\documentclass[11pt]{article} 
\textwidth 162mm
\textheight 220mm
\topmargin -15mm
\oddsidemargin 1mm
\usepackage{amsmath,amssymb,amsfonts,amsthm,amscd,graphicx,psfrag,epsfig}
\usepackage{color}
\usepackage{xcolor}
\definecolor{blue}{rgb}{0,0,0.7}
\definecolor{red}{rgb}{0.75, 0, 0}
\definecolor{midnight}{rgb}{0.0,0.2,0.4}
\usepackage{stmaryrd}
\usepackage{shuffle}

\usepackage[titletoc,toc]{appendix}
\usepackage[
colorlinks=true, 
linkcolor=blue, 
citecolor=blue, 
filecolor=blue, 
menucolor=blue, 
urlcolor=blue]{hyperref}
 \usepackage{hyperref}
\hypersetup{colorlinks=true, citecolor=blue,linkcolor=midnight}
\usepackage[all]{xy}           
\usepackage{marginnote}  
\usepackage{float}
\usepackage{enumitem}

 \usepackage{xcolor}
\usepackage{hyperref}
\usepackage{amsmath,amssymb,amsfonts,amsthm,amscd,graphicx,psfrag,epsfig,mathabx}
\usepackage{color}
\definecolor{blue}{rgb}{0,0,0.7}
\definecolor{red}{rgb}{0.75, 0, 0}
\usepackage[titletoc,toc]{appendix}
\usepackage{pgf,tikz,pgfplots}
\usepackage{mathrsfs}

\newtheorem{theorem}{Theorem}[section]

\newtheorem{theorem-definition}[theorem]{Theorem-Definition}
\newtheorem{theorem-construction}[theorem]{Theorem-Construction}
\newtheorem{lemma-definition}[theorem]{Lemma--Definition}
\newtheorem{lemma-construction}[theorem]{Lemma--Construction}
\newtheorem{lemma}[theorem]{Lemma}
\newtheorem{proposition}[theorem]{Proposition}
\newtheorem{corollary}[theorem]{Corollary}
\newtheorem{conjecture}[theorem]{Conjecture}
\newtheorem{definition}[theorem]{Definition}
\newtheorem{example}[theorem]{Example}

\newenvironment{remark}[1][Remark.]{\begin{trivlist}
\item[\hskip \labelsep {\bfseries #1}]}{\end{trivlist}}

\newcommand{\old}[1]{}

\newcommand{\Z}{{\mathbb Z}}
\newcommand{\R}{{\mathbb R}}
\newcommand{\Q}{{\mathbb Q}}
\newcommand{\C}{{\mathbb C}}

\newcommand{\A}{{\rm A}}
\newcommand{\F}{{\rm F}}
\newcommand{\B}{{\rm B}}
\newcommand{\G}{{\rm G}}

\renewcommand{\P}{{\mathbb P}}

\newcommand{\lms}{\longmapsto}
\newcommand{\lra}{\longrightarrow}
\newcommand{\hra}{\hookrightarrow}

\newcommand{\be}{\begin{equation}}
\newcommand{\ee}{\end{equation}}
\newcommand{\bt}{\begin{theorem}}
\newcommand{\et}{\end{theorem}}
\newcommand{\bd}{\begin{definition}}
\newcommand{\ed}{\end{definition}}
\newcommand{\bp}{\begin{proposition}}
\newcommand{\ep}{\end{proposition}}

\newcommand{\bl}{\begin{lemma}}
\newcommand{\el}{\end{lemma}}
\newcommand{\bc}{\begin{corollary}}
\newcommand{\ec}{\end{corollary}}
\newcommand{\bcon}{\begin{conjecture}}
\newcommand{\econ}{\end{conjecture}}
\newcommand{\la}{\label}
\renewcommand{\L}{{\mathcal L}}
\newcommand{\alt}{\rm Alt}

\newcommand{\bi}{\begin{itemize}}
\newcommand{\ei}{\end{itemize}}
\newcommand{\bs}{\begin{split}}
\newcommand{\es}{\end{split}}
\begin{document}

\date{}
 
\title{Motivic correlators, cluster   varieties   and Zagier's conjecture on $\zeta_\F(4)$}

\author{Alexander  Goncharov, Daniil Rudenko}

\maketitle

\tableofcontents

\begin{abstract}
We prove Zagier's conjecture 
 on  the  value at $s=4$ of  the  Dedekind $\zeta$-function  of  a number field $\F$:
\[ 
\zeta_\F(4) = \pi^{4(r_1+r_2)}|d_\F|^{-1/2}\cdot {\rm det} \Bigl({\cal L}_4(\sigma_i(y_j))\Bigr), ~~~~1\leq i,j \leq r_2.
\]
Here $\{\sigma_i\}$ is the set of all complex embeddings of $\F$, up to the conjugation. Namely,  up to a standard factor,  $\zeta_\F(4)$ is equal to a   
  $r_2\times r_2$ determinant, whose entries are $\Q$-linear combinations of  the values of a single-valued version ${\cal L}_4(z)$ of the classical 4-logarithm  at   some 
numbers in  $\sigma_j(\F)$. These linear combinations are subject to a specific condition discovered by Zagier.

For an infinite field $\F$, we define a  map  of 
  $K$-groups $K_{8-i}(\F)$, $i=1, \dots ,4$,  to the $i$-th cohomology  of the 
weight 4 polylogarithmic motivic complex ${\cal B}^\bullet(\F, 4)$. When $\F$ is the function  field  of    a  complex variety, 
composing the map with the regulator map on the polylogarithmic complex to the Deligne cohomology, we get a $\Q^\times$-multiple of Beilinson's regulator. 
This implies that the composition 
$K_7(\C) \to H^1{\cal B}^\bullet(\C, 4) \to \R$, where the second map is given by   ${\cal L}_4(z)$,  
is   a $\Q^\times$-multiple of   Borel's regulator. This plus Borel's theorem  on the ranks of algebraic $K$-groups of number fields implies Zagier's conjecture. 


We get  a strong evidence  for  the  part of Freeness Conjecture
 describing the weight four part ${\cal L}_4(\F)$ of the motivic Lie coalgebra  
of   $\F$ via  higher Bloch groups ${\cal B}_j(\F)$ as an extension:  
\[
0 \lra {\cal B}_4(\F) \lra {\cal L}_4(\F)\lra \Lambda^2{\cal B}_2(\F)\lra 0.
\]

The main tools are  motivic correlators  and  a new link  of  cluster varieties to polylogarithms. 
\end{abstract}

\section{Introduction and the architecture of the proof}

\subsection{The classical polylogarithms and algebraic $K$-theory} \la{SS1.1}
\paragraph{1. The classical $n$-logarithm.} The classical $n$-logarithm function  ${\rm Li}_n(z)$ on the unit disc $|z|<1$ is given by the  absolutely convergent power series: 
$$
{\rm Li}_n(z) = \sum_{k=1}^\infty\frac{z^k}{k^n}, ~~~~|z|<1.
$$
It is  continued analytically  to a multivalued analytic function on  $\C - \{0, 1\}$  by induction, setting
$$
{\rm Li}_n(z) = \int_0^z{\rm Li}_{n-1}(z)d\log z, ~~~~n >1.
$$
Here  we integrate over a path  in $\C-\{0,1\}$ from $0$ to $z$.  
The obtained  multivalued analytic function  has a single valued cousin  \cite{Zag90}.  
 Namely, consider the projection given by
\be \la{pin}
\pi_n: \C \lra  (2\pi i)^{n-1}\R, ~~~~z\in \C \lms \begin{cases} 
{\rm Re}(z)& n=2k+1\\
 {\rm Im}(z) & n =2k. 
\end{cases}.
\ee
Then  one   shows that the following expression  is a single-valued function on ${\Bbb C}{\Bbb P}^1-\{0, 1, \infty\}$:
$$
{\cal L}_n(z):= \pi_n\left(\sum_{k=0}^{n-1}\frac{2^kB_k}{k!} {\rm Li}_{n-k}(z) \log^k |z|\right), ~~n>1.
$$ 
Here  $B_k$ are the  Bernoulli numbers, and ${\rm Li}_{n-k}(z)$ are defined via the same integration path. For example,    ${\cal L}_2(z)$ is the Bloch-Wigner dilogarithm.  
The function ${\cal L}_n(z)$  is continuous on $\C{\Bbb P}^1$. For a  
 Hodge-theoretic interpretation of the  functions ${\cal L}_n(z)$ see   \cite{BD94}. 

\paragraph{2. Zagier's conjecture.}  Let   $\zeta_\F(s)$ be the Dedekind $\zeta$-function  of  a number field $\F$. Zagier's conjecture  \cite{Zag90} predicts that 
the classical regulator formula 
$$
{\rm Res}_{s=1}\zeta_\F(s) = \frac{2^{r_1+r_2}\pi^{r_2}R_\F h_\F}{w_\F\sqrt{|d_\F|}}
$$
for the residue of the Dedekind $\zeta$-function of a number field $\F$ at $s=1$ has analogs   for $\zeta_\F(n)$ for any positive integer $n$.
Theorem \ref{ZCZ4t} proves Zagier's conjecture for $n=4$.
Let us state it first  in a preliminary form. 
Denote  by $\Q[X]$ the $\Q$-vector space generated by a set $X$. 

\bt \la{ZCZ4ta}
Let $\F$ be a number field,  $[\F:\Q]=r_1+2r_2$, and  the set $\{\sigma_j\}$ of all  embeddings $\F \to \C$ is numbered so that 
$\overline \sigma_{r_1+i} = \sigma_{r_1+r_2+i}$. Let $d_\F$ be the discriminant of $\F$. Then there exist elements
$y_1, \dots , y_{r_2} \in \Q[\F]$  such that 
\be \la{ZCZ4a}
\zeta_\F(4) = \pi^{4(r_1+r_2)}|d_\F|^{-1/2}\cdot {\rm det} \Bigl({\cal L}_4(\sigma_{r_1+i}(y_j))\Bigr), ~~~~1\leq i,j \leq r_2.
\ee
\et


Similar results about $\zeta_\F(2)$ and $\zeta_\F(3)$ were proved in \cite{Zag86} and \cite{Gon91}, \cite{Gon95}, respectively. Our next goal is to formulate Theorem \ref{ZCZ4t}, which is a more precise version of Theorem \ref{ZCZ4ta}.



 \paragraph{3. Functional equations for polylogarithms.} 
It has been known for a long time that polylogarithms satisfy various algebraic functional equations, i.e., polynomial relations with rational coefficients between expressions ${\rm Li}_n(P)$ for $P\in \mathbb{Q}(z_1,\dots,z_m)$ which hold in some open ball. One of the reasons for appearance of the modified functions ${\cal L}_n(z)$ is that the functional relations satisfied by them   are ``clean,'' that is,  no products of polylogarithms of smaller weights are involved.  For example, the dilogarithm  ${\cal L}_2(z)$ satisfies   the five-term relation. Namely,  recall the  cross-ratio  of four points on ${\Bbb P}^1(\F)$:


%

\be \la{CR1}
[x_1, x_2, x_3, x_4] := \frac{(x_1- x_2) (x_3- x_4) }{(x_1- x_4) (x_3- x_2) }, ~~~~[\infty, 1, 0, x] = x.
\ee
Then for any five distinct points $x_1, \ldots, x_5$ on $\C{\Bbb P}^1$ we have:\footnote{Note that $[x_2, x_3, x_4, x_1] = [x_1, x_2, x_3, x_4]^{-1}$. So using ${\cal L}_2(z) = - {\cal L}_2(z^{-1})$, relation (\ref{AFED}) can be expressed in the usual form $\sum_{i=1}^5(-1)^i{\cal L}_2([x_1, \ldots , \widehat x_i, \ldots, x_5]) =0$. }
\be \la{AFED}
\sum_{i=1}^5{\cal L}_2([x_i, x_{i+1}, x_{i+2},  x_{i+3}])=0, ~~~~i \in \Z/5\Z.
\ee
From \cite[Corollary 5.6]{Sus90} one can deduce that any linear relation with rational coefficients between functions ${\cal L}_2(P)$ for $P\in \mathbb{Q}(z_1,\dots,z_m)$ follows from the five-term relation.


Although we do not know explicitly functional equations for $n$-logarithms for large $n$ except a trivial one ${\cal L}_n(z) + (-1)^n{\cal L}_n(z^{-1}) =0$ and the distribution relations,
one can define a certain group of functional equations of the type above.  This leads to higher Bloch groups, whose definition we  recall now.

\paragraph{Conventions.} A few remarks about conventions are in order. First, we work everywhere with infinite fields.  Second, we work modulo torsion, i.e., with $\Q$-vector spaces. In particular, our definition of ${\cal B}_n(\F)$ differs from the one in \cite{Gon95}: our group  ${\cal B}_n(\F)$ is  the rationalization of the corresponding abelian group defined in \cite{Gon95}. For an abelian group $A$, consider its rationalization $A:= A \otimes \Q$. We mostly omit the rationalization sign: for instance, we denote the space $\F^{\times}\otimes_{\Z} \Q$ simply by $\F^{\times}.$ 

Finally, the name {\it  Bloch group} was coined by  Suslin \cite{Sus90} for the kernel of the map  $\delta: \mathfrak{p}(\F) \lra \Lambda^2\F^\times,$ where  $\frak{p}(\F)$ is an abelian group whose rationalization is isomorphic to ${\cal B}_2(\F)$. We use the terminology of \cite{Gon95}, where the group ${\cal B}_2(\F)$ was called the Bloch group, and its higher analogs ${\cal B}_n(\F)$ were defined and called ``higher Bloch groups.'' Our point is that the groups ${\cal B}_n(\F)$ are the primary objects, and thus deserve the name ``higher Bloch groups.''

\paragraph{4. Polylogarithmic motivic complexes \cite{Gon95}.} 
For a set $X$ we denote by $\{x\}$ the generator  of $\Q[X]$  assigned to an   $x\in X$.  
Given an infinite field $\F$, one defines  inductively for each $n\geq 1$ a subspace ${\cal R}_n(\F) \subset \Q[\F]$ reflecting  functional equations for the classical n-logarithm 
function,  and define {\it the higher Bloch group}
$$
{\cal B}_n(\F):= \frac{\Q[\F]}{{\cal R}_n(\F)}.
$$
We denote by $\{z\}_n$ the projection of the generator $\{z\}$ to the quotient ${\cal B}_n(\F)$.

The subgroup ${\cal R}_1(\F)$ is generated by the elements $\{xy\}-\{x\}-\{y\}$ where $x,y\in \F^\times$, and $\{0\}$. So $\Q[\F]/{\cal R}_1(\F) = \F^\times_\Q$, which we denote simply by $\F^\times.$ We define by  induction  a map 
$$
\Q[\F] \stackrel{\delta_n}{\lra} \begin{cases} 
{\cal B}_{n-1}(\F)\otimes \F^\times & n>2,\\
 \Lambda^2 \F^\times& n =2,
\end{cases}
 ~~~~\{x\} \lms \begin{cases} 
 \{x\}_{n-1}\otimes x & n>2,\\
 (1-x)\wedge x & n =2, 
\end{cases}~~\delta_2\{1\}=\delta_2\{0\}=0.
$$
It is handy to add a generator $\{\infty\}$ together with the relation $\{\infty\}=0$.

 Let us define a subspace ${\cal R}_n(\F) \subset {\rm Ker}~\delta_n$. 
Any expression $\sum_in_i\{f_i(t)\}$ which lies in the kernel of $\delta_n$ for the field $\F(t)$ gives rise to  
an element $\sum_in_i\bigl(\{f_i(1)\} - \{f_i(0)\}\bigr)$.   The subgroup 
${\cal R}_n(\F)$ is generated by all elements obtained this way,   and $\{0\}$. 

  One proves \cite[Theorem 1.5]{Gon94a} that 
there is a map of abelian groups
$$
{\cal L}_n: {\cal B}_n(\C) \lra \R, ~~~~\{z\}_n \lms {\cal L}_n(z), ~~n>1. 
$$
For  $n=1$ we have   a map 
${\cal B}_1(\C)\to \R, \{z\}_1 \lms \log|z|$. This means that the subgroup ${\cal R}_n(\C)$ is indeed a subgroup of functional equations for the polylogarithm function 
${\cal L}_n(z)$. One can show that it contains all functional equations  of the type mentioned above.

One proves that  the  map $\delta_n$ induces    a group homomorphism
\be \la{BB}
\delta_n: {\cal B}_n(\F) \lra \begin{cases} 
{\cal B}_{n-1}(\F)\otimes \F^\times& \text{for}\ n>2,\\
\Lambda^2 \F & \text{for}\ n =2
\end{cases}. 
\ee
Thus we get a complex
\be \la{BCOM}
{\cal B}^\bullet(\F;n) : ~~~~{\cal B}_n(\F) \stackrel{}{\to} {\cal B}_{n-1}(\F)\otimes \F^\times \stackrel{}{\to}  {\cal B}_{n-2}(\F)\otimes \Lambda^2 \F^\times \to \ldots \to  {\cal B}_{2}(\F)\otimes \Lambda^{n-2} \F^\times \to \Lambda^{n} \F^\times
\ee
in the degrees $[1,n]$, where the differential has degree $+1$ and sends $\{x\}_{k}\wedge x_1 \wedge \dots \wedge x_{n-k}$ to  $\delta_k(\{x\}_{k})\wedge x_1 \wedge \dots \wedge x_k.$
This complex is called the {\it weight $n$ polylogarithmic motivic complex}; the first four polylogarithmic motivic   complexes are
\[
\begin{split}
&{\cal B}^\bullet(\F;1): ~~~~{\cal B}_1(\F) = \F^\times, \\
&{\cal B}^\bullet(\F;2): ~~~~{\cal B}_2(\F) \stackrel{}{\lra}   \Lambda^2 \F^\times, \\
&{\cal B}^\bullet(\F;3): ~~~~{\cal B}_3(\F) \stackrel{}{\lra} {\cal B}_{2}(\F)\otimes \F^\times \stackrel{}{\lra}    \Lambda^{3} \F^\times, \\
&{\cal B}^\bullet(\F;4): ~~~~{\cal B}_4(\F) \stackrel{}{\lra} {\cal B}_{3}(\F)\otimes \F^\times\stackrel{}{\lra}  {\cal B}_{2}(\F)\otimes \Lambda^2 \F^\times \lra  \Lambda^{4} \F^\times. 
\end{split}
\]
Complexes ${\cal B}^\bullet(\F;n)$ are rationalizations of complexes $\Gamma_\F(n)$ defined in \cite{Gon95}.


\paragraph{6. Main results.} 

Let us state now   our first main result precisely. 

  \bt \la{ZCZ4t}
Let $\F$ be a number field,  $[\F:\Q]=r_1+2r_2$, and  the set $\{\sigma_j\}$ of all  embeddings $\F \to \C$ is numbered so that 
$\overline \sigma_{r_1+i} = \sigma_{r_1+r_2+i}$. Let $d_\F$ be the discriminant of $\F$. Then there exist elements $y_1, \ldots, y_{r_2} \in {\rm Ker} \delta_4 \subset {\cal B}_4(\F)$  such that 
\be \la{ZCZ4}
\zeta_\F(4) = \pi^{4(r_1+r_2)}|d_\F|^{-1/2}\cdot {\rm det} \Bigl({\cal L}_4(\sigma_{r_1+i}(y_j))\Bigr), ~~~~1\leq i,j \leq r_2.
\ee
For any    $y_1, \ldots, y_{r_2} \in {\rm Ker}\delta_4$, the right-hand side of  (\ref{ZCZ4}) equals $q\cdot\zeta_\F(4)$ for some $q\in \Q$. 
\et

Zagier's conjecture concerns number fields. For an arbitrary infinite field $\F$, it was conjectured in \cite{Gon95} that the weight $n$ polylogarithmic motivic complexes calculate the weight $n$ pieces of the Quillen's $K$-groups of the field $\F$ modulo torsion. 
Precisely, let  $\gamma$ be the Adams $\gamma$-filtration on Quillen's algebraic $K$-theory. The  conjecture states   that one expects the following isomorphisms: 
\[
{\rm gr}_\gamma^nK_{2n-i}(\F)_\Q \stackrel{?}{=} H^i{\cal B}^\bullet(\F;n), ~~~~i>0.
\]

 This conjecture has a variant that is formulated more elementary, 
without reference to Quillen's definition of algebraic $K$-theory and the $\gamma$-filtration, which we recall now. 

Denote by $\mathrm{GL}$ the infinite general linear group, defined as the inductive limit of the groups ${\rm GL}_m$, sitting one in the other in a natural way. There is a canonical map 
\be \la{HUR}
K_n(\F) \lra \pi_n({\rm BGL}(\F)^+) \xrightarrow{\rm Hurewicz}  H_n({\rm BGL}(\F)^+, \Z) = H_n({\rm GL}(\F), \Z). 
\ee
Here $+$ stands for the Quillen plus construction and ${\rm B}$ for the classifying space. 

Let $G$ be any group, and $\Delta: G  \to  G \times G$   the diagonal map. Then  the primitive part of the $n$th rational homology of any group $G$ is defined by
 $$
{\rm Prim}H_n(G, \Q):= \{X \in H_n(G, \Q)~|~ \Delta_*(X) = X \otimes 1 + 1 \otimes X\}.
$$ 
It is well known that  the map (\ref{HUR}) induces an isomorphism
\be \la{RANK}
K_n(\F)_\Q \stackrel{\sim}{\lra} {\rm Prim}H_n(\textup{GL}(\F), \Q).
\ee

 The natural filtration of the group ${\rm GL}(\F)$ by the subgroups ${\rm GL}_m(\F)$ induces an increasing  filtration on (\ref{RANK}), known as the {\it rank filtration}:
 $$
 {\cal F}_m^{\rm rk}K_n(\F)_\Q:= {\rm Im}\Bigl({\rm Prim}H_n(\textup{GL}_m(\F), \Q) \lra {\rm Prim}H_n({\rm GL}(\F), \Q)\Bigr).
  $$
  The stabilization theorem of Suslin \cite{Sus84} implies that 
  $$
  {\cal F}_n^{\rm rk}K_n(\F)_\Q =   K_n(\F)_\Q.
  $$
Suslin  conjectured cf. \cite[Conjecture 3.2]{Gon94a} that for an infinite field $\F$  the rank and the Adams filtrations have isomorphic associate graded pieces: 
 $$
 {\rm gr}^{\rm rk}_{i}K_{n}(\F)_\Q  \stackrel{?}{\cong } {\rm gr}_{\gamma}^{i}K_{n}(\F)_\Q.
 $$
 
 Here is the main result of this paper. Denote by $\R_{\cal D}(n)$ the weight $n$ real Deligne complex \cite{Bei84}. Recall the canonical map of complexes \cite{Gon00a}
 \be \la{RM}
 H^i{\cal B}^\bullet(\C(X);4) \lra H^i({\rm Spec}(\C(X)), \R_{\cal D}(4)).
  \ee

\bt \la{ZCZ4t2}\begin{enumerate}[label=(\roman*)]
\item Let $\F$ be an infinite field.  Then there are canonical homomorphisms
$$
K_{8-i}(\F)_\Q \lra H^i{\cal B}^\bullet(\F;4), ~~~~i=1,2,3,4.
$$
Their  restrictions   to ${\cal F}_3^{\rm rk}K_{8-i}(\F)_\Q$ are zero.

\item For any complex variety $X$, the  composition  
\be \la{BRM}
K_{8-i}(\C(X))_\Q \lra H^i{\cal B}^\bullet(\C(X);4) \stackrel{(\ref{RM})}{\lra} H^i({\rm Spec}(\C(X)), \R_{\cal D}(4))
\ee
is a non-zero rational multiple of Beilinson's regulator map \cite{Bei84}.

\item For any regular   curve  $X$ over a number field,  the map (\ref{BRM}) gives rise to a non-zero rational multiple of Beilinson's regulator map 
$$
K_{8-i}(X)_\Q \lra  H^i(X, \R_{\cal D}(4)).
$$

\item The following composition is a non-zero rational multiple of the Borel regulator map \cite{Bor77}:
 $$
K_{7}(\C)_\Q \lra H^1{\cal B}^\bullet(\C;4)  \stackrel{{\cal L}_4}{\lra} \R.
$$
\end{enumerate}
\et

The proof of Theorem \ref{ZCZ4t2} is completed in Section \ref{Sec8.3}. 

The key ingredients of the proof of Theorem \ref{ZCZ4t2} 
are Theorem \ref{THRN}, whose proof is finished in Section \ref{SSEECC10}, Theorem \ref{MapBtoP}, whose proof is completed  in Section \ref{Sec4}, and Theorem \ref{TH1.7}.

Theorem \ref{ZCZ4t} follows  from the part (iv) of Theorem \ref{ZCZ4t2} and Borel's theorem \cite{Bor77}. The part (i) of Theorem \ref{ZCZ4t2} implies that we get maps 
\be \la{QUIL1}
{\rm gr}^{\rm rk}_{4}
K_{8-i}(\F)_\Q \stackrel{}{\lra} H^i{\cal B}^\bullet(\F;4), ~~~~i=1,2,3,4.
\ee
The map (\ref{QUIL1}) for $i=4$ is an isomorphism due to a theorem of Suslin \cite{Sus84} relating  Quillen's and   Milnor's $K$-groups.

\bcon
The maps (\ref{QUIL1}) are  isomorphisms.
\econ
  
  Theorem \ref{ZCZ4t2}  gives a strong evidence to the weight $4$ parts of several ``beyond-the-standard'' conjectures about mixed Tate motives, which we are going to discuss now. 
  

 \paragraph{7. The motivic Tate Lie algebra of a field $\F$.} According to Beilinson, one should have a  Tannakian  category  ${\cal M}(\F)$ of mixed motives over a field $\F$. Let ${\cal M}_T(\F)$ be its minimal exact subcategory containing pure Tate motives $\mathbb{Q}(n), n\in \mathbb{Z}.$ Beilinson conjectured the following.
 
 \bcon \la{MTM=DM} \cite{Bei87} One should have
$$
{\rm Ext}^\bullet_{{\cal M}_T(\F)}(\Q(0), \Q(n)) = {\rm Ext}^\bullet_{{\cal M}(\F)}(\Q(0), \Q(n)).
 $$
\econ

The abelian category of mixed Tate motives   ${\cal M}_T(\F)$ is available  with all the expected properties when $\F$ is a number field \cite{Lev93}, \cite{DG03}.  

Let us assume that the category ${\cal M}_T(\F)$ of mixed Tate motives over a field $\F$ does exist. 
Then every object of this category carries a canonical increasing weight filtration $W_\bullet$, and  there is a canonical fiber functor to the category of  $\Q$-vector spaces:
\be \la{FF1}
\omega: {\cal M}_T(\F) \lra {\rm Vect}_\Q, ~~~~\omega(M):= \oplus_{n \in \Z}{\rm Hom}(\Q(-n), {\rm gr}^W_{2n}M).
\ee  
The Lie algebra ${\rm L}_\bullet(\F):= {\rm Der}^{\otimes}(\omega)$ of derivations with respect to the tensor product in ${\cal M}_T(\F)$  of the functor $\omega$, of degree $<0$,  
is a  Lie algebra in the category of projective limits of  $\Q$-vector spaces, graded by negative integers.  
It is called the motivic Tate Lie algebra. 

Recall the   Chevalley-Eilenberg  cochain complex of a Lie coalgebra ${\cal L}$:
\be \la{CEil}
 {\rm CE}^\ast({\cal L}): =~~~~ {\cal L} \stackrel{\delta}{\lra} \Lambda^2{\cal L}    \stackrel{}{\lra} \Lambda^3{\cal L} \lra \ldots 
\ee
The first map is the Lie cobracket. We extend it to a  differential   on ${\rm Sym}^\ast({\cal L} [-1]),$  the symmetric algebra of ${\cal L}$ with shifted grading,  by the Leibniz rule. 
If the Lie coalgebra ${\cal L}$   is graded, we get a complex of graded vector spaces. 

Conjecture \ref{MTM=DM} implies the isomorphism 
\[
{\rm Ext}^i_{{\cal M}(\F)}(\Q(0), \Q(n))\cong \left[H^i({\rm L}_\bullet(\F),\Q)\right]_n
\]
where the group on the right is the degree $n$ part of the cohomology group of the Lie 
algebra ${\rm L}_\bullet(\F)$. Let  ${\cal L}_\bullet(\F)$ be the graded dual to the Lie algebra ${\rm L}_\bullet(\F)$. It is  a  Lie coalgebra  called the motivic Tate Lie coalgebra of $\F$. 
The weight $4$ part of  Conjecture \ref{MTM=DM} implies that the cohomology groups of the complex 
\be \la{MTM=DM1}
{\cal L}_4(\F) \lra \bigl({\cal L}_3(\F)\otimes  {\cal L}_1(\F)\bigr) \oplus \Lambda^2 {\cal L}_2(\F) \lra {\cal L}_2(\F)\otimes \Lambda^2{\cal L}_1(\F) \lra \Lambda^4{\cal L}_1(\F)
\ee
are isomorphic to ${\rm Ext}^i_{{\cal M}(\F)}(\Q(0), \Q(4)).$
 
The space ${\cal B}_n(\F)$ is a vector space. Denote by ${\cal B}_n(\F)^\vee$ its linear dual. 
Conjecture \ref{FRC} below  is a part of  the Freeness Conjecture  \cite[Conjecture 1.20]{Gon94a}. It describes the relation between 
the classical polylogarithms and all mixed Tate motives. Consider the graded ideal of the Lie algebra ${\rm L}_{\bullet}(\F)$:
$$
{\rm I}_{\bullet}(\F) = \oplus_{n>1}{\rm L}_{-n}(\F).
$$
Denote by $H^i_{(n)}{\rm I}_{\bullet}(\F)$ the degree $n$ part of the  $i-$th cohomology of the graded Lie algebra ${\rm I}_{\bullet}(\F)$. 
\bcon \la{FRC} Assume the existence of the category of mixed Tate motives over the field $F$.
The ideal ${\rm I}_{\bullet}(\F) $   has  the cohomology   given by 
$$
H^1_{(n)}{\rm I}_{\bullet}(\F) = {\cal B}_n(\F), ~~~~H^{i}_{(n)}{\rm I}_{\bullet}(\F)= 0, ~~\forall i,n>1.
 $$
So the graded Lie algebra ${\rm I}_{\bullet}(\F)$ is isomorphic to the free graded Lie algebra with the  generators   in the degree $-n$,  $n\geq 2$,  
given by the spaces ${\cal B}_n(\F)^\vee$. However, this isomorphism is non-canonical, that is there is no canonical, that is functorial in $\F$, map realizing it. 
\econ

 Here is a  sharper version of  Freeness Conjecture \ref{FRC}.  
Observe that the lower central series in the  Lie algebra ${\rm I}_{\bullet}(\F)$  gives rise to an increasing filtration ${\cal C}_\ast$ of the 
  Lie coalgebra 
 ${\cal L}_{\geq 2}(\F)$, indexed by positive integers.  
 
   \bcon \la{FRCS}  Assume the existence of the category of mixed Tate motives over the field $F$. The associate graded  Lie coalgebra 
 ${\rm gr}^{\cal C}_\ast{\L}_{\geq 2}(\F)$  
  is canonically isomorphic to the cofree Lie coalgebra cogenerated by  the $\Q-$vector spaces 
 ${\cal B}_m(\F)$, $m \geq 2$,   which have  the ${\cal C}-$degree $1$ and the weight $m$. 
 \econ

Conjectures  \ref{MTM=DM}, \ref{FRC} and \ref{FRCS} predicts 
 the following canonical isomorphisms:
  \be \la{MTLC1}
{\cal L}_1(\F) = \F^\times, ~~~~{\cal L}_2(\F) = {\cal B}_2(\F), ~~~~{\cal L}_3(\F) = {\cal B}_3(\F).
\ee
 Furthermore, it predicts that the  weight $4$ component ${\cal L}_4(\F)$    
is described by an extension: 
\be \la{MTLC2}
\begin{split}
&0 \lra {\cal B}_4(\F) \lra {\cal L}_4(\F)\stackrel{p}{\lra} \Lambda^2{\cal B}_2(\F)\lra 0.\\
\end{split}
\ee
The map $p$ is identified with the component  ${\cal L}_4(\F) \to \Lambda^2{\cal L}_2(\F)$ of the cobracket via the conjectural isomorphism ${\cal L}_2(\F) = {\cal B}_2(\F)$. 
Therefore  complex (\ref{MTM=DM1}) looks now as follows: 

\be \la{MTM=DM12}
{\cal L}_4(\F) \lra {\cal B}_3(\F)\otimes \F^\times \bigoplus  \Lambda^2 {\cal B}_2(\F) \lra {\cal B}_2(\F)\otimes \Lambda^2\F^\times \lra \Lambda^4\F^\times.
\ee

  Extension (\ref{MTLC2}) does not have a    functorial in $\F$ splitting  $s\colon \Lambda^2{\cal B}_2(\F) \to {\cal L}_4(\F)$. Indeed,  
  composing the map $s$  with the ${\cal B}_3(\F)\otimes \F^\times$-component of the cobracket, we get  a map  
$$
\Lambda^2{\cal B}_2(\F) \lra {\cal B}_3(\F)\otimes \F^\times.
$$
However, there is no such a map  given  by rational functions on  generators \cite[Theorem~4.7]{Gon94a}. 
Thus Conjecture \ref{FRCS} does not   describe the  functor  $ \F \lra {\L}_\bullet(\F)$, since the Lie coalgebra $ {\L}_{\geq 2}(\F)$ does not have canonical,   depending functorially on   $\F$, cogenerators.  In the next Section, we will define explicitly a functor   $\F \lra \mathbb{L}_n(\F)$ for $n\leq 4$ and prove for it a version of (\ref{MTLC2}). Generalizing this construction to  $n>4$ is an important open problem.

 



\subsection{Functional equations for polylogarithms and motivic Tate Lie coalgebras}  \label{SectionFunctionalEquations}
We start with recalling the construction of the higher Bloch groups $\B_2(\F)$ and $\B_3(\F)$ from \cite{Gon95}. We will work with the  $\mathbb{Q}$-vector spaces $\B_2(\F)\otimes_\mathbb{Z}\mathbb{Q}$ and $\B_3(\F)\otimes_\mathbb{Z}\mathbb{Q}$ but keep the original notation.

Consider a subspace ${\rm R}_2(\F)$  of $\mathbb{Q}[\F]$ generated by elements 
\[
\sum_{i=1}^5(-1)^i \{[x_1, \ldots , \widehat x_i, \ldots,  x_5]\}
\]
for distinct points $x_1,\dots, x_5\in \mathbb{P}^1_\F.$ We put  ${\B}_2(\F)=\dfrac{\mathbb{Q}[\F]}{{\rm R}_2(\F)}.$ A deep result of Suslin (\cite[Corollary 5.6]{Sus90}) implies that the natural map ${\B}_2(\F)\lra \mathcal{B}_2(\F)$ is an isomorphism.\footnote{The map  $\B_2(\F) \to {\cal B}_2(\F)$, induced by the identity map $\{x\}\to \{x\}$ on the generators, gives rise to a map from the complex $\B_2(\F) \lra \Lambda^2\F^\times$ to the complex ${\cal B}_2(\F)\lra \Lambda^2\F^\times$,  identical on  $\Lambda^2\F^\times$. \cite[Corollary 5.6]{Sus90} implies that the first homology  of these complexes coincide, so it is a quasiisomorphism.  Indeed,
 the former first homology  is the group $\B(\F)$ in loc. cit.,  the latter is  $\B(\F(t))$, and $\B(\F) = \B(\F(t))$ by loc. cit.  The claim follows. }

Let us denote by   ${\rm R}_3(\F)$  the subspace  of $\mathbb{Q}[\F]$ generated by  the elements 
\be \la{EATRI}
\begin{split}
\left \{x\right\}_{3}-\left \{x^{-1}\right\}_{3}, ~~~~\left \{x\right\}_{3}+\left \{1-x\right\}_{3}+\left \{1-x^{-1}\right\}_{3}- \{1\}_3 \ \ \ \ \ \ \forall x\in \F^\times\\
\end{split}
\ee 
and the 22-term relation \cite[Formula 1.10, Theorem 1.3]{Gon95} for the trilogarithm. Let us set 
$$
{\B}_3(\F)=\dfrac{\mathbb{Q}[\F]}{{\rm R}_3(\F)}.
$$
 It was conjectured in \cite{Gon95} that the natural map  ${\B}_3(\F)\lra {\cal B}_3(\F)$ is an isomorphism.  
 \vskip 1mm
 
 We use both  groups ${\B}_k(\F)$ and ${\cal B}_k(\F)$, for $k=2,3$.   The ${\cal B}-$groups are handy in general considerations. The groups $\B_k(\F)$  appear naturally in 
 explicit constructions  in Sections \ref{SEC2N}-\ref{SEC2}, \ref{SEC8.4n}, often combined with 
 the  corollary (\ref{7term})  of the 22-term relation. Let us    elaborate it. 
 \vskip 1mm
 
Let $V_3$ be a three-dimensional $\F-$vector space. Given a generic configuration $(x_1, \ldots, x_6)$ of 6 points  in ${\Bbb P}^2(\F)$, we lift it to a configuration of  vectors $(l_1, \ldots, l_6)$ in $V_3$, 
pick a volume form   $\omega$ in  $V_3$, and consider following the ``triple ratio'", where   ${\rm Alt}_6$ is the  antisymmetrization:
\be \la{TR}
r_3(x_1, x_2, x_3, x_4, x_5, x_6):=   {\rm Alt}_6\left\{\frac{\omega(l_1,l_2,l_4)\omega(l_2,l_3,l_5)\omega(l_1,l_3,l_6)}{\omega(l_1,l_2,l_5)\omega(l_2,l_3,l_6)\omega(l_1,l_3,l_4)}\right\} \in \Q[\F].
\ee
 The right-hand side is independent of the choices. 

\bt  \la{R37term} For any generic configuration $(x_1, \ldots, x_7)$ of 7 points  in ${\Bbb P}^2(\F)$ we have  
\be \la{7term}
\sum_{i=1}^7(-1)^i r_3(x_1, \ldots , \widehat x_i, \ldots,  x_7) \in {\rm R}_3(\F).
\ee
\et

Theorem \ref{R37term} is deduced  in  Section \ref{intermezzo} from  Theorem A  in \cite{Gon95}.  
So for generic configurations $(x_1, \ldots , x_7)$ of   points in ${\Bbb P}^2(\C)$ we get a   functional relation for the trilogarithm \cite{Gon94}:
\be \la{TRIF}
\sum_{i=1}^7(-1)^i {\cal L}_3(r_3(x_1, \ldots , \widehat x_i, \ldots,  x_7))=0.
\ee

 

The 22-term relation for the trilogarithm and relation (\ref{TRIF})  played a central role in connecting  the
   trilogarithmic motivic complex to the algebraic $K$-theory for an arbitrary infinite field  $\F$, and proving Zagier's conjecture for $\zeta_\F(3)$.  
Yet it is unclear how to generalize  relations  (\ref{AFED})  and (\ref{TRIF}) to the classical n-logarithms to implement a similar strategy for $\zeta_\F(n)$ for $n>3$. 

\vskip 2mm

Our strategy is different.  Given a field $\F$, 
we look for an explicit construction of the weight $\leq n$ part   of the whole motivic Tate Lie coalgebra ${\cal L}_\bullet(\F)$ rather than 
just ${\cal B}_n(\F)$.  The  Freeness Conjecture predicts that for $n\leq 3$ we should have ${\cal L}_n(\F)={\cal B}_n(\F)$, so the new phenomena start to appear at the weight four.
\vskip 2mm
 
 The  existence of the  Lie coalgebra ${\cal L}_\bullet(\F)$ for a general field $\F$ is   unknown yet. So  
 we  aim   at a functorial in $\F$   explicit construction 
 of a Lie coalgebra $\mathbb{L}_\bullet(\F)$, which is conjecturally   isomorphic to    ${\cal L}_\bullet(\F)$, and its subspace $\B_\bullet(\F)$, which is conjecturally   isomorphic to    ${\cal B}_\bullet(\F).$
We define   a Lie coalgebra   $\mathbb{L}_{\leq 4}(\F)$   by using   simple and uniform in $n$ 
  relations ${\bf Q}_n$ between   weight $n$  iterated integrals,  $n\leq 4$. 
  The relation  ${\bf Q}_2$ is just the Abel five-term relation.  The  relation   ${\bf Q}_3$ not only implies the 22-term relation, and hence the 
 relation (\ref{7term})  for the   trilogarithm, but also  allows   to express any weight $3$ iterated integral  via the classical trilogarithm. So it is a new way to present the trilogarithm story. The  relation   ${\bf Q}_4$ is new.

 \paragraph{1. Constructing the Lie coalgebra $\mathbb{L}_{\leq 4}(\F)$.} Recall the   moduli space   $ {\mathcal{M}_{0,n}}$ parametrizing configurations of 
distinct points $(x_1, \ldots, x_n)$ on ${\Bbb P}^1$ modulo the  diagonal action of the group ${\rm PGL}_2$. 
 Recall the cross-ratio $[x_1, x_2, x_3, x_4]$, see (\ref{CR1}). Consider the following  regular function  on ${\mathcal{M}_{0,6}}$:\footnote{Mind the minus sign.}
\be \la{CR0}
[x_1,x_2, \ldots, x_{6}]:=-\frac{(x_1-x_2)(x_3-x_4)  (x_{5}-x_{6})}{(x_2-x_3)(x_4-x_5)  (x_{6}-x_{1})}.
\ee  
We use the cyclic   summation notation, where the indices are modulo $n$:
\be \la{CYC}
\begin{split}
&{\rm Cyc}_n \varphi(x_1, \ldots , x_n):= \sum_{k\in \Z/n\Z}  \varphi(x_{k+1}, x_{k+2}, \ldots , x_{k+n}). \\
\end{split}
\ee

Before we proceed with the definitions, let us make some comments applicable to  all of them. 
In Definitions \ref{DEFL2a}-\ref{DEF4La} we assume that  $\F$ is an infinite field. Lie coalgebra $\mathbb{L}_{\leq 4}(\F)$ is defined by generators and relations. Generators are symbols 
\be \la{symb}
\begin{split}
&x\in \mathbb{L}_{1}(\F)\cong \F^\times, \ \ \{x\}_2\in \mathbb{L}_{2}(\F),\ \  \{x\}_3 \in \mathbb{L}_{3}(\F),\ \  \{x\}_4 \in \mathbb{L}_{4}(\F) \text{\ \ for } x \in \F^{\times};\\
&\{x,y\}_{2,1}\in \mathbb{L}_{3}(\F),\ \  \{x,y\}_{3,1}\in \mathbb{L}_{4}(\F) \text{\ \  for } x, y \in \F^{\times}.\\		
\end{split}
\ee
Here elements $\{x\}_n$ are related to the corresponding elements of ${\cal B}_n(\F)$ which explains our notation.
 
 Relations consist of a ``generic'' relation  ${\bf Q}_n$ and  all its specializations. Precisely, consider any curve $\alpha(t): {\Bbb A}^1 - \{0\} \lra {\cal M}_{0,n+3}$, where $t$ 
 be a local parameter near $0$. Let us define the specialization ${\rm Sp}_{t = 0}({\bf Q}_n)$ of the relation ${\bf Q}_n$ at $t=0$ along the curve $\alpha(t)$. 
 Since the relation ${\bf Q}_n$ is a linear combination of   generators  (\ref{symb}),  it is sufficient 
 to define the specializations of  generators (\ref{symb}). Consider any functions $x(t)=t^{a} u(t)$ and $y(t)=t^b v(t)$  with $a,b\in \Z$ and $u(0), \ v(0)\neq 0.$  
 Then, we define specialization in the following way:\footnote{Formulas (\ref{DegenerationRules}) are essentially nailed by the requirement that specializations commute with the cobracket (\ref{37}).} 
\be \label{DegenerationRules}
\begin{split}
&{\rm Sp}_{t = 0}(x(t))=u(0)\in \F^\times;\\
&{\rm Sp}_{t = 0}(\{x(t)\}_n)=
\begin{cases}
\{u(0)\}_n & \text{ if } a= 0, \\	
0 & \text{ if } a\neq 0 \\
\end{cases} \text{ \ \ \ for $n=2,3,4$;}\\
&{\rm Sp}_{t = 0}\{x(t), y(t)\}_{2,1}=
\begin{cases}
\{u(0), v(0)\}_{2,1} & \text{ if } a=0,\ b=0,\\
-\{v(0)\}_{3} & \text{ if } a>0,\ b=0,\\
-\{u(0)\}_{3} & \text{ if }a=0,\ b>0,\\
2\left\{\dfrac{u(0)}{v(0)}\right\}_{3} & \text{ if }  a=b< 0,\\
0 & \text{ otherwise;} 
\end{cases}\\
&{\rm Sp}_{t = 0}\{x(t), y(t)\}_{3,1}=
\begin{cases}
\{u(0), v(0)\}_{3,1} & \text{ if } a=0,\ b=0,\\
\{v(0)\}_{4} & \text{ if } a>0,\ b=0,\\
-\{u(0)\}_{4} & \text{ if }a=0,\ b>0,\\
3 \left\{\dfrac{u(0)}{v(0)}\right\}_{4}  & \text{ if }  a=b< 0,\\
0 & \text{ otherwise.} 
\end{cases}\\
\end{split}
\ee
Now, we can get new relations from ${\bf Q}_n$ by looking at various specializations.

\begin{definition} \la{DEFL2a}
The  $\Q$-vector space $\mathbb{L}_2(\F)$ is  
generated by  symbols
$\left\{x \right \}_2$ 
for $x \in \F^\times$ satisfying  the following pentagon relation:
\begin{itemize}

\item ${\bf Q}_2$: for any   configuration  $(x_1, \ldots, x_5)\in {\mathcal{M}_{0,5}}(\F)$  the following cyclic sum is zero:
\be \la{121}
\begin{split}
&
\left\{[x_1,x_2,x_3,x_4]\right \}_2+
\left\{[x_2,x_3,x_4,x_5]\right \}_2+\\
&\left\{[x_3,x_4,x_5,x_1]\right \}_2+
\left\{[x_4,x_5,x_1,x_2]\right \}_2+
\left\{[x_5,x_1,x_2,x_3]\right \}_2=0.\\
\end{split}
\ee
\end{itemize}
\end{definition}

Denote $\overline{\mathcal{M}}_{0,n}$ the Deligne-Mumford compactification of $\mathcal{M}_{0,n}$ defined in \cite{DM69}, see also \cite{Man99}. Specializing the   relation ${\bf Q_2}$  to  the boundary divisors $D_{ij}$   of 
$\overline {\mathcal{M}}_{0,5}$, where the points $x_i$ and $x_j$ collide, we get antisymmetry relations:   for any permutation $\sigma \in \mathrm{S}_4$, 
$$
\left\{[x_{\sigma(1)}, x_{\sigma(2)},x_{\sigma(3)},x_{\sigma(4)}]\right \}_2 = (-1)^{|\sigma|}\left\{[x_1,x_2,x_3,x_4]\right \}_2.
$$ 
By the very definition, we have
${\Bbb L}_2(\F) =  \B_2(\F)$. 
\begin{figure}[ht]
\centerline{\epsfbox{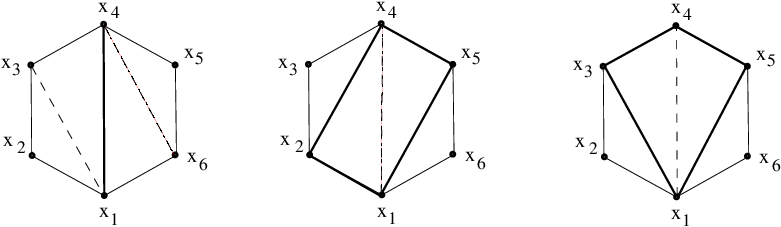}}
\caption{Geometry of   relation ${\bf Q}_3$. The diagonal $x_1x_4$   divides  the hexagon  into  quadrangles.    
 The arguments of   $\left \{[x_1, x_2,x_3,x_4],[x_4, x_5,  x_6,   x_1] \right\}_{2,1}$ are the cross-ratios assigned to  
  them, and  $[x_1, x_2,x_3,x_4, x_5, x_6]$ is  their ratio. Elements $\{[x_1,x_2,x_4,x_5]\}_3$ and $\{[x_1,x_3,x_4,x_5]\}_3$
     correspond to the  quadrangles  in the two hexagons on the right.}
\label{zc8}
\end{figure}

\begin{definition} \la{DEFQ3}
The $\Q$-vector space $\mathbb{L}_3(\F)$ is  
generated by symbols
$\left\{x \right \}_3$ where $x \in \F^{\times}$ and $\left \{x,y \right\}_{2,1}$
where $x,y \in  \F^\times$, satisfying the relation $\{x,y\}_{2,1}=\{y,x\}_{2,1}$ and the following relation: 
 \begin{itemize}
  \item ${\bf Q}_3:$ For any   $6$ distinct points $(x_1, x_2,\ldots,x_6)$ on ${\Bbb P}^1(\F)$ the following cyclic sum is zero:
 \be
 \begin{split}
&{\rm Cyc}_6\left( \left \{[x_1, x_2,x_3,x_4],[x_4, x_5,x_6,x_1]\right\}_{2,1} - \{[x_1,x_2,x_4,x_5] \}_3  + 2  \left\{[x_1,x_3,x_4,x_5]\right \}_3 \right)   \\
&  
 -4  \left\{[x_1,x_2,x_3,x_4,x_5,x_6]\right \}_3=0. \\
 \end{split}
 \ee
\end{itemize}
\end{definition}

  Specializing the relation ${\bf Q}_3$  to the divisor $D_{13}$ in $\overline {\mathcal{M}}_{0,6}$,  we get the following key relation: 
\be \la{144a}
\begin{split}
&\left \{x,y \right\}_{2,1}=
  \left\{1-x^{-1}\right \}_3+ \left\{1-y^{-1}\right \}_3+\left\{\frac{y}{x}\right \}_3 +\left\{\frac{1-y}{1-x}\right \}_3-\left\{\frac{1-y^{-1}}{1-x^{-1}}\right \}_3  - \{1\}_3.\\
\end{split}
\ee
  Relation  (\ref{144a})
 has a geometric interpretation. Namely, take five points $(\infty; 0,x,1,y)$ on ${\Bbb P}^1$, where  the last four points are ordered cyclically. Then\footnote{Note that 
$\left\{[z_1, z_2, z_3, z_4]\right \}_3$ is cyclically invariant.} 
\be \la{144ac}
\begin{split}
&\left \{x,y \right\}_{2,1}=\\
&\left\{[\infty, 0, x,1]\right \}_3+\left\{[\infty, 1,y,0]\right \}_3+\left\{[\infty, y,0, x]\right \}_3 + \left\{[\infty, x,1,y\right \}_3-\left\{[0,x,1,y]\right \}_3 -  \{1\}_3.\\
\end{split}
\ee

Substituting (\ref{144a}) to  the  relation ${\bf Q}_3$,     we get  the $22$-term relation for trilogarithm from \cite{Gon94}. The map $\{x\}_3 \lms \{x\}_3$ induces a map
\be \la{ISOBL}
  \B_3(\F) \stackrel{\sim}{\lra} {\Bbb L}_3(\F)
\ee
which is an isomorphism. The only nontrivial step in the proof  is contained in  \cite[\S 5]{Gon95}. 

Specializing the  relation $(\ref{144a})$ even further,  we obtain  relations (\ref{EATRI}).

\begin{figure}[ht]
\centerline{\epsfbox{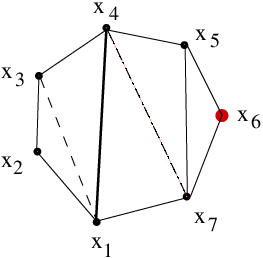}}
\caption{Geometry of   relation ${\bf Q}_4$. The diagonal $x_1x_4$   divides the heptagon   into a quadrangle and a pentagon. 
Deleting  one of three  vertices $\{x_5, x_6, x_7\}$  of the pentagon  we get another quadrangle: on the picture we deleted $x_6$. The arguments of  $\left \{[x_1, x_2,x_3,x_4],[x_4, x_5, 
 \widehat x_6,  x_7,  x_1] \right\}_{3,1}$ are  cross-ratios assigned to  these quadrangles.}
\label{zc2}
\end{figure} 

\begin{definition} \la{DEF4La}
The  $\Q$-vector space  $\mathbb{L}_4(\F)$ is generated by elements 
$\left\{x \right \}_4$, where $x \in \F^\times$, and $\left \{x,y \right\}_{3,1}$
where $x,y \in  \F^\times$,  obeying the following relation:
\begin{itemize}
 \item ${\bf Q}_4:$ For any configuration  $(x_1, x_2,\ldots, x_7)\in  {\mathcal{M}_{0,7}}(\F)$ the following cyclic sum is zero:
 \be
 \begin{split}
{\rm Cyc}_7& \Bigl( -\left \{[x_1,x_2,x_3,x_4],[x_4,x_6,x_7,x_1] \right\}_{3,1}\\
&+ \left \{[x_1, x_2,x_3,x_4],[x_4,x_5,x_7,x_1] \right\}_{3,1}\\
&- \left \{[x_1, x_2,x_3,x_4],[x_4,x_5,x_6,x_1 ] \right\}_{3,1}\\
&- \left \{[x_1, x_2,x_4,x_6]\right\}_{4}+3 \left \{[x_1,x_2,x_3, x_4,x_5, x_6]\right\}_{4}\Bigr)=0.\\
\end{split}
\ee
\end{itemize}
\end{definition} 

We denote by $\B_4(\F)$ the subspace of $\mathbb{L}_4(\F)$ spanned by symbols $\{x\}_4$ for $x\in \F^\times.$ Conjecturally, the natural map $\B_4(\F)\lra {\cal B}_4(\F)$ is an isomorphism.
 Specializing relation ${\bf Q}_4$, we obtain
 \be \la{202}
\left \{x, 1 \right\}_{3,1}=-\left \{1-x^{-1} \right\}_4
-\left \{1-x \right\}_4 + \left \{x \right\}_4.
\ee



\paragraph{2. An alternative  form  of relation ${\bf Q}_4$. }   We use a signed cyclic sum notation
\be \la{SIGNS}
\begin{split}
&{\rm Cyc}^-_n \varphi(x_1, \ldots , x_n):= \sum_{k\in \Z/n\Z} (-1)^k \varphi(x_{k+1}, x_{k+2}, \ldots , x_{k+n}). \end{split}
\ee
 For any   six distinct points $(x_1, x_2,\ldots,x_6)$ on ${\Bbb P}^1(\F)$ let us set
 \be \la{HHHH}
 \begin{split}
 &{{\rm L}^1_4}(x_1, \ldots , x_6) := \\
&{\rm Cyc}^-_6\Bigl( \left \{[x_1, x_2,x_3,x_4],[x_4, x_5,x_6,x_1] \right\}_{3,1} -
 \left\{[x_1,x_3,x_4,x_5]\right \}_4   
 -\left\{[x_1, x_2, x_3, x_4,x_5,x_6]\right \}_4\Bigr). \\
 \end{split}
 \ee
Then relation ${\bf Q}_4$ becomes
 \be \la{HHHH1}
 \begin{split}
&\sum_{k\in \Z/7\Z}    {{\rm L}^1_4}(x_{k+1}, \ldots ,   x_{k+6}) = 0. 
 \end{split}
 \ee  
From this perspective, the relations ${\bf Q}_2$ and ${\bf Q}_4$  look remarkably similar: 
 
${\bf Q}_2$: We start from   elements  $\{[x_1, x_2, x_3, x_4]\}_2$ assigned to 
 configurations of four distinct  points on ${\Bbb P}^1$. Then  for any   five distinct  points on ${\Bbb P}^1(\F)$ 
we have the five-term relation (\ref{121}).

${\bf Q}_4$: We start   from   elements  (\ref{HHHH})  assigned to 
  configurations of six distinct  points on ${\Bbb P}^1$. Then for   any  seven distinct  points on ${\Bbb P}^1(\F)$ we have   relation (\ref{HHHH1}).

\paragraph{3. The cobracket maps.} Recall that, according to our convention,  $\F^\times$ stands for $\F^\times\otimes \Q$. We define the cobracket maps:  
 \be \la{37}
 \begin{split}
  &\delta   \colon \mathbb{L}_2(\F) \longrightarrow  \Lambda^2\F^\times,\\
  &\delta   \colon \mathbb{L}_3(\F) \longrightarrow \mathbb{L}_2(\F) \otimes \F^{\times},\\
 &\delta  \colon \mathbb{L}_4(\F) \longrightarrow (\mathbb{L}_3(\F) \otimes \F^{\times}) \oplus  \Lambda^2\mathbb{L}_2(\F). \\
\end{split}
\ee 
First, we   define them on the generators: the cobracket $\delta \left \{x\right\}_{k} $ is given by formula (\ref{BB}), and
\be \la{26}
 \begin{split}
 \delta   \left \{x,y \right\}_{2,1}=&
\left \{\frac{1-y}{1-x}\right\}_2 \otimes \frac{y}{x}+\left \{\frac{y}{x}\right \}_2 \otimes \frac{1-y}{1-x} + \left \{x\right\}_2 \otimes (1-y^{-1}) + \left \{y\right\}_2 \otimes (1-x^{-1}),\\
\delta \left \{x,y \right\}_{3,1}=&
~ \left \{x,y\right\}_{2,1} \otimes \frac{x}{y}~+
\left \{\frac{x}{y}\right\}_3 \otimes \frac{1-x}{1-y}+
\left \{x\right\}_3 \otimes (1-y^{-1})-
\left \{y\right\}_3 \otimes (1-x^{-1})\\
& +\{x\}_2 \wedge \{y\}_2.\\
 \end{split}
 \ee
 Note that the symmetry of $\{x,y\}_{2,1}$,  implies that $\delta \left \{x,y \right\}_{3,1} = -\delta \left \{y,x \right\}_{3,1}$. \vskip 1mm
  
 We need to show that cobracket maps $\delta$ are well defined, i.e., that they vanish on the  relations ${\bf Q}_n$.  For $n=2$ the statement is well-known, for $n=3$ we prove it in Proposition \ref{PP1} and for $n=4$ we prove it in Proposition \ref{FRWT4}. 
 
It is easy to see that $\delta$  commutes with specialization. This can be checked case-by-case; we do ${\rm Sp}_{t = 0}\{t^a u, t^a v\}_{3,1}=\left\{\frac{u}{v}\right\}_{4}$ for $a<0$ as an example. In this case we have

\begin{align*}
& {\rm Sp}_{t = 0}\delta\{t^a u, t^b v\}_{3,1}=\\
& {\rm Sp}_{t = 0}\Bigl(\left \{t^a u,t^a v\right\}_{2,1} \otimes \frac{u}{v}~+
\left \{\frac{u}{v}\right\}_3 \otimes \frac{1-t^a u}{1-t^a v}+\\
&\left \{t^a u\right\}_3 \otimes (1-(t^a v)^{-1})-
\left \{t^a v\right\}_3 \otimes (1-(t^a u)^{-1})+\{t^a u\}_2\wedge\{t^a v\}_2 \Bigr)= \\
&2\left\{\frac{u}{v}\right\}_3\otimes \frac{u}{v}+\left\{\frac{u}{v}\right\}_3\otimes \frac{u}{v}=\delta \left(3\left\{\frac{u}{v}\right\}_4\right).\\
\end{align*}
The other cases are similar.\vskip 2mm

Next, we give two motivic interpretations of  elements $\{x,y\}_{m-1,1}$ and   cobracket formula (\ref{26}). 
 
 \paragraph{4. Elements $\{x,y\}_{m-1,1}$  and   multiple polylogarithms.}  Motivic multiple polylogarithms  \cite{Gon94} are elements of the motivic Hopf algebra.\footnote{We use the  summation convention 
 ${\rm Li}_{p,q}(x,y) = \sum_{0 < m < n}\frac{x^my^n}{m^p n^q}$ following the original definition \cite{Gon94}.} 
 Although do not use  them in this paper,  we present  the following identities in the motivic Lie coalgebra. We denote the equality in the Lie coalgebra (i.e. modulo products)  by $\stackrel{\shuffle}{=}$.
\be \la{36}
 \begin{split}
 &\left \{x,y \right \}^{\cal M}_{2,1} \stackrel{\shuffle}{=}  -{\rm Li}^{\cal M}_{2,1}\left ( \frac{x}{y},y \right )- {\rm Li}^{\cal M}_3(x)-{\rm Li}^{\cal M}_3(y),\\
&\left \{x,y \right \}^{\cal M}_{3,1}\stackrel{\shuffle}{=} {\rm Li}^{\cal M}_{1,3}\left (\frac{1}{x}, \frac{x}{y} \right )-3{\rm Li}^{\cal M}_4 \left ( \frac{x}{y} \right )-{\rm Li}^{\cal M}_4(y).\\
 \end{split}
 \ee
There is a more natural motivic interpretation of the elements $\{x,y\}_{m-1,1}$ via motivic correlators, which we will   explain now.

\paragraph{5. Motivic correlators and the Lie coalgebra ${\Bbb L}_{\leq 4}(\F)$.}  {\it Motivic correlators}  are  elements of the motivic Lie coalgebra  introduced in \cite{Gon08}. They are defined whenever we have the abelian  category of mixed Tate motives, e.g., when $\F$ is a number field or in realizations (Betti, de Rham, l-adic, crystalline, etc.) For concreteness, we will use the motivic notation.
 
Weight $m$ motivic correlators are determined by an arbitrary  cyclically ordered collection of   points 
 $b_1, \ldots, b_{m+1}$ on ${\Bbb P}^1(\F)$ - some or all of them may coincide, a  point $a\in {\Bbb P}^1(\F)$ different from $ b_i$, and a non-zero tangent vector $t$ at $a$, see Figure \ref{zg10}:
 $$ 
 {\rm Cor}_{a,t}^{\cal M}(b_1, \ldots , b_{m+1}) \in {\cal L}_m(\F). 
 $$
   If $m>1$, they do not depend on the tangent vector $t$. 
 
    \begin{figure}[ht]
\centerline{\epsfbox{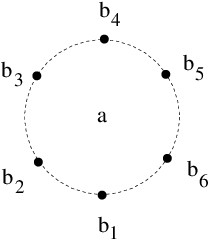}}
\caption{A data defining a weight 5 motivic correlator.}
\label{zg10}
\end{figure}

  To state the properties of motivic correlators, 
let us recall that a {\it framed mixed Tate motive} is  a mixed Tate motive  $M$ plus 
  non-zero linear maps $v: \Q(-m) \lra {\rm gr}^W_{2m} M$ and $f: {\rm gr}^W_{2n} M    \lra \Q(-n)$ for $m>n$. It provides an element 
  $(M; f,v) \in {\cal L}_{m-n}(\F)$. 
  
    Let $X:= {\Bbb P}^1_\F - \{0, b_1, \ldots , b_{m+1}\}$.  The following  properties explain ubiquity of motivic correlators: 
  
  \begin{enumerate}
  
  \item Motivic correlators describe the structure of the motivic fundamental group $\pi_1^{\cal M}(X, t_0)$ discussed in \S \ref{SectionMotivicFundamentalGroup}. Here $t_0$ is the   tangent vector $\partial/\partial z$ at $0$, where $z$ is the standard coordinate on ${\Bbb P}^1$. 
  
  In particular, 
  any element $(M; f,v) \in {\cal L}_m(\F)$ given by a framed  subquotient  $M$ of $\pi_1^{\cal M}(X, t_0)$ is a linear combination of motivic correlators.   
  
  \item The cobracket of  motivic correlators is given by a simple  formula, illustrated in Figure \ref{zg9}.
  
  \item The Hodge realization of motivic correlators is described by a Feynman type integral. 
  \end{enumerate} 
  
  Motivic correlators with arguments   in a field $\F$ are closed under the cobracket. So they form a graded Lie coalgebra,   denoted by ${\cal L}_\bullet'(\F)$. 
  It is a Lie subcoalgebra of ${\cal L}_\bullet(\F)$. Then Universality Conjecture \cite[Conjecture 17a]{Gon94} can be restated as follows.
  
  \bcon The canonical embedding ${\cal L}_\bullet'(\F) \hookrightarrow {\cal L}_\bullet(\F)$  is an isomorphism. 
  \econ
    
  In   Section \ref{SEC8} we show that the elements $\{x,y\}_{m-1,1}$ can be materialized by  
 {motivic correlators}:
  \bt \la{JMCi} a) There is a map of Lie coalgebras 
  \be \la{MCOA1} 
  {\Bbb L}_{\leq 4}(\F) \lra {\cal L}_{\leq 4} (\F),
  \ee
  defined on  generators as 
\[ 
\begin{split}
&\{x,y\}_{2, 1} \lms     {\rm Cor}^{\cal M}_\infty( {0} ,x,1,y).\\
&\{x,y\}_{3, 1} \lms     -{\rm Cor}^{\cal M}_\infty( {0, 0} ,x,1,y).\\
&\{y\}_{m} \lms     -{\rm Cor}^{\cal M}_\infty(\underbrace{0, \ldots , 0}_{m-1 ~{\rm zeros}},1,y).\\\end{split}
\]

b) The map (\ref{MCOA1}) provides a surjective homomorphism
  \be \la{MCOA2} 
  {\Bbb L}_{\leq 4}(\F) \lra {\cal L}'_{\leq 4} (\F).
  \ee
  
  \et 
 
 We prove Theorem \ref{JMCi} in Section \ref{SEC8}, restated there as Theorem \ref{JMC}.

Let us now continue to discuss the combinatorially defined Lie coalgebra $\mathbb{L}_{\leq 4}(\F)$.

\paragraph{6. The structure of $\mathbb{L}_{\leq 4}(\F)$.} It is described by  Theorem \ref{THRN}, proved in Section \ref{SSEECC10}. Its part c)  tells us that the $\Q$-vector space $\mathbb{L}_4(\F)$ is an extension,  predicted by the Freeness Conjecture.

\bt \la{THRN} a) The  cobracket maps  (\ref{26}) descend to the quotient spaces ${\Bbb L}_n(\F)$, $n \leq 4$, providing a graded Lie coalgebra structure on ${\Bbb L}_{\leq 4}(\F)$. 

b) For any    five distinct points $(x_1, \ldots, x_5)$ on ${\Bbb P}^1(\F)$ and any $y \in \F$ we have:
\be \la{5TERMRzuu}
\begin{split}
&\sum_{i=1}^5(-1)^i\left\{[x_1, \ldots, \widehat x_i, \ldots,  x_5], y\right \}_{3,1}\in  {\rm B}_4(\F).\\
\end{split}
\ee

c) There is a short exact sequence of $\Q$-vector space, functorial in $\F$: 
\be \la{Ext4}
0 \lra {\rm B}_4(\F) \stackrel{i}{\lra} \mathbb{L}_4(\F) \stackrel{p}{\lra}  \Lambda^2{\Bbb L}_2(\F) \lra 0.
 \ee
 
 Here the map $i$ is the natural embedding from Definition \ref{DEF4La}. 
 The projection $p$ is   the $(2,2)$-component of the cobracket $\delta: \mathbb{L}_4(\F) \lra \Lambda^2\mathbb{L}_2(\F)$. 
 \et
 
 We  prove   relation (\ref{5TERMRzuu}) by  specializing    relation ${\bf Q}_4$ to certain strata in $\overline {\cal M}_{0,7}$. 

The  natural isomorphisms 
${\rm B}_{2}(\F) =  {\Bbb L}_2(\F)$ and $ {\rm B}_{3}(\F)  \stackrel{\sim}{\lra}  {\Bbb L}_3(\F)$ are compatible with the cobrackets, and therefore 
 give rise to a map of complexes:
\be \la{M112} 
\begin{gathered}
    \xymatrix{
        {\rm B}_4(\F) \ar[r]^{}  \ar[d]^{ }   &{\rm B}_{3}(\F)\otimes \F^\times \ar[d]^{ } \ar[r]^{}  & {\B}_{2}(\F)\otimes \Lambda^2 \F^\times \ar[r]^{}  \ar[d]^{\sim}& 
        \Lambda^{4} \F^\times \ar[d]^{\sim}\\
             {\Bbb L}_4(\F)   \ar[r]^{}  & {\Bbb L}_3(\F)  \otimes {\Bbb L}_1(\F) \bigoplus \Lambda^2{\Bbb L}_2(\F)  \ar[r]^{}&   {\Bbb L}_2(\F ) \otimes  \Lambda^2{\Bbb L}_1(\F)  
               \ar[r]^{ }& \Lambda^4{\Bbb L}_1(\F)}  \\
                                             \end{gathered}
 \ee
 
 \bc \la{COR112}
 The map of complexes (\ref{M112})  is a    quasi-isomorphism.
  \ec
 \begin{proof}
  The map is injective by the very definition. By Theorem \ref{THRN} its cokernel   is identified with the complex  $\Lambda^2{\Bbb L}_2(\F)   \stackrel{\sim}{\lra}  \Lambda^2\B_2(\F)$, evidently quasi-isomorphic to zero. 
 \end{proof}
     
Note that the Bloch complex ${\rm B}_2(\F) \to  \Lambda^2\F^\times$ is quasi-isomorphic to   ${\rm R}_2(\F) \to \Q[\F] \to \Lambda^2\F^\times$. 

 \bc There  exists an  antisymmetric  map of complexes
\be \la{Mmapc} 
 \begin{split}
  &\Bigl( {\rm R}_2(\F) \to \Q[\F] \to \Lambda^2\F^\times \Bigr)\wedge \Bigl( {\rm R}_2(\F) \to \Q[\F] \to \Lambda^2\F^\times \Bigr)~ \lra \\
 &{\rm B}_4(\F) \stackrel{}{\lra} {\B}_{3}(\F)\otimes \F^\times \stackrel{}{\lra}  {\B}_{2}(\F)\otimes \Lambda^2 \F^\times \lra  \Lambda^{4} \F^\times. \\
 \end{split}
 \ee
 given on the generators as follows
 \be \la{Mmapc1} 
 \begin{split}
&( x_1\wedge x_2) \wedge ( y_1\wedge y_2) \lra  x_1\wedge x_2 \wedge y_1\wedge y_2 \in \Lambda^{4} \F^\times,\\
&\{x\}  \wedge (y_1\wedge y_2) \lms  \{x\}_2 \otimes y_1\wedge y_2 \in  {\B}_{2}(\F)\otimes \Lambda^2 \F^\times,\\
&\{x\}  \wedge \{y\}  \lms    \delta \{x,y\}_{3,1} - \{x\}_2 \wedge \{y\}_2\in  {\B}_{3}(\F)\otimes \F^\times.     \\
 \end{split}
 \ee
 \ec

 \paragraph{7. Gangl's formula.} Combining formula (\ref{5TERMRzuu})  with    formula (\ref{36}) expressing $\{x,y\}_{3,1}$ via   the weight four motivic double polylogarithm 
${\rm Li}_{1,3}^{\cal M}(x,y)$, we get a  ``five-term relation'' expressing
\be \la{5TERMRx}
\sum_{i=1}^5 (-1)^i{\rm Li}_{1,3}^{\cal M}\left(\frac{1}{[x_1, \ldots, \widehat x_i, \ldots,  x_5]}, \frac{[x_1, \ldots, \widehat x_i, \ldots,  x_5]}{y}\right)\\
\ee
as a linear combination of elements  ${\rm Li}_4(x)$ for $x\in \F^{\times}.$ 
Such a five-term relation for   ${\rm Li}_{1,3}^{\cal M}(x,y)$ was conjectured by Goncharov \cite[Section 4.8, p. 82]{Gon94a}, and discovered  
  by an amazing  tour de force computer calculations  by Herbert Gangl in \cite{Gan16}. Gangl's formula  contains 122 terms ${\rm Li}_{4}^{\cal M}(\ast)$ with   arguments of a  rather complicated and   obscure nature. So although it is given explicitly, it does not seem feasible to prove it directly, without using a computer. Our approach gives a conceptual proof of a variant of Gangl's  formula, explaining   the arguments of ${\rm Li}_{4}^{\cal M}$. 

Note also   that relation (\ref{5TERMRx}) is sufficient to define a functorial  extension of $\Lambda^2\B_2(\F)$ by ${\rm B}_4(\F)$.

\paragraph{8. The role of  elements $\{x,y\}_{3,1}$.}   

\begin{enumerate}

\item 
The element $\{x,y\}_{3,1}\in {\Bbb L}_4(\F)$ shows that the projection ${\Bbb L}_4(\F) \lra \Lambda^2{\Bbb L}_2(\F)$ is surjective. Indeed, the 
$\Lambda^2{\Bbb L}_2(\F)$-component of its cobracket is $\{x\}_2 \wedge \{y\}_2$. 

\item 
The element  $\{x,y\}_{3,1}$  is the crucial part  of  the   product  map (\ref{Mmapc}).

\item 
The element $\{x,y\}_{3,1}$ 
is   a weight four motivic correlator. Therefore it is related to 
 the motivic fundamental group $\pi_1^{\cal M}({\Bbb P}^1- \{0, 1, \infty, x, y\}, t_0)$.

\end{enumerate}

\paragraph{9. Functional equations for $4$-logarithm.}  According to (\ref{5TERMRzuu}), 
substituting into one of the arguments of $\{x,y\}_{3,1}$ the five-term relation for the dilogarithm we 
get a sum of elements $\{z\}_{4}$. Now substituting a relation between the five-term relations, we get 
a functional equation for the  tetralogarithm. Let us elaborate on this. 

Denote by  ${\rm C}_n({\Bbb P}^1(\F))$   the $\mathbb{Q}$-vector space generated by  configurations $(x_1, \ldots , x_n)$ 
of $n$ distinct points on ${\Bbb P}^1(\F)$.  These vector spaces are organized into    the standard chain complex 
$$
\ldots  \stackrel{\partial}{\lra} {\rm C}_6({\Bbb P}^1(\F)) \stackrel{\partial}{\lra} {\rm C}_5({\Bbb P}^1(\F)) \stackrel{\partial}{\lra} {\rm C}_4({\Bbb P}^1(\F))  \stackrel{\partial}{\lra} \ldots 
$$

According to  (\ref{5TERMRzuu}), there is a map 
\[
 \kappa_4: {\rm C}_5({\Bbb P}^1(\F)) \otimes \Q[\F] \lra {\rm B}_4(\F).
\]
 Let us pick its lift to $\Q[\F]$:
 \[
\widetilde \kappa_4: {\rm C}_5({\Bbb P}^1(\F)) \otimes \Q[\F] \lra \Q[\F].
\]
 It leads to functional relations for the tetralogarithm as follows.  

Let    ${\rm S}(\F) \subset {\rm C}_5({\Bbb P}^1(\F))$  be the subgroup of antiinvariants of the order reversing  involution, acting    by 
$(x_1, x_2, x_3, x_4, x_5) \lms (x_5, x_4, x_3, x_2, x_1)$.\footnote{It can be identified with  the one in \cite[Appendix]{Gon95}. }
Then $\partial ({\rm S}(\F)) =0$. Set\footnote{Informally,  ${\rm A}(\F)$ is the group generated by  relations between the five-term relations.}  
$$
{\rm A}(\F) := {\rm C}_6({\Bbb P}^1(\F)) \oplus {\rm S}(\F). 
$$
Consider the following complex:
\[
{\rm A}(\F) \stackrel{\alpha}{\lra}  {\rm C}_5({\Bbb P}^1(\F)) \stackrel{\beta}{\lra} \Q[\F^\times] \stackrel{\delta}{\lra}  \Lambda^2\F^\times.
\]
Note that the cross-ratio provides an isomorphism ${\rm C}_4({\Bbb P}^1(\F)) = \Q[\F^\times\backslash \{1\}]$. Then
 $\beta$ is just the map $\partial: {\rm C}_5({\Bbb P}^1(\F)) {\lra} {\rm C}_4({\Bbb P}^1(\F))$. Its 
  image is the subgroup ${\rm R}_2(\F)$  of five-term relations.  
The restriction of   $\alpha$ to ${\rm C}_6({\Bbb P}^1(\F))$ is the differential  $\partial$.  Its restriction   to ${\rm S}(\F)$ is   the tautological inclusion. 
We get a complex: $\beta \circ \alpha =0$. 
A.A. Suslin conjectured (unpublished, cf. \cite[page 314]{Gon95}) that 
$$
{\rm Ker}(\beta)/{\rm Im}(\alpha)={\rm gr}_\gamma^2K_4(\F).
$$ 
 Restricting the map $\widetilde \kappa_4$ to the subgroup $\alpha({\rm A}(\F)) \subset  {\rm C}_5({\Bbb P}^1(\F))$ we get  relations for ${\rm Li}_{4}$:
$$
\widetilde \kappa_4: \alpha({\rm A}(\F)) \otimes \Q[\F] \lra {\cal R}_4(\F).
$$
This follows from the   commutative diagram below: 
\[
\begin{gathered}
    \xymatrix{
        {\rm A}(\F)\otimes \Q[\F] \ar[r]^{\alpha\otimes{\rm Id}~~}  \ar[d]^{ }   &{\rm C}_5({\Bbb P}^1(\F))\otimes \Q[\F] \ar[d]^{ \kappa_4} \ar[r]^{~~~~\beta\otimes{\rm Id}}  & \Q[\F^{\times}]\otimes \Q[\F]  \ar[d]^{ (\ref{Mmapc1})} \\
             0  \ar[r]^{}  & {\rm B}_4(\F)     \ar[r]^{}&   {\B}_3(\F)\otimes \F^\times 
               \\} 
                                             \end{gathered}
\]

 \paragraph{10. Conclusion.} If $n>3$, the problem of writing explicitly functional equations for the classical $n-$logarithm  might  not  be the ``right'' problem. It seems that 
 when  $n$ is growing, the functional equations  become so complicated  that one can not   write them down  on a piece of paper. 
 
Contrary to this,  relations ${\bf Q}_n$ for $n=2,3,4$  are simple, geometric,   
   surprisingly uniform in $n$, and  describe functorially  the motivic Lie algebra in the weights $\leq 4$, e.g.  define  extension (\ref{Ext4}). 
    They live on   moduli spaces 
 ${\cal M}_{0,n+3}$. We suggest that there is a similar  description for all $n$, and   
  that a  combinatorial construction of the   functor  $ \F \lra {\L}_\bullet(\F)$ is    the way to  
 approach   functional equations for   classical $n-$logarithms.  The   Lie coalgebra ${\Bbb L}_{\leq 4}(\F)$ is the first step in this direction. 
  
    \vskip 3mm
The next question is where the relations ${\bf Q}_n$ come from, and why they are   uniform in $n$. Our answer relates them to the cluster structure of the moduli spaces ${\cal M}_{0,n+3}$ and   ${\rm Conf}_n(V_2)$.

\subsection{Cluster polylogarithm maps  for ${\rm Conf}_n(V_2)$} \la{sec8.2}

Recall that the Milnor ring of a field $\F$  is the quotient of the exterior algebra   ${\Lambda}^\bullet\F^{\times}$ of the multiplicative group $\F^\times$   by the  ideal generated by the elements $ x\wedge (1-x)$ where $x \in \F^\times - \{ 1 \}$:
$$
{\rm K}_\bullet^M(\F)=\frac{{\Lambda}^\bullet(\F^{\times})  }{\langle x\wedge (1-x) \rangle}.
$$ 
There is a canonical homomorphism, written using the notation $d\log (f):= f^{-1}df$. 
\be \la{MK2}
\begin{split}
&d\log:  {\rm K}^{M}_n(\F) \lra \Omega^n_{\F/\Q},\\
&f_1 \wedge \ldots \wedge f_n \lms d\log (f_1) \wedge \ldots \wedge d\log (f_n).\\
\end{split}
\ee

Let $V_2$ be a $2$-dimensional symplectic space over a field $\F$ with a symplectic 2-form $\omega$.  Denote by ${\rm Aut}(V_2, \omega)$ the automorphism group of the symplectic vector space $(V_2, \omega)$. For $n\geq 2$ consider the space ${\rm Conf}_n(V_2)$ parametrizing ordered tuples of pairwise non-collinear vectors $(l_1, \ldots, l_n)$ for $l_i\in V_2$ modulo the action of the group ${\rm Aut}(V_2, \omega).$   Let $\mathbb{F}_n$ be the function field $\Q({\rm Conf}_n(V_2)).$ Finally, denote by $x_i$ the projections of $l_i$ to $\mathbb{P}(V_2).$

Let us label vertices of a convex $n$-gon ${\rm P}_n$ by the vectors $l_1, \ldots, l_n$ following the  cyclic order of the vertices. Each triangulation ${\cal T}$ of ${\rm P}_n$ provides the following data on the space ${\rm Conf}_n(V_2)$:

\begin{enumerate}
\item 
 A coordinate system $\{\Delta_E\}$ on ${\rm Conf}_n(V_2)$, whose coordinates are the following invertible functions parametrized by the set 
$\{E\}$   of the edges of the triangulation ${\cal T}$, including the sides of the polygon, defined as follows:
\be
\begin{split}
&\Delta_E 
:=  \omega(l_i,  l_j), ~~~~E = (i,j), ~ i<j.\\
&\Delta_E \in \mathbb{F}_n^{\times}.\\
\end{split}
\ee

 \item 

  Motivic avatar of the ``Weil-Petersson 2-form'' on the space ${\rm Conf}_n(V_2)$ \cite[Section 6]{FG03}:
\be \la{CLASSW2}
W_{\cal T}:= \frac{1}{2}\sum_{E, F} \varepsilon_{EF} \Delta_E \wedge \Delta_F \in \Lambda^2\mathbb{F}_n^{\times}.
\ee
 Here   $\varepsilon_{EF} \in \{1, -1, 0\}$, $\varepsilon_{EF} = -\varepsilon_{FE}$, and  
\be \la{EPS}
\varepsilon_{EF} :=  \begin{cases} 
+1 & \mbox{if $E$ and $F$ share a vertex, and $F$ is just after $E$ in the clockwise order,}\\
-1 &\mbox{if $E$ and $F$ share a vertex, and $E$ is just after $F$ in the clockwise order,}\\
0 &\mbox{otherwise.}
\end{cases} 
\ee 
 
\end{enumerate}

   \begin{figure}[ht]
\centerline{\epsfbox{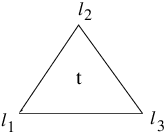}}
\caption{The element $W_t$ is assigned to a triangle $t$ decorated by a configuration of vectors $(l_1, l_2, l_3)$. The elements $\omega(l_i, l_j)$ are assigned to the sides of the triangle.}
\label{zc4}
\end{figure} 

Here is the simplest example. Let $(l_1, l_2, l_3)$ be a generic configuration of three vectors in $V_2$. We assign them  to the vertices of a 
triangle $t$, see Figure \ref{zc4}. Then the element  (\ref{CLASSW2})   is
\be \la{MAPT}
W_t:= \omega(l_1, l_2) \wedge  \omega(l_2, l_3) + \omega(l_2, l_3) \wedge  \omega(l_1, l_3) +\omega(l_1, l_3) \wedge  \omega(l_1, l_2).
\ee
Using the elements $W_t$ we can rewrite the definition of $W_{\cal T}$ as follows: 
\be \la{MAPWt}
W_{\cal T} = \sum_{t \in T} W_t. 
\ee
Here the sum is over all triangles $t$ of the triangulation ${\cal T}$.

\begin{figure}[ht]
\centerline{\epsfbox{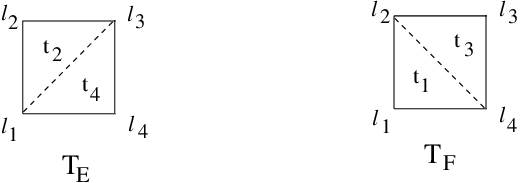}}
\caption{The two triangulations of the 4-gon. They are related by a flip of an edge.}
\label{zc3}
\end{figure}

 If we change a triangulation ${\cal T}$ by   flipping   its edge, the   element $W_{\cal T}$ does change. Precisely, 
 let ${\cal T}$ and ${\cal T}'$ be the two triangulations related by a flip at an edge $E$, see Figure \ref{zc3}. Let $X_E:= -[x_1, x_2, x_3, x_4]$ be the negative cross-ratio of the four vectors   at the vertices of the  $4$-gon assigned to the flip, so that $E = (l_1, l_3)$. 
 \be \la{FLIUPEF}
 W_{{\cal T}} -  W_{{\cal T}'}  = (1+X_E) \wedge X_E= \delta \{-X_E\}_2.
\ee
Indeed, using the shorthand $|ij|:= \omega(l_i,l_j)$, one has thanks to the Pl\"ucker relations:
   \be \la{MCda} 
\begin{split}
        & \left( 1 + \frac{|12| |34|}{|23||14|}\right)\wedge \frac{|12| |34|}{|23||14|}\  =  \frac{|13| |24|}{|23||14|} \wedge \frac{|12| |34|}{|23||14|}\ 
       = W_{t_2} + W_{t_4} - W_{t_1} - W_{t_3}  \in  \Lambda^{2}{\F}^\times.
\end{split}
\ee
 
   Any two triangulations of a polygon are related by a sequence of flips. 
  Thus the class $[W_{\cal T}]$ in $K_2$  does not depend on the triangulation.   
In particular, it gives rise to a well defined 2-form  
$$
\Omega_{\cal T}:= (d\log \wedge d\log) (W_{\cal T}) = \frac{1}{2}\sum_{E, F} \varepsilon_{E F} d\log( \Delta_E) \wedge d\log (\Delta_F).
 $$

Our key point is to  assign to each triangulation ${\cal T}$  of the polygon ${\rm P}_n$  and $k\geq 1$ the $k$-th power of $W_{\cal T}$:
 $$
 \left( {\cal T}, k\right) \lms W_{\cal T}^k:= W_{\cal T} \wedge \cdots \wedge W_{\cal T} \in \Lambda^{2k}\mathbb{F}_n^\times.
 $$
Now the  Stasheff polytope ${\rm K}_{n-3}$ enters into the picture.
 
  \begin{figure}[ht]
\centerline{\epsfbox{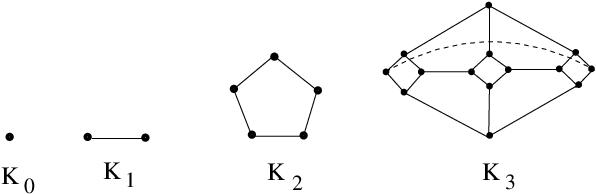}}
\caption{The Stasheff polytopes ${\rm K}_0, {\rm K}_1, {\rm K}_2, {\rm K}_3$.  }
\label{zc6}
\end{figure}  

 Recall that the Stasheff polytope ${\rm K}_m$ is a convex polytope whose $p$-dimensional faces are in bijection with the collections of 
 $m-p$ non-intersecting  
diagonals of ${\rm P}_{m+3}$. 
So the vertices of the Stasheff polytope ${\rm K}_m$ are parametrized by the triangulations of ${\rm P}_{m+3}$.  
The  $1$-dimensional faces of 
${\rm K}_m$ correspond to pairs of triangulations ${\cal T}\stackrel{}{\longleftrightarrow} {\cal T}'$ related by a flip, etc.

\paragraph{Examples.} 
Figure \ref{zc6} shows   Stasheff polytopes of small dimensions: $K_0$ is a point, $K_1$ is a segment, $K_2$ is a pentagon.  The polytope $K_3$ is obtained by gluing two tetrahedra 
  into a pyramid,  and cutting off 
 the three glued vertices in the middle. It has 14 vertices, 21 edges, and 9 faces: 
 3 squares  and 6 pentagons.  
 \vskip 3mm

Denote by   ${\rm C}_p({\rm K}_m)$ the free abelian group  generated by the   oriented $p$-dimensional faces of the polytope ${\rm K}_m$  - altering the orientation we flip the sign of the generator.  
Consider the chain complex of the Stasheff polytope ${\rm K}_m$:

$$
{\rm C}_*({\rm K}_m):= ~~~~~~{\rm C}_{m}({\rm K}_m) \stackrel{d}{\lra} {\rm C}_{m-1}({\rm K}_m)\stackrel{d}{\lra}  \ldots \stackrel{d}{\lra} {\rm C}_0({\rm K}_m).
 $$


 The discussion above shows that for the pentagon ${\rm K}_2$ there is  a map of complexes.
 \be \la{MCDImmmytz} 
  \begin{gathered}
    \xymatrix{
   &  {\rm C}_{2}({\rm K}_{2})  \ar[d]^{{\Bbb L}^0_2}   \ar[r]^{d}    & {\rm C}_{1}({\rm K}_{2})  \ar[d]^{{\Bbb L}^1_2}  \ar[r]^{d} &        
      {\rm C}_{0}({\rm K}_{2})  \ar[d]^{{\Bbb L}^{2}_2}\\
  & 0    \ar[r]^{}      &  \B_2({\Bbb F}_{5})   \ar[r]^{\delta}    
               &\Lambda^{2}{\Bbb F}^\times_{5}.}
                          \end{gathered}
 \ee 
The map ${\Bbb L}^{2}_2$ sends   each vertex $v_{\cal T}$ of the Stasheff pentagon ${\rm K}_2$, corresponding to a triangulation ${\cal T}$ of the pentagon ${\rm P}_5$, to the element $W_{\cal T}$. 
The map ${\Bbb L}^{1}_2$  sends each edge of the  pentagon ${\rm K}_2$,  
corresponding to a flip of   triangulations of the pentagon ${\rm P}_5$ assigned to an internal edge $E$, to the element $\{-X_E\}_2 \in \B_2({\Bbb F}_{5})$.
Finally, the claim that $ {\Bbb L}^{1}_2 \circ d=0$ is just  the pentagon relation ${\bf Q}_2$. 

\vskip 3mm
 Theorem \ref{MC4} provides a similar interpretation of the relation ${\bf Q}_4$,   starting by assigning the element $W_{\cal T} \wedge   W_{\cal T}$ to each triangulation ${\cal T}$ of the polygon ${\rm P}_7$.   

\bt \la{MC4}Let    $m\geq 0$ be an integer. Then,  using  shorthands $\B_n=\B_n({\Bbb F}_{m+3})$, 
 there exists a  map of complexes, called   the {\rm cluster polylogarithm map}: 
 \be \la{MCDImmmxz} 
\begin{gathered}
    \xymatrix{
    {\rm C}_{4}({\rm K}_{m})  \ar[d]^{{\Bbb L}^0_{4}}  \ar[r]^{d}    & {\rm C}_{3}({\rm K}_{m})  \ar[d]^{{\Bbb L}^1_{4}}  \ar[r]^{~~~ d}&{\rm C}_{2}({\rm K}_{m}) \ar[d]^{{\Bbb L}^2_{4}}  \ar[r]^{d} &   {\rm C}_{1}({\rm K}_{m})  \ar[d]^{{\Bbb L}^{3}_{4}} \ar[r]^{d}& 
       {\rm C}_{0}({\rm K}_{m})  \ar[d]^{{\Bbb L}^{4}_{4}}\\
            0  \ar[r]  &  {\Bbb L}_{4}({\Bbb F}_{m+3} )    \ar[r]^{\delta~~~~~~}&  \B_3  \otimes  {\Bbb F}_{m+3}^\times  \oplus \Lambda^2\B_2  \ar[r]^{\delta}& \B_2  \otimes \Lambda^{2} {\Bbb F}_{m+3}^{\times}  \ar[r]^{\delta}  
               &\Lambda^{4}{\Bbb F}_{m+3}^{\times}
               }
\end{gathered}
\ee
such that the vertex $v_{\cal T}$ of the  polytope ${\rm K}_m$ assigned to a triangulation ${\cal T}$ is mapped to $W_{\cal T}^{2}$:
$$
{\Bbb L}^{4}_{4}: v_{\cal T} \in {\rm C}_0({\rm K}_m)\lms W_{\cal T} \wedge   W_{\cal T} \in \Lambda^{4}{\Bbb F}^{\times}_{m+3}.
$$
\et

We   define   map (\ref{MCDImmmxz}) explicitly in  Section \ref{SEC10.2}. In fact, it is sufficient to 
define it for $m=3, 4$: the general case reduces to this. The key point is that for $m=4$ 
the left commutative square in (\ref{MCDImmmxz}) is nothing else but the relation ${\bf Q}_4$. It lives on the space ${\rm Conf}_7({\Bbb P}^1)$ rather than ${\rm Conf}_7(V_2)$.

Precisely, there is a canonical isomorphism $\Z = {\rm C}_{m}({\rm K}_{m})$. It sends   $1 \in \Z$ to  the oriented polytope itself, viewed as the 
$m$-chain $[{\rm K}_{m}]$  in the chain complex. For $m=3$ its image   under the map ${\Bbb L}^1_{4}$ is an element of  the group  ${\Bbb L}_{4}({\rm F})$ for  ${\rm F}={\Bbb F}_{6}$: 
$$
{\Bbb L}^1_{4}([{\rm K}_{3}]) \in  {\Bbb L}_{4}({\Bbb F}_{6}).   
$$
For an arbitrary infinite field $\F$, it can be interpreted as a function on the $\F$-points of the space ${\rm Conf}_{6}(V_2)$ with values in ${\Bbb L}_{4}(\F)$:
$$
{\rm L}^1_{4}(l_1, \ldots , l_{6}) \in {\Bbb L}_{4}(\F). 
$$
Commutativity of the left square is equivalent to  the $7$-term relation
\be \la{MOTREL}
\sum_{i=1}^{7}(-1)^i {\rm L}^1_{4}(l_1, \ldots , \widehat l_i, \ldots , l_{7}) =0
\ee
We will see that this  is just the relation ${\bf Q}_4$.

 \subsection{Cluster nature of functional equations for polylogarithms} \la{Sec1.4a}
 
 In Section \ref{Sec1.4a} we collect several observations about the mysterious link between cluster Poisson varieties \cite{FG03a} and polylogarithms. 
 The main feature of cluster Poisson varieties is the fact that they admit non-perturbative in the Planck constant $\hbar$ 
 quantization, with intrinsically built modular duality\footnote{Note that the involution $\hbar \to 1/\hbar$ differs by the sign from the standard modular transformation $z \to -1/z$.}  $\hbar \leftrightarrow 1/\hbar$, see  \cite{FG07}. The modular quantum dilogarithm $\Phi^\hbar(z)$ is the main working horse 
 carrying out the cluster quantization. 
 
 Although none of the observations below   is used in the proof of our main results, we feel  that they indicate strongly   
 the future development of the subject    towards  quantum polylogarithms. 

\paragraph{1. Cluster Poisson structure of the space $ {\mathcal{M}_{0,n+3}}$ and relations ${\bf Q}_n$.} The moduli space $ {\mathcal{M}_{0,n}}$ has a {\it cluster Poisson} structure  
  invariant under the cyclic shift  $(x_1, \ldots, x_n)\lra (x_2, \ldots, x_n, x_1)$ \cite{FG03a}.   
It comes with a finite collection of cluster Poisson coordinate systems, parametrized by  triangulations  of the oriented 
 convex polygon   ${\rm P_n}$. Namely, let us label  the vertices of the polygon  by the points 
$(x_1, \ldots, x_n)$  cyclically, following the polygon orientation. 
Then any   diagonal $E = (i,k)$ in a rectangle $(i, j, k, l)$  of a triangulation  gives rise to a cluster Poisson coordinate, given by the {\it negative} of the cross-ratio (\ref{CR1}):  
\be \label{ClusterCoordinates}
X_E:=-[x_{i}, x_{j}, x_{k}, x_{l}].
\ee
The Poisson structure on the space $ {\mathcal{M}_{0,n}}$ is described as follows. Given a triangulation ${\cal T}$ of the polygon, the Poisson bracket between the functions $X_E$ and $X_F$ assigned to   internal 
edges $E$ and $F$ 
of the triangulation ${\cal T}$ is given by the following formula, where   $\varepsilon_{EF}$ was defined in (\ref{EPS}):
$$
\{X_E, X_F\} = \varepsilon_{EF}X_EX_F.
$$


The  space $ {\mathcal{M}_{0,2n+1}}$ is symplectic. The  center of the  Poisson algebra of functions on  $ {\mathcal{M}_{0,2n}}$ is generated by 
the  {\it Casimir element}, encompassing both the cross-ratio (\ref{CR1}) and the function (\ref{CR0}): 
$$
[x_1,x_2, \ldots, x_{2n}]:= (-1)^n\frac{(x_1-x_2)(x_3-x_4) \ldots  (x_{2n-1}-x_{2n})}{(x_2-x_3)(x_4-x_5)   \ldots  (x_{2n}-x_{1})\ \ \ \  }.
$$

We observe that for $n=2,3,4$ we have the following.

\vskip 2mm
{\it Barring  $\{[x_1, \ldots, x_6]\}_n$, all arguments of  elements $\{\ast\}_n$ and $\{\ast, \ast\}_{n-1, 1}$ in   relation ${\bf Q}_n$  are  
negatives of  cluster Poisson coordinates. For $n=3, 4$ arguments of $\{\ast, \ast\}_{n-1, 1}$ Poisson commute (have a zero Poisson bracket). The argument of $\{[x_1, \ldots, x_6]\}_n$ is a product of two Poisson-commuting cross-ratios. }  
 
\vskip 3mm

Our next remark is   that  the functional equation (\ref{TRIF}) for the trilogarithm is related to the cluster Poisson structure of finite type $D_4$.

\paragraph{2. Cluster nature of the triple ratio.}  Although the seven-term  relation (\ref{TRIF}) for the trilogarithm 
generalizes  the five-term relation for the dilogarithm,  formula (\ref{TR}) looks mysterious. It became clear only  a decade later after relation (\ref{TRIF}) 
was discovered 
that the triple ratio (\ref{TR}) has a cluster origin: it is related to the cluster Poisson  structure of the  
space ${\rm Conf}_6({\Bbb P}^2)$ of configurations of six  points  in ${\Bbb P}^2$, which is of  type $D_4$. 
The triple ratio (\ref{TR})  is the simplest cluster Poisson coordinate which is not   a cross-ratio. 

Furthermore, the proof of  relation (\ref{TRIF}) in \cite[Appendix]{Gon95a} is based on the following identity:

\be
\begin{split}
&\omega(l_1,l_2,l_5)\omega(l_2,l_3,l_6)\omega(l_1,l_3,l_4) -  \omega(l_1,l_2,l_4)\omega(l_2,l_3,l_5)\omega(l_1,l_3,l_6)  \\
&= \omega(l_1,l_2,l_3) \cdot \omega^*(l_1 \times l_4, l_2 \times l_5, l_3 \times l_6).\\
\end{split}
\ee
Here $\omega^*$ is the dual volume form in $V_3^*$, and $a\times b$ is the cross-product. 
This identity  is just an exchange relation in the cluster algebra of type $D_4$ with six frozen variables, describing the algebra of   functions on the space ${\rm Conf}_6(V_3)$ of configurations of 
six  vectors in $V_3$. 

It is also interesting to note a 40-term functional equation for the trilogarithm \cite{GGSVV13} whose arguments are   cluster Poisson coordinates on the moduli space 
of 6 cyclically ordered points in ${\Bbb P}^2$, which is also of finite type $D_4$. 
 
\paragraph{3. Pentagon relation for the quantum dilogarithm.} 
A  pentagon relation for the   
 quantum dilogarithm power series  $\Psi_q(x)$ was proved in \cite{FK93}.  However, the series converges only if $|q|<1$. 
 
The {\it modular quantum dilogarithm 
function} is  defined   by the following convergent integral:
$$
\Phi^\hbar(z) := {\rm exp}\Bigl(- \int_{\Omega}\frac{e^{-ipz}}{ (e^{\pi p} - e^{-\pi p}) (e^{\pi \hbar p} - e^{-\pi \hbar p}) 
} \frac{dp}{p} \Bigr), ~~~~\hbar>0.
$$
Here $\Omega$ is a path from $-\infty$ to $+\infty$, making a little
half circle going over the zero.
The name is justified by the asymptotic expansion when $\hbar \to 0$  \cite[Section 4]{FG03a}: 
$$
\Phi^\hbar(z) \sim {\rm exp}\Bigl(\frac{{\rm L}_2(e^{z})}{2\pi i \hbar}\Bigr),
\quad \mbox{where } {\rm L}_2(x):= \int_0^x\log(1+t)\frac{dt}{t}.
$$
The function $\Phi^\hbar(z)$ goes back to  Barnes \cite{Bar99}, and reappeared at the modern time in \cite{Bax82},  \cite{Fad95} and other works. It plays a key role in quantization of cluster Poisson varieties \cite{FG07}.

The cluster Poisson structure of the space $ \mathcal{M}_{0,n+3}$ is of finite type $A_n$. 
 Therefore there are only finitely many cluster Poisson coordinates (\ref{ClusterCoordinates}).

The function $\Phi^\hbar(z)$ provides an operator $K: L^2(\R) \to L^2(\R)$,
defined as a rescaled
Fourier transform followed by
the operator of multiplication by   $\Phi^\hbar(x)$.
$$
Kf(z):= \int_{-\infty}^{\infty}f(x)\Phi^\hbar(x) {\rm exp}(\frac{-xz}{2\pi i \hbar})dx.
$$
Since $|\Phi^\hbar(x)|=1$ on the real line, $2\pi \sqrt {\hbar}K$ is unitary.  A proof of the following result can be found in \cite{Gon07}.\footnote{Unfortunately, the argument suggested in \cite{FC99}  was wrong.}
Its quasiclassical limit recovers Abel's five-term
relation for the dilogarithm.

\begin{theorem} \label{qp4*} 
$(2\pi  \sqrt {\hbar} K)^5 = \lambda \cdot {\rm Id}$, where
$|\lambda|= 1$.
\end{theorem}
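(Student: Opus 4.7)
The plan is to prove $(2\pi\sqrt{\hbar}K)^5 = \lambda \cdot \mathrm{Id}$ via Schur's lemma applied to the Stone--von Neumann representation, reducing everything to the Faddeev--Kashaev operator pentagon identity for $\Phi^\hbar$. Factor $K = F \circ M$, where $M$ is multiplication by $\Phi^\hbar(x)$ on $L^2(\R)$ and $F$ is the rescaled Fourier transform $(Ff)(z)=\int f(x)e^{-xz/(2\pi i \hbar)}dx$. Let $\hat x$ be multiplication by $x$ and $\hat p = -2\pi i \hbar\,\partial_x$, so $[\hat p,\hat x] = -2\pi i \hbar$ and the pair $(\hat x,\hat p)$ generates the irreducible Stone--von Neumann representation on $L^2(\R)$. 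A direct calculation yields $F\hat x F^{-1}=\hat p$ and $F\hat p F^{-1}=-\hat x$, and (up to a phase) $F^4=\mathrm{Id}$.

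Next I would compute the conjugation action of $K$ on the Heisenberg pair. Since $M$ commutes with $\hat x$, we have $K\hat x K^{-1}=\hat p$. The integral representation of $\Phi^\hbar$ yields, by a contour shift, the difference equation $\Phi^\hbar(x+2\pi i \hbar)=(1+e^x)\Phi^\hbar(x)$. Translating this into a conjugation identity for $M$ on $\hat p$ and then applying $F$ gives
$$
K\hat p K^{-1} \;=\; -\hat x + \log(1+e^{\hat p}),
$$
interpreted via functional calculus of the single self-adjoint operator $\hat p$. This is precisely the quantization of the $A_2$ cluster mutation, which classically is a symplectic automorphism of order five. At the operator level, iterating the substitution five times and simplifying the resulting word in $\Phi^\hbar(\hat x)$'s and $\Phi^\hbar(\hat p)$'s reduces the claim $K^5 \hat x = \hat x K^5$, $K^5 \hat p = \hat p K^5$ to a single application of the Faddeev--Kashaev operator pentagon identity
$$
\Phi^\hbar(\hat p)\,\Phi^\hbar(\hat x) \;=\; \Phi^\hbar(\hat x)\,\Phi^\hbar(\hat p+\hat x)\,\Phi^\hbar(\hat p),
$$
itself derived by combining the difference equation above with its modular dual (obtained from the $\hbar \leftrightarrow 1/\hbar$ symmetry of the integral defining $\Phi^\hbar$). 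Schur's lemma applied to the irreducible Heisenberg representation then forces $K^5$ to be a scalar.

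Since $|\Phi^\hbar(x)|=1$ on $\R$ (clear from the integral formula), $M$ is unitary; and $2\pi\sqrt{\hbar}\,F$ is unitary by Plancherel; hence $2\pi\sqrt{\hbar}\,K$ is unitary, giving $|\lambda|=1$. The main obstacle is the rigorous proof of the pentagon identity as an equation of unbounded operators on $L^2(\R)$: one must carefully control domains, justify the functional calculus for the non-commuting self-adjoint operators $\hat x$ and $\hat p$ appearing inside $\Phi^\hbar$, and track phase factors in the reduction of $K^5$ to a scalar. The quasiclassical limit $\Phi^\hbar(z) \sim \exp({\rm L}_2(e^z)/(2\pi i \hbar))$ degenerates this operator identity into Abel's five-term relation, providing both a consistency check and control over the leading asymptotic behavior of $\lambda$.
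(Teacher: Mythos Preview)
The paper does not actually prove this theorem: it simply states that ``A proof of the following result can be found in \cite{G07}'' and notes that the argument in \cite{CF} contains a fatal mistake. So there is no in-paper proof to compare against; the result is quoted from Goncharov's arXiv:0706.4054.

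That said, your outline is essentially the standard route and is very close in spirit to what \cite{G07} does: factor $K$ as Fourier transform composed with multiplication by $\Phi^\hbar$, compute the conjugation action on the Heisenberg generators, recognize the $A_2$ cluster mutation, and reduce the five-periodicity to the operator pentagon identity for $\Phi^\hbar$, invoking irreducibility of the Stone--von Neumann representation to conclude that $K^5$ is scalar. The unitarity argument for $|\lambda|=1$ is correct and unproblematic. You have correctly identified the genuine analytic issue: the pentagon identity $\Phi^\hbar(\hat p)\Phi^\hbar(\hat x)=\Phi^\hbar(\hat x)\Phi^\hbar(\hat p+\hat x)\Phi^\hbar(\hat p)$ involves unbounded non-commuting self-adjoint operators, and making sense of $\Phi^\hbar(\hat p+\hat x)$ and the equality of the two sides on a common dense domain is exactly the point where \cite{CF} goes wrong and where \cite{G07} supplies the missing rigor. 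Your sketch is honest about this gap but does not close it; to turn this into a proof you would need to either reproduce the domain analysis from \cite{G07} or give an alternative (e.g., via integral kernels or the Weyl calculus).
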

 
{ We suggest that the relation ${\bf Q}_4$ should have a  quantum analog related to the quantized   cluster symplectic variety $ {\mathcal{M}_{0,7}}$}.

 \subsection{Organization of the paper and other applications}

 \paragraph{1. The structure of the paper.} In Section \ref{SEC8} we recall motivic correlators and  introduce  motivic correlators $\{x,y\}_{m-1, 1}$. Using them, 
we define  a map of Lie coalgebras ${\Bbb L}_{\leq 4}(\F) \lra {\cal L}_{\leq 4}(\F)$. 
 The key fact that this map  is well defined  is established in    Sections \ref{SEC2N} - \ref{SEC9}. 
 
 In Section \ref{SEC2N} we recall  cluster varieties and  introduce cluster polylogarithm maps of complexes. 
 
 In  Section \ref{sec8} we consider our main example:  the  cluster variety ${\rm Conf}_{n+3}(V_2)$  
  parametrizing generic configurations of $n+3$ vectors in a 2-dimensional symplectic vector space.  It is a cluster variety of finite type $A_{n}$ with $n+3$ frozen variables. 
 Its cluster modular complex is the n-dimensional Stasheff polytope.   
We work out the cluster polylogarithm map  
  in the weights $2$ and  $4$, and  prove  relation ${\bf Q}_4$ modulo relation ${\bf Q}_3$. 

In Section \ref{SEC9} we prove   relation ${\bf Q}_3$, and thus complete the proof of relation ${\bf Q}_4$.

In Sections \ref{SSEECC10} we deduce   Theorem \ref{THRN} from relation ${\bf Q}_4$.  
 
In Section \ref{SEC2} we recall weight $p$ Bigrassmannian complexes and the map from the  decorated $(n-3)-$flag complex for ${\rm GL}_n$ to the weight $4$ Bigrassmannian complex constructed in \cite{Gon93}, and   define a map from the weight $4$ Bigrassmannian complex  to   weight $4$ motivic complex (\ref{MTM=DM12}).   

In Section \ref{Sec4}  we prove that it is a map of complexes.

Combining these results,   we get   maps from   appropriate parts of the algebraic $K$-theory of $\F$ to the cohomology of the weight four polylogarithmic  motivic complex  ${\cal B}^\bullet(\F;4)$, see  (\ref{BCOM}). However, we still need to prove that   these maps are non-trivial, even in the number field case. 

To do this, we construct   in Section \ref{SEC8.4n} the  weight $4$ Beilinson regulator,  given by a  
   class    in the weight $4$ real Deligne cohomology of the classifying space 
${\rm BGL}_{{n}  }$ of the algebraic group ${\rm GL}_{n} $:
\be \la{CHHO}
{c}_{4, {n} }^{\cal H}\in H^8({\rm BGL}_{{n}  }(\C), \underline{\R}_{\cal D}(4)), \ \ \ \ \forall {n} \geq 4.
\ee

To explain  the strategy, let us recall that one should have 
 the universal motivic Chern class 
$$
{c}_{p, {n} }^{\cal M}\in H^{2p}_{\rm Zar}({\rm BGL}_{{n}  }, \underline{\Z}_{\cal M}(p)) 
$$
 in the weight $n$ motivic cohomology of ${\rm BGL}_{{n}  }$. Here $H_{\rm Zar}$ is Zariski cohomology, and $\underline{\Z}_{\cal M}(p)$   the weight $p$  motivic complex  \cite{Voe00}.
These classes provide   Chern classes in various realizations.

  In Section \ref{Sec7} we define for a  regular  variety $X$ over a field $\F$  a complex $\Gamma(X;4)$ which should calculate  the rational weight $4$ motivic cohomology of $X$. When $X = {\rm Spec}(\F)$, it is  the   complex ${\cal B}^\bullet(F; 4)$.  Complexes $\Gamma(X;n)$  for  $n \leq 3$ were defined in \cite{Gon95}. The construction of   complex $\Gamma(X;4)$ uses   the  reciprocity law conjectured in \cite{Gon02a}, and proved in full generality in \cite{Bol21}. 
 
  When $\F = \C$, we define an explicit  regulator map   
  $$
  r_{\cal D}: \Gamma(X;4) \lra \R_{\cal D}(X(\C);4)
  $$
    to  the complex calculating the weight $4$ real Deligne cohomology of $X$, see Theorem \ref{MTHRCC}.

The map  from Section \ref{SEC2} provides a cocycle $C_{4, {n} ; \circ}^{\cal M}$ for the motivic Chern class  of the generic point  of Milnor's simplicial realization ${\rm BGL}_{{n} \bullet}$ of   ${\rm BGL}_{{n}  }$ with values in    the weight $4$ polylogarithmic motivic complex. 
 In Section \ref{SEC8.4n} we extend  it     to a  cocycle    
   $C_{4, {n} }^{\cal M}\in \Gamma({\rm BGL}_{{n} \bullet}; 4)$.  
   Applying   the    regulator map $r_{\cal D}$, we get a cocycle  
$$
C_{4, {n}  }^{\cal H}:= r_{\cal D}(C_{4, {n} }^{\cal M})  \in \R_{\cal D}({\rm BGL}_{{n}  \bullet}(\C);4).
$$
We prove that its  cohomology class   is  equal to  Beilinson's class. This proves  the  injectivity of the map 
   $K_7(\F) \to H^1{\cal B}^\bullet(\F;4)$ for number fields, and   Zagier's conjecture on $\zeta_\F(4)$.

In fact the construction of the cocycle $C_{4, {n}  }^{\cal H}$  uses only simple features of the complex $\Gamma(X;4)$. So the reader may skip    details in Section \ref{Sec7}.

 \paragraph{2. Other applications.} 1. Our construction  implies that for a number field $\F$, the  group $K_7(\F)_\Q$ can be  realized in the motivic fundamental group of the punctured ${\Bbb P}^1$.  
This plus the Rigidity Conjecture on ${\rm gr}^\gamma_4K_7(\F)_\Q$ would imply   Conjecture \ref{MTM=DM} in the weight $4$.  

2. Let $E$ be an  elliptic curve over $\Q$. Part 3) of Theorem \ref{ZCZ4t2} implies a formula expressing  the $L$-value $L(E, 4)$ via 
 the generalizing Eisenstein-Kronecker series.  This proves Theorems 0.9 and 0.11 from \cite{Gon95} for $n=4$. 
 The $n=3$ case was proved in \cite{Gon95a}. 
 
3.  The explicit construction of the regulator map on the weight 4 
polylogarithmic motivic complex with values in the exponential Deligne complex \cite[Section 3]{Gon15} allows to refine 
 the construction of the class (\ref{CHHO})   to a class   with values in the integral Deligne cohomology $H^{8}({\rm BGL}_{{n} \bullet}(\C), \Z_{\cal D}(4))$. 
Using this, we get   
 a local combinatorial formula for the classical Chern class $c_4(E)\in H^8(X, \Z(4))$ of a complex vector bundle $E$ on a complex manifold $X$.   
 
 4. The cluster structure plays an important role in combinatorial formulas for the universal motivic Chern classes $c_n$ for $n\leq 4$. We expect this will happen for all $n$. 

5. A real variant of this story should give a local combinatorial formula for the second Pontryagin class,  generalizing  celebrated  Gabrielov-Gelfand-Losik's formula for the first 
Pontryagin class \cite{GGL75}. 


 \paragraph{Acknowledgments.}
This work was
supported by the  NSF grants  DMS-1564385,  DMS-1900743 and DMS-2153059. A part of the work was written when A.G. enjoyed the hospitality and support of IHES at Bures sur Yvette and IAS at Princeton. We are grateful to A. Levin, who pointed an issue in the definition of the complex $\Gamma(X;4)$. We are also grateful to S. Charlton for numerically verifying some of our computations. The Mathematica notebook with these computations is attached to the ArXiv version of the text. We would like to thank the referee for the extraordinary job. The referee's comments and suggestions  saved the text from a number of misprints and  errors and improved the exposition.

\section{Motivic correlators} \la{SEC8}

Motivic correlators were defined in \cite{Gon08}. 
 Below we recall  their definition and key features and then use them to give a natural definition of  
  motivic elements $\{x,y\}_{m-1,1}$ for all $m\geq 2$.
 
 Note that by the very definition elements $\{x,y\}_{m-1,1}$  live in the motivic Lie coalgebra, rather than in the motivic Hopf algebra.  Although one can define elements $\{x,y\}_{m-1,1}$  via   motivic multiple polylogarithms, the latter live in the motivic Hopf algebra. 
 
\paragraph{1. The specialization functor.} 
       Let $Z$ be a regular subvariety of a
regular variety $X$ over a field. Let $N^\circ_ZX$ be the normal bundle to $Z$ in $X$, with the zero
section removed. Set $U := X - Z$. Recall that the Verdier specialization functor is an exact functor between the bounded derived categories of constructible sheaf complexes: 
$$
{\rm Sp}_Z: D^b_{\rm Sh}(U) \lra D^b_{\rm Sh}(N^\circ_ZX).
 $$
 Since $Z$ is a regular subvariety of a regular variety $X$, it
  provides a Tate functor between the corresponding mixed Tate categories of unipotent variations of Hodge-Tate structures or lisse Tate
l-adic sheaves. Therefore it provides a map of Tannakian Tate Hodge / l-adic  Lie coalgebras. \vskip 2mm

{\it Example}. Assume  that $X={\Bbb A}^1, Z=\{0\}$ with a local parameter $\varepsilon$. There is a  tangent vector $  \partial / \partial \varepsilon\in N^\circ_{\{0\}}X$ at 
$\varepsilon =0$. The specialization   ${\rm Sp}_{\varepsilon =0}$    means the fiber of the specialization functor at  $  \partial / \partial \varepsilon$.  
We calculate  it on 
${\cal L}_{1}(\Q(\varepsilon)) = \Q(\varepsilon)^\times$  as follows. Let 
$F(\varepsilon) \in    \Q(\varepsilon)^\times$. 
Write $ F(\varepsilon) = \varepsilon^n F_\circ(\varepsilon)$ where 
$F_\circ(0)\not = 0$. Then ${\rm Sp}_{\varepsilon =0}F( \varepsilon ) =   F_\circ(0)$.

\paragraph{2. Motivic fundamental groups.} \label{SectionMotivicFundamentalGroup}
Let $\{s_0, s_1, \ldots, s_n\}$ be a collection of distinct points on ${\Bbb P}^1(\F)$, and $t_0$ a non-zero tangent vector at $s_0$, defined over $\F$. 
Then the motivic fundamental group
\[
\pi_1^{\cal M}(X, t_0), ~~~~X:= {\Bbb P}^1(\F) - \{s_1, \ldots, s_n\}
\]
is   defined  whenever we have motivic formalism available, see \cite{DG03}. So for now we must assume that  either  $\F$ is a number field, or 
 work in the  Hodge or l-adic realizations. 
 
 If $\F$ is a number field,  there is a mixed Tate category ${\cal M}_T(\F)$ of mixed Tate motives over $\F$. 
If $\F$ is not a number field, we have the related mixed Tate categories 
 in realizations. From now on, we work with the category ${\cal M}_T(\F)$. The same construction works in any mixed Tate category ${\cal C}$ in which we have the ${\cal C}-$motivic fundamental group available. Translation to other realizations is straightforward. 
 
Recall the canonical fiber functor $\omega$ on the category ${\cal M}_T(\F)$, see (\ref{FF1}), and the fundamental Lie algebra 
${\rm L}_\bullet(\F):= {\rm Der}^{\otimes}(\omega)$. Lie coalgebra ${\cal L}_\bullet(\F)$ is the graded dual of the Lie algebra ${\rm L}_\bullet(\F).$ Then there is a canonical equivalence of tensor categories
\[
\omega: {\cal M}_T(\F) \stackrel{\sim}{\lra}  \mbox{finite dimensional graded ${\rm L}_\bullet(\F)-$modules}.
\] 
The motivic fundamental group $\pi_1^{\cal M}(X, t_0) $ is a pro-Lie algebra object in the  category ${\cal M}_T(\F)$.   Therefore there is a canonical 
 homomorphism of Lie algebras
$$
{\rm L}_\bullet(\F) \lra {\rm Der}\Bigl(\omega(\pi_1^{\cal M}(X, t_0)\Bigr).
$$ 
  
  The tangential base point $t_0$ provides a canonical map $\varphi_{t_0}: \Q(1) \to \pi_1^{\cal M}(X, t_0)$. This implies that 
  the object $X_0:= \omega\circ \varphi_{t_0}(\Q(1))$ is killed by the action of ${\rm L}_\bullet(\F)$.   Furthermore, there are objects 
  $X_0, X_{s_1}, \ldots,   X_{s_n} \in  \omega(\pi_1^{\cal M}(X, t_0))$ such that 
$X_0 + X_{s_1} +  \ldots +  X_{s_n}  =0$ and the conjugacy classes of  the objects $X_{s_i}$, $i=1, \ldots, n$ are preserved  by the action of ${\rm L}_\bullet(\F)$. 
  
Denote by ${\rm Der}^S\Bigl(\omega ( \pi_1^{\cal M}(X, t_0))\Bigr)$ the Lie subalgebra of all {\it special} derivations of the graded 
  Lie algebra $\omega\left(\pi_1^{\cal M}(X, t_0)\right)$, defined as the derivations which have the properties as above. Then there is a canonical 
 homomorphism of Lie algebras
\be \la{HLA1}
 {\rm L}_\bullet(\F) \lra {\rm Der}^S\Bigl(\omega(\pi_1^{\cal M}(X, t_0)\Bigr).
\ee   
The target Lie algebra   has a simple combinatorial description, which we recall now. 
    
 Given a  vector space $V$ over a field, denote by ${\cal C}_{V}$ the cyclic tensor envelope of 
$V$:
$$
{\cal C}_{V}:= \bigoplus_{m=1}^{\infty}\left(\otimes^mV\right)_{\Z/m\Z}
$$
where the subscript $\Z/m\Z$ denotes the coinvariants of the cyclic shift.  We denote  the projection of the tensor 
$v_1 \otimes   \ldots \otimes v_m$ to the coinvariants of the cyclic shift by $(v_1 \otimes   \ldots \otimes v_m)_{\Z/m\Z}$. 

The subspace of {\it shuffle relations} in 
${\cal C}_{V}$  generated by the 
elements
$$
\sum_{\sigma \in \Sigma_{p,q}}  (v_0 \otimes v_{\sigma(1)} 
\otimes \ldots \otimes v_{\sigma(p+q)})_{\Z/(p+q+1)\Z}, 
\qquad p, q \geq 1, 
$$ 
where the sum is over all $(p,q)$-shuffles. 
 
Set
\[ \la{HHH}
{\cal C}{{\cal L}ie}_X:=  
\frac{ {\cal C}_{H^1(X)}}{\mbox{Shuffle relations}};
\qquad 
{\rm CLie}_{X}:= \mbox{the graded dual of 
${\cal C}{{\cal L}ie}_X$}.
\]
 The space ${\cal C}{{\cal L}ie}_X$ is equipped with a Lie coalgebra structure, which is reflected in Figure \ref{zg9} below.  So   ${\rm CLie}_X$ is   a Lie algebra. The following theorem was proven in \cite[Proposition 8.4]{Gon08}.

\bt \la{COR11} Let $\{s_0, s_1, \ldots, s_n\}$ be a collection of distinct points on ${\Bbb P}^1(\F)$, and $t_0$ a non-zero tangent vector at $s_0$, defined over $\F$. For $X=\mathbb{P}^1(\F)-\{s_1,\dots,s_n\}$ there is a canonical isomorphism of Lie algebras:
\be \la{KAP1}
\kappa:  {\rm CLie}_X \stackrel{\sim}{\lra} {\rm Der}^S\Bigl(\omega(\pi_1^{\cal M}(X, t_0))\Bigr).
\ee
\et

Composing the map (\ref{HLA1}) with the isomorphism (\ref{KAP1}),  and dualizing the composition, 
we arrive at a canonical map of graded Lie coalgebras, called the {\it motivic correlator map}:
$$
{\rm Cor}_{\cal M}: {\cal C}{\cal L}ie_X \lra  {\cal L}_\bullet(\F). 
$$

There is a collection of elements in ${\cal C}{\cal L}ie_X$ which span it as a vector space. Namely, these are the 
tensor products of the basis elements $X_{s_{i}}$ of $H_1(X)$ corresponding to the punctures $s_i$. In the Betti realization $X_s$ is the homology class of
 a little loop around the puncture $s$. Their images under the motivic correlator map are called motivic correlators:
$$
{\rm Cor}^{\cal M}_{t_0}(s_{i_1}, \ldots , s_{i_m}):= {\rm Cor}_{\cal M}\Bigl(X_{s_{i_1}} \otimes X_{s_{i_2}} \otimes \ldots \otimes X_{s_{i_m}} \Bigr)_{\Z/m\Z}
$$

Then ${\rm gr}^W_{2k}\pi_1^{\cal M}(X, t_0)$ is a finite direct sum  of copies of $\Q(-k)$. There is an isomorphism $f: {\rm gr}^W_{0}\pi_1^{\cal M}(X, t_0) \lra \Q(0) $. 
So any non zero map  $v: \Q(-k) \lra {\rm gr}^W_{2k}\pi_1^{\cal M}(X, t_0)$ for $k\geq 0$ provides a framed mixed Tate motive 
$\left(\pi_1^{\cal M}(X, t_0), v, f\right)$. The following theorem follows immediately from Theorem \ref{COR11} and the basic definitions.

\bt The element of the motivic Lie coalgebra ${\cal L}_\bullet(\F)$ induced by the framed mixed Tate motive  $\bigl(\pi_1^{\cal M}(X, t_0), v, f\bigr)$ is a linear combination of the weight $k$ correlators.
\et

\paragraph{3. Motivic correlators: a description.} 
 Given a collection of points $(a; b_0, \ldots, b_m)$ of ${\Bbb P}^1(\F)$, such that $a \not = b_i$, although some of the   points $b_i$  may coincide, and  
 a non-zero tangent vector $v$ at the point $a$ defined over $\F$, there is  a   motivic correlator:
 \be \la{MCR}
 {\rm Cor}^{\cal M}_{a, v}(b_0, \ldots , b_m) \in {\cal L}_m(\F).
\ee
 By the very definition, motivic correlator (\ref{MCR}) is invariant under  cyclic shifts of   points 
 $b_0, \ldots , b_m$:
  \be \la{MCR1}
 {\rm Cor}^{\cal M}_{a, v}(b_0, \ldots , b_m)  = {\rm Cor}^{\cal M}_{a, v}(b_1, \ldots , b_m, b_0).
\ee 

Motivic correlators of  weights $m>1$ do not depend on $v$. So we skip $v$ in the notation. 
 
 Motivic correlators with the base point at $\infty$ are invariant under the shift of the arguments: 
$$
{\rm Cor}^{\cal M}_{\infty, v}(b_0, \ldots , b_m) = {\rm Cor}^{\cal M}_{\infty, v}(b_0-c, \ldots , b_m-c).
$$

Motivic correlators of weight $m>1$ with the base point at $\infty$ are invariant under rescaling: 
$$
{\rm Cor}^{\cal M}_{\infty, v}(\lambda \cdot b_0, \ldots , \lambda \cdot b_m) = {\rm Cor}^{\cal M}_{\infty, v}(b_0, \ldots , b_m), ~~~~m>1.
$$

The key properties  of motivic correlators are the following:
 
 \begin{itemize}
 
 \item There is a simple formula for the cobracket:
 \be \la{wt2}
  \delta: {\rm Cor}^{\cal M}_a(b_0, \ldots , b_m)\lms  \sum_{i<j} {\rm Cor}^{\cal M}_a(b_i, \ldots , b_j) \wedge{\rm Cor}^{\cal M}_a(b_{j}, \ldots , b_{i-1}).
\ee 
The notation $i<j$ means that   the order of the points $b_i, \ldots , b_j$ in the first factor, and hence in the second, is compatible with the cyclic order of the points $b_k$. 

\item  Given an embedding $\F \hra \C$, the canonical real period of the Hodge realization of the 
 motivic correlator is calculated as the Hodge correlator integral \cite{Gon08}. 
 
\end{itemize}
    \begin{figure}[ht]
\centerline{\epsfbox{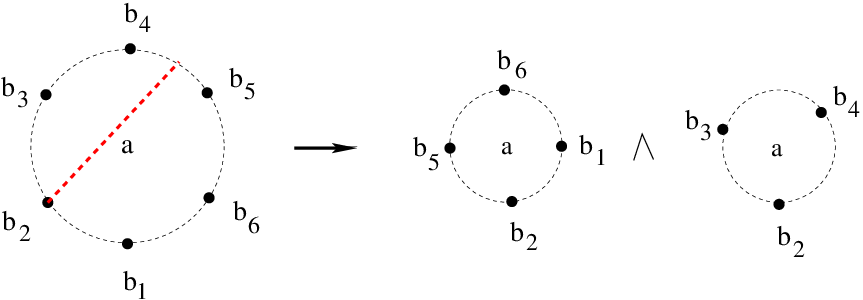}}
\caption{The cobracket of the weight $5$ motivic correlator  ${\rm Cor}^{\cal M}_{a}(b_1, \ldots , b_6)$. We cut the circle at a marked point and inside of an arc, 
making two new circles with marked points, and taking the wedge product of the related motivic correlators. Then we sum over all cuts. }
\label{zg9}
\end{figure}

These two properties determine  motivic correlators (\ref{MCR}) uniquely when $\F$ is a number field. 

 \paragraph{4. Simplest motivic correlators.} Here are  first examples:
\be \la{wt1}
\begin{split}
& {\rm Cor}^{\cal M}_{\infty, v_\infty}(b_0, b_1) = b_0-b_1\in \F^\times = {\cal L}_1(\F).\\
&{\rm Cor}^{\cal M}_{b_0}(b_1,b_2,b_3)= -{\rm Li}^{\cal M}_2([b_0,b_1,b_2,b_3]).
\end{split}
\ee

In the first example,  the tangent vector $v_\infty$  determines  the standard coordinate on ${\Bbb A}^1$. The formula follows since 
by the very definition the corresponding Hodge correlator is just $\log|b_0-b_1|$. 
To prove the second formula, note that  cobracket formula (\ref{wt2}) plus the first formula in (\ref{wt1}) imply
$$
\delta {\rm Cor}^{\cal M}_\infty(b_1, b_2, b_3) = (b_1-b_2) \wedge (b_2-b_3) + (b_2-b_3) \wedge (b_3-b_1) +(b_3-b_1) \wedge (b_1-b_2). 
$$
Therefore one has, where  ${\rm Li}^{\cal M}_2(*)$ denotes the projection of the motivic dilogarithm \cite{Gon00} to the motivic Lie coalgebra ${\cal L}_\bullet(\F)$: 
$$
{\rm Cor}^{\cal M}_\infty(b_1, b_2, b_3) + {\rm Li}^{\cal M}_2([\infty,b_1,b_2,b_3]) \in {\rm Ext}_B^1(\Q(0), \Q(2)).
$$
The Ext group is over the base $B$ parametrizing triples of points  $(b_1, b_2, b_3)$. 
Thanks to the rigidity of ${\rm Ext}^1(\Q(0), \Q(2))$ in the Hodge realization (see \cite[Lemma 8.2]{Gon02b}), it is constant. 
Since  ${\rm Ext}_{{\cal M}_T(\Q)}^1(\Q(0), \Q(2))=0$, 
specializing to a rational point of $B$ we get the second formula in (\ref{wt1}).

 \paragraph{5. The Lie coalgebra ${\Bbb L}_{\leq 4}(\F)$  and motivic correlators.}  Elements  $\{x\}_m$ and 
 $\{x,y\}_{m-1,1}$ which appear in the definition of   spaces ${\Bbb L}_m(\F)$ are   most naturally
 expressed  as motivic correlators. Below we use motivic correlators to define   these   elements.
 
 \bd \la{DEFMC} Motivic correlators $\{x\}^{\cal M}_m$ and 
 $\{x,y\}^{\cal M}_{m-1,1}$ are defined as follows:
   \be \la{144ads1}
\begin{split}
&\left \{x,y \right\}^{\cal M}_{2,1}:={\rm Cor}^{\cal M}_\infty(0,x,1,y), \\
& \left \{x,y\right\}^{\cal M}_{3,1} := -{\rm Cor}^{\cal M}_\infty(0,0,x,1,y),\\
&\{y\}^{\cal M}_2:= -{\rm Cor}^{\cal M}_\infty(0,1,y),\\
&\{y\}^{\cal M}_3:= -{\rm Cor}^{\cal M}_\infty(0,0,1,y),\\
&\{y\}^{\cal M}_4:= -{\rm Cor}^{\cal M}_\infty(0,0,0,1,y).\\
\end{split}
\ee    \ed 

 One can introduce  similar weight $m$ motivic correlators   as follows:
\[
\begin{split}
&\{[\infty,  1, 0, x], [\infty,  1, 0, y ]\}^{\cal M}_{m-1,1} = (-1)^{m-1}{\rm Cor}^{\cal M}_\infty(0, \ldots , 0, x, 1, y).\\ 
& \left\{[a, b , c, d]\right \}^{\cal M}_m = - {\rm Cor}^{\cal M}_a(\underbrace{b, \ldots , b}_{m-1}, c, d).\\
\end{split}
\] 

 Using motivic correlators,  relation (\ref{144ac})  acquires a neat form: 
\[
\begin{split}
&{\rm Cor}^{\cal M}_a(b_1, b_2, b_3, b_4) + 
 \sum_{i \in \Z/4\Z} {\rm Cor}^{\cal M}_a(b_i, b_i, b_{i+1}, b_{i+2})  = - \{[b_1, b_2, b_3, b_4]\}^{\cal M}_3 - \{1\}^{\cal M}_3.\\
\end{split}
\]

 \bt \la{JMC} Motivic correlators $\left \{x \right \}^{\cal M}_{2}$,$\left \{x \right \}^{\cal M}_{3}$, $\left \{x \right \}^{\cal M}_{4}$,  $\left \{x,y \right \}^{\cal M}_{2,1}$, $\left \{x,y \right \}^{\cal M}_{3,1}$ 
  give rise to a map  
 of Lie coalgebras 
\[
\begin{split}
& {\Bbb L}_{\leq 4}(\F) \lra {\cal L}_{\leq 4} (\F),\\
&\{x\}_m \lms \{x\}^{\cal M}_m, ~~\{x,y\}_{m-1, 1} \lms \{x,y\}^{\cal M}_{m-1, 1}.\\
\end{split}
\]
  \et
 
 \paragraph{Remark.} Below we skip superscript ${\cal M}$ from the notation   $\{x,y\}^{\cal M}_{m-1, 1}$ and $\{x\}^{\cal M}_{m}$. 
 So for example $\{x,y\}_{m-1, 1}$ may mean either a generator of the combinatorially defined space ${\Bbb L}_{m}(\F)$, or its image in 
 $ {\cal L}_{m} (\F)$. Referring to $\{x,y\}_{m-1, 1}$ as a motivic correlator   we mean the latter; referring to it as an element    we mean the former. 
 We hope the reader will not be confused.  We also skip the superscript ${\cal M}$ from the notation of motivic correlators. 
 
 \vskip 3mm
 
 The proof of Theorem \ref{JMC} consists of the two steps.  First, we need to prove that the maps in Theorem \ref{JMC} are compatible with the cobrackets. 
 This is what  Proposition \ref{LL1} does,  justifying   Definition \ref{DEFMC} of  the motivic correlators $\{x,y\}_{m-1,1}$.  
 Second, we need to prove that relations ${\bf Q}_m$ map to zero, as well as the specialization relations for $m=4$. 
The claim for the relations between the elements $\{x\}_m$ is similar and well known. 
  The   second claim is proved at the end of Section \ref{SEC9}, after we prove that the cobracket in ${\Bbb L}_{\leq 4}(\F)$ kills relations   ${\bf Q}_m$.

 \bp \la{LL1} The cobracket of   motivic correlators $\{x,y\}_{2,1}$ and $\{x,y\}_{3,1}$ is given by the   four-term formulas (\ref{26}). 
\ep

\begin{proof}   i) The  cobracket for weight three motivic correlators looks as follows:
\be \la{26aA}
\delta {\rm Cor}_a(b_1, b_2, b_3,  b_4)  = \sum_{i\in \Z/4\Z}\left \{[a, b_{i-1}, b_{i+1}, b_{i+2}]\right \}_2 \otimes \frac{b_i-b_{i-1}}{b_i-b_{i+1}} \in \B_2(\F) \otimes\F^\times.
\ee
Therefore one easily recognizes in (\ref{26aA})  the formula for the cobracket $\delta\{x,y\}_{2,1}$:
$$
\delta {\rm Cor}_\infty(0,x,1,y)  =  \left \{y\right \}_2 \otimes (1-x^{-1})   +\left \{\frac{x}{y}\right \}_2 \otimes \frac{1-x}{1-y} + 
\left \{x\right \}_2 \otimes (1-y^{-1})  +\left \{\frac{x-1}{y-1}\right \}_2 \otimes \frac{ x}{ y} .
$$
Observe also that we get the boundary of (\ref{144a}) by adding to the right-hand side the expression 
$$
\Bigl(-\{x\}_2+\{y\}_2 - \{y/x\}_2 +  \{(1-y)/(1-x)\}_2 -\{(1-1/y)/(1-1/x)\}_2\Bigr)\otimes \frac{1-y^{-1}}{1-x^{-1}}.
$$
 Here the first factor is a five-term relation. 
 
 ii)  The cobracket formula for weight four motivic correlators looks as follows:
\[
\bs
\delta {\rm Cor}_a(b_1, b_2, b_3,  b_4, b_5)  = &\sum_{i\in \Z/5\Z}{\rm Cor}_a(b_i, b_{i+1}, b_{i+2},  b_{i+3})\wedge \frac{b_{i+4}-b_{i+3}}{b_{i+4}- b_{i+5}} ~~+\\
&  \sum_{i\in \Z/5\Z}{\rm Cor}_a(b_i, b_{i+1}, b_{i+2})\wedge  {\rm Cor}_a( b_{i+2}, b_{i+3}, b_{i+4}).
\\
\end{split}
\]
Specializing this formula, we get:
\[
\bs
&\delta {\rm Cor}_\infty(0,0,x,1,y)  =   \\
&{\rm Cor}_\infty(0, x, 1, y)\wedge  {y} - {\rm Cor}_\infty(x, 1, y, 0)\wedge  {x} + 
{\rm Cor}_\infty(1, y, 0, 0)\wedge \frac{x}{x-1}  \\
& + {\rm Cor}_\infty(y, 0, 0, x)\wedge \frac{x-1}{y-1} + {\rm Cor}_\infty( 0, 0,x, 1)\wedge \frac{y-1}{y} + \{x\}_2\wedge  \{y\}_2.\\
\end{split}
\]
The $\B_3\otimes \F^\times$ component is given by 
\[
\bs
&\{x,y\}_{2,1}\wedge  \frac{y}{x}  + \left\{\frac{y}{x}\right\}_3 \wedge \frac{y-1}{x-1} + \{y\}_3  \wedge (1-x^{-1}) - \{x\}_3  \wedge (1-y^{-1}).\\
\end{split}
\] 
The claim follows by comparing with the four term formula (\ref{26}) for the cobracket $\delta\{x,y\}_{3,1}$.

iii) Specializing  $x = 0$ in $\{x,y\}_{2,1}$,   and using (\ref{144ac}) we get the trilogarithm: 
\[
\begin{split}
&{\rm Cor}_\infty(0,0,1,y)= \left \{0,y \right\}_{2,1} \stackrel{(\ref{EATRI})}{=}   -\left\{ {y} \right \}_3.\\
\end{split}
\]

iv) Specializing  $x = 0$ in $\{x,y\}_{3,1}$,   and using (\ref{202}) we get the tetralogarithm: 
\[
\begin{split}
&{\rm Cor}_\infty(0,0,0,1,y)= \left \{0,y \right\}_{3,1} \stackrel{(\ref{202})}{=}   -\left\{ {y} \right \}_4.\\
\end{split}
\]
 \end{proof} 

\paragraph{6. The Hodge correlator construction \cite{Gon08}.} For convenience of the reader we briefly recall the definition of  numbers ${\rm Cor}^{\cal H}_{a, v_a}(b_1, \ldots ,  b_m)$.
  First, recall  
 the Green function $G_{v_a}(x,y)$ on $X\times X$, where $X=\C{\Bbb P}^1$, which  is determined up to a constant by the differential equation 
$$
\frac{1}{2\pi i}\overline \partial \partial G_{v_a}(x,y) = \delta_\Delta - \{a\}\times X - X \times \{a\}. 
$$
Here $\Delta \subset X \times X$ is the diagonal. The constant is determined by the tangent vector $v_a$. 
For example, if $a = \infty, v_a=\partial / \partial t$, then $G_{v_a} (x,y)= \log|x-y|$.

Take a plane trivalent tree $T$ whose ends are decorated by the points $b_1, \ldots, b_m$.  
Let us define a differential form $\kappa_T$ on ${\C{\Bbb P}}^{{\cal V}(T)}$, where ${\cal V}(T)$ is the set of 
internal vertices of $T$. Given an edge $E$ of the tree $T$, external or internal, with the vertices $v_i, v_j$, we assign to the edge $E$ the Green function $G_E:= G_{v_a}(x_i,x_j)$
 on $X\times X$.

    \begin{figure}[ht]
\centerline{\epsfbox{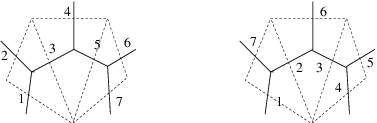}}
\caption{The canonical orientation of a plane trivalent tree for two different plane orientations. }
\label{zg13}
\end{figure}
Recall that there is a canonical orientation of a plane trivalent tree $T$, provided by the plane orientation. This means that there is a class of  
enumerations of the edges $E_1, \ldots, E_{2m-3}$, well defined up to an even permutation. Recall $d^\C:= \partial - \overline \partial$. 
Then we set 
$$
\kappa^a_T(a, v_a; b_1, \ldots ,      b_m):= {\rm Alt}_{2m-3}\Bigl(G_{E_{2m-3}}  d^\C G_{E_1} \wedge \ldots \wedge d^\C G_{E_{2m-4}}\Bigr).
$$
This differential form is integrable. We define the Hodge correlator by integrating it over ${\C{\Bbb P}}^{{\cal V}(T)}$ and taking the sum over all plane trivalent trees $T$ decorated by the points $b_1, \ldots, b_m$:
$$
{\rm Cor}^{\cal H}_{a, v_a}(b_1, \ldots ,     b_m):= \sum_T\frac{1}{{\rm Aut}(T)}\int_{{\C{\Bbb P}}^{{\cal V}(T)}} \kappa^a_T(a, v_a; b_1, \ldots ,     ,b_m).
$$
The collection of these numbers allows to define a real mixed Hodge structure on the pro nilpotent completion of $\pi_1(\C{\Bbb P}^1-\{b_1, \ldots, b_m\}, v_a)$  (see \cite[\S 1.10]{Gon08}). 

\paragraph{7. Reduction of motivic correlators to lower depth.} 
\bt \la{CHPI1} Let $a, b, x_i \in {\Bbb P}^1$. Suppose that $a \not = b$. Then for any $m>2$ one has 
\be \la{FOR1}
\begin{split}
&{\rm Cor}_a(x_1, \ldots x_m) =  
 \sum_{k=0}^m\sum_{1 \leq i_1 < \ldots < i_k  \leq m} (-1)^k {\rm Cor}_b(x_1, \ldots ,    x_{i_1-1}, a,x_{i_1+1}, \ldots ,x_{i_2-1}, a, x_{i_2+1}, \ldots ,x_m).
  \end{split}
\ee
Here the $k$-th term is $(-1)^k$ times a sum over $1 \leq i_1 < \ldots < i_k  \leq m$ of  motivic correlators obtained from the correlator ${\rm Cor}_b(x_1, \ldots x_m)$ by inserting  
  the point $a$    instead of  points $x_{i_1}, \ldots , x_{i_k}$.  
\et

 For example, the first few terms are 
\[
\begin{split}
&{\rm Cor}_a(x_1, \ldots, x_m) = {\rm Cor}_b(x_1, \ldots, x_m)  - 
 \sum_{1\leq i_1\leq m} {\rm Cor}_b(x_1,  \ldots, x_{i_1-1}, a, x_{i_1+1}, \ldots ,x_m)  \\
 &+\sum_{1 \leq i_1<i_2\leq m} {\rm Cor}_b(x_1,  \ldots, x_{i_1-1}, a, x_{i_1+1}, \ldots, x_{i_2-1}, a, x_{i_2+1},  \ldots, x_m) - \ldots \\
 \end{split}
\]

When $m=3$ we get   nothing else but the five-term relation for the motivic dilogarithm:
\[
{\rm Cor}_a(x_1, x_2, x_3) = {\rm Cor}_b(x_1, x_2, x_3) - {\rm Cor}_b(a, x_2, x_3) - {\rm Cor}_b(x_1, a, x_3) - {\rm Cor}_b(x_1, x_2, a).
\]

\begin{proof} We start with an analytic argument. 
The relation between the Green functions assigned to  base points $a$ and $b$ is reflected in the following  identity: 
\be \la{EQa}
G_{v_a}(x,y) =  G_{v_b}(x,y) - G_{v_b}(a,y) -G_{v_b}(x,a) +C.
\ee

 Let us calculate the difference 
 \be \la{DIFF1}
 \int_{X^{m-2}}(\kappa_T^a - \kappa_T^b).
\ee 
Take an internal edge $e$ of the tree $T$.   We claim that the contribution of the summands $C$, $G_{v_b} (a,y) $ and $G_{v_b} (x,a) $ from (\ref{EQa}) are zero. Indeed, cutting the edge $E$,  the tree $T$ splits into a disjoint union   $T= T_1 \cup T_2$. 
Therefore we can split $X^{m-2}= X^{p_1}\times X^{p_2}$, where the $i$-th 
  factor corresponds to the copies of $X$ parametrized by the internal vertices of the tree $T_i$. 
 Since one of the arguments of $G_{v_b} (a,y) $, as well as $G_{v_b} (x,a) $, is a constant, the integral  splits into a product of two integrals; it is easy to see that at least one of theses integrals vanishes. Therefore for an internal edge $E$ of $T$, only the summand $G_{v_b}(x,y)$  contributes to (\ref{DIFF1}). 
  
 It remains to calculate the contribution of the external edges $E$ of the tree $T$. The same argument shows that the   contribution  of $C$, $G_{v_b}(a,y)$ and $G_{v_b} (x,a)$, where 
 $a$ is at an internal vertex, is zero. This  implies the claim. 
 
 This implies that  formula (\ref{FOR1}) is valid for the real Hodge realization of motivic correlators. The claim for  motivic correlators 
 with arguments in a number field follows from this by using the injectivity of the regulator map, see \cite{Bor77}. The   identity for the $\Q$-Hodge realization can be deduced  from the 
 motivic one for arbitrary number fields.  Alternatively, one checks by the induction on the weight that the cobracket of the both sides of the identity is the same. Then the rigidity, see Lemma \ref{RL} below,  plus  the injectivity of the regulator imply the claim in the $\Q$-Hodge and l-adic realizations.  Indeed, a framed variation with zero cobracket is equivalent to an extension from Lemma  \ref{RL}. 
 
 Here by the rigidity we mean the following well known Lemma (see \cite[Lemma 8.2]{Gon02b}):
 \bl \la{RL} Let $ \underline{{\cal L}}$ be a two dimensional variation of mixed Hodge structures, or lisse l-adic sheaf over a base  ${\cal Z}$, which is an extension
\[
0\lra \underline{\Q}(m) \lra   \underline{{\cal L}} \lra \underline{\Q}(0) \lra 0, ~~~~~~ m\geq 2.  
\]
 where $\underline{\Q}(i)$ is the constant sheaf $\Q(i)$  on ${\cal Z}$ in either of these two realizations. Then $ \underline{{\cal L}}$ is a constant sheaf, that is isomorphic to a pull back from the point.
 \el
 
 \begin{proof} The proof in the Hodge realization, which is the only one we need in the paper,  follows immediately from the Griffiths transversality. 
 \end{proof}
\end{proof}

\bl Consider a collection of points $a, x_2,\dots, x_m \in \mathbb{P}^1$ such that  $x_i\neq a$ for some $2\leq i\leq m.$ The following specialization is zero:
\be \la{EX10}
{\rm Cor}_a(a, x_2, \ldots,  x_m) := {\rm Sp}_{x_1 = a}{\rm Cor}_a(x_1, x_2, \ldots,  x_m)  =0.
\ee
\el

\begin{proof} Without loss of generality, we can assume that $a=\infty.$ 

Let us  prove by the induction on $m$ that the cobracket  is zero:
$$
\delta {\rm Sp}_{x_1 = a}{\rm Cor}_a(x_1, x_2, \ldots,  x_m) =0.
$$
We have ${\rm Sp}_{x_1 = \infty}{\rm Cor}_{\infty, v_\infty}(x_1, x_2) =0$. Indeed, ${\rm Cor}^{\cal M}_{\infty, v_\infty}(x_1, x_2) = x_1-x_2$.\footnote{Here is how this works in the Hodge realization: $\log|t-x| = \log |t| + \log |1-x/t|$. So when $t \to \infty$, the second term has limit zero, while the first is zero because of the special choice of the tangential base vector.} 
Using either this or the induction assumption, we see that the cobracket vanishes  for  $m\geq 3$. 

By the rigidity, the left-hand side in (\ref{EX10}) is a constant. 
The shuffle relation for the shuffle of $\{x_2\}$ and $\{x_3, \ldots, x_m\}$, which we refer to as the shuffle $\{2\}\ast \{3, \ldots, m\}$,  implies that this constant is zero. 
\end{proof}

\bp \la{MOTCOIN1}   If $m > 3$, any motivic correlator   is a linear combination of the motivic  correlators ${\rm Cor}_\infty(x_1, \ldots,  x_m)$ 
where at least two of the  points $x_i$ coincide.\footnote{ The statement of Proposition \ref{MOTCOIN1} is false for $m=3$. } 
\ep

\begin{proof} We say that a motivic correlator has {\it lower depth} if it has at least two coinciding arguments $x_i=x_j$. We denote equalities of motivic correlators 
 modulo lower depth by $\sim$. Then, using (\ref{EX10}),  Theorem \ref{CHPI1} implies that under the specialization  $x_1 = b$ we have:
\[
\begin{split}
&{\rm Cor}_a(b, x_2,, \ldots x_m) \sim  - 
   {\rm Cor}_b(a, x_2,   \ldots ,x_m).  \\
 \end{split}
\]

Thanks to the  cyclic symmetry (\ref{MCR1}) of  motivic correlators, ${\rm Cor}_{y_1}(y_2, \ldots , \ldots y_{m+1})$ is antisymmetric modulo lower depth under  transpositions 
$(1, k)$ for $k=2, \ldots, m+1$. These transpositions generate the symmetric group $S_{m+1}$. 

Let  $m=2k$. Then   cyclic shift  (\ref{MCR1}) is an odd permutation. 
Thus  
 ${\rm Cor}_{a}(x_1, \ldots , \ldots x_{m})\sim 0.$ Both arguments are motivic, that is work in realizations and for a number field. 

 Let $m=2k+1>3$. We apply the shuffle relation for the shuffle $\{1, 2\}\ast\{3, \ldots, 2k\}$ in the correlator ${\rm Cor}_{a}(x_0, x_1, \ldots ,  x_{2k})$.
 Let us elaborate the  $m=5$ example,   crucial for the paper:
\[
\begin{split}
& 0 = {\rm Cor}_{a}( x_0,  x_1,   x_2,  x_3, x_4 ) + {\rm Cor}_{a}( x_0,   x_1, x_3,  x_2, x_4 ) + {\rm Cor}_{a}( x_0,   x_1, x_3,x_4, x_2 )   +\\
& {\rm Cor}_{a}( x_0,  x_3, x_1, x_2,    x_4   )+ {\rm Cor}_{a}( x_0,   x_3, x_1, x_4, x_2   )   +  {\rm Cor}_{a}( x_0,   x_3,  x_4,   x_1,   x_2)\sim \\
&   {\rm Cor}_{a}( x_0,   x_1, x_3, x_4, x_2 )     +  {\rm Cor}_{a}( x_0,   x_3,  x_4,   x_1,   x_2) \sim 2 \cdot {\rm Cor}_{a}( x_0,   x_1,  x_3, x_4, x_2 ). \\
\end{split}
\]
Here we use the antisymmetry of correlators under transpositions modulo  $\sim$ equivalencies. For example,  
${\rm Cor}_{a}( x_0,  x_1,   x_2,  x_3, x_4 ) + {\rm Cor}_{a}( x_0,   x_1, x_3,  x_2, x_4 )\sim 0$. In general   we use  the following:
 
1) The shuffle  $\{2\}\ast \{3, \ldots, 2p\}$ produces a sum off an odd number of  correlators, and all but the last one are combined into pairs such that the sum of the terms in a pair   is $\sim 0$. 

2) Similarly   the shuffle $\{2\}\ast \{3, \ldots, 2p+1\}$ produces a sum which is  $\sim 0$. 

3) Correlators   appearing after step 1)   differ by  even permutations, and thus are $\sim$. 
  
So we proved the claim for   Hodge correlators, and hence for the real Hodge realization, and therefore for number fields. 
 The latter is sufficient to deduce the claim for $\Q$-Hodge and {'e}tale realizations. 
 \end{proof}

\bc \la{WTRD43} Any  weight $4$ motivic correlator is a sum of the ones $\{x,y\}_{3,1}$ and $\{x\}_4$.
\ec

\begin{proof}  Proposition \ref{MOTCOIN1} and (\ref{144ads1}) imply that any weight $4$ motivic correlator is a sum of lower depth correlators.  
\end{proof}

\paragraph{Remark.} It was known that any weight $n$ multiple polylogarithm can be reduced  to a linear combination of the ones of depth $\leq n-2$, considered modulo products, see  \cite{Bro09}, \cite{Dan08}, \cite{Dan11}, and \cite{Ch17}. Using  relation (\ref{36}),  Corollary \ref{WTRD43} follows from that.

 \paragraph{8. Reduction to motivic classical polylogarithms.} Below  $\stackrel{\shuffle}{=}$  means equivalent modulo products, that is  in the motivic Lie coalgebra. 

 \bl \la{LEM2.9}
Let $X$ be an algebraic variety over $\Q$. Let $\A$ be any $\Q$-linear combination of  motivic correlators $\{f\}_4$ and $\{g, h\}_{3,1}$ for $f,g,h \in \Q(X)$ such that the    $\Lambda^2\B_2$-component $\delta_{2,2}(\A)$ of the   cobracket  vanishes. Then in the motivic Lie coalgebra $\A$ is a linear combination of motivic classical $4$-logarithms $\{f\}_4$ for $f\in \Q(X).$
\el
\begin{proof}
For $F_1,\dots,F_5\in \Q(X)$ consider the five-term relation
\[
r_5(F_1, \dots , F_5)=\sum_{i=1}^5(-1)^5 \left\{[F_1^{(k)},  \ldots , \widehat F_i^{(k)}, \ldots F_5^{(k)}] \right\} \in \Q[\Q(X)].
\] 
 
Assume that 
\[
\A= \sum_i a_i\{f_i\}_4 + \sum_j b_j  \{g_j, h_j\}_{3,1}, ~~~~a_i, b_j\in \Q, ~~f_i, g_j, h_j \in \Q(X).  
\]
We have 
\[ 
\delta_{2,2}\Bigl(\sum_j b_j ( \{g_j, h_j\}_{3,1})\Bigr) = \sum_j b_j (\{g_j\}_2 \wedge \{h_j\}_{2}),
\]
so 
\[
\sum_j b_j \cdot \{g_j\} \wedge \{h_j\}=\sum c_k \left(r_5\Bigl(F_1^{(k)},   F_2^{(k)}, F_3^{(k)}, F_4^{(k)}, F_5^{(k)}\Bigr)\wedge G_k \right)
\] 
for $c_k\in \Q$ and $F_i^k, G_k\in \Q(X).$
Since this is an identity in $\Lambda^2(\Q[\Q(X)] )$, it   implies that 
$$
\sum_j b_j  \{g_j, h_j\}_{3,1}  - \sum_k c_k  \sum_{i=1}^5(-1)^5\left \{[F_1^{(k)},  \ldots , \widehat F_i^{(k)}, \ldots F_5^{(k)}], G_k \right \}_{3,1} =0.
 $$
Theorem \ref{L4TH} (or \ref{THRN}) implies that the second summand in this formula is a sum of  motivic classical $4$-logarithms. 
\end{proof}

\section{Cluster $K_2$-varieties and cluster   polylogarithm maps} \la{SEC2N}

Cluster algebras were invented in  \cite{FZ01}. Cluster $K_2$-varieties are a geometric variant of cluster algebras. As the name suggests, they come equipped with a canonical class in $K_2$  \cite{FG03a}. 

For simplicity of the exposition, we discuss cluster varieties only in the simply-laced case, which means that we assume 
 $\varepsilon_{ij}= - \varepsilon_{ji}\in \Z$ in Definition \ref{D3.1}. 

\subsection{$K_2$-morphisms and cluster transformations} \la{SEC7.1}

 The exposition  in Section \ref{SEC7.1}  is borrowed from  \cite[Section 6]{FG03a}.

\paragraph{$K_2$-tori and $K_2$-morphisms.} Below, we always consider $K_2-$groups modulo $2-$torsion. The Milnor $K_2$-group of $\F$ is given by 
$$
{\rm K}_2^M(\F)=  \frac{\Lambda^2\F^{\times}}{\langle x\wedge (1-x),\  x\in \F^{\times}\setminus \{1\} \rangle}.
$$

\begin{definition} \la{D3.1}
A $K_2$-torus of rank $n\geq 1$ is a pair $(T, W)$, where $T$  is a split torus over $\Z$, and  
$W\in \Lambda^2\Z[T]^{\times}\otimes\Q$, where $\varepsilon_{ij}= - \varepsilon_{ji} $ below are integers:
\[
T={\rm Spec}\bigl(\Z[A^{\pm 1}_1,\ldots, A^{\pm 1}_n]\bigr), ~~~~~~~W=\frac{1}{2}\sum_{i,j=1}^{n} \varepsilon_{ij}A_i \wedge A_j = \sum_{1\leq i<j\leq n}  \varepsilon_{ij}A_i \wedge A_j.
\]
\end{definition}

A $K_2$-torus is the same thing as  a pair $(\Lambda, \varepsilon)$, where   $\Lambda$  is a lattice and $\varepsilon\in \Lambda^2(\Lambda)$ - a antisymmetric bilinear form  
on the dual lattice. 
The torus is recovered by $T:= {\rm Hom}(\Lambda, {\Bbb G}_m)$, so that $\Z[T]=\Z[\Lambda]$. 
So $W$ is just a coordinate expression of   $\varepsilon$. 
The name comes from the fact that $W$ defines an element $[W]$ in the Milnor 
$K_2$-group of the function field $ \Q(T) $  of the torus. 
Applying   map (\ref{MK2})  we get a closed $2$-form with logarithmic singularities on the $K_2-$torus:
\[
\frac{1}{2}\sum_{i,j=1}^{n} \varepsilon_{ij} d \log (A_i)  \wedge d\log (A_j).
\]

\begin{definition}
A $K_2$-morphism  of $K_2$-tori $(T_1,W_1)\to (T_2,W_2)$  is a birational map $f \colon T_1 \to T_2$ such that $[f^*W_2]=[W_1]$ in  $K_2$, modulo $2$-torsion.
\end{definition}

A trivial example of a $K_2$-morphism is a map of  tori preserving the forms: $f^*W_2=W_1$.  The first examples of non-trivial $K_2$-morphisms, which turned out to be  the main source of  $K_2$-morphisms,    are given by {\it cluster transformations}, whose definition we recall shortly.  

\paragraph{Seeds and mutations.} Consider a $K_2-$torus $(T, W)$ and choose a basis $A_1, \ldots, A_n$ of the lattice of characters of $T$. Such a  triple $(T, W, \{A_i\})$, where the coordinates $A_i$ are parametrized by a set $I$, is called {\it a seed}. 
It is the same thing as an oriented quiver, whose vertices  are indexed by the elements of the set $I$, so that the vertices  $i$ and $j$ are connected by $|\varepsilon_{ij}|$ arrows going from $i$ to $j$ if  
 $\varepsilon_{ij}>0$ and in the opposite direction otherwise. 
 
 For each  
$1 \leq k \leq n$ we consider a new $K_2$-torus $(T', W')$ with basis $A_1', \ldots A_n'$ and the form
\[
\begin{split}
&W'=\frac{1}{2} \sum_{i,j=1}^{n} \varepsilon_{ij}'A_i' \wedge A_j',\\
 \end{split}
\]
where the new matrix $\varepsilon'_{ij}$ is given by the  Fomin-Zelevinsky formula \cite{FZ01}:
\[
\begin{split}
&\varepsilon_{ij}':=
\begin{cases} 
-\varepsilon_{ij}, & \mbox{if } k \in \{i,j\}, \\ 
\varepsilon_{ij}, & \mbox{if } \varepsilon_{ik}\varepsilon_{kj}\leq 0, \\
\varepsilon_{ij}+|\varepsilon_{ik}|\varepsilon_{kj}, & \mbox{if } \varepsilon_{ik}\varepsilon_{kj} > 0. \\
\end{cases}\\
\end{split}
\]
There is   a birational transformation    
$
\mu_k\colon T \lra T',
$ 
called a mutation at vertex $k$, defined by setting   \cite{FZ01}:
\[
\begin{split}
&\mu_k^* (A_i')=A_i  ~~\mbox{for $i \neq k$, and}\\
&\mu_k^* (A_k')=\frac{\prod_{j|\varepsilon_{kj}>0}A_j^{\varepsilon_{kj}}+\prod_{j|\varepsilon_{kj}<0}A_j^{-\varepsilon_{kj}}}{A_k}. \\
\end{split}
\]

 The crucial fact is that although the mutation $\mu_k$ does not preserve  $W$, 
it is   a $K_2$-morphism. 
Even better, for a seed given by a $K_2$-torus $(T,W)$ and a basis  $A_1, \ldots A_n$, consider the following regular function on the torus:
\be \la{FXk}
X_k=\prod_{1\leq j \leq n}A_j^{\varepsilon_{kj}} \in \Z[T].
\ee
Then according to   \cite[Proposition 6.2]{FG03a}
\be \la{WEQ}
W-W'=(1+X_k) \wedge X_k = \delta \{-X_k\}_{2}.
\ee
Motivated by relation (\ref{WEQ}),  we  assign to a mutation $\mu_k$   an element in the Bloch group 
\be \la{XELL}
\{-X_k\}_2\in \B_2(\Q(T)).
\ee

\paragraph{Cluster transformations and cluster $K_2$-varieties.}  
An  {\it isomorphism} of seeds $
(T, W, \{A_i\}) \to (T', W', \{A'_{i'}\})
$
   is  given by  an isomorphism of tori $\sigma: T \to T'$ together with    a bijection $\overline \sigma: I \to I'$  such that 
  $\sigma^*W'=W$ and $ \sigma^*A'_i = A_{\overline \sigma^{-1}(i)}$. 
  
Compositions of mutations and isomorphisms  are called {\it cluster transformations}.  Thanks to (\ref{WEQ}), every cluster transformation is a $K_2$-morphism.

\bd A cluster $K_2$-structure on an algebraic variety ${\cal A}$ over $\Q.$

A variety ${\cal A}$ has a cluster $K_2$-structure if there is a connected groupoid   ${\cal C}$ of seeds and cluster transformations,  so that    
for every seed ${\bf s} = (T, W, \{A_i\})$  in ${\cal C}$  there is a regular map, which is a birational isomorphism, 
$$
\psi_{\bf s}: {\cal A} \lra T,
$$
and for every mutation $\varphi$ between seeds $s=(T, W, \{A_i\})$  and  $s'=(T', W', \{A_{i'}'\})$ in   ${\cal C}$ there is a commutative diagram 
\[
\centerline{
  \xymatrix{
    T \ar[rr]^{\mathcal{ \varphi}}  & & T'\\
    &\ar[ul]^{\psi_{\bf s}}{\cal A}\ar[ru]_{\psi_{\bf s'}}&
  }
}
\]

An algebraic variety ${\cal A}$ over $\Q$  is called a cluster $K_2$-variety if it admits
a non-trivial cluster $K_2-$structure. In this case we call ${\cal A}$ a cluster
$K_2-$variety, or cluster variety for short.\ed

The algebra ${\cal O}({\cal A})$ of regular functions on a cluster
$K_2$-variety ${\cal A}$ consists of all rational functions $F \in \Q({\cal A})$ such that for each seed ${\bf s}$ the function $F$ is expressed as 
a Laurent polynomial 
of the variables  $\{\psi_{\bf s}^*A_i\}$.

The most basic   variety with a cluster 
$K_2-$structure is   the moduli space ${\rm Conf}_n(V_2)$ of generic configurations of $n$ vectors   in a two dimensional 
symplectic vector space, see Section \ref{sec8.2}. The objects of the groupoid ${\cal C}$ are given by the triangulations ${\cal T}$ of the 
convex $n-$gon ${\rm P}_n$. The morphisms are generated by the flips of triangulations at the diagonals, and by isomorphisms of triangulations. 

 \paragraph{The cluster origin of  relations for the dilogarithm.}  Given a cluster variety ${\cal A}$ and a function $F$ on the cluster torus 
of a seed ${\bf s}$ of the associated connected groupoid,   we  abuse notation by writing $F:= \psi_{\bf s}^*(F) \in {\cal O}({\cal A})$. 
For example, there are functions $X_k = \psi_{\bf s}^*(X_k)$ on ${\cal A}$ given by (\ref{FXk}). Similarly we write $A_i= \psi_{\bf s}^*(A_i)$.

Let ${\cal A}$ be a cluster variety. Recall that the field of rational functions $\Q(\mathcal{A})$ can be identified with the field of $\Q(T)$ for every seed ${\bf s} = (T, W, \{A_i\}).$  A  cluster transformation $\varphi: (T_1, W_1) \lra (T_2, W_2)$, presented as a composition of mutations $\varphi_i$ and isomorphisms,   gives rise to 
an element  of the Bloch group, given by the sum of the elements $\{-X_{\varphi_i}\}_2$ assigned  to the mutations $\varphi_i$ by (\ref{XELL}):
$$
X_\varphi := \sum_i\{-X_{\varphi_i}\}_2 \in  \B_2(\Q({\cal A})).
$$  
 Formula (\ref{WEQ})    implies that
\[
W_1-W_2=\delta X_\varphi.
\]
So if  a  cluster transformation is {\it trivial}, that is  results in the identity map  $\varphi: (T, W) \lra (T, W)$,  then $\delta X_\varphi=0$. 
Thanks to Suslin's theorem \cite[Theorem 5.2]{Sus90}, it defines an element of the 
 group $K_3^{\rm ind}(\Q(T))$ modulo $\Z/4\Z$. The group  $K_3^{\rm ind}(\Q(T))$ is isomorphic by \cite[Corollary 5.6]{Sus90} to $K_3^{\rm ind}(\Q)$,  and thus is torsion by Borel's theorem, or, better, by the 
 theorem of Lee and Szczarba \cite{LS} that $K_3(\Z) = \Z/48\Z$.

 Therefore we arrive at a functional relation for the dilogarithm
$
\sum_{i}{\cal L}_2(-X_{\varphi_i})=0. 
$
 

\paragraph{Basic examples of trivial cluster transformations.}  
Given a seed ${\bf s}$, denote by 
$\sigma_{ij}({\bf s})$  a new seed induced by the bijection  
$I \to I$  
interchanging $i$ and $j$.  The following relations (\ref{K10}) are affiliated with the rank two 
Dynkin diagrams, of the Dynkin types $A_1 \times A_1$ and $A_2$. 

\bl \la{FTFTFT} 
For any cluster variety ${\cal A}$, given a pair $i,j$ such that $\varepsilon_{ij}=-p$, where $p=0,1$, we have
\be \label{K10}
(\sigma_{ij}\circ \mu_i)^{p+4} = \mbox{a trivial cluster transformation}. 
\ee
\el

\paragraph{The Abel five-term relation for the dilogarithm.} 
We start with a $K_2$-torus of the Dynkin type $A_2$, with the coordinates $A_1, A_2$ and the form 
$
W=A_1 \wedge A_2.
$
Then  the   functions $X_k$, defined recursively as $X_k:= (\sigma \circ \mu_1)^*X_{k-1}$,  where $\sigma(1)=2$,  $\sigma(2)=1$, satisfy the pentagon recurrence:
\be \la{PENR2}
X_{k+2}=\frac{X_{k+1}+1}{X_k}.
\ee
Equation (\ref{K10}) for the  Dynkin type $A_2$ just means that the sequence $X_k$ is $5-$periodic. Therefore
\be \la{PENR}
\delta \left(\{-X_1\}_2+\{-X_2\}_2+\{-X_3\}_2+\{-X_4\}_2+\{-X_5\}_2\right)=0. 
\ee
One can show it directly:   recursion (\ref{PENR2})  just means that $X_k X_{k+2}=1+X_{k+1},$
so  
\[
 X_k \wedge X_{k+1}-X_{k+1} \wedge X_{k+2}= (1+X_{k+1})\wedge X_{k+1}. 
\]
So  
$\sum (1+X_{k+1}) \wedge X_{k+1}=0$
due to $5-$periodicity. 
   This  implies the five-term relation for the dilogarithm.

 \subsection{The cluster modular complex}
 
 The material below borrowed from \cite[Sections 2.4, 2.6]{FG03a}.
 
 Below we fix a cluster variety ${\cal A}$ together with 
the set  $I$   parametrizing the cluster ${\cal A}-$variables $A_i$ in a given seed and a subset $I_0\subset I$. One can opt to perform mutations only at the directions $k \in I-I_0$. Then  
the variables $A_j$, $j \in I_0$ do not change, and called the {\it frozen variables}. The non-frozen variables $A_i$ mutate, and produce 
new cluster variables $A_i'$, and so on. Cluster variables, aka cluster coordinates, are assigned to the vertices of the simplicial complex ${\Bbb S}$ defined below. 
 
\paragraph{The simplicial complex ${\Bbb S}$.} 
Let ${\mathbf s}$ be a seed for a given cluster ${\cal A}-$variety. Let $n:=|I|$, $m:=|I-I_0|$. 
A  decorated simplex $S$ is an $(n-1)$-dimensional
simplex equipped with a bijection  
\begin{equation} \label{17:00a}
\{\mbox{vertices of $S$}\} \stackrel{\sim}{\lra} I.
\end{equation} 
Note that the codimension one faces of $S$ are in a canonical bijection with the vertices of $S$: a vertex $k$ of $S$ is identified with the codimension one face of $S$ which does not contain the vertex $k$. 

Take a decorated simplex $S$, and glue to it $m$ other ones as follows. 
To each codimension one face of the  simplex $S$ 
 decorated by an element $k\in I-I_0$ we glue a new decorated 
simplex along its  codimension one face decorated by the same $k$. Repeating  this construction indefinitely,  
we get a simplicial complex ${\Bbb S}$. 
Let ${\mathbf S}$ be the set of all its simplices.


The cluster ${\cal A}-$coordinates are
assigned to the vertices of the simplicial complex ${\Bbb S}$. 
Cluster ${\cal X}-$coordinates are assigned
to  cooriented faces of ${\Bbb S}$.  
 Changing coorientation  
amounts to inversion of the   cluster ${\cal X}$-coordinate.
Mutations are parametrized by codimension one cooriented 
faces of ${\Bbb S}$ parametrised by  $k \in I-I_0$.

\paragraph{The cluster complex.} 
Let $ {\bf F}({\Bbb S})$ be the set of  pairs $(S,F)$ where $S$ 
is a simplex of ${\Bbb S}$ and $F$ is a codimension one face of $S$. 
Pairs of codimension one faces of the same simplex are parametrized 
by the possibly infinite set
\begin{equation} \label{5.20.03.11}
{\bf F}({\Bbb S})\times_{{\bf S}} {\bf F}({\Bbb S}).
\end{equation}
Recall that a given seed ${\bf s}$ include a matrix $\varepsilon_{ij}= \varepsilon_{ij}^{\bf s}$. 
The possibly infinite collection of  all functions $\{\varepsilon^{\bf s}_{ij}\}$ can be viewed as  a single 
function ${\cal E}$ on the set (\ref{5.20.03.11}). Note that the superscript ${\bf s}$ points out the simplex of ${\bf S}$ in (\ref{5.20.03.11}), since the set of  seeds obtained by mutations of a given one 
is canonically identified with the set ${\bf S}$.

Let  ${\rm Aut}({\Bbb S})$ be the automorphism group  of the 
simplicial complex ${\Bbb S}$. It contains the 
subgroup ${\rm Aut}_0({\Bbb S})$ respecting the decoration bijections (\ref{17:00a}). 
The group ${\rm Aut}({\Bbb S})$ is a semidirect product:
$$
0 \lra {\rm Aut}_0({\Bbb S}) \lra  {\rm Aut}({\Bbb S})
\stackrel{}{\lra}  {\rm Per}
\lra 0.
$$ 
Here ${\rm Per}$ is the group of automorphisms of the pair $(I, I_0)$. 
Given an ${\bf s} \in {\bf S}$, the group ${\rm Per}$ is realized as a subgroup ${\rm Aut}({\Bbb S})$ permuting the faces of the simplex $S^{{\bf s}}$, parametrised by the ${\bf s}$. The group ${\rm Aut}({\Bbb S})$ 
acts on the set (\ref{5.20.03.11}), and hence on the  functions ${\cal E}$. 

\begin{definition} \label{5.19.03.1}  Given a cluster $K_2-$variety ${\cal A}$ with the sets $I$ parametrizing the cluster variables in any given seed, 
and the subset $I_0$ parametrizing the frozen ones, 
the group $\Delta= \Delta({\cal A}, I, I_0)$  is the subgroup of ${\rm Aut}({\Bbb S})$  preserving ${\cal E}$ and  all cluster ${\cal A}$-coordinates:
 \begin{equation} \label{5.19.03.11}
\Delta:= \{\gamma \in 
{\rm Aut}({\Bbb S}) \quad | \quad \gamma^*A_{j} = A_j,\  \forall j\in I, ~~\gamma^* ({\cal E}) = 
{\cal E}\}.
\end{equation}

The {cluster complex} $C = C({\cal A}, I, I_0)$ is the quotient of 
${\Bbb S}$ by the action of the group $\Delta$:
$$
C: 
= {\Bbb S}/\Delta. 
$$
\end{definition}


A simplicial face of a simplicial complex  
is of {\it finite type} if it is contained in finitely many  simplices. 
We define the reduced cluster complex $C^*$ as the union of finite type faces of $C$.

Although the reduced cluster complex may not be   a simplicial complex since 
 certain faces of its simplices may not belong to it,   it   has a  
 topological realization.  

\begin{theorem}  \label{12.9.03.2}  \cite[Theorem 2.23]{FG03a}
The topological realization of the reduced cluster complex $C^*$ is a smooth real manifold.
\end{theorem}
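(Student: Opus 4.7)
The plan is to show that the topological realization $|C^*|$ is a manifold by verifying that the link of every simplex $\sigma$ in $C^*$ is a sphere of the appropriate dimension. A point in the interior of a top-dimensional simplex of $C^*$ trivially admits a Euclidean neighborhood, so the content is concentrated at lower-dimensional strata, where the manifold property is equivalent to sphericity of links. Thus the problem reduces to a purely combinatorial statement about the star of each simplex.

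The main ingredient will be a local reduction principle at the level of $\mathbb{S}$. Given a $p$-simplex $\sigma$ whose vertices correspond to cluster $\mathcal{A}$-coordinates $A_{i_0},\ldots,A_{i_p}$, I would show that the star of $\sigma$ in $\mathbb{S}$ is combinatorially the join of $\sigma$ with the full simplicial complex $\mathbb{S}^\sigma$ of a new cluster structure of rank $n-p-1$, obtained from the original data $(I,I_0,\{\varepsilon_{ij}\})$ by enlarging the frozen set to $I_0\cup\{i_0,\ldots,i_p\}$. The idea is that seeds containing $\sigma$ as a face are precisely those extending the partial seed $\{A_{i_0},\ldots,A_{i_p}\}$, and mutations among them are exactly those not performed in the directions $i_0,\ldots,i_p$. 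Under this identification the link of $\sigma$ in $\mathbb{S}$ coincides with $\mathbb{S}^\sigma$, and the finite type condition on $\sigma$ translates literally into the finite type condition for the auxiliary cluster structure.

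Next I would invoke the Fomin--Zelevinsky classification: a finite type cluster structure has finitely many seeds, and its reduced cluster complex is the boundary of the corresponding generalized associahedron, hence a simplicial sphere of dimension $n-p-2$. This makes the link of $\sigma$ in $\mathbb{S}$ a sphere of the correct dimension whenever $\sigma\in C^*$. To descend to $C=\mathbb{S}/\Delta$ and then to $C^*$, I would analyse the stabiliser $\Delta_\sigma$ of $\sigma$ in $\Delta$ and show that in the finite type situation its action on the link factors through a finite group of combinatorial automorphisms of a generalized associahedron, acting by piecewise-linear homeomorphisms of the boundary sphere whose fixed loci are sub-associahedra of strictly lower dimension. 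A straightforward local analysis, together with the rigidity of finite type cluster complexes, shows that the quotient remains spherical at each stratum, so that the link in $C^*$ is a topological sphere.

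The main obstacle is the rigorous proof of the local reduction principle, and in particular establishing the bijection between seeds containing $\sigma$'s cluster variables and seeds of the frozen cluster structure together with a matching of mutations and of the relation groupoid. A secondary subtlety is that $C^*$ is not literally a simplicial complex, since a face of a finite type simplex need not itself be of finite type; one must verify that the topological realization remains well-defined and that the ``missing'' faces correspond to the boundary at infinity of infinite type strata rather than introducing pseudomanifold singularities in $|C^*|$. Once these technical points are handled, the manifold claim follows from sphericity of every link together with the local conical structure of $|C^*|$ near each simplex.
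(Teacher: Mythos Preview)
The paper does not actually prove this theorem. The entire Section~3.2 is prefaced by ``The material below borrowed from \cite[Sections 2.4, 2.6]{FG2},'' and Theorem~\ref{12.9.03.2} is simply stated as a result imported from Fock--Goncharov without any argument given here. So there is no proof in the paper to compare your attempt against.

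That said, your strategy is the right one and matches the approach in \cite{FG2}: reduce to showing that links are spheres, identify the link of a finite-type simplex with the cluster complex of the seed obtained by freezing the corresponding variables, and then invoke the Fomin--Zelevinsky theorem that finite-type cluster complexes are boundaries of generalized associahedra. Your identification of the two main obstacles (the precise local reduction principle, and the fact that $C^*$ is not a genuine simplicial complex because faces of finite-type simplices may fail to be finite-type) is accurate.

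One point deserves more care than you give it: your treatment of the quotient by $\Delta$. You assert that the stabilizer $\Delta_\sigma$ acts on the link through a finite group of automorphisms of a generalized associahedron and that ``the quotient remains spherical at each stratum.'' Recall from the definition that $\Delta$ consists of automorphisms of $\mathbb{S}$ fixing all cluster $\mathcal{A}$-coordinates and the function $\mathcal{E}$. This means $\Delta$ identifies simplices of $\mathbb{S}$ whose associated seed data coincide; in particular it acts freely on top-dimensional simplices (distinct seeds with identical clusters and exchange matrices are identified, not folded). The correct statement is that the quotient map $\mathbb{S}\to C$ is a covering on the locus of finite-type simplices, so the link in $C^*$ is literally homeomorphic to the link in $\mathbb{S}$, not a nontrivial quotient of it. Your more elaborate argument about quotients of spheres by PL automorphisms is unnecessary and, as stated, would not be sufficient (a quotient of a sphere by a finite group action is typically an orbifold, not a sphere).
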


 \paragraph{Cluster modular complexes and generalized Stasheff polytopes.}  
 Suppose  that we have a decomposition of a manifold into incomplete simplices, meaning that  
some faces of the simplices may be missing.\footnote{In other words, we have a simplicial complex minus a simplicial subcomplex.}   Then,  
the dual polyhedral decomposition of the  manifold is a locally finite {\it polyhedral complex}.  
So by Theorem \ref{12.9.03.2} the dual  complex $M$  for $C^*$ is a  polyhedral complex.   

\begin{definition} \label{12.9.03.4}
Under the same input as in Definition \ref{5.19.03.1}, the {\em cluster modular complex $M$} is the dual polyhedral complex 
for the reduced cluster complex 
$C^*$. 
\end{definition}

According to 
 \cite{FZ02} cluster algebras of {\it finite type}, i.e., the ones whose  
  cluster complexes  have  a finite number of simplices, 
are classified  by the Dynkin diagrams of type $A, D, E$. 
Their cluster modular complexes $M$ are the generalized Stasheff polytopes  \cite{FZ01b} of  types $A, D, E$ respectively. 
The cluster modular complex of type $A_n$ is the classical  Stasheff polytope ${\rm K}_n$.

\bt \label{11.2.03.1d} \cite[Section 2.6]{FG03a}
The faces of any cluster modular complex 
$M$ are  generalized  Stasheff polytopes  
of type $A,D,E$.
\et

 Lemma \ref{FTFTFT}  describes   a collection of 
 $2$-faces of the  cluster modular complex
$M$. Namely, take a  seed ${\bf s}$ with a pair of vertices $i,j$ with 
$\varepsilon_{ij}\in \{0,\pm 1\}$. Performing $h+2$ 
mutations  at the vertices $i, j, i, j, i,
\ldots$,  augmented    by the isomorphism $\sigma_{ij}^{h+2}$, we get an $(h+2)$-gon, where $h=2,3$ is the Coxeter number 
of the Dynkin diagram of type $A_1 \times A_1, A_2$ respectively.   
These  $(h+2)$-gons  are called the {\it standard 
$(h+2)$-gons} in  $M$. So they are either the {\it standard squares}, or the {\it standard pentagons}.  
Here is a more precise version of Theorem \ref{11.2.03.1d} describing the $2$-skeleton of $M$.

\bp  \label{11.2.03.1}
The $2$-skeleton of the cluster modular complex $M$ has the following structure: the vertices of  $M$ parametrize the isomorphism classes of seeds, the   oriented edges of  $M$ correspond to mutations, the $2$-faces of  $M$ are the  standard squares and pentagons.  
 
 \ep

\subsection{The cluster polylogarithm map of complexes: the setup}
Our key idea is to   associate with every $K_2$-torus $(T,W)$  the exponential of $W$:
\[
e^{ W}=\sum_{k=0}^{\infty}\frac{W^{k}}{k!}\in \Lambda^{*} (\Q(T)^{\times}).
\]
It provides   a differential form, as well as  a class in the Milnor   ring 
$K^M_{\bullet}(\Q(T))$:
\[
[e^W]=e^ {[W]} \in  K^M_{\bullet}(\Q(T)). 
\] 

\begin{color}{red}
\end{color}

The following observation is crucial. Since the class $[W]$ is invariant under $K_2-$morphisms, so is $[e^W].$ 
Next,
 given a cluster $K_2-$variety ${\cal A}$, the vertices $v_{\bf s}$ of the cluster modular polyhedral complex ${\rm M}_{\cal A}$ are  assigned to the seeds ${\bf s}$, considered modulo the 
 action of the group $\Delta$. The antisymmetric bilinear 2-form $W$ can be realized by any seed ${\bf s},$ so each seed ${\bf s}$ provides an element  
 $W_{\bf s}$.  We  assign to each vertex $v_{\bf s}$   a   representative of the same element in Milnor $K$-theory ring: 
\[
L[v_{\bf s}]=e^{W_{\bf s}} \in \Lambda^* \Q(T)^{\times}.
\]

Denote by   ${\rm C}_p({\rm M}_{\cal A})$ the free abelian group  generated by the   oriented $p$-dimensional faces of the cluster modular complex  ${\rm M}_{\cal A}$  - altering the orientation we flip the sign of the generator.  
Consider the chain complex of the modular complex ${\rm M}_{\cal A}$:
\[
{\rm C}_*({\rm M}_{\cal A}):= ~~~~~~{\rm C}_{m}({\rm M}_{\cal A}) \stackrel{d}{\lra} {\rm C}_{m-1}({\rm M}_{\cal A})\stackrel{d}{\lra}  \ldots \stackrel{d}{\lra} {\rm C}_0({\rm M}_{\cal A}).
\]
 
Now the Chevalley-Eilenberg complex of the motivic Tate Lie coalgebra ${{\cal L}}_{\bullet}$  comes into play.  
\[
{\rm CE}^{*}\left({{\cal L}}_{\bullet} \right) :=~~~~~~{{\cal L}_{\bullet} \stackrel{\delta}{\lra} \Lambda^2 {{\cal L}}_{\bullet} 
\stackrel{\delta}{\lra} \Lambda^3 {{\cal L}}}_{\bullet} \stackrel{\delta}{\lra} \ldots
\]
It is a commutative dg-algebra $\Lambda^*({\cal L}_\bullet[-1])$ with the differential 
$\delta$ induced  on the generators by the   map ${\cal L}_\bullet \lra \Lambda^2{\cal L}_\bullet$. The product  is denoted by $\ast$.    


\bcon \la{MTTHH1} For any cluster $K_2-$variety ${\cal A}$  there exists a  map of complexes, called the \underline{cluster polylogarithm map}
$$
\alpha: {\rm C}_*({\rm M}_{\cal A})\lra {\rm CE}^{*}\left({\cal L}_\bullet({\Bbb F}_{\cal A})\right)
$$
from the chain  complex of the cluster modular complex ${\rm M}_{\cal A}$ 
to  the Cartan-Eilenberg  complex  of the motivic Tate Lie coalgebra ${\cal L}_\bullet({\Bbb F}_{\cal A})$ of the field of functions ${\Bbb F}_{{\cal A}}$ on ${\cal A}$ such that 

\begin{itemize}

\item 
each vertex $v_{\bf s}$ of the  cluster modular complex ${\rm M}_{\cal A}$  is mapped to $e^{W_{\bf s} }$.

\item The map $\alpha$ is multiplicative: for each oriented face $M$ of ${\rm M}_{\cal A}$, decomposed into a product of oriented faces $M = M_1 \times \ldots \times M_n$, one has 
$$
\alpha(M_1 \times \ldots \times M_n) = \alpha(M_1) \ast \ldots \ast \alpha(M_n).
 $$
\end{itemize}
\econ

Consider the degree $2k$ part of the Chevalley-Eilenberg  complex  of the Lie coalgebra ${\cal L}_\bullet(\F)$: 
 $$
{\rm CE}_{(2k)}^{*}\left({\cal L}_\bullet({\F})\right):= ~~~~~~
 {\cal L}_{2k}(\F) \stackrel{\delta}{\lra} \ldots \stackrel{\delta}{\lra}  {\cal L}_{2}(\F) \otimes \Lambda^{2k-2}\F^\times \stackrel{\delta}{\lra} \Lambda^{2k} \F^\times.
 $$
 One has  
 $$
 H^{2k}({\rm CE}_{(2k)}^*\left({{\cal L}}_{\bullet}(\F) \right))=K^M_{2k}(\F)_\Q.
 $$ 
 
 Conjecture \ref{MTTHH1} has the following specification, describing the {\it weight $2k$ cluster polylogarithm map}.
  
  \bcon \la{MTTHH} For any cluster $K_2-$variety ${\cal A}$, and an integer  $k>0$, there exists the weight $2k$ cluster polylogarithm map, given by a  map from the chain  complex of the cluster modular complex ${\rm M}_{\cal A}$ 
to the weight $2k$ part of the  cochain complex of the motivic Tate Lie coalgebra ${\cal L}_\bullet({\Bbb F}_{\cal A})$ of the field of functions ${\Bbb F}_{{\cal A}}$ on ${\cal A}$ such that 

\begin{itemize}

\item 
each vertex $v_{\bf s}$ of the  cluster modular complex ${\rm M}_{\cal A}$  is mapped to $W_{\bf s}^{k}$:
\end{itemize}
\[
\begin{split}
&{\Bbb L}^*_{2k}: {\rm C}_{2k-*}({\rm M}_{\cal A}) \lra {\rm CE}_{(2k)}^{*}\left({\cal L}_\bullet({\Bbb F}_{\cal A})\right),\\
&{\Bbb L}^{2k}_{2k}: v_{\bf s} \in {\rm C}_0({\rm M}_{\cal A})\lms   W_{\bf s} \wedge \ldots \wedge W_{\bf s} \in \Lambda^{2k}{\Bbb F}^\times_{{\cal A}}.\\
\end{split}
\]
\econ

Elaborating this,  the weight $2k$ cluster polylogarithm map ${\Bbb L}^*_{2k}$ should look as follows:
\[ 
\begin{gathered}
    \xymatrix{
        & {\rm C}_{2k}({\rm M}_{\cal A})  \ar[d]^{{\Bbb L}^0_{2k}}  \ar[r]^{d}& {\rm C}_{2k-1}({\rm M}_{\cal A})  \ar[d]^{{\Bbb L}^1_{2k}}  \ar[r]^{~~~~~~d}&\ldots \ar[r]^{d~~~~} &   {\rm C}_{1}({\rm M}_{\cal A})  \ar[d]^{{\Bbb L}^{2k-1}_{2k}} \ar[r]^{d}& 
       {\rm C}_{0}({\rm M}_{\cal A})  \ar[d]^{{\Bbb L}^{2k}_{2k}}\\
             &  0   \ar[r]^{\delta}  &  {\cal L}_{2k}({\Bbb F}_{\cal A})    \ar[r]^{~~~~\delta}&  \ldots   \ar[r]^{\delta~~~~~~~~~~~~}& {\cal L}_2({\Bbb F}_{\cal A})  \otimes \Lambda^{2k -2} {\Bbb F}^\times_{\cal A}  \ar[r]^{~~~~~~~~\delta}  
               &\Lambda^{2k}{\Bbb F}^\times_{\cal A}.}
                          \end{gathered}
\]


In Section \ref{SEC7.4}, we define the map ${\Bbb L}^*_{2k}$, for any $k>0$,  for $1$ and $2-$dimensional faces of the cluster modular complex.  We  present it in the ``exponential''  form which works for all $k>0$.  So, we produce a map of  complexes 
\[
\begin{gathered}
    \xymatrix{
            C_{2}({\rm M}_{\cal A})  \ar[d]^{{\Bbb L}^{2k-2}_{2k}} \ar[r]^{d}&   C_{1}({\rm M}_{\cal A})  \ar[d]^{{\Bbb L}^{2k-1}_{2k}} \ar[r]^{d}& 
       C_{0}({\rm M}_{\cal A})  \ar[d]^{{\Bbb L}^{2k}_{2k}}\\ 
               \bigl(\B_3 \otimes  \Lambda^{2k-3} {\Bbb F}^\times_{\cal A}\bigr) \oplus    \bigl(\Lambda^2\B_2   \otimes \Lambda^{2k -4} {\Bbb F}^\times_{\cal A}\bigr) \ar[r]^{~~~~~~~~~~~~~\delta}& 
             \B_2  \otimes \Lambda^{2k -2} {\Bbb F}^\times_{\cal A}  \ar[r]^{~~~~~\delta}  
               &\Lambda^{2k}{\Bbb F}^\times_{\cal A}.}
                          \end{gathered}
\]

In Section \ref{sec8}, we define the map ${\Bbb L}^1_{4}$ explicitly for the cluster variety $\textup{Conf}_n(V_2)$.

\subsection{Edges, squares, pentagons} \la{SEC7.4}

Below we denote the cluster polylogarithm map of complexes  just by ${\Bbb L}$.

\paragraph{Edges.} Consider an oriented edge $e$ of the cluster modular complex. It corresponds to a mutation of seeds $\mu_e\colon (T_1, W_1) \to (T_2, W_2)$. 
Let $X_e$ be the corresponding $\mathcal{X}-$coordinate (\ref{FXk}). 
Since $[e^{W_1}]=\left[e^{W_2}\right]$ in the Milnor $K$-theory,
\[
e^{W_2}-e^{W_1}\in {\rm Im} \left (\B_2 \otimes \Lambda^* \Q(T)^{\times} \lra \Lambda^* \Q(T)^{\times}\right).
\]
For every oriented edge $e$ in the cluster modular complex we  define
\[
{\Bbb L}[e]: =-\{-X_e\}_2\otimes \frac{e^{W_1}+e^{W_2}}{2} \in \B_2 \otimes \Lambda^* \Q(T)^{\times}.
\]
If  $\overline{e}$ is the same edge with the opposite orientation then $ X_{\overline{e}}  =  X_{ {e}}^{-1}$. So     
$\{-X_{\overline{e}}\}_2 = -\{-X_{ {e}}\}_2$ and  
\[
{\Bbb L}[\overline{e}] 
=-{\Bbb L}[e].
\]

\bl \la{L6.5} One has 
\be
\delta {\Bbb L}[e] =e^{W_2}-e^{W_1}.
\ee
\el

\begin{proof}
Since  $W_2-W_1=X_e \wedge (1+X_e)$ by (\ref{WEQ}),  it is nilpotent: 
\[
(W_2-W_1)^2=(W_2-W_1)\wedge (W_2-W_1)=0.
\] 
So
\[
\begin{split}
&e^{W_2-W_1}=1+X_e \wedge (1+X_e),\\
&e^{W_1-W_2}=1-X_e \wedge (1+X_e).\\
\end{split}
\]
Using this, we get
\[
\begin{split}
& e^{W_2}-e^{W_1}=\frac{1}{2}\left(((1-e^{W_1-W_2})e^{W_2}+(e^{W_2-W_1}-1)e^{W_1} \right)=\\
&X_e \wedge (1+X_e)\wedge \frac{e^{W_1}+e^{W_2}}{2}=\delta {\Bbb L}[e].\\
\end{split}
\]
\end{proof}

Notice also that the  fact that $(W_2-W_1)^2=0$ has the following corollary:
\[
\frac{e^{W_1}+e^{W_2}}{2}=e^{\dfrac{W_1+W_2}{2}}
\]

\paragraph{Squares.}
By Proposition \ref{11.2.03.1} there are just two types  of 2-dimensional   faces in  a cluster modular complex:  squares and pentagons, 
  assigned to the    Stasheff polytopes $K_1\times K_1$ and $K_2$.

Let $f_{K_1\times K_1}$ be an oriented  square face in the cluster modular complex. Its vertices correspond to four cluster tori $(T_i, W_i)$, and the edges to the pair of commuting mutations. 
We denote by  $v_i$, $i \in \Z/4\Z$,  the corresponding vertices,   by $e_i$ the edges connecting vertices $v_i$ and $v_{i+1}$, and by $X_{i}$  
the  corresponding cluster $\mathcal{X}$-coordinates. Notice that $X_{1}X_{3}=X_{2}X_{4}=1.$ Since the boundary of the boundary of the square vanishes, we immediatelly get
\[
\delta({\Bbb L}[e_1]+{\Bbb L}[e_2]+{\Bbb L}[e_3]+{\Bbb L}[e_4])=0.
\]
We define 
\[
{\Bbb L}[f_{K_1\times K_1}]=-\{-X_{1}\}_2\wedge \{-X_{2}\}_2\otimes \frac{e^{W_1}+e^{W_2}+e^{W_3}+e^{W_4}}{4}.
\]
\bl
\[
\delta {\Bbb L}[f_{K_1\times K_1}]={\Bbb L}[e_1]+{\Bbb L}[e_2]+{\Bbb L}[e_3]+{\Bbb L}[e_4].
\]
\el
\begin{proof} Using $\{-X_{i+2}\}_2 = - \{-X_i\}_2$  we get: 
\[
\begin{split}
&{\Bbb L}[e_1]+{\Bbb L}[e_2]+{\Bbb L}[e_3]+{\Bbb L}[e_4]= -\sum_{i \in \Z/4\Z} 
 \{-X_i\}_2 \otimes \frac{e^{W_i}+e^{W_{i+1}}}{2} =  \\
&-\{-X_1\}_2\otimes  \frac{e^{W_1}+e^{W_2}-e^{W_3}-e^{W_4}}{2}-\{-X_2\}_2\otimes  \frac{e^{W_2}+e^{W_3}-e^{W_4}-e^{W_1}}{2}.
\end{split}
\]
From Lemma \ref{L6.5} we have  
\[
\begin{split}
&e^{W_{i+1}}-e^{W_i}= - \delta \{-X_i\}_2 \otimes \frac{e^{W_{i+1}}+e^{W_i}}{2}.\\
\end{split}
\]
This implies that 
\[
\begin{split}
& \frac{e^{W_1}+e^{W_2}-e^{W_3}-e^{W_4}}{2}  = \\
& \delta\{-X_2\}_2\otimes \frac{e^{W_2}+e^{W_3}}{2} - \delta\{-X_4\}_2\otimes \frac{e^{W_4}+e^{W_1}}{2} = \\
&  \delta\{-X_2\}_2\otimes \frac{e^{W_1}+e^{W_2}+ e^{W_3}+e^{W_4}}{4}. \\
\end{split}
\]
Using this identity and its cyclic shift by $1$,   we get  
\[
\begin{split}
&{\Bbb L}[e_1]+{\Bbb L}[e_2]+{\Bbb L}[e_3]+{\Bbb L}[e_4]=\\
&-(\{-X_1\}_2 \otimes \delta\{-X_2\}_2 + \{-X_2\}_2 \otimes \delta\{-X_1\}_2)\otimes \frac{e^{W_1}+e^{W_2}+e^{W_3}+e^{W_4}}{4}=\\
&\delta {\Bbb L}[f_{K_1\times K_1}].\\
\end{split}
\]
\end{proof}

\paragraph{Pentagons.}
Let $f_{K_2}$ be an oriented pentagon of the cluster modular complex. Its vertices $v_i$, where $i \in \Z/5\Z$, correspond to five cluster tori $(T_i, W_i)$. 
Denote by  $e_i = (v_i, v_{i+1})$ 
  the edge from the  verticex $v_i$ to $v_{i+1}$, and by $X_{i}$  the corresponding $\mathcal{X}$-coordinate.  
Recall   the   $5-$periodic recurrence  for the $\mathcal{X}$-coordinates:
\[
X_{i+2}=\frac{X_{i+1}+1}{X_{i}}.
\]
Just as in the square case, the ``boundary of the boundary is  zero'' argument implies that   
\[
\delta({\Bbb L}[e_1]+ {\Bbb L}[e_2]+{\Bbb L}[e_3]+{\Bbb L}[e_4]+{\Bbb L}[e_5])=0.
\]
The following observation is crucial. Consider  the following element:
\[
W_{K_2}:= W_i-X_{i-1}\wedge X_{i}, ~~~~i\in \Z/5\Z.
\]
\bl \la{L6.7} The  element $W_{K_2}$ does not depend on $i\in \Z/5\Z$. 
\el

\begin{proof} 
Since $W_{i+1}-W_i=X_i\wedge (X_{i}+1)$ and $X_{i-1}X_{i+1}=1+X_{i}$, we have
\[
\begin{split}
&(W_{i+1}-X_{i}\wedge X_{i+1}) - (W_i-X_{i-1}\wedge X_{i}) = \\
& (W_{i+1} - W_i) -X_{i}\wedge X_{i+1} X_{i-1}  =  X_i\wedge (X_{i}+1) - X_i\wedge (X_{i}+1)=0.\\
\end{split}
\]
\end{proof}

Using the element $W_{K_2}$  we define 
\[
{\Bbb L}[f_{K_2}]=\frac{1}{2}\left (\sum_{i=1}^{5}\{-X_{i}\}_3 \otimes \frac{X_{i-1}}{X_{i+1}}\right )\wedge e^{W_{K_2}}.
\]
The element ${\Bbb L}[f_{K_2}]$ is  cyclically invariant, and changes the sign if one alters the orientation of the pentagon $f_{K_2}.$

\bl \la{L6.8} One has 
\[
\delta {\Bbb L}[f_{K_2}]={\Bbb L}[e_1]+{\Bbb L}[e_2]+{\Bbb L}[e_3]+{\Bbb L}[e_4]+{\Bbb L}[e_5].
\]
\el
\begin{proof}
\[
\begin{split}
&\sum_{i=1}^{5}{\Bbb L}[e_i]=-\sum_{i=1}^{5}\{-X_{i}\}\otimes \frac{e^{W_i}+e^{W_{i+1}}}{2}=\\
&-\left (\sum_{i=1}^{5}\{-X_i\}\otimes \frac{e^{W_i-W_{K_2}}+e^{W_{i+1}-W_{K_2}}}{2}\right )\wedge e^{W_{K_2}}=\\
&-\left (\sum_{i=1}^{5}\{-X_i\}\otimes \frac{e^{X_{i-1}\wedge X_{i}}+e^{X_{i}\wedge X_{i+1}}}{2}\right )\wedge e^{W_{K_2}}=\\
&-\left (\sum_{i=1}^{5}\{-X_i\}\otimes (1+\frac{1}{2}(X_{i-1}\wedge X_i+X_{i}\wedge X_{i+1}))\right )\wedge e^{W_{K_2}}.\\
\end{split}
\]
The five-term relation (\ref{PENR})  implies that 
$
\sum_{i=1}^{5}\{-X_i\}_2\otimes e^{W_{K_2}}=0.
$ 
So we get 
\[
\begin{split}
&\sum_{i=1}^{5}{\Bbb L}[e_i]=-\left (\sum_{i=1}^{5}\{-X_i\}_2\otimes \frac{1}{2}(X_{i-1}\wedge X_i+X_{i}\wedge X_{i+1})\right )\wedge e^{W_{K_2}} =\\
&\delta \frac{1}{2} \left (\sum_{i=1}^{5}\{-X_i\}_3\otimes \frac{X_{i-1}}{X_{i+1}}\right )\wedge e^{W_{K_2}}.\\
\end{split}
\]
\end{proof}

Using this  and Theorem \ref{11.2.03.1d}   we define  for any cluster $K_2-$variety ${\cal A}$ the motivic cluster polylogarithm map  in the weights $2$ and $4$.

 \paragraph{Cluster polylogarithms as derived motivic differential forms.} Although we do not need it in this paper, we want to outline the general setup underlying 
  cluster polylogarithms. 

 Let $X$ be a variety over a field $k$. Denote by ${\cal X}$ its generic point. 
There is a canonical map 
\[
\begin{split}
&d\log:  K^{ M}_m(k({\cal X})) \lra \Omega^m_{\log}({\cal X}),\\
&f_1 \wedge \ldots \wedge f_n \lms d\log (f_1) \wedge \ldots \wedge d\log (f_n).\\
\end{split}
\]
We say that a differential $m$-form $\omega$ on  $X$ is {\it motivic} if it lies in the image of the map $d\log$: 
$$
\omega = d\log (W), ~~~~W\in K^M_m(k({\cal X})).
$$  

We would like to have an equivariant derived variant of motivic differential forms. Recall   the weight $m$ motivic complexes of sheaves 
$\underline \Z_{\cal M}(m)$, such that for ${\cal X}= {\rm Spec}(\F)$ one has 
\be \la{CCC}
H^m({\rm Spec}(\F), \underline \Z_{\cal M}(m)) = K^M_m(\F).
\ee
It follows that one can lift   $W$ to a class ${\cal W}$ in  a  Chech complex   calculating (\ref{CCC}).

An interesting situation appears for a cluster variety ${\cal A}$, which comes   with    the following  data: 

\begin{itemize} 
\item A {\it cluster atlas  of ${\cal A}$}  - a distinguished Zariski atlas  provided by cluster coordinate systems.

\item A  polyhedral complex ${\rm M}_{\cal A}$, called the {\it cluster modular complex}.

The  vertices of ${\rm M}_{\cal A}$ are the cluster coordinate systems, the oriented edges are   
mutations, 
the  two-dimensional faces are pentagons and squares, encoding the basic relations between mutations.  
Higher-dimensional cells should be view as higher relations. 

\item A  group $\Gamma_{\cal A}$, called the {\it cluster modular group},
acting by automorphisms of ${\cal A}$ and ${\rm M}_{\cal A}$.

\end{itemize}

\paragraph{Example.} The space ${\rm Conf}_n(V_2)$ has a cluster variety structure. The cluster coordinate systems are parametrized by 
the triangulations of the polygon ${\rm P}_n$. The cluster modular complex $M$, in this case, is the Stasheff polytope ${\rm K}_{n-3}$. 
The cluster modular group $\Gamma$ is the group $\Z/n\Z$ acting by the twisted cyclic shift $(l_1, \ldots, l_n) \lms (-l_n, l_1, \ldots, l_{n-1})$ on the space ${\rm Conf}_n(V_2)$. 

\vskip 3mm
Using this data, we assign to a complex of sheaves ${\cal F} $ on ${\cal A}$  its {\it cluster resolution}   ${\cal C}^\bullet_{\rm cl}({\cal F})$, 
whose definition mimics the definition of the Cech 
complex. Each vertex $v$ of ${\rm M}_{\cal A}$ corresponds to an open subset $U_v \subset {\cal A}$. We assign to each face $F$ of ${\rm M}_{\cal A}$ an open subset 
$U_F:= \cap_{v \in F}U_v$. Let $j_F: U_F \subset {\cal A}$ be the natural inclusion. We define a complex of sheaves ${\cal C}^\bullet_{\rm cl}({\cal F})$ as follows:
$$
{\cal C}^\bullet_{\rm cl}({\cal F}):= ~~~~\bigoplus_{{\rm dim}(F) = 0} j_{F*}j_F^* {\cal F}   \lra \bigoplus_{{\rm dim}(F) = 1} j_{F*}j_F^* {\cal F}   \lra \bigoplus_{{\rm dim}(F) = 2} j_{F*}j_F^* {\cal F}  \lra \ldots .
$$
If the cluster modular complex ${\rm M}_{\cal A}$ is contractable, which is the case in our main example, then we get a resolution of the sheaf ${\cal F}$. 
It is equipped  with a natural action of the cluster modular group $\Gamma_{\cal A}$. So by an {\it $\Gamma_{\cal A}$-equivariant derived motivic differential form} on a cluster variety 
${\cal A}$ we mean a cocycle 
$$
{\cal W} \in H^m\left({\cal C}^\bullet_{\rm cl}(\underline\Z_{\cal M}(m))^{\Gamma_{\cal A}}\right).
$$
There is a canonical projection
$$
d\log: H^m\left({\cal C}^\bullet_{\rm cl}(\underline\Z_{\cal M}(m))^{\Gamma_{\cal A}}\right) \lra \Omega^m_{\rm log}({\cal A}).
$$
So ${\cal W}$ is a derived motivic avatar of a $d\log$-form   $\omega:= d\log({\cal W})$.

\vskip 3mm
The map of complexes from Theorem \ref{MC4} can be viewed as   a cocycle in the complex obtained by taking the weight four part of the 
standard cochain complex of the motivic Lie algebra as a variant of the 
motivic complex $\underline\Z_{\cal M}(4)$, and using the cluster resolution ${\cal C}^\bullet_{\rm cl}(-)$  related to the cluster variety 
${\rm Conf}_m(V_2)$ to calculate $H^4$. 

 \section{Cluster polylogarithm maps for the  space ${\rm Conf}_{n}(V_2)$} \la{sec8}
 
  \subsection{The cluster polylogarithm map for  ${\rm Conf}_{n}(V_2)$ - the setup} \la{sec8.1}
  
  Let $V_2$ be a two dimensional symplectic vector space over a field $\F$ with a 2-form $\omega$. Recall the space
${\rm Conf}_n(V_2)$ parametrizing generic configurations of vectors $(l_1, \ldots, l_n)$ in $(V_2, \omega)$. 

Given a convex $n-$gon ${\rm P}_n$, there is a cluster variety ${\cal A}_{{\rm P}_n}$ related to it. Let us relate it to the moduli space ${\rm Conf}_n(V_2) $. Pick a vertex $v$ of the polygon.
 We label  the  vertices of   ${\rm P}_n$ cyclically by the vectors $l_1, \ldots, l_n$.  Then there is a canonical birational isomorphism 
  ${\rm I}_v\colon {\rm Conf}_n(V_2) \lra {\cal A}_{{\rm P}_n}$. Changing the vertex $v$ to the next one $v'$ following the cyclic order amounts to precomposition with the   twisted cyclic 
  shift $S\colon (l_1, \ldots, l_n) \lms (-l_n, l_1, \ldots, l_{n-1})$. Precisely, ${\rm I}_{v'} = {\rm I}_v\circ S$.

Given a generic configurations of vectors $( l_1, l_2, l_3)$, let us set 
\[
\bs
&W_t(l_1, l_2, l_3):=  \omega(l_1, l_2) \wedge  \omega(l_2, l_3) + \omega(l_2, l_3) \wedge  \omega(l_1, l_3) +\omega(l_1, l_3) \wedge  \omega(l_1, l_2) \in \Lambda^2\F^\times.\\
\end{split}
\]
Note that we have 
\[
\bs
&e^{W_t(  l_1, l_2, l_3)} = 1+ W_t(  l_1, l_2, l_3).\\
\end{split}
\]

Recall the Stasheff polytope ${\rm K}_{\rm P_n}$ assigned to the convex $n-$gon ${\rm P_n}$. Its $p-$dimensional  faces are parametrized by collections of $n-p-3$ non-intersecting  diagonals of ${\rm K}_{\rm P_n}$. So each 
 decomposition ${\cal Q}$ of the polygon 
${\rm P}_n = {\rm Q_0} \cup \ldots \cup {\rm Q_d}$,  obtained by cutting the polygon by $d$ non-intersecting diagonals, give rise to a face, denoted by ${\rm K}_{\cal Q}$.   

 Faces of  Stasheff polytopes have a beautiful factorization structure. Namely, 
denote by ${\rm K}_{\rm Q}$  the Stasheff polytope   assigned to the convex polygon ${\rm Q}$. 
 Then the face ${\rm K}_{\cal Q}$ assigned to a decomposition $ {\rm Q_0}\cup  \ldots \cup  {\rm Q_d}$ is   
 a product of Stasheff polytopes
\be \la{DSP}
{\rm K}_{\cal Q} = {\rm K}_{\rm Q_0} \times \ldots \times  {\rm K}_{\rm Q_{d}}.
\ee
 
 We denote by $[{\rm K}_{\rm Q}, {\rm or}_{\rm Q}]$ a Stasheff polytope ${\rm K}_{\rm Q}$ equipped with an orientation.  If ${\rm K}_{\rm Q}$ is a point, the orientation is canonical, and thus omitted from the notation.

 Recall the chain complex ${\rm C}_\bullet({\rm K}_{\rm P_n})$ of the Stasheff polytope of ${\rm P}_n$.
$$
{\rm C}_\bullet({\rm K}_{\rm P_n}):= ~~~~~~{\rm C}_{n-3}({\rm K}_{\rm P_n}) \stackrel{d}{\lra} {\rm C}_{n-4}({\rm K}_{\rm P_n})\stackrel{d}{\lra}  \ldots \stackrel{d}{\lra} {\rm C}_0({\rm K}_{\rm P_n}).
 $$
   Denote by ${\Bbb F}_{n}$ the field of rational functions with $\Q$-coefficients on the 
cluster variety assigned to the polygon ${\rm P}_n$.

Our next goal is to state Conjecture \ref{MTTHH11}, which we use as a setup. In this paper, we do not prove
Conjecture \ref{MTTHH11}, but we derive  the pieces of the map $\alpha$, which we need to prove our main results explicitly.  Most importantly, we do not define  the coalgebra ${\cal L}^g_\bullet$, just indicating that it can be defined as 
the Lie coalgebra of the category of, say, mixed Hodge-Tate structures of geometric origin. 

We denote by $\ast$ the product in the commutative dg-algebra ${\rm CE}^{*}\left({\cal L}^g_\bullet({\Bbb F}_{n})\right)$.

\bcon \la{MTTHH11} For each $n\geq 3$ there exists   a map, called the cluster polylogarithm map,  
\be \la{ICON4}
\alpha: {\rm C}_\bullet({\rm K}_{\rm P_n}) \lra {\rm CE}^{*}\left({\cal L}^g_\bullet({\Bbb F}_{n})\right)
\ee
from the chain complex of the Stasheff polytope of ${\rm P}_n$ to the Cartan-Eilenberg commutative dg-algebra of the motivic Lie coalgebra of the  field ${\Bbb F}_{n}$ with the following properties: 

\begin{itemize}

 \item {\em The initial condition:} For a triangle $t = {\rm P}_3$ we have 
\be \la{ICON1}
\bs
& \alpha [{\rm K}_{\rm t}]= e^{W_t}.
\\
\end{split}
\ee

\item {\em Compatibility with the differentials:}  
$$
\delta \circ \alpha = \alpha \circ d.
$$

\item {\em Multiplicativity:} for each  decomposition  ${\cal Q}$ of a polygon  ${\rm P}_n   = {\rm Q}_0 \cup \ldots \cup {\rm Q}_{s}$,  equipped  with a   compatible set of orientations 
 ${\rm or}_{\cal Q} = {\rm or}_{{\rm Q}_0} \cdot \ldots \cdot {\rm or}_{{\rm Q}_{s}}$,  one has 
\be \la{ICON11}
\alpha [{\rm K}_{\cal Q}, {\rm or}_{\cal Q}] = \alpha [{\rm K}_{\rm Q_0},  {\rm or}_{{\rm Q}_0}]   \ast \ldots \ast \alpha [{\rm K}_{\rm Q_{s}}, {\rm or}_{{\rm Q}_{s}}].
\ee
\end{itemize}
\econ

For example, the map $\alpha$ assigns to a triangulation ${\cal T}$ of the polygon ${\rm P_n}$ the element 
$$
\alpha[{\rm K}_{\cal T}] = e^W = \prod_{t \in T} e^{W_t}.
$$
Observe that ${\rm K}_{\cal T}$ is the vertex of the Stasheff polytope ${\rm K}_{\rm P_n}$ corresponding to a triangulation ${\cal T}$ of ${\rm P}_n$. 

  \begin{figure}[ht]
\centerline{\epsfbox{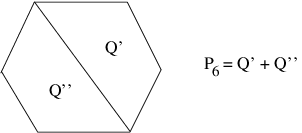}}
\caption{ Codimension one boundary faces ${\rm K}_{\rm Q}$ of the Stasheff polytope ${\rm K}_{\rm P_6}$ are parametrized by diagonals of the polygon ${\rm P_6}$, 
dividing it into  two polygons: ${\rm P_6} = {\rm Q}'\cup {\rm Q}''$.}
\label{zg12}
\end{figure}

Alternatively, the cluster polylogarithm map amounts to the following data:

\bi

\item For each integer $n \geq 3$, and element 
\be \la{ICON2}
\alpha [{\rm K}_{\rm P_n}, {\rm or}_{\rm P_n}] \in {\rm CE}^{*}\left({\cal L}^g_\bullet({\Bbb F}_{n})\right).
\ee
such that the  element $\alpha [{\rm K}_{\rm P_3}]$ is given by (\ref{ICON1}).

\item The coboundary of element (\ref{ICON2}) is given by a sum over  codimension one  faces ${\rm K}_{\rm Q}$  of the Stasheff polytope ${\rm K}_{\rm P_n}$, parametrized 
by  decompositions ${\rm P_n}= {\rm Q}' \cup {\rm Q}''$, induced by cutting the polygon by a diagonal:
\be \la{ICON3}
\bs
&\delta \circ \alpha [{\rm K}_{\rm P_n}, {\rm or}_{\rm P_n}] = \alpha \circ d  [{\rm K}_{\rm P_n}, {\rm or}_{\rm P_n}]:= \\
&\frac{{\rm or}_{\rm Q}}{{\rm or}_{\rm Q'}{\rm or}_{\rm Q''}}\sum_{{\rm Q}} \alpha [{\rm K}_{{\rm Q}'}, {\rm or}_{{\rm Q}'}] \ast \alpha [{\rm K}_{{\rm Q}''}, {\rm or}_{{\rm Q}''}].\\
\end{split}
\ee
Orientations of the faces ${\rm K}_{\rm Q}$ are induced by the orientation    ${\rm or}_{\rm P_n}$. The ratio of orientations is a sign 
$\frac{{\rm or}_{\rm Q}}{{\rm or}_{\rm Q'}{\rm or}_{\rm Q''}}\in \{\pm1\}$. 
\ei

The degrees of  components  of the element $\alpha [{\rm K}_{\rm P_n}] $ are of the same parity as  the dimension of the Stasheff polytope ${\rm K}_{\rm P_n}$. 
So the right-hand side of (\ref{ICON3}) does not depend on the order of factors. 
The above data determines uniquely  map (\ref{ICON4}) due to its  multiplicativity (\ref{ICON11}). 

\paragraph{Remark.} The map $\alpha$ from Conjecture \ref{MTTHH11} implies Conjecture \ref{MTTHH1} for the cluster variety ${\cal A}_{\rm P_n}$.  
It also implies Conjecture \ref{MTTHH1} for the cluster $K_2$-variety ${\cal A}_{SL_2, S}$ assigned  \cite{FG03} to any decorated surface $S$. 
It does not imply  however Conjecture \ref{MTTHH1} for cluster $K_2$-varieties ${\cal A}_{SL_m, S}$ where $m>2$. Indeed, although   the  polytopes in the cluster modular complex 
are always of finite type \cite[Lemma 2.24]{FG03a} for $m>2$ they 
 could be of   type different then $A_m$.

\paragraph{An example.} Recall that each triangulation ${\cal T}$ of ${\rm P}_n$ provides a $K_2$-seed
 and an element $W_{\cal T}$,   see Section \ref{sec8.2}. 
We assign to each triangulation ${\cal T}$ of the polygon ${\rm P}_n$ the $k$-th power of $W_{\cal T}$:
 $$
 {\cal T} \lms W_{\cal T}^{\wedge k}:= W_{\cal T} \wedge \ldots \wedge W_{\cal T} \in \Lambda^{2k} {\Bbb F}_{m+3}^{\times}.
 $$

Given a pair of positive integers $m,k$,   a cluster polylogarithm map  is  a  map  of complexes
$$
{\Bbb L}^*_{2k}: {\rm C}_{2k-*}({\rm K}_m) \lra {\rm CE}_{(2k)}^{*}\left({\cal L}^g_\bullet({\Bbb F}_{m+3})\right)
$$
such that the vertex $v_{\cal T}$ of the  polytope ${\rm K}_m$ assigned to a triangulation ${\cal T}$ and an integer $k$ is mapped to $W_{\cal T}^{k}$:
$$
{\Bbb L}^{2k}_{2k}: v_{\cal T} \in {\rm C}_0({\rm K}_m)\lms W_{\cal T} \wedge \ldots \wedge W_{\cal T} \in \Lambda^{2k}{\Bbb F}^\times_{m+3}.
$$
For example, in the most fundamental case  $m=2n-1, k=n$   we are looking for a map
 \be \la{MCDImmmx} 
\begin{gathered}
    \xymatrix{
        & {\rm C}_{2n-1}({\rm K}_{2n-1})  \ar[d]^{{\Bbb L}^1_{2n}}  \ar[r]^{~~~~~~d}&\ldots \ar[r]^{d~~~~} &   {\rm C}_{1}({\rm K}_{2n-1})  \ar[d]^{{\Bbb L}^{2n-1}_{2n}} \ar[r]^{d}& 
       {\rm C}_{0}({\rm K}_{2n-1})  \ar[d]^{{\Bbb L}^{2n}_{2n}}\\
               &  {\cal L}^g_{2n}({\Bbb F}_{2n+2})    \ar[r]^{~~~~\delta}&  \ldots   \ar[r]^{\delta~~~~~~~~~~~~}& \B_2  \otimes \Lambda^{2n -2} {\Bbb F}^\times_{2n+2}  \ar[r]^{~~~\delta}  
               &\Lambda^{2n}{\Bbb F}^\times_{2n+2}.}
                          \end{gathered}
 \ee 
In  Section \ref{SEC10.2}, we   define   cluster polylogarithm maps  in the    weights two and four. 





   \subsection{The cluster dilogarithm map of complexes and the Bloch complex} \la{SEC10.2}
   
   With the notable exception of the  crucial Proposition \ref{L7.4}, other results of Sections \ref{SEC10.2}-\ref{SECC7.4} already appeared in Section \ref{SEC7.4} in the  cluster form. 
 However, translating them into the concrete language of configurations  is a non-trivial task, which requires a careful setting of the background and 
   normalizations. It also  serves as a key example. Therefore we present all  details.

\paragraph{The $m=0, k=1$ case.} 
Then $ {\rm K}_{0}$ is a point, 
   ${\rm C}_{0}({\rm K}_{0})=\Z$, and we get a map, see Figure \ref{zc4}: 
\[
  \begin{split}
 &  {\Bbb L}_2: {\rm C}_{0}({\rm K}_{0}) \lra    \Lambda^{2}{\Bbb F}^\times_{3}, ~~~~1 \lms W_t.\\
 &  W_t:= \omega(l_1, l_2) \wedge  \omega(l_2, l_3) + \omega(l_2, l_3) \wedge  \omega(l_1, l_3) +\omega(l_1, l_3) \wedge  \omega(l_1, l_2).\\
   \end{split}
\]

 \paragraph{The $m=1, k=1$ case.} The polytope  $ {\rm K}_{1}$ is an interval, and  the map ${\Bbb L}^*_2$ looks   as follows:
  \be \la{MCDImmmy} 
\begin{gathered}
    \xymatrix{
        & {\rm C}_{1}({\rm K}_{1})  \ar[d]^{{\Bbb L}^1_2}  \ar[r]^{d} &        
       {\rm C}_{0}({\rm K}_{1})  \ar[d]^{{\Bbb L}^{2}_2}\\
               &  \B_2[{\Bbb F}_{4}]    \ar[r]^{\delta}    
               &\Lambda^{2}({\Bbb F}^\times_{4}).\\}
                          \end{gathered}
 \ee

The map   ${\Bbb L}_2$ is defined as follows. ${\Bbb L}_2^2$ sends the vertex corresponding to the triangulation ${\cal T}$ the form $W_{\mathcal{T}}.$ The map ${\Bbb L}_2^1$ sends the generator of the group   ${\rm C}_{0}({\rm K}_{1})$ to an element 
\[
{\rm L}_2^1(l_1, l_2, l_3, l_4)=\{[x_1,x_2,x_3,x_4]\}_2\in \B_2({\Bbb F}_{4}).
\]
The commutativity of the diagram (\ref{MCDImmmy}) follows now from (\ref{MCda}).

    \begin{figure}[ht]
\centerline{\epsfbox{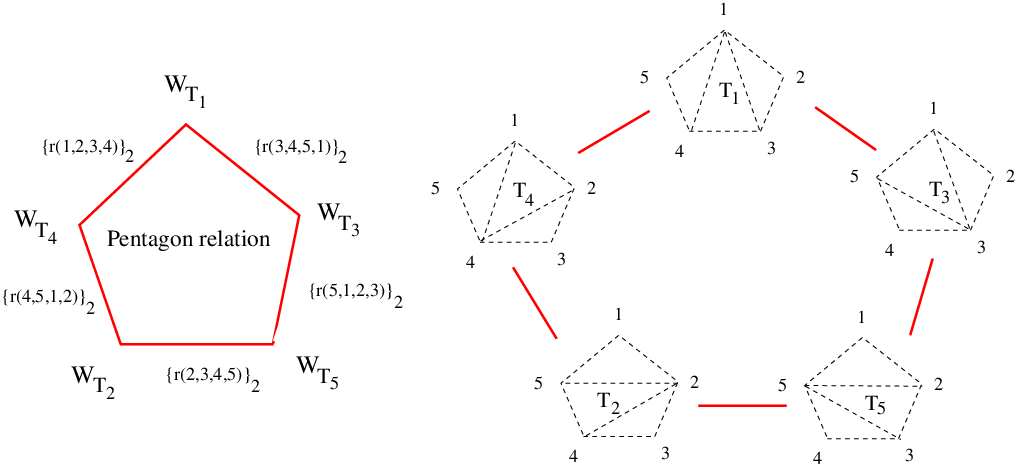}}
\caption{We label the  vertices of the  pentagon ${\rm K}_{2}$ on the left the elements $W_{\cal T}\in \Lambda^2{\Bbb F}_5^\times$ assigned to the triangulations ${\cal T}$ 
of the pentagon    on the right. We   denote  by ${\cal T}_{k}$ the triangulation by the diagonals sharing a vertex $k$. The edges of the pentagon ${\rm K}_{2}$ are labeled by the elements of the Bloch group 
${\rm B}_2({\Bbb F}_5)$. The oriented edge opposite to a vertex ${\cal T}_k$ carries an element $\{[x_{k+1}, x_{k+2}, x_{k+3}, x_{k+4}]\}_2$. The  pentagon ${\rm K}_{2}$ itself gives rise to the
 five-term relation in the Bloch group.}
\label{zc7}
\end{figure}

 \paragraph{The $m=2, k=1$ case and the Bloch complex.}  
 

For the pentagon ${\rm K}_2$ there is  a map, whose restriction to the right square is a map of complexes:
 \be \la{MCDImmmyt} 
  \begin{gathered}
    \xymatrix{
   &  {\rm C}_{2}({\rm K}_{2})  \ar[d]^{{\Bbb L}^0_2}   \ar[r]^{d}    & {\rm C}_{1}({\rm K}_{2})  \ar[d]^{{\Bbb L}^1_2}  \ar[r]^{d} &        
       {\rm C}_{0}({\rm K}_{2})  \ar[d]^{{\Bbb L}^{2}_2}\\
  & 0       \ar[r]^{~}      &  \B_2({\Bbb F}_{5})    \ar[r]^{\delta}    
               &\Lambda^{2}{\Bbb F}^\times_{5}.}
                          \end{gathered}
 \ee
 The map ${\Bbb L}^2_2$ assigns to a vertex of ${\rm K}_2$ the form $W_{{\cal T}}$ for the corresponding triangulation ${\cal T}.$ Next, the map ${\Bbb L}^1_2$ sends an edge $e$ of ${\rm K}_2$ the corresponding element $\{[x_{k+1}, x_{k+2}, x_{k+3}, x_{k+4}]\}\in \B_2(\mathbb{F}_5).$ Finally, the map ${\Bbb L}^0_2$ acts on the generator $[{\rm K}_2]$ of ${\rm C}_{2}({\rm K}_{2})=\Z$ as
 $$
 {\Bbb L}^0_2: [{\rm K}_2] \lms \sum_{k\in \Z/5\Z}    \{[x_{k+1}, x_{k+2}, x_{k+3}, x_{k+4}]\}_2=0.
 $$
This is the five-term relation. Commutativity of the right square, established in the previous section, implies that $\delta$ vanishes on the five-term relation.


\subsection{The weight four cluster polylogarithm map of complexes} \la{SECC7.4}
 
    The Stasheff polytope ${\rm K}_{3}$ is defined by using partial triangulations of the  hexagon ${\rm P_6}$,  provided by any collections of pairwise non-intersecting (except possibly in vertices) diagonals of the hexagon, 
whose vertices are cyclically labeled by $(1, \ldots,  6)$. 
Our first goal is to define   a map of complexes
   \be \la{MCDImmmys} 
\begin{gathered}
    \xymatrix{
      {\rm C}_{3}({\rm K}_{3})  \ar[d]^{{\Bbb L}^1_4}  \ar[r]^{d} &  {\rm C}_{2}({\rm K}_{3})  \ar[d]^{{\Bbb L}^2_4}  \ar[r]^{d} &    {\rm C}_{1}({\rm K}_{3})  \ar[d]^{{\Bbb L}^3_4}  \ar[r]^{d} &        
       {\rm C}_{0}({\rm K}_{3})  \ar[d]^{{\Bbb L}^{4}_4}\\
          \mathbb{L}_{4}({\Bbb F}_{6})    \ar[r]^{\delta~~~~~~~} &  {\rm B}_{3}  \otimes  {\Bbb F}^\times_{6} \bigoplus \Lambda^2 {\rm B}_{2}  \ar[r]^{~~~~\delta}  &      {\rm B}_{2}  \otimes  \Lambda^2{\Bbb F}^\times_{6} \ar[r]^{\delta}    
               &\Lambda^{4}{\Bbb F}^\times_{6}.}
                          \end{gathered}
 \ee  
 
We do that with the use of formulas of  Section \ref{SEC7.4}. To simplify computations, we work in this sections with the form $W^2,$ which is differ by a factor of $2$  from the weight four component of $e^W$ we used in Section \ref{SEC7.4}.

\paragraph{1. The map ${{\Bbb L}^4_4}$.} It assigns to a generic configuration $(l_1, \ldots , l_6)$ of $6$ vectors  in $V_2$, equipped with a symplectic form $\omega_2$,  and a triangulation ${\cal T}$ of the hexagon ${\rm P_6}$ an element 
$$
W_{\cal T}^2(l_1, \ldots , l_6) \in \Lambda^4{\Bbb F}_6^\times.
$$ 
 Since  triangulations   of the hexagon  correspond to the vertices   of  the polytope ${\rm K}_3$, we get a map 
$$
 {{\Bbb L}^4_4}: {\rm C}_{0}({\rm K}_{3})   \lra 
 \Lambda^4{\Bbb F}^\times_{6}.
$$ 

\paragraph{2. The map $ {{\Bbb L}^3_4}$.} Below we denote by $(x_1, \ldots , x_6)$ the configuration of $6$ points on ${\Bbb P}^1$ corresponding 
to  the configuration of $6$ vectors $(l_1, \ldots , l_6)$. An oriented edge $e$  of the Stasheff polytope ${\rm K}_3$ determines 
a cluster Poisson coordinate $X_e$ on the space of configurations $(x_1, \ldots , x_6)$ of six points in ${\Bbb P}^1$. Changing the orientation of the edge $e$   
amounts to inverting   the coordinate. 
 
 Given   a generic configuration $(l_1,  \ldots , l_6)$ of 6   vectors  in $V_2$, and an oriented edge   
$e$ of the Stasheff polytope ${\rm K}_3$ connecting  the vertices    corresponding to triangulations 
${\cal T}$ and ${\cal T}'$, we set
\be \la{MIII}
 {\rm e}(l_1,  \ldots , l_6):= \{-X_{{e}}\}_2\otimes (W_{{\cal T}} + W_{{\cal T}'}). 
\ee
Changing the orientation of the edge $e$ amounts to changing the sign of $\{-X_{{e}}\}_2$, and thus changes the sign of (\ref{MIII}). 
Therefore (\ref{MIII})     indeed defines a map 
$$
 {{\Bbb L}^3_4}: {\rm C}_{1}({\rm K}_{3})   \lra 
  {\rm B}_{2}({\Bbb F}_{6})  \otimes  \Lambda^2{\Bbb F}^\times_{6}.
$$
\paragraph{3. The map $ {{\Bbb L}^2_4}$.}  Let us define a map 
\be \label{MAPLL}
 {{\Bbb L}^2_4}: {\rm C}_{2}({\rm K}_{3})   \lra 
  ({\rm B}_{3}({\Bbb F}_{6})  \otimes  {\Bbb F}^\times_{6}) \oplus \Lambda^2 {\rm B}_{2}({\Bbb F}_{6}).
\ee
For that we need to define an element  on generic configurations of six points $(x_1, \ldots, x_6)$ on ${\Bbb P}^1(\F)$.
The $2$-faces of the Stasheff polytope ${\rm K}_3$ are   pentagons and  squares, accounted as follows:

\begin{itemize}

\item 
Six pentagons $P_i, i\in \Z/6\Z$  assigned to the 
short diagonals $(i-1, i+1)$ of the hexagon ${\rm P}_6$. 

\item Three squares $S_{j, j+3}$ assigned to the long diagonals 
$(j, j+3)$ of the hexagon ${\rm P}_6$. 
\end{itemize}

The ${\rm B}_{3}({\F} )  \otimes  {\F}^\times$-components of   (\ref{MAPLL}) are assigned to the pentagon faces of the polytope ${\rm K}_3$. The $\Lambda^2 {\rm B}_{2}({\F})$-components of   (\ref{MAPLL}) are assigned to the square faces of  the polytope ${\rm K}_3$.

\paragraph{3a. The map ${\rm P}$ for a pentagon.}
Given a configuration of 5 points $(x_1,  \ldots , x_5)$ on ${\Bbb P}^1(\F)$,   set:
\be \la{MI}
{\rm P}(x_1, \ldots, x_5):=    \sum_{k\in \Z/5\Z}  \{[x_{k+1}, x_{k+2}, x_{k+3}, x_{k+4}] \}_3 \otimes  \frac{[x_{k+3}, x_{k+4}, x_{k}, x_{k+1}]}{[x_{k-1}, x_{k}, x_{k+1}, x_{k+2}]} \in {\rm B}_{3}({\F} )  \otimes  {\F}^\times .
\ee
Observe that 
\be \la{SQSCR}
[x_{k+1}, x_{k+2}, x_{k+3}, x_{k+4}]= [x_{k+4}, x_{k+3}, x_{k+2}, x_{k+1}].
\ee
So reversing the cyclic order of the points $(x_1,  \ldots , x_5)$ we do not change the first factor, but inverse the second. Therefore reversing the cyclic order 
amounts to changing the sign of ${\rm P}$:
$$
{\rm P}(x_1, x_2, x_3, x_4, x_5) = - {\rm P}(x_5, x_4, x_3, x_2, x_1).
$$
Thus the map ${\rm P}$ maps the   subgroup of ${\rm C}_{2}({\rm K}_{3})$ generated by the  pentagons to 
${\rm B}_{3}({{\Bbb F}_5} )  \otimes  {\Bbb F}_5^\times$. 

\paragraph{3b. The map ${\rm S}$ for a square.} Given a cyclically ordered configuration of 6 points $(x_1,  \ldots , x_6)$ on ${\Bbb P}^1(\F)$, let us define the following  functions 
$S_j$ of this configuration, were $ j \in \Z/6\Z$:
\be \la{MII}
\begin{split}
&{\rm S}_{j}(x_1, \ldots, x_6):= \{[x_j, x_{j+1}, x_{j+2}, x_{j+3}]\}_2 \wedge \{[x_{j+3}, x_{j+4}, x_{j+5}, x_{j+6}]\}_2 \in \Lambda^2 {\rm B}_{2}({\F} ).\\
\end{split}
\ee
 The cyclic shift by $3$ alters the sign of this expression:    
$$
{\rm S}_{j} (x_1, \ldots, x_6)= - {\rm S}_{j+3}(x_1, \ldots, x_6), ~~~~ j\in \Z/6\Z.
$$
Thanks to (\ref{SQSCR}), reversing the cyclic order of the points  
amounts to changing the sign of ${\rm S}$:
$$
{\rm S}_{1}(x_1, x_2, \ldots, x_5, x_6) = - {\rm S}_{6}(x_6, x_5, \ldots, x_2 , x_1).
$$
Now we assign to the square $S_{j, j+3}$ the function (\ref{MII}).

\paragraph{4. The crucial map $ {{\Bbb L}^1_4}:  {\rm C}_{3}({\rm K}_{3})   \lra 
  \mathbb{L}_{4}({\Bbb F}_{6})$.} 
Equivalently, it as a map on generic configurations of six points $(x_1, \ldots , x_6)$ on ${\Bbb P}^1(\F)$:
\be 
 {{\rm L}^1_4}(x_1, \ldots , x_6)  \in
  \mathbb{L}_{4}({\F}).
\ee
Recall the signed cyclic summation (\ref{SIGNS}).  
Recall the  different shapes of the Casimir element:
\be
Z:=  [x_1, x_2, x_3, x_4, x_5, x_6] = \frac{[1,2,3,4]}{[4,5,6,1]} = -\frac{|12||34||56|}{|23||45||61|}.
\ee


 \bd
 For any   $6$ distinct points $(x_1, x_2,\ldots,x_6)$ on ${\Bbb P}^1(\F)$ we set:
 \be
 \begin{split}
 {{\rm L}^1_4}(x_1, \ldots , x_6) := 
&{\rm Cyc}^-_6\Bigl( \left \{[x_1, x_2,x_3,x_4],[x_4, x_5,x_6,x_1] \right\}_{3,1} - \left\{[x_1,x_3,x_4,x_5]\right \}_4   
 -\left\{Z\right \}_4\Bigr). \\
 \end{split}
 \ee
 
\end{definition}

 \bt \la{TH7.3} The defined above maps ${\Bbb L}^\ast_4$   describe a morphism of complexes (\ref{MCDImmmys}). 

\et
\begin{proof} 
Here is the key Proposition. The commutativity of the other two squares in (\ref{MCDImmmys}) has been proved in  Section \ref{SEC7.4}. 
 
\bp \la{L7.4}
The following diagramm  is commutative:
 \be
  \begin{gathered}
    \xymatrix{
         & {\rm C}_{3}({\rm K}_{3})  \ar[d]^{{\Bbb L}^1_4}  \ar[r]^{d} &  {\rm C}_{2}({\rm K}_{3})  \ar[d]^{{\Bbb L}^2_4}      \\
              & {\Bbb L}_{4}({\Bbb F}_{6})    \ar[r]^{\delta~~~~~~~} &  {\rm B}_{3}  \otimes  {\Bbb F}^\times_{6} \bigoplus \Lambda^2 {\rm B}_{2}     }
                          \end{gathered}
 \ee  
 \ep

\begin{proof} 
Recall the signed cyclic sum ${\rm Cyc}_6^-$ from (\ref{SIGNS}). We have shown that $(\mathbb{L}^2_4\circ d)(1)$ gives the following function on configurations $(x_1, \ldots , x_6)$:
\be \la{247}
\begin{split}
&{\rm Cyc}^-_6\left ({\rm P}(x_{1}, \ldots, x_{5}) +   
{\rm S}_{i}(x_1, \ldots, x_6)\right)=\\
&{\rm Cyc}^-_6\Bigl ( \{[1,2,3,4]\}_3\otimes \frac{[3,4,5,1]}{[4,5,1,2]}+\{[2,3,4,5]\}_3\otimes \frac{[4,5,1,2]}{[5,1,2,3]}+\\
&\{[3,4,5,1]\}_3\otimes \frac{[5,1,2,3]}{[1,2,3,4]}+\{[4,5,1,2]\}_3\otimes \frac{[1,2,3,4]}{[2,3,4,5]}+\\
&\{[5,1,2,3]\}_3\otimes \frac{[2,3,4,5]}{[3,4,5,1]}+\{[1,2,3,4]\}_2\wedge\{[4,5,6,1]\}_2 \Bigr ).\\
\end{split}
\ee
We can simplify this expression as follows:
\[
\begin{split}
(\ref{247}) = &{\rm Cyc}^-_6\Bigl (\{[1,2,3,4]\}_3\otimes\frac{[3,4,5,1][4,6,1,2]}{[4,5,1,2][3,4,6,1]}+\{[3,4,5,1]\}_3\otimes\frac{[5,1,2,3][4,5,6,1]}{[1,2,3,4][5,6,1,3]}+\\
&\frac{1}{2}\{[4,5,1,2]\}_3\otimes\frac{[1,2,3,4][5,6,1,2]}{[2,3,4,5][4,5,6,1]}+\{[1,2,3,4]\}_2\wedge\{[4,5,6,1]\}_2 \Bigr ). \\
\end{split}
\]
Elaborating this we get: 
\[
\begin{split}
(\ref{247}) =&{\rm Cyc}^-_6\Bigl (2 \{[1,2,3,4]\}_3 \otimes\frac{|15||46|}{|45||16|}+\{[3,4,5,1]\}_3\otimes\frac{|15||34|}{|13||45|}Z^{-2}+\\
&\frac{1}{2}\{[4,5,1,2]\}_3\otimes Z^2+\{[1,2,3,4]\}_2\wedge\{[4,5,6,1]\}_2\Bigr ).\\
\end{split}
\]
Combining the terms containing Casimir element, we finally arrive at
\[
\begin{split}
(\ref{247}) =&{\rm Cyc}^-_6\Bigl (\delta\{[3,4,5,1]\}_4+2\{[1,2,3,4]\}_3\otimes[1,5,4,6]+\\
&\{[1,2,3,4]\}_2\wedge\{[4,5,6,1]\}_2  +
  \left ( \{[4,5,1,2]\}_3-2\{[3,4,5,1]\}_3 \right )\otimes Z\Bigl).\\
\end{split}
\]
On the other hand, (\ref{26}) implies that
\be \la{251}
\begin{split}
&\delta \circ {\rm Cyc}^-_6 \{[1,2,3,4],[4,5,6,1]\}_{3,1}= \\
&{\rm Cyc}^-_6\Bigl ( \{[1,2,3,4],[4,5,6,1]\}_{2,1}\otimes Z+\left\{Z\right \}_3\otimes\frac{[1,3,2,4]}{[4,6,5,1]}+\\
&\{[1,2,3,4]\}_3\otimes[4,6,1,5]-\{[4,5,6,1]\}_3\otimes[1,3,4,2]+\{[1,2,3,4]\}_2\wedge\{[4,5,6,1]\}_2 \Bigr ).\\
\end{split}
\ee
Observe that ${\rm Cyc}^-_6 \left( [1,3,2,4] / [4,6,5,1]\right) = Z^2$. Therefore 
we get  
\[
\begin{split}
(\ref{251}) = 
&{\rm Cyc}^-_6 \Bigl ( \{[1,2,3,4],[4,5,6,1]\}_{2,1} \otimes Z+ 
   2\{[1,2,3,4]\}_3\otimes[4,6,1,5]\\ 
   &+\{[1,2,3,4]\}_2\wedge\{[4,5,6,1]\}_2 \Bigr )+2 \delta \left\{Z\right \}_4.
\end{split}
\]
We rewrite this expression using the crucial relation ${\bf Q}_3$ for the trilogarithm, whose proof is given in Section \ref{SEC9}, and does not use the results of Section \ref{SECC7.4}:
\[
\begin{split}
(\ref{251}) = 
&{\rm Cyc}^-_6 \Bigl ( \{[1,2,4,5]\}_3-2\{[1,3,4,5]\}_3 \otimes Z   + 
    2\{[1,2,3,4]\}_3\otimes[4,6,1,5] \\ & +\{[1,2,3,4]\}_2\wedge\{[4,5,6,1]\}_2 + \delta \left\{Z\right \}_4\Bigr ).\\
\end{split}
\]
From this it follows that
\[
\begin{split}
&\bigl((\mathbb{L}^2_4\circ d)(1)\bigr) (x_1, \ldots , x_6) = \delta \circ {\rm Cyc}^-_6\Bigl ( \{[1,2,3,4],[4,5,6,1]\}_{3,1}-\{[1,3,4,5]\}_4 -\{Z\}_4\Bigr ).\\
\end{split}
\]
Proposition \ref{L7.4} is proved. \end{proof}
 Theorem \ref{TH7.3} is proved.  \end{proof}

 


\section{Lie coalgebra structure on  ${\Bbb L}_{\bullet}(\F)$}\la{SEC9}
In this section, we show that cobracket maps on ${\Bbb L}_n(\F)$ are well-defined for $n\leq 4.$ It is sufficient to consider the cases $n=3$ and $n=4$ only.

\subsection{The map ${\Bbb L}_3(\F) \lra {\Bbb L}_2(\F)\otimes \F^\times$} 
Recall Definition \ref{DEFQ3} of the group $\mathbb{L}_3(\F)$. In particular, we recall   the shape of relation ${\bf Q}_3:$

 \begin{itemize}
  \item ${\bf Q}_3:$ For any   $6$ distinct points $(x_1, x_2,\ldots,x_6)$ on ${\Bbb P}^1(\F)$ the following cyclic sum is zero:
\[
 \begin{split}
&{\rm Cyc}_6\Bigl( \left \{[x_1, x_2,x_3,x_4],[x_4, x_5,x_6,x_1]\}_{2,1} - \{[x_1,x_2,x_4,x_5]\right \}_3  + 2\cdot \left\{[x_1,x_3,x_4,x_5] \right\}_{3}\Bigr)   \\
&  -4\cdot \left\{[x_1,x_2,x_3,x_4,x_5,x_6]\right \}_3 =0. \\
 \end{split}
\]
\end{itemize}
The relation ${\bf Q}_3$ is proved in Proposition \ref{PP1}. Let us discuss first   consequences of   relation    ${\bf Q}_3$.  We obtain these consequences by applying specialization, see (\ref{121}).

\begin{lemma} \la{LLMM}
The following relation hold:   
\be \la{KER}
\begin{split}
&\left \{x,y \right\}_{2,1}=
\left\{1-x^{-1}\right \}_3+\left\{1-y^{-1}\right \}_3+\left\{\frac{1-y}{1-x}\right \}_3-\left\{\frac{1-y^{-1}}{1-x^{-1}}\right \}_3+\left\{\frac{y}{x}\right \}_3- \{1\}_3.\\
\end{split}
\ee\end{lemma}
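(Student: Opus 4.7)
The strategy is to obtain Lemma \ref{LLMM} as the Verdier specialization of the defining relation ${\bf Q}_3$ of $\mathbb{L}_3(\F)$ to the boundary divisor $D_{13} \subset \overline{\mathcal{M}_{0,6}}$, along which the marked points $x_1$ and $x_3$ collide. Since Definition \ref{DEFQ3} includes all such specializations of ${\bf Q}_3$ among the defining relations of $\mathbb{L}_3(\F)$, the resulting limit identity holds in $\mathbb{L}_3(\F)$ by construction. After $PGL_2$-normalization, the specialized identity lives on a $2$-dimensional family of $5$-point configurations, which one realizes concretely as $(\infty, 0, x, 1, y)$, where $\infty$ is the image of the collapsed pair $x_1=x_3$; the target is then exactly the equivalent form (\ref{144ac}) of the lemma.

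Concretely, I would use a $1$-parameter family such as $(x_1, x_2, x_3, x_4, x_5, x_6) = (t, \infty, -t, x, 1, y)$ and compute the limit of each summand of ${\bf Q}_3$ as $t \to 0$. For the single-trilog terms $\{[x_i, x_j, x_k, x_l]\}_3$ and the Casimir $\{[x_1,\ldots,x_6]\}_3$, each cross-ratio specializes either to one of the five cross-ratios $1-x^{-1}$, $1-y^{-1}$, $y/x$, $(1-y)/(1-x)$, $(1-y^{-1})/(1-x^{-1})$ appearing on the RHS of the lemma (when exactly one of $x_1,x_3$, or neither, is among its arguments), to the degenerate value $0$ (when $x_1, x_3$ are adjacent in the $4$-tuple), or to $1$ (when $x_1, x_3$ are separated by a single point). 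The inversion $\{z\}_3 = \{z^{-1}\}_3$ from (\ref{EATRI}) allows one to group the six cyclic shifts so that the coefficients of the five trilogs and of $\{1\}_3$ match exactly those in (\ref{144a}). For the six cyclic shifts of the $\{[\cdot,\cdot,\cdot,\cdot],[\cdot,\cdot,\cdot,\cdot]\}_{2,1}$ brackets, two shifts yield finite two-argument brackets such as $\{1-x^{-1}, 1-y^{-1}\}_{2,1}$ and its transpose, while the remaining four produce degenerate $\{1, z\}_{2,1}$ or $\{z, 1\}_{2,1}$ brackets.

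These degenerate brackets are not tractable by naively plugging into the coproduct formula (\ref{26}), which diverges at an argument equal to $1$, so they must in turn be expressed using subsidiary specializations of ${\bf Q}_3$ at simpler divisors (e.g. a divisor $D_{ij}$ at which only a single bracket degenerates, so that the Verdier specialization can be inverted explicitly). This produces an identity of the form $\{1, z\}_{2,1} = -\{1-z\}_3$, which --- after reduction via (\ref{EATRI}) --- one substitutes into the $D_{13}$ specialization. Collecting all contributions then reproduces (\ref{144a}) on the nose. The main obstacle is precisely the organization of these layered specializations: one must arrange the chain of divisor specializations so that the divergent pieces cancel between the degenerate brackets arising in different cyclic shifts. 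A useful consistency check and shortcut for identification is that the coproducts of both sides of the claimed identity, computed from (\ref{26}) and the trilog coproduct (\ref{BB}), must cancel identically in $\mathbb{L}_2(\F)\otimes \F_\Q^\times$; verifying this directly both constrains and validates the combinatorial bookkeeping.
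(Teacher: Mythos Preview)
Your approach is essentially the same as the paper's: specialize ${\bf Q}_3$ to a boundary divisor of $\overline{\mathcal M}_{0,6}$, identify the resulting degenerate brackets $\{*,1\}_{2,1}$, evaluate them via a further specialization, and substitute back using (\ref{EATRI}). The paper in fact announces the $D_{13}$ specialization in Section~1.2 as yielding (\ref{144a}) directly, so your choice of divisor is fine.

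Two points where the paper's execution is cleaner than your outline. First, the paper works at $D_{26}$ with the parametrization $x=[1,2,3,4]$, $y=[4,5,2,1]$, which makes $\{x,y\}_{2,1}$ itself (rather than $\{1-x^{-1},1-y^{-1}\}_{2,1}$) appear among the surviving finite brackets; your family $(t,\infty,-t,x,1,y)$ requires a change of variables at the end. Second, and more substantively, rather than returning to ${\bf Q}_3$ on a new divisor to evaluate the degenerate pieces, the paper specializes the \emph{already obtained} intermediate identity (its equation (\ref{333})) to $y=1$. This produces a $2\times 2$ linear system in $\{x;1\}_{2,1}$ and $\{x^{-1};1\}_{2,1}$, which one solves to get $\{x;1\}_{2,1}=-\{1-x\}_3$ directly. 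This sidesteps the ``layered specialization'' bookkeeping you flag as the main obstacle: no separate divergence cancellations between cyclic shifts are needed. Your plan would work, but the linear-system trick is the organizing idea that makes the computation short.
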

\begin{proof} 
Specializing  ${\bf Q}_3$ to the divisor $D_{26}\subset \overline {\mathcal{M}}_{0,6}$ we get, setting $x=[1,2,3,4], ~~y=[4,5,2,1]$:
\be \la{333}
\begin{split}
&\{x, y\}_{2,1} + \{y/x, 1\}_{2,1}  + \{x/y, 1\}_{2,1}  -\{1-y^{-1}\}_3  - \{1-x^{-1}\}_3 \\
& + \left \{\frac{1-x^{-1}}{1-y^{-1}}\right \}_3 - \left \{\frac{1-x}{1-y}\right \}_3 - 2 \left \{\frac{y}{x}\right \}_3   =-  2 \{1\}_3.\\ \end{split}
\ee

The following two equations are obtained by specialization.  Specializing  (\ref{333})  to $y=1$, we get the first one. 
Switching $x \leftrightarrow x^{-1}$  we get the second:
\be \la{333g}
\begin{split}
& 2\cdot  \{x, 1\}_{2,1}  + \{x^{-1}, 1\}_{2,1}  =\{1-x^{-1}\}_3    
  + 2\{x\}_3 -2\cdot \{1\}_3.\\ 
& 2\cdot  \{x^{-1}, 1\}_{2,1}  + \{x, 1\}_{2,1}  =\{1-x \}_3    
  + 2\{x\}_3 -2\cdot \{1\}_3.\\ 
    \end{split}
\ee

Solving this system of equations we get
\be \la{333s}
\begin{split}
&   \{x, 1\}_{2,1}  =    
  -   \{1-x \}_3.\\ 
    \end{split}
\ee 
Substituting to (\ref{333}), and using (\ref{EATRI}),  we get (\ref{KER}). \end{proof}

\bl
 i) The $22-$term functional equation for trilogarithm from \cite{Gon91} is just equivalent to the relation obtained by substituting relation (\ref{KER}) to  the  relation ${\bf Q}_3$.    
 
ii) The canonical map  $\B_3(\F) \lra \mathbb{L}_3(\F)$ is an isomorphism. 
\el

\begin{proof} i) This is straightforward.

 ii) The $22-$term relation for the trilogarithm  implies the one defined via the triple ratio, see the discussion on \S \ref{SectionFunctionalEquations}. On the other hand, the former can be obtained as the specialization of the latter. 
\end{proof}

\begin{definition}
Let $\delta \colon \mathbb{L}_3(\F) \longrightarrow \B_2(\F) \otimes \F^{\times}$ be a map given on the generators as follows: 
\be \la{145}
\begin{split}
&\delta\left \{x,y \right\}_{2,1}=
\left \{x\right\}_2 \otimes (1-y^{-1}) + \left \{y\right\}_2 \otimes (1-x^{-1})+\left \{\frac{1-y}{1-x}\right\}_2 \otimes \frac{y}{x}+\left \{\frac{y}{x}\right \}_2 \otimes \frac{1-y}{1-x},\\
&\delta\left \{x\right\}_3=\left \{x\right\}_2 \otimes x.\\
\end{split}
\ee
\end{definition}

It is easy to check that the composition $\mathbb{L}_3(\F) \stackrel{\delta}{\longrightarrow} \B_2(\F) \otimes \F^{\times} \stackrel{\delta}{\longrightarrow} \Lambda^3\F^\times$ is zero.
\bp \la{PP1}
The cobracket $\delta$ sends relation ${\bf Q}_3$ to zero.  
\ep

 \begin{proof} 
 Using $[1,3,4,2] =  1 - [1,2,3,4]^{-1}$ and formula (\ref{145}) for the cobracket we get:
 \be \la{210}
 \begin{split}
&\delta \circ {\rm Cyc}_6 \Bigl  (\left \{[1, 2,3,4],[4, 5,6,1] \right\}_{2,1}- \left\{[1,2,4,5]\right \}_3  +   2\left\{[1,3,4,5]\right \}_3  \Bigr ) 
 -4\delta \left\{Z\right \}_3 =\\
& {\rm Cyc}_6 \Bigl  (\left\{[1,2,3,4]\right \}_2 \otimes [4, 6,1,5]+\left\{[4,5,6,1]\right \}_2 \otimes [1, 3,4,2]+\\
&\left\{\frac{[4,6,5,1]}{[1,3,2,4]}\right \}_2 \otimes \frac{[4,5,6,1]}{[1,2,3,4]} +\left\{\frac{[4,5,6,1]}{[1,2,3,4]}\right \}_2 \otimes \frac{[4,6,5,1]}{[1,3,2,4]}\\
&- \delta\left\{[1,2,4,5]\right \}_3  + 2\delta \left\{[1,3,4,5]\right \}_3 \Bigr )  
 -4 \delta \left\{Z\right \}_3 \stackrel{(\ref{263})}{=}\\
 & {\rm Cyc}_6 \Bigl  (2\left\{[1,2,3,4]\right \}_2 \otimes [4, 6,1,5] +\left\{\frac{[4,6,5,1]}{[1,3,2,4]}\right \}_2 \otimes \frac{[4,5,6,1]}{[1,2,3,4]}\\
&- \delta\left\{[1,2,4,5]\right \}_3  + 2\delta \left\{[1,3,4,5]\right \}_3   \Bigr )
 -2 \delta \left\{Z\right \}_3.\\
 \end{split}
 \ee 
 
 
 The second equality follows from the following  observations. First, 
 \be \la{263}
\frac{[4,5,6,1]}{[1,2,3,4]} = Z^{-1}, 
~~~~{\rm Cyc}_6^-\frac{[4,6,5,1]}{[1,3,2,4]} = Z^{-2}.
\ee 
Using this we get 
\be
\begin{split}
&{\rm Cyc}_6\left\{[1,2,3,4]\right \}_2 \otimes [4, 6,1,5]={\rm Cyc}_6 \left\{[4,5,6,1]\right \}_2 \otimes [1, 3,4,2].\\
&{\rm Cyc}_6\left(\left\{\frac{[4,5,6,1]}{[1,2,3,4]}\right \}_2 \otimes \frac{[4,6,5,1]}{[1,3,2,4]}\right) \stackrel{(\ref{263})}{=}
2\delta \{Z\}_3.
\end{split}
\ee 
 
 Let us compute the $\B_2$-components of the cobracket (\ref{210}) with a given $\F^\times$-factor, after expanding each $\F^\times$-factor into irreducible components, up to a sign.
  Due to the $6-$fold cyclic symmetry it suffices to consider $|12|, |13|$ and $|14|$, 
and an easy   check shows that   the latter do not appear at all.
 
Let us calculate  the  element of $\B_2$  in front of the $|13|$-factor. From the last formula in (\ref{210}) we see that out of its five-terms, the two more complicated 
ones   do not contribute, and the rest give twice the  five-term relation  in the Bloch group $\B_2$ for  the points $(1,3,4,5,6)$:
\[
\begin{split}
& \{[4,5,6,1]\}_2+\{[3,4,5,6]\}_2+ \{[3,4,6,1]\}_2 +
\{[1,3,4,5]\}_2+ \{[1, 3,5,6]\}_2 =0.\\
\end{split}
\]
 
 Finally, computing the element of $\B_2$ in front of the $|12|$-factor we get twice the following:
 \be \la{220}
\begin{split}
& -\{[4,5,6,1]\}_2-\{[2,3,4,5]\}_2 -\{[1,2,4,5]\}_2 - \{[5,1,2,3]\}_2+\{[4,6,1,2]\}_2\\
&-\left\{\frac{[4,6,5,1]}{[1,3,2,4]}\right \}_2+\left\{\frac{[5,1,6,2]}{[2,4,3,5]}\right \}_2-\left\{\frac{[6,2,1,3]}{[3,5,4,6]}\right \}_2- \{Z\}_2.\\
\end{split}
\ee
The first three terms in the second line in this formula can be  written as  follows:
\[
\begin{split}
 &\frac{[4,6,5,1]}{[1,3,2,4]}  = \frac{[1,4,2,3]}{[1,4,6,5]}, ~~~~~~~~
 \frac{[5,1,6,2]}{[2,4,3,5]}= \frac{[1,4,2,6]}{[1,4,3,5]}~~~~~~~~
\frac{[6,2,1,3]}{[3,5,4,6]} =  \frac{[1,4,6,2]}{[1,4,5,3]}.\\
  \end{split}
\]
Using this, and the formula   $Z = \dfrac{[1,4,5,6]}{[1,4,3,2] }$, we     present (\ref{220}) as a sum of  three five-term relations:
\[
\begin{split}
&\{[1,2,4,6]\}_2-\{[1,3,4,5]\}_2+\left\{\frac{[1,3,4,5] }{[1,2,4,6]}\right \}_2-\left\{\frac{[1,4,6,2]}{[1,4,5,3]}\right \}_2  +\left\{\frac{[1,4,2,6]}{[1,4,3,5]}\right \}_2,\\
&\{[1,3,4,2]\}_2-\{[1,5,4,6]\}_2+\left\{\frac{[1,5,4,6]}{[1,3,4,2]}\right \}_2 -\left\{\frac{[1,4,2,3]}{[1,4,6,5]}\right \}_2 + \left\{\frac{ [1,4,3,2]}{[1,4,5,6]}\right \}_2, \\
&-\{[1,2,3,4]\}_2+\{[1,2,3,5]\}_2-\{[1,2,4,5]\}_2+\{[1,3,4,5]\}_2-\{[2,3,4,5]\}_2. \\
\end{split}
\]
Here we use  
\[
 \dfrac{[1,3,4,5] }{[1,2,4,6]} = \left(\dfrac{[1,5,4,6]}{[1,3,4,2]}\right)^{-1}
\]
to check that the sum of the middle terms in the first two lines vanishes. 
\end{proof}
\subsection{The map $\delta\colon \mathbb{L}_4(\F)\lra (\mathbb{L}_3(\F)\otimes \F^{\times})\oplus \Lambda^2 \B_2(\F)$} 
\begin{proposition}\la{FRWT4} 
The cobracket $\delta\colon \mathbb{L}_4(\F)\lra (\B_3(\F)\otimes \F^{\times})\oplus \Lambda^2 \B_2(\F)$ sends relation ${\bf Q}_4$ to zero. 
\end{proposition}
\begin{proof}
Consider the cluster polylogarithm map of complexes   for $m=4$. Its left part looks as follows: 
\[
\begin{gathered}
    \xymatrix{
       {\rm C}_{4}({\rm K}_{4})  \ar[d]   \ar[r]^{d} & {\rm C}_{3}({\rm K}_{4})  \ar[d]^{{\Bbb L}^1_4}  \ar[r]^{d} &  {\rm C}_{2}({\rm K}_{4})  \ar[d]^{{\Bbb L}^2_4}      \\
          0 \ar[r]    & {\Bbb L}_{4}({\Bbb F}_{7})    \ar[r]^{\delta~~~~~~~} &  {\rm B}_{3}  \otimes  {\Bbb F}^\times_{7} \bigoplus \Lambda^2 {\rm B}_{2}}
\end{gathered}.
\]
The identity $d \circ d=0$ and the commutativity of the right square imply that we have: 
$$
\delta  \sum_{i=1}^{7}(-1)^i {\rm L}^1_{4}(x_1, \ldots , \widehat x_i, \ldots , x_{7}) =0.
$$
More explicitly, this implies that $\delta$ vanishes on the $7$-term relation
 \be 
 \begin{split}
{\rm Cyc}_7& \Bigl( -\left \{[x_1,x_2,x_3,x_4],[x_4,x_6,x_7,x_1] \right\}_{3,1}\\
&+ \left \{[x_1, x_2,x_3,x_4],[x_4,x_5,x_7,x_1] \right\}_{3,1}\\
&- \left \{[x_1, x_2,x_3,x_4],[x_4,x_5,x_6,x_1 ] \right\}_{3,1}\\
&- \left \{[x_1, x_2,x_4,x_6]\right\}_{4}+ 3 \left \{[x_1,x_2,x_3, x_4,x_5, x_6]\right\}_{4}\Bigr).
\end{split}
\ee
Since $\delta$ commutes with specializations, it also vanishes on the specializations of the $7$-term relation. This finishes the proof of the proposition.
\end{proof}

\subsection{End of the proof of Theorem \ref{JMC}.} Since the map $\{x,y\}_{m-1,1} \to \{x,y\}^{\cal M}_{m-1,1} $ for $m=3,4$ respects  cobrackets by Proposition \ref{LL1}, and 
relations ${\bf Q}_m$ are killed by the cobracket thanks to Proposition \ref{PP1} and Theorem \ref{TH7.3},
 their images in ${\cal L}_m(\F)$ are  killed by the cobracket. Thus the image 
of  relation ${\bf Q}_m$ in the Hodge realization is constant. Since ${\rm Ext}_{{\cal M}_T(\Q)}^1(\Q(0), \Q(4))=0$ we complete  the proof when $m=4$ by specializing to a   point $(x,y)\in \Q^2$. 
When $m=3$ the Ext group is generated by $\{1\}_3$. The constant is computed by specializing $x=0$ at (\ref{KER}). 

It remains to show that  specialization formulas (\ref{DegenerationRules}) are preserved by the   motivic correlators map (\ref{MCOA1}).  
We already know that they hold up to a constant, since hold after the cobracket. Thus the
 constant is an element of ${\rm Ext}_{{\cal R}}^1(\Q(0), \Q(m))$, where ${\cal R}$ is the realization we consider. 
Since this Ext is of geometric origin, the constant is the realization of an element in ${\rm Ext}_{{\cal M}_T(\Q)}^1(\Q(0), \Q(m))$. The latter is zero if $m=4$. 
In the $m=3$ case   it is sufficient to check that it is annihilated by  the canonical real period map (\cite{Gon08}). 
 This boils down to proving that the appropriate 
Hodge correlator integrals behave continuously, which  follows from an argument given by N. Malkin  in \cite[Theorem 11]{M20}. 
Theorem \ref{JMC} is proved.

\section{Proof of Theorem \ref{THRN}} \la{SSEECC10}

For the convenience of the reader, we restate the main claim of Theorem \ref{THRN}. 

\begin{theorem} \la{L4TH}
The following sequence is exact:
\[
0 \longrightarrow {\rm B}_4(\F) \longrightarrow \mathbb{L}_4(\F)
\stackrel{p}{\longrightarrow}  \Lambda^2{\B}_2(\F)
\longrightarrow 0.
\]
\end{theorem}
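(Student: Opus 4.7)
The plan is to verify three assertions: surjectivity of $p$, vanishing of $p \circ i$, and exactness at $\mathbb{L}_4(\F)$ together with injectivity of $i$. The first two are immediate: by the coproduct formula (\ref{26}), $p(\{x,y\}_{3,1}) = -\{x\}_2 \wedge \{y\}_2$, and such elements span $\B_2(\F) \wedge \B_2(\F)$; and $\delta\{x\}_4 = \{x\}_3 \otimes x$ has trivial $(2,2)$-component. Injectivity of $i$ can essentially be read off Definition \ref{DEF4La}: the relations (\ref{202}) express certain $\{x,y\}_{3,1}$'s in terms of $\{x\}_4$'s (not the reverse), and ${\bf Q}_4$ always couples $\{*,*\}_{3,1}$-terms with $\{*\}_4$-terms, so no new relations among the $\{x\}_4$-generators beyond ${\cal R}_4(\F)$ are imposed.

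The main content is the inclusion $\ker p \subset i({\cal B}_4(\F))$. I would produce a splitting
$$s \colon \B_2(\F) \wedge \B_2(\F) \longrightarrow \mathbb{L}_4(\F)/i({\cal B}_4(\F)), \qquad \{x\}_2 \wedge \{y\}_2 \longmapsto \tfrac{1}{2}\bigl([\{y,x\}_{3,1}] - [\{x,y\}_{3,1}]\bigr),$$
and show it is inverse to the map $\bar p$ induced by $p$ on the quotient. Well-definedness of $s$ reduces, after antisymmetrization, to showing that the five-term sums $\sum_{i=1}^5 (-1)^i\{r_i, y\}_{3,1}$ lie in $i({\cal B}_4(\F))$, which is exactly part (b) of the theorem, equation (\ref{5TERMRzuu}), already proved by specializing ${\bf Q}_4$ to suitable strata in $\overline{\mathcal{M}}_{0,7}$. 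The equality $\bar p \circ s = \mathrm{id}$ is then immediate from the coproduct formula. For the reverse composition $s \circ \bar p = \mathrm{id}$ on a generator $[\{x,y\}_{3,1}]$ one needs the antisymmetry $\{x,y\}_{3,1} + \{y,x\}_{3,1} \in i({\cal B}_4(\F))$.

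The main obstacle is establishing this antisymmetry identity. My plan is to extract it from further specializations of ${\bf Q}_4$: identifying a tangential degeneration in $\overline{\mathcal{M}}_{0,7}$ for which the seven-point cyclic sum collapses to $\{a,b\}_{3,1} + \{b,a\}_{3,1}$ modulo terms in $i({\cal B}_4(\F))$. Alternatively, one can push the identity across the map of Theorem \ref{JMC} to the motivic Lie coalgebra and exploit shuffle relations among the motivic correlators $\mathrm{Cor}^{\mathcal M}_\infty(0,0,x,1,y)$ representing $\{x,y\}_{3,1}$, in the spirit of the argument in Proposition \ref{MOTCOIN1}, after verifying that the resulting relations descend to $\mathbb{L}_4(\F)$ itself. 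Once antisymmetry is established, the isomorphism $\bar p \colon \mathbb{L}_4(\F)/i({\cal B}_4(\F)) \xrightarrow{\sim} \B_2(\F) \wedge \B_2(\F)$ and hence the whole short exact sequence (\ref{Ext4}) follow formally.
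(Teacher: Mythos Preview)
Your overall architecture --- produce a map $\B_2(\F) \wedge \B_2(\F) \to \mathbb{L}_4(\F)/{\cal B}_4(\F)$ via $[\{x,y\}_{3,1}]$ and show it inverts $\bar p$ --- matches the paper exactly (the paper writes $\mathrm{pr}(\{x\}\wedge\{y\}):= [\{x,y\}_{3,1}]$ without your antisymmetrization, which buys nothing since you need antisymmetry anyway for $s\circ\bar p=\mathrm{id}$). But you have the difficulty backwards, and this creates a real gap. The antisymmetry $\{x,y\}_{3,1} + \{y,x\}_{3,1} \in {\cal B}_4(\F)$, which you call the ``main obstacle'', is the \emph{easy} step: a single specialization of ${\bf Q}_4$ to the divisor $D_{67}$ (colliding $x_6$ with $x_7$) yields it in one line. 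What you treat as ``already proved'' --- relation (\ref{5TERMRzuu}), part (b) of Theorem~\ref{THRN} --- is not established anywhere prior to this point; its proof \emph{is} the substance of the exact-sequence argument, and parts (b) and (c) are proved simultaneously. The paper phrases it as showing $\mathrm{pr}(F(x_1,\ldots,x_6)) = 0$, where $F$ wedges one cross-ratio with a five-term expression, and this requires a long chain: specialize ${\bf Q}_4$ to $D_{72}$ to get a ten-term identity; degenerate that further to extract the symmetries $\{x,y\}_{3,1} \equiv \{x^{-1},y^{-1}\}_{3,1} \equiv \{1-x,1-y\}_{3,1}$ and a four-term relation; use a configuration admitting a projective involution to deduce $\{x,y\}_{3,1} \equiv -\{x,1-y\}_{3,1}\equiv -\{x,y^{-1}\}_{3,1}$; then combine all of these with one more specialization lemma to force $\mathrm{pr}(F)$ to be simultaneously symmetric and antisymmetric in two of its six arguments, hence zero. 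So your plan names the right tool (specialize ${\bf Q}_4$) but aims it at the wrong target and treats the actual hard step as a black box.

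Your fallback via motivic correlators does not close the gap either: shuffle identities among $\mathrm{Cor}^{\mathcal M}_\infty(0,0,x,1,y)$ hold in ${\cal L}_4(\F)$, and the map of Theorem~\ref{JMC} goes only \emph{from} $\mathbb{L}_{\leq 4}(\F)$ \emph{to} ${\cal L}_{\leq 4}(\F)$, with no inverse claimed; relations in the target do not lift to the combinatorially defined source.
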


\begin{proof} Since $p\{x,y\}_{3,1}= \{x\}_2 \wedge \{y\}_2,$ the second map is surjective. 

The first map is injective by the very definition.

To prove exactness in the middle term we  consider a map  
\be \la{MAPpr}
\begin{split}
&{\rm pr} \colon \Lambda^2 \mathbb{Q}(\F) \longrightarrow 
\dfrac{\mathbb{L}_4(\F)}{{\rm B}_4(\F)}.\\
&{\rm pr}(\{x\}\wedge \{y\}):=\{x,y\}_{3,1}.
\end{split}
\ee

Theorem \ref{L4TH} will follow if we show that the map 
(\ref{MAPpr})  factors through  $  {\B}_2(\F) \wedge  {\B}_2(\F).$

For any six distinct points 
$(x_1,x_2,x_3,x_4,x_5,x_6)$  on ${\Bbb P}^1(\F)$ let us define an element 
 $$
 F(x_1,x_2,x_3,x_4,x_5,x_6) \in \Lambda^2\mathbb{Q}[\F].
 $$
 It is  the cross-ratio of the first four points wedge the ``five-term relation'' for the last five:  
$$
F(x_1,x_2,x_3,x_4,x_5,x_6):=
$$
$$
[x_1,x_2,x_3,x_4]\wedge \Bigl([x_2,x_3,x_4,x_5]+[x_3,x_4,x_5,x_6]+[x_4,x_5,x_6,x_2]+[x_5,x_6,x_2,x_3]+[x_6,x_2,x_3,x_4]\Bigr).
$$
The claim  that the map (\ref{MAPpr}) factors through 
$ \Lambda^2{\B}_2(\F)$ is just equivalent to  the claim that  
\be \label{PROJECTION}
{\rm pr}(F(x_1,x_2,x_3,x_4,x_5,x_6))=0.
\ee

We  prove this by   specializing   relation ${\bf Q}_4$ to different strata of $\overline {\cal M}_{0,7}$. 

Recall that a genus zero stable curve with $n$ marked points is a pair 
$(C; y_1, \ldots, y_n)$, where $C$ is a simply-connected  curve  whose components are isomorphic to ${\Bbb P}^1$,  
the only singular points are simple double points distinct from the marked points, and on each component the total number of marked and singular points   is at least $3$. 
The moduli space 
$\overline {\cal M}_{0,n}$ parametrizes genus zero stable curves with $n$ marked points $(C; y_1, \ldots, y_n)$. 

The  space $\overline {\cal M}_{0,6}$ is identified with the divisor $D_{72}\subset \overline {\cal M}_{0,7}$  
as follows:\footnote{Here $C\cup_{x_2} {\Bbb P}^1$ is a new stable curve whose components $C$ and ${\Bbb P}^1$ have a single common point.}
$$
(C; x_1, \ldots , x_6) \lms (C \cup_{x_2} {\Bbb P}^1; \{x_1, \ldots , \widehat x_2, \ldots x_6\} \cup \{y_7, y_2\}).
$$
  Informally,   points $y_7, y_2$   collide into a  point  $x_2$, and  other points $y_j$ are identified with   $x_j$. 

Lemma \ref{COML} uses this identification. 
All identities below are   modulo ${\rm B}_4(\F)$. 

\bl \la{COML} The specialization  of  relation  ${\bf Q_4}$ on the divisor in $D_{72} \subset \overline{\mathcal{M}}_{0,n}$ is equal  
modulo  ${\rm B}_4(\F)$ to the following expression, where $[a,b,c,d]$ stands for $[x_a, x_b, x_c, x_d]$:
\be \la{200a}
\begin{split}
&-\left \{ [1, 2,3,4],[4, 6,2,1] \right \}_{3,1}
-\left \{ [3, 4,5,6],[6,1,2,3] \right \}_{3,1}
-\left \{ [5,6,2,1],[1, 3,4,5] \right \}_{3,1}\\
&+\left \{ [1, 2,3,4],[4,5,2,1] \right \}_{3,1}
+\left \{ [2,3,4,5],[5,6,1,2] \right \}_{3,1}
+\left \{ [4,5,6,2],[2,1,3,4] \right \}_{3,1}\\
&+\left \{ [5,6,2,1],[1,2,4,5] \right \}_{3,1}\\
&-\left \{ [1,2,3,4],[4,5,6,1] \right \}_{3,1}
-\left \{ [3,4,5,6],[6,2,1,3] \right \}_{3,1}
-\left \{ [5,6,2,1],[1,2,3,5] \right \}_{3,1}
=0.\\
 \end{split}
\ee
\el

\begin{proof} The first line in (\ref{200a}) is obtained by taking the  sum 
$-{\rm Cyc}_7 \left \{[x_1,x_2,x_3,x_4],[x_4,x_6,x_7,x_1] \right\}_{3,1}$ and imposing the relation 
$x_2=x_7$. Since we work modulo   ${\rm B}_4$,     only three out of  seven terms in the cyclic sum 
survive. Indeed, the other four terms are 
\be \la{201}
\begin{split}
 &\left \{[2, 3, 4, 5],[5, 7, 3, 2] \right\}_{3,1}, ~~ \left \{[4, 5, 6, 7],[7, 2, 5, 4] \right\}_{3,1}, \\
&  \left \{[6, 7, 1, 2],[2, 4, 7, 6] \right\}_{3,1}, ~~\left \{[7, 1, 2, 3],[3, 5, 1, 7] \right\}_{3,1} .\\
  \end{split}
\ee
Each of them has a cross-ratio containing both points $x_2$ and $x_7$, and hence is zero modulo ${\rm B}_4$ thanks to   specialization  relations (\ref{202}).

The second plus third  lines, as well as line four,  are obtained similarly. \end{proof} 

\begin{lemma} 
The following relations hold in the quotient $\dfrac{\mathbb{L}_4(\F)}{{\rm B}_4(\F)}$:
\[
\begin{split}
&a)\left \{x,y \right \}_{3,1} = -\left\{y,x \right \}_{3,1},\\
&b)\left \{x,y \right \}_{3,1} = \left\{x^{-1}, y^{-1} \right \}_{3,1},\\
&c)\left \{x,y \right \}_{3,1} = \left\{1-x,1-y \right \}_{3,1},\\
&d)\left \{x,y \right \}_{3,1} + \left\{x,1-y \right \}_{3,1}+\left \{\frac{x}{y},\frac{1-x}{1-y}  \right \}_{3,1} + \left\{\frac{x}{y},\frac{1-y}{1-x}  \right \}_{3,1}=0.\\
\end{split}
\]

\end{lemma}
\begin{proof}
 
a) Specializing   relation  ${\bf Q_4}$ to  divisor $D_{67}$  we get the following identity, equivalent to  a): 
\[
\left \{[3,4,5,6],[6,1,2,3] \right \}_{3,1} = -\left\{[6,1,2,3],[3,4,5,6] \right \}_{3,1}.
\]

b) 
Specializing   relation  (\ref{200a}) to the divisor $(651x,x432)$ given by the stable curve   ${\Bbb P}^1 \cup_x {\Bbb P}^1$  with the double    
point $x$ and marked points $\{6,5,1\}$ and $\{4,3,2\}$ respectively,  
we obtain 
\[
\left \{ [2,3,4,5],[5,6,1,2] \right \}_{3,1}=\left \{ [1,2,3,4],[4,5,6,1] \right \}_{3,1}. 
\]
This implies  $
\left \{ x,y \right \}_{3,1}=\left \{ x^{-1}, y^{-1}  \right \}_{3,1}.
$

c) Specializing   relation  (\ref{200a})   to the divisor $(13x,2x456)$  of $D_{72}$ given by the stable curve ${\Bbb P}^1 \cup_x {\Bbb P}^1$ with the  marked points $\{1,3\}$ and $\{2,4,5,6\}$ respectively, we obtain that
\[ 
\left \{ [2,1,4,5],[5,6,1,2] \right \}_{3,1}+\left \{ [5,6,2,1],[1,2,4,5] \right \}_{3,1}=0.
\]
Equivalently, 
$ 
\left \{ x,y \right \}_{3,1}+\left \{ (1-y^{-1})^{-1},(1-x^{-1})^{-1} \right \}_{3,1}=0.
$ 
Then c) follows by using a) and b).

d) Specializing   relation  (\ref{200a})      to the divisor  $(52x, x1346)$  of $D_{72}$ given by the stable curve ${\Bbb P}^1 \cup_x {\Bbb P}^1$ with the  marked points $\{5,2\}$ and $\{1,3,4,6\}$ respectively, we obtain that
\[
\begin{split}
&\left \{ [1,2,3,4],[4,6,2,1] \right \}_{3,1}+\left \{ [1,2,3,4],[4,2,6,1] \right \}_{3,1}+\\
&\left \{ [3,4,2,6],[6,1,2,3] \right \}_{3,1}+ \left \{ [3,4,2,6],[6,2,1,3] \right \}_{3,1}=0.\\
\end{split}
\]
One checks that this is just equivalent to
\[ 
\begin{split}
&\left \{ x,y \right \}_{3,1}+\left \{ x,1-y \right \}_{3,1}+
\left \{ \frac{x}{x-y},\frac{1-y}{x-y} \right \}_{3,1}+\left \{ \frac{x}{x-y},\frac{x-1}{x-y} \right \}_{3,1}=0.\\
\end{split}
\]
Using b) and c), we get that the following two identities, which imply d): 
\[
\begin{split}
&\left \{ \frac{x}{x-y},\frac{1-y}{x-y} \right \}_{3,1}=\left \{ \frac{1}{1-\frac{y}{x}},\frac{1}{1-\frac{1-x}{1-y}} \right \}_{3,1} = \left \{  \frac{x}{y}, \frac{1-y}{1-x}  \right \}_{3,1}, \\
&\left \{ \frac{x}{x-y},\frac{x-1}{x-y} \right \}_{3,1}=\left \{ \frac{1}{1-\frac{y}{x}},\frac{1}{1-\frac{1-y}{1-x}} \right \}_{3,1}= \left \{  \frac{x}{y}, \frac{1-x}{1-y}  \right \}_{3,1} . \\
\end{split}
\]
\end{proof}

\begin{lemma} \la{LEM1}
The following relations hold  the quotient $\dfrac{\mathbb{L}_4(\F)}{{\rm B}_4(\F)}$:
\[
\begin{split}
&i)\left \{x,y \right \}_{3,1}=-\left\{x,1-y \right \}_{3,1},\\
&ii)\left \{x,y \right \}_{3,1}=-\left\{x,  y^{-1}  \right \}_{3,1}.\\
\end{split}
\]

\end{lemma}
\begin{proof}
1. Let us specialize   relation   (\ref{200a})  to $(63y,y1245).$ We get that 
\[
\begin{split}
&-\left \{ [1,2,3,4],[4,3,2,1] \right \}_{3,1} -\left \{ [5,3,2,1],[1,2,3,5] \right \}_{3,1} -\left \{ [5,3,2,1],[1,3,4,5] \right \}_{3,1} \\
&+\left \{ [1,2,3,4],[4,5,2,1] \right \}_{3,1}+
\left \{ [2,3,4,5],[5,3,1,2] \right \}_{3,1}
+\left \{ [4,5,3,2],[2,1,3,4] \right \}_{3,1}\\
&+\left \{ [5,3,2,1],[1,2,4,5] \right \}_{3,1}-
\left \{ [1,2,3,4],[4,5,3,1] \right \}_{3,1}
=0.\\
\end{split}
\]
The first two terms vanish since  $\{x,x\}_{3,1}=0$. Using $[i,j,k,l]=[l,k,j,i]$ and Lemma \ref{LEM1} we get
\be \label{D1}
\begin{split}
&\left \{ [1,2,3,4],[4,5,2,1] \right \}_{3,1}
-\left \{ [1,2,3,4],[5,4,3,2] \right \}_{3,1}-
\left \{ [1,2,3,4],[4,5,3,1] \right \}_{3,1}=\\
&\left \{ [1,2,3,5],[1,3,4,5] \right \}_{3,1}
-\left \{ [1,2,3,5],[1,2,4,5] \right \}_{3,1}
+\left \{ [1,2,3,5],[2,3,5,4] \right \}_{3,1}.\\
\end{split}
\ee

2. 
 Relation   (\ref{200a})  and Lemma \ref{LEM1} imply that
\be \label{D3}
\begin{split}
&-\left \{ [1,2,3,4],[4,6,2,1] \right \}_{3,1}
+\left \{ [1,2,3,4],[4,5,2,1] \right \}_{3,1}\\
&-\left \{ [1,2,3,4],[5,4,6,2] \right \}_{3,1}
-\left \{ [1,2,3,4],[4,5,6,1] \right \}_{3,1}-\\
&-\left \{ [5,6,2,1],[1,3,4,5] \right \}_{3,1}
-\left \{ [5,6,2,1],[2,3,5,4] \right \}_{3,1}\\
&+\left \{ [5,6,2,1],[1,2,4,5] \right \}_{3,1}
-\left \{ [5,6,2,1],[1,2,3,5] \right \}_{3,1}=\\
&\\
&\left \{ [3,4,5,6],[6,1,2,3] \right \}_{3,1}+
\left \{ [3,4,5,6],[6,2,1,3] \right \}_{3,1}.\\
 \end{split}
\ee

Let us consider now a  configuration  $(x_1, x_2, x_3, x_4, x_5, x_6)$ of six distinct points on $\mathbb{P}^1,$  such that there exists a 
projective involution ${\rm I}$ exchanging $x_1\leftrightarrow x_2$, $x_3 \leftrightarrow x_6$, $x_4\leftrightarrow x_5$. 
Then  
\be \la{EQ1}
[1,2,3,4]=[{\rm I}(1), {\rm I}(2), {\rm I}(3), {\rm I}(4)]=[2,1,6,5].
\ee
Similarly, 
\be \la{EQ2} 
\begin{split}	
&[4,6,2,1]=[5,3,1,2], ~~~~
[4,5,2,1]=[5,4,1,2],\\
&[5,4,6,2]=[4,5,3,1],~~~~
[4,5,6,1]=[5,4,3,2].\\
\end{split}
\ee
Using this and  Lemma \ref{LEM1} we see that the sum of the first four lines in (\ref{D3}) is equal to 
\be \label{D4}
\begin{split}
&-2\left \{ [1,2,3,4],[5,3,1,2] \right \}_{3,1}
+2\left \{ [1,2,3,4],[5,4,1,2] \right \}_{3,1}\\
&-2\left \{ [1,2,3,4],[4,5,3,1] \right \}_{3,1}
-2\left \{ [1,2,3,4],[5,4,3,2] \right \}_{3,1}.\\
\end{split}
\ee
Indeed, the first two lines in (\ref{D3}) are identified with the  half of   (\ref{D4}) using (\ref{EQ2}). The last two lines in (\ref{D3}) are identified with the  half of  (\ref{D4}) using (\ref{EQ1}).     This and (\ref{D3}) implies
\be \la{D5}
(\ref{D4}) = \left \{ [3,4,5,6],[6,1,2,3] \right \}_{3,1}+
\left \{ [3,4,5,6],[6,2,1,3] \right \}_{3,1}.
\ee
By Lemma \ref{LEM1} the right-hand side of  (\ref{D5})  is invariant under the flip $4 \leftrightarrow  5$. On the other hand, using (\ref{D1}) we conclude that (\ref{D4})     is antisymmetric. 
This implies that  
\[
\left \{ [3,4,5,6],[6,1,2,3] \right \}_{3,1}+
\left \{ [3,4,5,6],[6,2,1,3] \right \}_{3,1}=0.
\]
So we proved i).
 Part ii) follows from i) using part d) of Lemma \ref{LEM1}.
\end{proof}

\bc \la{COR1}
The element ${\rm pr}(F(x_1,x_2,x_3,x_4,x_5,x_6))$ is symmetric in  $x_2,x_3,x_4$, and antisymmetric in $x_5,x_6.$
\ec

\begin{lemma} \la{LEM2}
The following relation holds:
$$
{\rm pr}(F(x_6,x_5,x_1,x_2,x_3,x_4))={\rm pr}(F(x_3,x_1,x_2,x_4,x_5,x_6)).
$$
\end{lemma}
\begin{proof}
Recall for the convenience of the reader the   relation  (\ref{200a}): 
\be \la{200}
\begin{split}
&-\left \{ [1, 2,3,4],[4, 6,2,1] \right \}_{3,1}
-\left \{ [3, 4,5,6],[6,1,2,3] \right \}_{3,1}
-\left \{ [5,6,2,1],[1, 3,4,5] \right \}_{3,1}\\
&+\left \{ [1, 2,3,4],[4,5,2,1] \right \}_{3,1}
+\left \{ [2,3,4,5],[5,6,1,2] \right \}_{3,1}
+\left \{ [4,5,6,2],[2,1,3,4] \right \}_{3,1}\\
&+\left \{ [5,6,2,1],[1,2,4,5] \right \}_{3,1}\\
&-\left \{ [1,2,3,4],[4,5,6,1] \right \}_{3,1}
-\left \{ [3,4,5,6],[6,2,1,3] \right \}_{3,1}
-\left \{ [5,6,2,1],[1,2,3,5] \right \}_{3,1}
=0.\\
 \end{split}
\ee

The sum  of  terms 2) and 9)  is zero by Lemma \ref{LEM1}.

The sum of the other 8 terms is accounted, using  Lemma \ref{LEM1}, as follows: 
\[
\begin{split}
&1)+4)+6)+8) ={\rm pr} (F(x_3, x_1, x_2,x_4, x_5, x_6)) - \{[1,2,3,4], [5, 6,2,1]\}_{3,1}; \\
& 3)+5)+7)+10) =  {\rm pr} (F(x_6, x_5, x_1, x_2, x_3, x_4)) +\{[1,2,3,4], [5,6,2,1]\}_{3,1}. \\
\end{split}
\]
Adding them up, we get the claim. 
\end{proof}

Now we finish the proof of (\ref{PROJECTION}). We have
\[
\begin{split}
&{\rm pr}(F(x_6,x_5,x_1,x_2,x_3,x_4)) =
{\rm pr}(F(x_3,x_1,x_2,x_4,x_5,x_6))=\\
&{\rm pr}(F(x_5,x_1,x_2,x_6,x_4,x_3))=
-{\rm pr}(F(x_5,x_6,x_1,x_2,x_3,x_4)).\\
\end{split}
\]
The first and second equalities follow from Lemma \ref{LEM2}. The last follows from Corollary \ref{COR1}.

Thus 
${\rm pr}(F(x_1,x_2,x_3,x_4,x_5,x_6))$ is antisymmetric in the first two arguments. Since by Corollary \ref{COR1} it is symmetric in the second and third arguments, it vanishes. 

 Therefore the map   ${\rm pr}$  induces an isomorphism  (\ref{MAPpr}). 
Theorem \ref{L4TH} is proved. 
\end{proof}

\begin{remark} The map (\ref{MAPpr})  is naturally lifted to a map 
\[
\begin{split}
&{\rm p} \colon \Lambda^2 \mathbb{Q}(\F) \longrightarrow  \mathbb{L}_4(\F),\\
&{\rm p}(\{x\}\wedge \{y\}):=\{x,y\}_{3,1}.\\
\end{split}
\] 
Theorem \ref{L4TH} tells that  
\be \la{F1a}
{\rm p}(F(x_1,x_2,x_3,x_4,x_5,x_6))  \in {\rm B}_4(\F).
\ee
Its proof  can be used to give
 an explicit  formula for the element (\ref{F1a}) in ${\rm B}_4(\F)$.   

\end{remark}

\section{Bigrassmannian    and   motivic complexes} \la{SEC2}

 We want to define a map from the Bigrassmannian complex to the weight $4$ part of the 
 cochain complex of the motivic Tate Lie coalgebra ${\cal L}_\bullet({\rm F})$ of a field ${\rm F}$. However, unless $\F$ is a number field, 
the latter is a conjectural object.  So we use a version of this complex, defined by setting 
$ {\cal L}_k({\rm F}) := {\B}_k({\rm F})$ for $k=1,2,3$, and introducing 
a substitute for ${\cal L}_4(\F)$, called the group $\G_4(\F)$.  

A key step in this direction was done in \cite{Gon00}, where a  formula for the 
Grassmannian 4-logarithm function was  
 found, and the nine-term and the dual nine-term functional equations for this function were proved. 
 The nine-term  equation just means that the  
Grassmannian 4-logarithm function gives  a measurable $7$-cocycle of ${\rm GL}_4(\C)$.
 Using the decorated flag construction and the dual nine-term   equation, we get 
a continuous $7$-cocycle of ${\rm GL}_N(\C)$,  $N\geq 4$. 

The Grassmannian 4-logarithm function from \cite{Gon00} 
coincides with the $n=4$ case 
of the Grassmannian $n$-logarithm function from \cite{Gon09}. 
So one can cast the   above result as  an explicit formula for the differential of the 
Grassmannian 4-logarithm function,  expressed 
via the classical trilogarithms, dilogarithms and logarithms. 

However, results of \cite{Gon00} are not sufficient to prove 
that this $7$-cocycle is non-trivial. 

What is even more important, they do not  give a cocycle for the motivic Chern class
\be \label{mschc4}
c_{4,{n} }^{\cal M} \in H^8(\textup{BGL}_{n} , \Z_{\cal M}(4)).
\ee
This is what we are going to do now.

\subsection{Decorated $p-$flag complex $~\lra~$   Bigrassmannian complex}

\paragraph{The Bigrassmannian  complex \cite{Gon93}.} Let $G$ be a group acting on a set $X$.
{\it Configurations} of $m$ elements in $X$ are the $G$-orbits  on $X^m$. 
Denote by $(x_1, \ldots, x_m)$ the configuration 
provided by an $m$-tuple  $\{x_1, \ldots, x_m\}$.   

A configuration of $m$ vectors in a $q$-dimensional vector space 
$V_q$   over a field $\F$ under the action of the group ${\rm GL}_q(\F)$ 
is {\it generic} if any $k\leq q$ of the vectors are linearly independent. Configuration spaces assigned to isomorphic vector spaces with a fixed number of vectors are {\it canonically} isomorphic. 
Denote by $C_m(q)$ the free abelian group generated by 
generic configurations of $m$ vectors in  $V_q$. 
There are two kinds of natural homomorphisms: 
\begin{enumerate}

\item Forgetting the $i$-th vector $l_i$:
$$
\partial_i: C_{m+1}(q) \lra C_{m}(q), ~~~~(l_0, \ldots, l_m)\longmapsto 
(l_0, \ldots, \widehat l_i, \ldots, l_m).
$$

\item Projecting the vectors $(l_0, \ldots, \widehat l_j, \ldots , l_m)$ to the quotient $V_q/(l_j)$:
$$
p_j: C_{m+1}(q) \lra C_{m}(q-1), ~~~~(l_0, \ldots, l_m)\longmapsto 
(l_j~|~ l_0, \ldots, \widehat l_j, \ldots  l_m). 
$$
\end{enumerate}
Using these maps, the groups $C_m(q)$ are organized into  the {\it Grassmannian bicomplex}:

\be  \label{dfltobigr}
\begin{gathered}
    \xymatrix{
       &   &  &     \ldots \ar[r]^{\partial}  \ar[d]^{p} &   C_5(4)   \ar[d]^{p} \\
      &         &   \ldots  \ar[d]^{p}\ar[r]^{ \partial}  \ar[d]^{p} & C_5(3) \ar[r]^{\partial}    \ar[d]^{p}   &C_4(3)\ar[d]^{p} \\
        &  \ldots \ar[r]^{\partial}  \ar[d]^{p}   & C_5(2)  \ar[r]^{\partial}\ar[d]^{p}& C_4(2) \ar[r]^{\partial}   \ar[d]^{p}    &C_3(2) \ar[d]^{p}   \\
    \ldots \ar[r]^{\partial}    &  C_5(1) \ar[r]^{\partial} &\ar[r]^{\partial} C_4(1)    & C_3(1)  \ar[r]^{ \partial} & C_2(1)       }
\end{gathered}
 \ee 
Here the maps $\partial$ and $p$ on a given group $C_m(n)$ are alternating sums of the maps $\partial_j$ and  $p_i$:
$$
\partial = \sum_{s=1}^m(-1)^{s-1}\partial_s, ~~~~ p  = \sum_{s=1}^m(-1)^{s-1}p_s.
$$
Each of the rows forms  a  
{\it weight $q$ Grassmannian complex}:
$$
C_{\bullet}^{(q)}:= \ \ \ \stackrel{\partial}{\lra} C_m(q) \stackrel{\partial}{\lra} C_{m-1}(q) \stackrel{\partial}{\lra}\ldots \stackrel{\partial}{\lra} C_{q+1}(q). 
$$
Let $BC_{m}:= \bigoplus_{q=1}^{m-1} C_{m}(q)$ be the sum of the groups on the $m$-th diagonal. 
Changing the signs of the differentials in the bicomplex, we get the {\it Bigrassmannian complex}. 

The bottom $m-1$ Grassmannian complexes in (\ref{dfltobigr}) 
form a sub-bicomplex of the Grassmannian bicomplex. The quotient  by this sub-bicomplex is called the weight 
$m$ Grassmannian bicomplex. The total complex associated with it is called {\it the weight $m$ Bigrassmannian complex}, and denoted by $BC_\bullet^{(m)}.$ 
The corner group $C_{m+1}(m)$ is in the degree $m+1$, and the differential has the degree $-1$. For example, the weight $4$ Bigrasmannian complex $BC_\bullet^{(4)}$ is the total complex associated with the following bicomplex:
\be  \label{dfltobigr1}
\begin{gathered}
    \xymatrix{
       &   &  &     \ldots \ar[r]^{\partial}  \ar[d]^{p} &   C_7(6)   \ar[d]^{p} \\
      &         &   \ldots  \ar[d]^{p}\ar[r]^{ \partial}  \ar[d]^{p} & C_7(6) \ar[r]^{\partial}    \ar[d]^{p}   &C_6(5)\ar[d]^{p} \\
        &  \ldots \ar[r]^{\partial}     & C_7(4)  \ar[r]^{\partial} & C_6(4) \ar[r]^{\partial}      &C_5(4)    }
\end{gathered}
 \ee 

\paragraph{Decorated $p-$flag complexes.} A {\it  $p-$flag} $F_\bullet$ in an $n$-dimensional vector 
space $V_n$ is  a nested collection of subspaces

\be \label{flag}    F_0 \subset F_1 \subset F_2 \subset\cdots 
\subset F_{p}\,, ~~~{\rm dim}F_i=i.
\ee   
A {\it decorated $p-$flag} $F_\bullet$ is a $p-$flag $F_\bullet$ plus a choice of a non-zero vector $f_i \in F_{i}/F_{i-1}$ for each $i=1, \ldots, p$. 
A collection of decorated $p-$flags $(F_{1, \bullet}, \ldots, F_{m, \bullet})$ 
is  {\it generic}, if 
for any integers $a_1, \ldots, a_m$ such that $a_1+...+a_m\leq n$   one has 
${\rm dim}(F_{1, a_1} + \ldots + F_{m, a_m})=a_1+...+a_m$, that is the sum  is a direct sum. 
Denote by $C_m({\cal A}^{(p)}_n)$ the free abelian group generated by 
 generic configurations  of $m$ decorated $p-$flags in $V_n$. 
There is  the complex $C_\bullet({\cal A}^{(p)}_n) $ of generic configurations of decorated $p-$flags in $V_n$ with the standard simplicial differential: 
\be \la{GDF}
\ldots \lra C_m({\cal A}^{(p)}_n) \lra \ldots \lra C_2({\cal A}^{(p)}_n)  \lra C_1({\cal A}^{(p)}_n).
\ee

The following crucial result was proved in  \cite{Gon93}.\footnote{See also  a version of this for complete flags in \cite[Section 4]{Gon15}.} 
\begin{theorem} \la{MTSG}
For any integers ${n} >0$ and $p \geq 0$, there is a canonical homomorphism of complexes $t_{\ast}: C_{n+*}({\cal A}^{(p+1)}_{n+p} ) \lra BC_{n+*}(n)$, that is 
\be  \label{dfltobigr1}
\begin{gathered}
    \xymatrix{
      \ldots \ar[r]^{} & C_{n+4}({\cal A}^{(p+1)}_{n+p} ) \ar[r]^{} \ar[d]^{t_4} & C_{n+3}({\cal A}^{(p+1)}_{n+p} ) \ar[r]^{} \ar[d]^{t_3} &C_{n+2}({\cal A}^{(p+1)}_{n+p} ) \ar[r]^{} \ar[d]^{t_2} &  C_{n+1}({\cal A}^{(p+1)}_{n+p} )   \ar[d]^{t_1} \\
       \ldots \ar[r]^{} & BC_{n+4}(n)  \ar[r]^{ }       & BC_{n+3}(n)  \ar[r]^{ }   & BC_{n+2}(n)  \ar[r]^{ }       &BC_{n+1}(n)}
\end{gathered}
 \ee 
\end{theorem}
 
The map $t_\ast$ was defined by  \cite[Formula 2.5]{Gon93}. Namely, take a generic  collection of decorated 
$(p+1)-$flags $(F_{1, \bullet}, \ldots, F_{m, \bullet})\in C_{m}({\cal A}^{(p+1)}_{n+p} )$. Pick any collection $\alpha$ of  integers $a_1, ..., a_m\geq 0$ such that $d:= a_1+...+a_m\leq p$. 
Consider the following quotient  $V_\alpha$ of the vector space $V_{n+p}$:
$$
V_\alpha := V_{n+p}/(F_{1, a_1}\oplus\ldots \oplus F_{m, a_m}), \ \ \ \ \ \ {\rm dim}V_\alpha = n+p-d.
$$
The decoration vector $f_{i, a_i+1}\in F_{i, a_i+1}/F_{i, a_i}$   projects to a vector $\overline f_{i, a_i+1}\in V_\alpha$.  These vectors form  a 
  generic configurations  of $m$ vectors  in $V_\alpha$, providing an element 
  \be \la{the}
  (\overline f_{1,  a_1+1}, \ldots, \overline f_{m, a_m+1})\in C_m(n+p-d).
\ee
Then $t_{m-p}(F_{1, \bullet}, \ldots, F_{m, \bullet})$ is defined as  the sum of  elements (\ref{the}) over all $\alpha$'s. By \cite[Lemma 2.1]{Gon93} we get a map of complexes.

\subsection{Bigrassmannian complex $~\lra~$ weight $\leq 3$ polylogarithmic complexes} \label{sec3.2.4n}

 \paragraph{The Bloch complex.} Below we use a \underline{different normalization} of the cross-ratio then  $[1,2,3,4]= \dfrac{|12||34|}{|14||32|}$ used above. This makes almost no difference for us, but 
 is consistent with the normalizations used in the literature before. 
 So the cross-ratio of four generic points on the projective line, defined via the corresponding configuration of four vectors in $V_2$:
\be \la{item24p1}
r(1,2,3,4):= \frac{|13||24|}{|14||23|}, ~~~~ r(1,2,3,4) = [1,3,2,4].
\ee

  
\paragraph{A map to the Bloch complex \cite{Gon95}.}  There  is a  map of complexes
\be \la{bctobloch2} 
\begin{gathered}
    \xymatrix{
        BC^{(2)}_4 \ar[r]^{} \ar[d] & BC^{(2)}_3 \ar[d]\\
         {\rm B}_2   \ar[r]^{\delta}       & \Lambda^2{\rm F}^\times}
\end{gathered}
 \ee
 
Its only non-zero component is the following
 map of complexes:
\be \la{rs22} 
\begin{gathered}
    \xymatrix{
        C_4(2) \ar[r]^{\partial} \ar[d]_{r_4(2)} & C_3(2) \ar[d]^{r_3(2)} \\
         {\rm B}_2   \ar[r]^{\delta}       & \Lambda^2{\rm F}^\times}
\end{gathered}
 \ee
 
 The map $r_3(2)$ is given by:
\be \la{123not}
r_3(2): (1,2,3) \lms [1,2,3]:= |12| \wedge |23| + |23| \wedge |31| +   |31| \wedge |12|. 
\ee
The map $r_4(2)$ is defined   using the  cross-ratio of four vectors (\ref{item24p1}) in a two dimensional  space: 
\be \la{3.7.16.3}
r_4(2)(1,2,3,4):= \{r(1,2,3,4)\}_2.
\ee
 Diagram  (\ref{rs22}) is commutativity  thanks to the  Plucker relation: $|12||34| - |13||24| + |14||23| =0$. 
 This implies that $\delta_2({\rm R}_2({\rm F}))=0$. 
To get a map of complexes, we change the sign of  $r_4(2)$. 

The   existence of   map of complexes (\ref{rs22}) gives rise to the definition of   cluster dilogarithm. 

\paragraph{A map to the weight $3$ polylogarithmic motivic complex \cite{Gon94}, \cite{Gon95}, \cite{Gon95a}.}

There is a  map of complexes
\be \la{bctobloch} 
\begin{gathered}
    \xymatrix{
        BC^{(3)}_6 \ar[r]  \ar[d]  &BC^{(3)}_5 \ar[d]  \ar[r]  \ar[d] & BC^{(3)}_4  \ar[d] \\
         {\B}_3   \ar[r]^{\delta~~~}       &{\B}_2 \otimes {\rm F}^\times\ar[r]^{~\delta}&   \Lambda^3{\rm F}^\times}
\end{gathered}
 \ee
 Its  only non-zero component is the following
 map of complexes:
\be \la{rs2} 
\begin{gathered}
    \xymatrix{
        C_6(3) \ar[r]^{\partial} \ar[d]^{r_6(3)} &C_5(3) \ar[d]^{r_5(3)} \ar[r]^{\partial} \ar[d]_{} & C_4(3) \ar[d]^{r_4(3)}\\
         {\B}_3   \ar[r]^{\delta~~}       &{\B}_2 \otimes {\rm F}^\times\ar[r]^{\delta}&   \Lambda^3{\rm F}^\times}
\end{gathered}
 \ee
This was done 
in  \cite[Section 3.2]{Gon94},  \cite[Section 5]{Gon95}. 
Namely,   set 
\be \la{3.7.16.2}
\begin{split}
&r_5(3)(1,2,3,4,5):=   \alt_5\Bigl(\{r(1|2,3,4,5)\}_2\otimes |345|\Bigr) \in {\B}_2\otimes {\rm F}^\times.\\
&r_4(3)(1,2,3,4):=  2\cdot  \alt_4\left (|123| \wedge |124| \wedge |134|   \right ) \in \Lambda^3{\rm F}^\times.\\
\end{split}
\ee
Then the right square of the diagram is commutative.
Next,  there exists a map $r_6(3): C_6(3) \lra {\B}_3$ making  the left square of   digram (\ref{rs2}), that is the following  diagram,  
commutative:
\be \la{COMMDIAx} 
\begin{gathered}
    \xymatrix{
       C_6(3) \ar[r]^{\partial} \ar[d]_{r_6(3)} & C_5(3) \ar[d]^{r_5(3)} \\
          {\B}_3   \ar[r]^{\delta~~}       &{\rm B}_2 \otimes {\rm F}^\times}
 \end{gathered}
 \ee
We will not use  an explicit formula for the map $r_6(3)$, although it 
looks nice:\footnote{The coefficient $1/15$ in \cite[Formula (16)]{Gon95a} is compatible with $1/5$ in (\ref{1/5}) since our map $r_5(3)$ is $3$ times the map used there.}
\be \la{1/5}
r_6(3)(1,2,3,4,5,6):= \frac{1}{5}\cdot {\rm Alt}_6\left\{\frac{|124||235||136|}{|125||236||134|}\right\}_3.
\ee
Let us set 
\be \la{FTRR}
\begin{split}
(1,2,3,4,5,6)_3 := r_6(3)(1,2,3,4,5,6) \in {\rm B}_3.
\end{split}
\ee
 The claim that we get a homomorphism of complexes, that is that the composition $$
 C_7(3) \stackrel{\partial}{\lra} C_6(3) \stackrel{r_6(3)}{\lra} \B_3
 $$
  is equal to zero, is just equivalent to Theorem \ref{R37term}. 
Let us now prove  Theorem \ref{R37term}.
 
\subsection{Intermezzo: a proof of Theorem \ref{R37term}.}  \la{intermezzo}

\begin{figure}[ht]
\centerline{\epsfbox{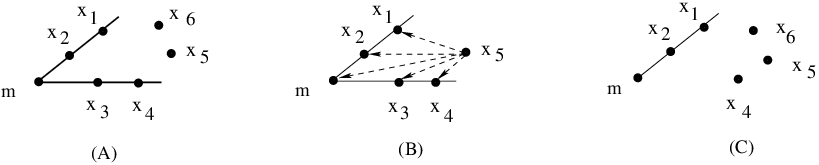}}
\caption{Reduction of the calculation of ${\rm M}_3(x_1, ..., x_6)$ to  degenerate configurations.}
\label{zac76}
\end{figure}

 The result follows from  Theorem A on page 293 in \cite{Gon95} by  the skew-symmetrization of the map ${\rm M}_3$ defined on page 286. 

Recall the group ${\cal G}_3(\F)$ from Definition 1.5 on p. 211  in loc. cit., generated by arbitrary configurations ${\bf x}=(x_1, ..., x_6)$ of 6 points in ${\Bbb P}^2(\F)$, subject to the following relations:
\vskip 1mm

\begin{enumerate} 

\item  One has ${\bf x}=0$ if two points coincide, or four points lie on the same line. 
 
\item  The 7-term relation: for any configuration of 7 points $(x_1, ..., x_7)$ in ${\Bbb P}^2(\F)$ we have 
 \be \la{7terms}
 \sum_{i=1}^7(-1)^i(x_1, \ldots , \widehat x_i, \ldots,  x_7) =0.
  \ee
  
\item {\it Relation R3}. Consider a  configuration $(a_1,a_2,a_3,b_1, b_2,  b_3)$ where $b_i$ is on the line $a_{i}a_{i+1}$, $i=1,2,3$, see  Figure \ref{zag78}. It is determined uniquely by the invariant 
$$
z:=r'(b_1|a_2, a_3, b_2, b_3), \ \ \mbox{where} \ \ r'(x_1, x_2, x_3, x_4):= \frac{(x_1-x_3)(x_2-x_4)}{(x_1-x_4)(x_2-x_3)}.
$$
Note  that the two versions  $r'$ and $r$ of the cross-ratio are related by
$$
r'(x_1, x_2, x_3, x_4)  \stackrel{(\ref{CR1})}{= } -r(x_1, x_3, x_2, x_4).
$$
Note also that
$$
1-r'(x_1, x_2, x_3, x_4) = r'(x_1, x_3, x_2, x_4)
$$
 Using the notation $(a_1, a_2, a_3, b_1, b_2, b_3)_z$ for the configuration with the invariant $z$, we define 
\be
{\rm T}'(z):=-  {\rm T}(z)-2   {\rm T}(1-z)+  {\rm T}(1); \qquad  {\rm T}(z):= (a_1, a_2, a_3, b_1, b_2, b_3)_z.
\ee
Let $(y_1, ..., y_6) = (x_1, x_2, m, x_3, x_4, x_5)$  be a configuration   of type (B) on  Figure \ref{zac76}. Then 
\be \la{R3}
({\rm  R3}) \ \ \ \ \ \ \ \ \ \ \ \ \ \ 3(y_1, ..., y_6) = \sum_{i=1}^5 (-1)^{i-1} {\rm T}'(r'(y_6|y_1, ..., \widehat y_i, ..., y_5)).
\ee
\end{enumerate}

  \begin{figure}[ht]
\centerline{\epsfbox{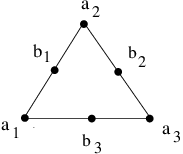}}
\caption{}
\label{zag78}
\end{figure} 

By Lemma 1.7 in loc. cit. the generators ${\bf x}$ are skew-symmetric in $x_i$'s. 

By Theorem A in loc. cit.  there is a canonical isomorphism  defined explicitly on  p. 286-287:
$$
{\rm M}_3: {\cal G}_3(\F)\lra \B_3(\F).
$$
The formula for the map ${\rm M}_3$    is canonical for degenerate configurations, but includes a choice for  generic ones. 
 Precisely,  the 7-term relation (\ref{7terms}) for a configuration (A) of 7 points $(m,x_1, ..., x_6)$ in ${\Bbb P}^2$ 
on  Figure \ref{zac76} expresses  ${\rm M}_3(x_1, ..., x_6)$ (which we  get forgetting the point $m$) as the map ${\rm M}_3$ applied to a sum of configurations of 6 points of type (B) (forget $x_5$ or $x_6$) and (C).  
Let us show  that  the skew-symmetrization in $(x_1, ..., x_6)$ of  the map ${\rm M}_3$ applied to the obtained sum of six degenerate configurations of 6 points delivers formula (\ref{7term}). 

 \bp   The skew-symmetrization in $(x_1, ..., x_6)$ of  the map ${\rm M}_3$ is given by, see (\ref{TR}),
\be
{\rm Alt}_{\{x_1, ..., x_6\}}{\rm M}_3(x_1, ..., x_6) = \frac{3}{2}\cdot r_3(x_1, x_2, x_3, x_4, x_5, x_6). \ee
\ep

 \begin{proof} The map ${\rm M}_3$ on  a configuration $(y_1, ..., y_6)$ of type B  is given by  recasting relation (\ref{R3}): 
\be 
\begin{split}
&3{\rm M}_3(y_1, ..., y_6) := \sum_{i=1}^5 (-1)^{i-1} {\cal T}\{r'(y_6|y_1, ..., \widehat y_i, ..., y_5)\}_3\\
& {\cal T}\{z\}_3:= -\{z\}_3-2\{1-z\}_3+ \{1\}_3; \\
\end{split}
\ee 

 \bl 
The skewsymmetrization of  the element ${\rm M}_3(x_5|x_1,x_2,m, x_3, x_4) \in \B_3(\F)$ in the variables $\{x_1, ..., x_6\}$ is equal to zero. 
  \el 
\begin{proof}  
Forgetting one of the points $x_1, x_2, x_3, x_4$ we get expressions of the following shape:
 \be \la{quq}
 \{r'(x_5|x_1, m, x_3, x_4)\}_3 + 2  \{r'(x_5|x_1, x_3, m, x_4)\}_3 + \{1\}_3.
 \ee
If we forget $m$, we get
\be \la{xxx}
-\{r'(x_5|x_1,x_2, x_3, x_4)\}_3 -2 \{r'(x_5|x_1,x_3, x_2, x_4)\}_3 +\{1\}_3.
\ee

The first terms in (\ref{quq}) and (\ref{xxx}) are equal to zero already after the skew-symmetrization in 
$\{x_3, x_4\}$ since $r'(x_1, x_2, x_3, x_4) = r'(x_1, x_2, x_4, x_3)^{-1}$ and $\{z\}_3 = \{z^{-1}\}_3$ in $\B_3(\F)$. 

The second term in (\ref{xxx})  vanishes after the skew-symmetrization in $\ell_2, \ell_4$ since $\{z\}_3 = \{z^{-1}\}_3$  and \be \la{quq0}
\begin{split} 
&\{r'(x_5|x_1, x_3, x_2, x_4) \}_3=
  \left\{-\frac{\omega(\ell_5, \ell_1, \ell_2)\omega(\ell_5, \ell_3, \ell_4)}{\omega(\ell_5, \ell_2, \ell_3)\omega(\ell_5, \ell_1, \ell_4)} \right\}_3.\\
 \end{split}
  \ee

For the second term in (\ref{quq}) we   use the following observation: the point $m = x_1x_2 \cap x_3x_4$ can be represented by the following vector $\widehat m$, where $\ell_i$ is a vector representing the point $x_i\in {\Bbb P}_2(\F)$:
\be \la{pop}
\widehat m := \omega(\ell_1, \ell_3, \ell_4) \ell_2- \omega(\ell_2, \ell_3, \ell_4) \ell_1 =  -\omega(\ell_1, \ell_2, \ell_3) \ell_4+ \omega(\ell_1, \ell_2, \ell_4) \ell_3.
 \ee
The second term in (\ref{quq})    is  invariant under the transposition $\ell_3 \leftrightarrow \ell_5$ since $\{z\}_3 = \{z^{-1}\}_3$ and \be
\begin{split} 
&\{r'(x_5|x_1, x_3, m, x_4) \}_3=
  \left\{\frac{\omega(\ell_5, \ell_1, \widehat m)\omega(\ell_5, \ell_3, \ell_4)}{\omega(\ell_5, \ell_3, \widehat m)\omega(\ell_5, \ell_1, \ell_4)} \right\}_3 \\
& =\left\{-\frac{\omega(\ell_5, \ell_1,\ell_2)\omega(\ell_1, \ell_3, \ell_4)\omega(\ell_5, \ell_3\, \ell_4)}{\omega(\ell_5, \ell_3,\ell_4)\omega(\ell_1, \ell_2, \ell_3)\omega(\ell_5, \ell_1, \ell_4)} \right\}_3
=  
 \left\{-\frac{\omega(\ell_5,\ell_1,\ell_2)\omega(\ell_1,\ell_3,\ell_4) }{ \omega(\ell_1,\ell_2,\ell_3)\omega(\ell_5,\ell_1,\ell_4)}\right\}_3.\\
 \end{split}
  \ee
 So it vanishes after the skew-symmetrization. The term $\{1\}_3$ obviously vanishes as well. 
  \end{proof}

According to p. 286 in loc. cit.,     to calculate  ${\rm M}_3(x_1, x_2, m, x_4, x_5, x_6)$ for a type (C) configuration on Figure \ref{zac76},   we  project 
from each of the  points $x_4, x_5, x_6$, getting a configuration of 5 points on the line, like $(x_4|x_5, x_6, x_1, x_2, m)$, and forget one of the three points  $(x_4|x_1, x_2, m)$.  Forgetting the point $m$ we get, after the skew-symmetrization, terms similar to (\ref{quq0}). So they vanish. 

The remaining expressions  after the skew-symmetrization in $\{x_1, ..., , x_6\}$ are the following
 \be \la{quq1}
\begin{split}
& \{r'(x_4|x_5, x_6, x_1, m)\}_3 + 2  \{r'(x_4|x_5, x_1, x_6, m)\}_3 = \\
 & \{r'(x_4|x_5, x_6, x_1, x_3)\}_3 + 2  \{r'(x_4|x_5, x_1, x_6, x_3)\}_3\\
 \end{split}
  \ee
and
  \be \la{quq2}
 \{r'(x_5| x_4, x_6, x_1, m)\}_3 + 2  \{r'(x_5|x_4, x_1, x_6, m)\}_3.
 \ee
 The second line in  (\ref{quq1}) is similar after the skew-symmetrization to  (\ref{xxx}), and so vanishes.


 
 \bl \la{L3.89} The skewsymmetrization of (\ref{quq2}) in the group $\B_3(\F)$ is given by
 \be \la{st}
 {\rm Alt}_{\{x_1, ..., x_6\}} \Bigl(  \{r'(x_5| x_4, x_6, x_1, m)\}_3 + 2  \{r'(x_5|x_4, x_1, x_6, m)\}_3\Bigr) = \frac{3}{2}\cdot r_3(x_1, x_2, x_3, x_4, x_5, x_6).
 \ee
 \el 
  
  \begin{proof} To calculate the first term in (\ref{st}) we start from $(x_1, x_2, x_3, x_4, x_5, x_6)$, set ${\rm m}:=x_1x_2\cap x_3x_4$, 
  forget $x_3$ and project from $x_5$, getting a configuration of four points $(x_5| x_4, x_6, x_1, m)$, shown on the  left  on Figure \ref{zag80}. Let us 
   apply  an even permutation $x_1, x_2, x_3, x_4, x_5, x_6 \longleftrightarrow x_1, x_2, x_6, x_5, x_4, x_3$. Then we
   start from $(x_1, x_2,  x_6, x_5, x_4, x_3)$, set ${\rm m}^*:=x_1x_2\cap x_6x_5$, 
  forget $x_6$ and project from $x_4$, getting a configuration  $(x_4| x_5, x_3, x_1, {\rm m}^*)$ shown in the middle  of Figure \ref{zag80}. 
  Comparing the three diagrams  on Figure \ref{zag80}  we get 
  $$
  r'(x_5| x_4, x_6, x_1, {\rm m}) =  r'({\rm q}, {\rm m}^*, x_1, {\rm m}); \ \ \ \ \ \ r'(x_4| x_5, x_3, x_1, {\rm m}^*)  =r'({\rm q}, {\rm m}, x_1, {\rm m}^*).
  $$
  Note that 
  $$
  r'({\rm q}, {\rm m}^*, x_1, {\rm m}) = (1- r'({\rm q}, {\rm m}, x_1, {\rm m}^*)^{-1})^{-1}.
  $$
   So, thanks to the relations (\ref{EATRI})  in the group $\B_3(\F)$, putting all together, 
   the first term in (\ref{st}) is 
  $$
  {\rm Alt}_{\{x_1, ..., x_6\}}  \{r'(x_5| x_4, x_6, x_1, m)\}_3  =    -\frac{1}{2} {\rm Alt}_{\{x_1, ..., x_6\}}  \{1- r'({\rm q}, {\rm m}, x_1, {\rm m}^*)\}_3.
  $$
  Let us look at the second term in (\ref{st}). Observe that 
  $$
  r'(x_5|x_4, x_1, x_6, m)  =  r'({\rm q}, {x_1, \rm m}^*, {\rm m}) 
   = (1-r'({\rm q}, {\rm m}, x_1, {\rm m}^*))^{-1}.
  $$ 
  So, using $\{z\}_3=\{z^{-1}\}_3$,   and formula  (\ref{pop}) for  the vector $\widehat m$, we see that (\ref{st}) is equal to 
    $$
     \frac{3}{2} {\rm Alt}_{\{x_1, ..., x_6\}}  \{ r'(x_5|x_4, x_1, x_6, m))\}_3 
   = \frac{3}{2} {\rm Alt}_{\{\ell_1, ..., \ell_6\}}  \left\{ \frac{\omega(\ell_5, \ell_4, \ell_6)\omega(\ell_5, \ell_1, \ell_2)\omega(\ell_1, \ell_3, \ell_4)}{\omega(\ell_5, \ell_1, \ell_6)\omega(\ell_5, \ell_4, \ell_3)\omega(\ell_1, \ell_2, \ell_4)}\right\}_3.
  $$
Since $(5,4,1,2,6,3)$ is an even permutation, the right hand side is $ \frac{3}{2} r_3(x_1, x_2, x_3, x_4, x_5, x_6)$.  \end{proof} 
    \begin{figure}[ht]
\centerline{\epsfbox{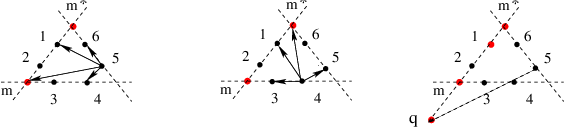}}
\caption{Projecting from the points $x_5$ or   $x_4$, we get the same configuration of four  points $({\rm q, m, x_1, m}^*)$ on a line on the right. }
\label{zag80}
\end{figure}

Theorem \ref{R37term} is proved.  \end{proof}

  \subsection{Dual Bigrassmannian complex $\lra$ a weight 4 motivic complex}

\paragraph{The Grassmannian duality.}

Let $V_m$ be an m-dimensional vector space with a given basis $\{e_1, \ldots, e_m\}$. Denote by 
${\rm Gr}^*_k(V_m)$ the Grassmannian of  $k$-dimensional subspaces in $V_m$ which are transversal to the coordinate hyperplanes. 
Denote by ${\rm Conf}_m(q)$ the set of configurations of $m$ generic vectors in a $q$-dimensional vector space. 
Then there is a canonical isomorphism
$$
{\rm Gr}^*_{m-q}(V_m)\lra {\rm Conf}_m(q).
$$ 
It assigns to an $(m-q)$-dimensional subspace $h \subset V_m$ the configuration of $m$ vectors in the $q$-dimensional 
quotient $V_m/h$ obtained by projecting the basis vectors $\{e_i\}$ to the quotient.

The dual space $V_m^*$ has a basis $\{f_i\}$ dual to the basis $\{e_i\}$. Assigning to a $(m-q)$-dimensional subspace $h$ its annihilator $h^\perp\subset V_m^*$, we get 
a canonical isomorphism 
$$
{\rm Gr}^*_{m-q}(V_m)\lra  {\rm Gr}^*_{q}(V^*_m).
$$
Combining these two isomorphisms, we arrive at   the duality isomorphism
$$
\ast: {\rm Conf}_{m}(q) \stackrel{\sim}{\lra}{\rm Conf}_{m}(m-q). 
$$
The duality interchanges the projection $p$ and the differential $\partial$ in the Grassmannian complex:
$$
p \circ \ast = \ast \circ \partial, ~~~~\partial\circ \ast = \ast \circ p.
$$

\paragraph{From the   Bigrassmannian complex to polylogarithmic complex.}  
Applying the Grassmannian duality map $\ast$ to the Bigrassmannian complex we get the {\it dual Bigrassmannian complex}. The weight four  dual Bigrassmannian complex looks as follows: 
 \be  \label{MCDI1zzkis}
\begin{gathered}
    \xymatrix{
&    & C_9(3)  \ar[r]^{p}  \ar[d]^{\partial} &    C_8(2)   \ar[r]^{p}  \ar[d]^{\partial} &   C_7(1)     \ar[d]^{\partial} \\
      &        C_9(4)  \ar[r]^{p}  \ar[d]^{\partial} & C_8(3)       \ar[r]^{p}  \ar[d]^{\partial} & C_7(2) \ar[r]^{}    \ar[d]^{\partial}   &C_6(1)\ar[d]^{\partial} \\
  C_9(5) \ar[r]^{p}      & C_8(4) \ar[r]^{p}      & C_7(3)  \ar[r]^{p}  & C_6(2) \ar[r]^{p}       &C_5(1)    \\      }
\end{gathered}
 \ee 
Let us compose the map of complexes (\ref{dfltobigr}) with the Grassmannian duality $\ast$. Then our  goal is to define a map from the dual 
Bigrassmannian complex (\ref{MCDI1zzkis}) to the weight four motivic complex. Let us now introduce the variant of the weight four motivic complex we use. 

Below we use the notation ${\rm B}_2$ for ${\rm B}_2(\F)$, etc. Consider the following subcomplex in the weight $4$ part of the standard cochain 
complex of the motivic Lie algebra of ${\rm F}$:
\[
  {\B}_{3}  \otimes {\rm F}^{\times} ~\oplus ~ \Lambda^2{\rm B}_{2}  
~~ \stackrel{{\delta}}{\longrightarrow}~ ~
{\B}_{2}   
\otimes \Lambda^{2} {\rm F}^{\times} ~~  \stackrel{{\delta}}{\longrightarrow}~ ~ \Lambda^{4} {\rm F}^{\times}.
\]  
The differential \(\delta\) acts as follows
\be \la{DIFFDEL}
\begin{split}
 & \{x\}_{3} \otimes y\in {\B}_{3}   \otimes {\rm F}^{\times}  
  ~ \stackrel{{\delta}}{\longrightarrow}~ \{ x\}_{2} \otimes x \wedge y \in
  {\B}_{2} \otimes \Lambda^{2} {\rm F}^{\times},\\
  & \lbrace x\rbrace_{2} \wedge  \{ y\}_{2} \in \Lambda^{2} {\B}_{2}  
   ~ \stackrel{{\delta}}{\longrightarrow}~  \lbrace x\rbrace_{2} \otimes (1-y)
  \wedge y - \{ y\}_{2} \otimes (1-x) \wedge x \in {\B}_{2} 
  \otimes \Lambda^{2} {\rm F}^{\times},\\
&   \{x\}_{2} \otimes y
  \wedge z \in {\B}_{2}  \otimes \Lambda^{2} {\rm F}^{\times} ~ 
\stackrel{{\delta}}{\longrightarrow}~   (1-x) \wedge x \wedge y \wedge z \in
  \Lambda^{4} {\rm F}^{\times}.
\end{split}
\ee

To define a substitute for ${\cal L}_4({\rm F})$, 
we consider first the $\Q$-vector space $\widetilde \G_4(\F)$ generated by   generic configurations of $8$ points in ${\Bbb P}^3$ over ${\rm F}$. 
There is a canonical map 
\be \la{R84}
r_8(4): C_8(4) \lra \widetilde \G_4(\F), ~~~~(v_1, \ldots, v_8) \lms (\overline v_1, \ldots, \overline v_8). 
\ee
It assigns to a configuration of $8$ vectors in a four dimensional vector space 
the configuration of $8$ points in its projectivization, given by the one dimensional subspaces generated by the vectors. 

\bd The $\Q$-vector space $\G_4(\F)$ is the following quotient of the space $\widetilde \G_4(\F)$:
\be \la{G4F}
\G_4(\F):= \frac{\widetilde \G_4(\F)}{r_8(4)\circ \partial(C_9(4)) + r_8(4)\circ p(C_9(5))}.
\ee
\ed

 \paragraph{Preliminary settings.} Pick  a volume form $\omega_m$ in an $m$-dimensional vector space $V_m$, and 
set 
\be \la{br}
|v_{1}, \dotsc, v_{m}|:= \langle \omega_m, v_{1} \wedge \dotsc \wedge v_{m}\rangle, ~~~~v_i\in V_m.
\ee
Given any function $f(v_1, \ldots, v_n)$, we define an operation of alternation by setting
  \begin{equation}
    \label{eq:alt-def}
    {\alt_n} f(v_1, \dotsc, v_n) = \sum_{\tau \in \mathcal{S}_{n}}
    (-1)^{|\tau|} f(v_{\tau(1)}, \dotsc, v_{\tau(n)}).
  \end{equation}
  We use the notation  ${\alt_n} f(\{1, \ldots, k\}, \{k+1, \ldots, n\})$ for averaging over the subgroup $S_k \times S_{n-k}$.

Below we use shorthands like $|1234|$ for $|v_1, v_2, v_3, v_4|$, 
skipping the vectors and keeping the indices only. 
Given a configuration of five generic vectors $(v_1, \ldots, v_5)$ in a three dimensional vector space $V_3$, we use the notation $(1|
    2345)$ for the configuration of four vectors in the two dimensional space $V_3/\langle v_1\rangle$ obtained by 
projecting of the vectors $v_2, v_3, v_4, v_5$ along the subspace generated by the vector $v_1$, etc. We write 
$(1,2,3,4,5)$ for the configuration of  vectors $(v_1, v_2, v_3, v_4, v_5)$.


\begin{theorem} \label{MapBtoP}
There exists a  map $\delta$ and a map from the dual Bigrassmannian complex to the weight $4$ motivic  complex, as depicted below:
\be  \label{MCDI1zz}
\begin{gathered}
    \xymatrix{
&    & C_9(3)  \ar[r]^{p}  \ar[d]^{\partial} &    C_8(2)   \ar[r]^{p}  \ar[d]^{\partial} &   C_7(1)     \ar[d]^{\partial} \\
      &        C_9(4)  \ar[r]^{p}  \ar[d]^{\partial} & C_8(3)       \ar[r]^{p}  \ar[d]^{\partial} & C_7(2) \ar[r]^{}    \ar[d]^{\partial}   \ar@{.>}[ldd] \ar[d]^{\partial}  &C_6(1)\ar[d]^{\partial} \\
  C_9(5) \ar[r]^{p}      & C_8(4) \ar[r]^{p}  \ar[d]^{{\bf r^*_8(4)}}     & C_7(3)  \ar[r]^{p}\ar[d]^{{\bf r^*_7(3)}}  & C_6(2) \ar[r]^{p}   \ar[d]^{{\bf r^*_6(2)}}     &C_5(1)   \ar[d]^{{\bf r^*_5(1)}}   \\ 
   0   \ar[r]      &  \G_4(\F) \ar[r]^{\delta~~~~~~} &\ar[r]^{~~\delta}  {\rm B}_3 \otimes {\rm F}^\times \oplus \Lambda^{2} {\rm B}_{2}     &  {\rm B}_2 \otimes \Lambda^2{\rm F}^\times \ar[r]^{~~\delta} & \Lambda^4{\rm F}^\times     }
\end{gathered}
 \ee 
 This map is given by the following  formulas:
\be \la{56}
\begin{split}
&{\bf r_5^*(1) } \colon (1,2,3,4,5) \lms 
-\frac{5}{2}\alt_5  \Bigl(|1|\wedge |2| \wedge |3|\wedge |4|\Bigr),\\
&{\bf r_6^*(2)} \colon (1,2,3,4,5,6) \lms \\
&\alt_6\Biggl(\Bigl(  \frac{1}{2}\cdot\{r(2,4,5, 6)\}_2
-  \{r(1,2,4,5)\}_2 \Bigr)\otimes |12| \wedge |23| +
 \{r(1,3,5,6)\}_2 \otimes |12|\wedge |34|\Biggr),\\
&{\bf r_7^*(3)} \colon (1,2,3,4,5,6,7) \lms \\
& \alt_{7}\Biggl(\Bigl(\frac{1}{108}\cdot(1,2,3,4,5,6)_3  - 
  \{r(6|2, 4,3,5)\}_3\Bigr) \otimes   |123| -\frac{3}{28}\cdot \{r(1|2,5,6,7)\}_2\wedge  \{r(2|1,3,4,5)\}_2\Biggr),\\
&{\bf r_7^*(2)} \colon (1,2,3,4,5,6,7) \lms -\frac{1}{84} \cdot {\rm Alt}_7\Bigl(\{r(1,2,3,4)\}_2\wedge \{r(1,5,6,7\}_2\Bigr),\\
&{\bf r_8^*(4)}:= \textup{pr}\circ r_8(4) \text{ \ where \ } \textup{pr}\colon \widetilde{\G}_4(\F)\lra \G_4(\F)  \text{ \ is the projection map. \ }
\end{split}
\ee
The remaining maps are zero.
\end{theorem}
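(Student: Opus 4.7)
The theorem contains three distinct assertions: (i) existence of a well-defined differential $\delta\colon G_4({\rm F})\to {\cal B}_3\otimes {\rm F}^{\times}_\Q\oplus \Lambda^2 {\rm B}_2$ promised by Lemma \ref{LemmalS}, (ii) that each formula in (\ref{56}) lands in its stated target, i.e.\ kills the defining relations of ${\cal B}_3$ and ${\rm B}_2$, and (iii) commutativity of each square of diagram (\ref{MCDI1zz}). For (i) I would write $\delta$ as an explicit alternating sum over $S_8$, of shape analogous to the Grassmannian trilogarithm (\ref{1/5}), combining a $\{r(\cdot)\}_3\otimes|\cdot|$ summand with a $\{r(\cdot)\}_2\wedge\{r(\cdot)\}_2$ summand. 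Its two coefficients are then uniquely fixed by the requirement $\delta^{2}=0$ with the next differential (\ref{DIFFDEL}) of the motivic complex. Well-definedness on the quotient $G_4({\rm F})$ is verified by evaluating this formula on the two families of defining relations: vanishing on $r_8(4)(p\,C_9(5))$ follows because the projection along a vector kills one factor in the relevant wedge product and collapses the cross-ratios, while vanishing on $r_8(4)(\partial\,C_9(4))$ reduces after ${\rm Alt}_9$ to a nine-term identity for $\{r(\cdot)\}_3$ (equivalently, the 22-term relation) applied to the projected $4$-vector subconfigurations.

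\textbf{Checking the squares.} Assertion (ii) is built into the formulas: every argument of $\{\cdot\}_2$ or $\{\cdot\}_3$ in (\ref{56}) is a cross-ratio, respectively triple ratio, of a subconfiguration, and the defining five-term (and 22-term) relations are applied after the alternating sum. Assertion (iii) is a square-by-square check, done right-to-left. The rightmost square $\delta\circ r_6^{\ast}(2)=r_5^{\ast}(1)\circ p$ reduces to the identity $\delta\{r\}_2=(1-r)\wedge r$ combined with the Pl\"ucker identity $|12||34|-|13||24|+|14||23|=0$, which after ${\rm Alt}_6$ produces exactly the coefficient $-\tfrac{5}{2}$. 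The middle square $\delta\circ r_7^{\ast}(3)=r_6^{\ast}(2)\circ p+r_7^{\ast}(2)\circ\partial$ is the heart of the argument: the ${\cal B}_3\otimes{\rm F}^{\times}$ component of $r_7^{\ast}(3)$ has a coproduct that, after ${\rm Alt}_7$ and using the coproduct formula for the trilogarithm, matches $r_6^{\ast}(2)\circ p$, while the $\Lambda^{2}{\rm B}_2$ component with coefficient $-\tfrac{3}{28}$ is designed so its coproduct absorbs the remaining ``off-diagonal'' terms, which turn out to equal $r_7^{\ast}(2)\circ\partial$ modulo Pl\"ucker relations. The leftmost square $\delta\circ r_8^{\ast}(4)=r_7^{\ast}(3)\circ p$ is exactly the construction of $\delta$ on $G_4$ from step (i). The precise coefficients $-\tfrac{5}{2}$, $\tfrac{1}{2}$, $\tfrac{1}{108}$, $-\tfrac{3}{28}$, $-\tfrac{1}{84}$ are forced by the orders $|S_5|,|S_6|,|S_7|$ of the alternating groups and by the $\tfrac{1}{5}$ normalization in (\ref{1/5}).

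\textbf{Main obstacle.} The computationally intensive and conceptually subtle step is the $r_7^{\ast}(3)$ square. It requires the simultaneous appearance of the averaged triple ratio $(1,2,3,4,5,6)_3$ and of the cross-ratio $\{r(6|2,4,3,5)\}_3$: the former provides the natural weight-$3$ symbol attached to a generic $6$-configuration, while the latter is precisely the difference produced by the projection $p$ from $C_7(3)$ to $C_6(2)$. Their coefficients must be correlated so that, after coproducts via (\ref{DIFFDEL}) and ${\rm Alt}_7$, the nine-term equation from \cite{G91b},\cite{G95} makes the ${\cal B}_3\otimes{\rm F}^{\times}$ contributions line up. Furthermore, the two summands ${\cal B}_3\otimes{\rm F}^{\times}$ and $\Lambda^{2}{\rm B}_2$ interact: applying $\delta$ to the $\Lambda^{2}{\rm B}_2$ part produces terms in ${\rm B}_2\otimes\Lambda^{2}{\rm F}^{\times}$ that must precisely cancel the residue from the trilogarithm coproduct, and this forces $-\tfrac{3}{28}$ uniquely. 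I expect the verification to be long but purely algebraic, driven throughout by the Pl\"ucker relations among the minors $|ijk|$ and $|ijkl|$ of four-dimensional configurations, together with the five-term relation applied to cross-ratios of various 5-point subconfigurations.
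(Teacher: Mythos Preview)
Your square-by-square plan is correct in outline, and you correctly identify the $r_7^*(3)$ square as the crux. But you miss the paper's main technical device, and without it your ``long but purely algebraic'' verification via Pl\"ucker identities would be essentially unmanageable. The paper introduces \emph{symbolic Bloch groups} ${\bf b}_n(q,m)$: finite-dimensional $\Q$-vector spaces with $\mathbb{S}_m$-action, carrying formal symbols $|i_1\cdots i_q|$, $\{i_1\cdots i_{q-2}|i_{q-1}\cdots i_{q+2}\}_2$, etc., subject only to the symmetry, five-term, and dual five-term relations. Every expression under $\alt_m$ in (\ref{56}) lives in the \emph{alternating component} of such a representation, and these components turn out to be tiny --- dimension $3$ for ${\bf b}_2(2,6)\otimes\Lambda^2{\bf b}_1(2,6)$ and for $\Lambda^4{\bf b}_1(2,6)$, dimension $1$ for the relevant piece in the $r_7^*(2)$ computation. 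The commutativity of each square then becomes a finite linear-algebra identity between explicit basis vectors ($a_i$, $b_i$, $t_i$, $r_i$, $s_i$ in the paper's notation), and the coefficients $-\tfrac52,\tfrac12,\tfrac{1}{108},-\tfrac{3}{28},-\tfrac{1}{84}$ drop out of solving small linear systems rather than from group orders. This is what replaces your proposed Pl\"ucker manipulations.

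Two smaller corrections. First, your step (i) is more elaborate than needed: the map $\delta$ on $G_4(\F)$ is \emph{defined} as $r_7^*(3)\circ p$, and its well-definedness on the quotient is just the observation (Lemma \ref{LemmalS}) that $r_7^*(3)\circ p$ is invariant under rescaling each vector --- after rescaling $l_1\mapsto al_1$ the factor $a$ appears tensored against an expression not involving indices $7,8$, which $\alt_8$ kills. No nine-term identity is invoked here. Second, the ``nine-term equation'' you cite does not appear in the paper's proof of the middle square; the $t_i$-identities used there are consequences of the five-term and dual five-term relations, applied inside the symbolic groups and then alternated.
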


Recall  that the triple ratio is normalized by (\ref{COMMDIAx}) and (\ref{FTRR}) so that one has 
\be \la{TRIPR}
\delta (1,2,3,4,5,6)_3 =\alt_6 (\{1|2,3,4,5\}_2\otimes |345|).
\ee

 Theorem \ref{MapBtoP} is proved in Section  \ref{Sec4}. Another way to get the map ${\bf r_7^*(3)}$ see at Section \ref{Sec8.3}.
The commutativity of the bottom right square in (\ref{MCDI1zz}) is a special case of \cite[Proposition 3.3]{Gon93}.

\paragraph{The  homomorphism $\delta$.}   
Let us consider  the following composition 
\be \la{COMPO}
  {\bf r}^*_7(3)\circ p:  ~ C_8(4) \lra C_7(3) \lra {\B}_3 \otimes {\rm F}^\times ~\oplus~ \Lambda^2{\B}_2.
\ee

\bl \la{LemmalS}
The composition (\ref{COMPO})  
 does not change if we rescale any of the vectors.  
\el

\begin{proof} This is evident for the $\Lambda^2{\B}_2$ component. Let us look at the 
 ${\B}_3 \otimes {\rm F}^\times$-component. 
Let us rescale $l_1 \lms al_1$. Then, for some $c_1, c_2 \in \Q$, we have 
$$
{\bf r_7^*(3)}_{3,1} \circ p: (1,2,3,4,5,6,7,8)\lms 
 \alt_8\Bigl(c_1 \cdot(8|1,2,3,4,5,6)_3  + c_2 \cdot \{r(8,6|2, 4,3,5)\}_3 \Bigr)\otimes   a  = 0.
 $$  
Indeed, the second term vanishes  under the alternation $\alt_8$ sign. The first vanishes thanks to the 7-term relation for $(8|1,2,3,4,5,6)_3$. 
 \end{proof} 
Therefore the map (\ref{COMPO}) descends to a well defined map 
\[
\widetilde \delta:   \widetilde \G_4\lra {\B}_3 \otimes {\rm F}^\times ~\oplus~ \Lambda^2{\B}_2.
\]
  It makes, by its very definition,  the bottom  left square in (\ref{MCDI1zz})  commutative.

\bp The map $ \widetilde \delta$ descends to a map  
\[
\delta:   \G_4\lra {\B}_3 \otimes {\rm F}^\times ~\oplus~ \Lambda^2{\B}_2.
\]
\ep

\begin{proof} The group $\G_4(\F)$  in (\ref{G4F}) is   the quotient of the group $\widetilde \G_4(\F)$ by  the images of the maps  ${r_8(4)}\circ \partial$ and   ${r_8(4)} \circ p$. 
We have to prove that  $ \widetilde \delta$  kills these   relations.

 \bl  \la{nbv} One has $ {\bf r_7^*(3)}\circ \partial =0$ on $C_8(3)$. 
 \el

 \begin{proof}
 The ${\B}_{3} \otimes {\rm F}^*$-part  
  of   ${\bf r_7^*(3)}(1,2,3,4,5,6,7)$  does not depend on $7$. So    by the antisymmetry in $(7,8)$, we have   ${\bf r_7^*(3)}\circ \partial (1,2,3,4,5,6,7,8)=0$. 
Let us prove that the ${\rm B}_2 \wedge {\rm B}_2$-part of the composition is  zero. 
Using the $5$-term relation for the configuration $(1|2,5,6,7,8)$ we write it as  
\be
\begin{split}
&\frac{3}{28}\cdot\alt_{8}\Bigl(\Bigl(\{r(1|2,6,7,8)\}_2 -\{r(1|5,6,7,8)\}_2 \Bigr)\wedge  \{r(2|1,3,4,5)\}_2\Bigr).\\
\end{split}
\ee
The first term vanishes since the involution $(12)(36)(47)(58)$ is even and interchanges the two factors in ${\rm B}_2 \wedge {\rm B}_2$. 
By the $5$-term relation for the configuration $(1| 4,5,6,7,8)$ the second term is  
$$
\frac{9}{56}\cdot\alt_{8} \Bigl( \{r(1|4,5,6,7)\}_2 \wedge  \{r(2|1,3,4,5)\}_2\Bigr).
$$
Since it is symmetric in $(4,5)$, it vanishes after the antisymmetrization. 
\end{proof}

 Since $p \circ p=0$, we have   $ \widetilde \delta \circ p=0$.    Lemma \ref{nbv} implies that   
 $
 \widetilde\delta\circ   \partial = {\bf r_7^*(3)}\circ \partial \circ p \stackrel{ }{=} 0.
 $
This just means that the map $\widetilde \delta$ kills the relations in (\ref{G4F}). \end{proof}

 \section{Weight four motivic complex  $\Gamma(X;4)$ and the regulator map} \la{Sec7}

In Section \ref{Sec7} we define, for any regular variety $X$ over an arbitrary field  $k$, a complex of $\Q-$vector spaces $\Gamma(X;4)$. This complex  is conjecturally  quasiisomorphic 
to the weight $4$ motivic complex of $X$, tensored by $\Q$.

 In the case when $X$ is a regular complex variety, we recall in Section \ref{SECT8.1}   the canonical real period map of complexes ${\rm P}$ defined in \cite{Gon08}, see Theorem \ref{PM1}.  Using 
 the map ${\rm P}$,   we define   the   regulator map  to the weight $4$ real Deligne complex of $X$:
\be \la{regd}
r_{\cal D}: \Gamma(X;4) \lra \R_{\cal D}(X; 4).
\ee

 \subsection{The weight four motivic complex $\Gamma(X;4)$}  \la{sec9.3}

\paragraph{1. The weight four motivic complex $\Gamma(X;4)$.}    
Let $X$ be a regular variety over a field $k$ and  $\F:=k(X)$. 
Let  $X^p$ be  the set of  codimension $p$ points  $x$ of $X$, given by closed irreducible codimension $p$ subvarieties $Y_x \subset X$. Let $\F_x:= k(Y_x)$.  
Let  ${{\rm Gal}_{\F_x}} $  be the Galois group $  {\rm Gal}(\overline \F_x/\F_x)$. 
 The next theorem and Theorem \ref{MTHRCCC} below  provide  the main constructions of Section \ref{Sec7}. 
 
 \bt \la{MTHRCC} Let $X$ be a regular  variety over a field $k$ and $\F=k(X)$. Then there exists     the following complex $\Gamma(X; 4)$:   
 \be \la{MCDIm} 
\begin{gathered}
    \xymatrix{
       {\cal B}_4(\F) \ar[r]^{\delta}   &{\cal B}_{3}(\F)\otimes \F^\times \ar[d]^{{\rm res} } \ar[r]^{\delta}  & {\cal B}_{2}(\F)\otimes \Lambda^2 \F^\times \ar[r]^{\delta}  \ar[d]^{{\rm res} }& 
        \Lambda^{4} \F^\times \ar[ddl]^{{\rm H}} \ar[d]^{{\rm res} }\\
               &\bigoplus_{x\in X^1}   \Bigl( {\cal B}_3(\F_x)    \ar[r]^{\delta}&   {\cal B}_2(\F_x) \otimes  \F_{x}^\times  \ar[d]^{{\rm res} }
               \ar[r]^{\delta }& \Lambda^3\F_{x}^\times \Bigr) \ar[d]^{{\rm res} }\\
                   \Gamma(X; 4) :& &\bigoplus_{x\in X^2}    \Bigl(   {\cal B}_2(\overline \F_x)   \ar[r]^{ \delta}& \Lambda^2\overline \F_{x}^\times\Bigl)^{{\rm Gal}_{\F_x}}\ar[d]^{{\rm res} }\\
                & & &\bigoplus_{x\in X^3}   \F_{x}^\times   \ar[d]^{{\rm res}}\\   & & &\bigoplus_{x\in X^4}   \Q}\\               \end{gathered}
 \ee
Here  ${\cal B}_4(\F) $ is in the degree $1$,    
  the differential is the sum of the arrows,  the horisontal arrows are   differentials in the polylogarithmic motivic complexes ${\cal B}^\bullet(\ast;p)$.   
 The   $p-$th horizontal line, counted from below   from $p=0$,   
 is the direct sum   over  codimension $4-p$  points.   
 \et
 
Note that one has 
$
{\rm res}= \sum_x{\rm res}_x
$, where the direct sum is over points $x$ in the target group. 

 \bcon  For any  regular variety $X$ over a field $k$, the complex  $\Gamma(X;4)$ is  quasiisomorphic to  the weight $4$ motivic complex 
 ${\rm RHom}_{{\cal MS}_X}(\Q_{\cal M}(0), \Q_{\cal M}(4))$, where   ${\cal MS}_X$ is the conjectural derived category of mixed motivic sheaves  on   $X$  \cite{Bei87}, and 
 $\Q_{\cal M}(n)$ are   Tate sheaves on $X$. 
 In particular,  
 $$
 H^i\Gamma(X;4) \stackrel{?}{=} {\rm gr}^\gamma_4K_{8-i}(X)_\Q.
 $$
 \econ

 \paragraph{2. Remarks.}   Few comments are in order.
 
 \begin{enumerate}  \item  The passage to $\overline \F_x$ in (\ref{MCDIm}) is necessary only to define the residue maps ${\rm res}$ to   singular points, and    ${\rm H}(\A)=0$ if the 
  divisor of  $\A\in \Lambda^4\F^\times$ is a normal crossings divisor. In the application to motivic 
  Chern classes on ${\rm BGL}_{n} $ in Section \ref{SEC8.4n}, only residues at smooth points appear, and all divisors are normal crossings. 

\item   The  Gersten type resolution $\Gamma(X, n)$ of  polylogarithmic motivic complexes ${\cal B}^\bullet(\F; n)$ for  $n \leq 3$ was defined in \cite{Gon95},  providing the weight $n \leq 3$ motivic complexes  for regular $X$.  
 
 For $n\geq 4$ there is the following  problem  \cite[Section 1.15]{Gon95}. 
To define the residue map ${\rm res}_{x/y}$ from a point $x\in X^{p}$ to   a   point $y\in X^{p+1}$ when $  y$ is singular in the closure $\overline x$ of $x$, 
one needs   transfer maps for  the motivic complexes ${\cal B}^\bullet(\F; n-p-1)$. 
This problem is absent if   $y$ is a non-singular point of $\overline x$. So for regular varieties $X$ over a field the residue maps were not   defined    when $n=4, p=1$.\footnote{If  $n=, p=0$ there is no issue due to the regularity of $X$.} 

To handle this problem, we   work  with the complex
${\cal B}^\bullet(\overline \F; 2)^{{\rm Gal}_\F}$ in the middle raw of (\ref{MCDIm}). 
It is expected to be quasiisomorphic to  the complex ${\cal B}^\bullet(\F; 2)$, 
due to Galois descent on the motivic cohomology modulo torsion. Using the complex ${\cal B}^\bullet(\overline \F; 2)^{{\rm Gal}_\F}$ 
it is easy to define  residue maps ${\rm res}$ in $\Gamma(X;4)$, see (\ref{RESM12}).\footnote{Note that , evidently, there is no need to use the Galois invariants  at the bottom two raws.} 
However the composition ${\rm res}\circ {\rm res}$ is   not necessarily  zero. Yet   it
 is canonically homotopic to zero by Lemma \ref{LLL1} below. We denote the homotopy by ${\rm H}$. 
 The map $\delta + {\rm res} + {\rm H}$  is the differential in the complex $\Gamma(X;4)$. 

\item  There is a version $ \Gamma'(X;4)$ of the complex $\Gamma(X;4)$ where we use  for the top line in (\ref{MCDIm}) the weight four part ${\rm CE}_{(4)}^*{\cal L}(\F)$ of the Chevalley-Eilenberg complexes of the conjectural motivic Tate Lie algebra ${\cal L}(\F)$, or its appropriate substitute. 
The rest is the same, and the residue map vanishes on the $\Lambda^2{\cal B}_2(\F)$ component we add. 
This is a natural explicit model for the weight $4$ motivic complex, and our motivic Chern classes land naturally in its cohomology.

   \end{enumerate}

 The  proof of Theorem \ref{MTHRCC} occupies the rest of Section \ref{Sec7}. 

We start with the definition of the vertical arrows ${\rm res}$, called the {residue maps}.

\paragraph{3. Residue maps.} 
 Recall the  {\it the residue map} on the  polylogarithmic   complexes 
\cite[Section 15.1]{Gon95}.  Let $K$ be a field with a discrete valuation $v$, the residue field $k_v$, 
and the group of units $U$. Let $u \lms \overline u$ be the projection $U \to k_v^\times$. 
Choose a uniformizer $\pi$. There is a homomorphism $\theta_v: \Lambda^mK^\times \lra  \Lambda^{m-1}k_v^\times$  
 uniquely defined by the following properties ($u_i \in U$):
\be \la{theta}
\theta_v (\pi \wedge u_1 \wedge \ldots \wedge u_{m-1}) = \overline u_1 \wedge \dots  \wedge \overline u_{m-1}; ~~~~\theta_v (u_1 \wedge \ldots  \wedge u_m) =0. 
\ee
It is   independent of $\pi$. We define a homomorphism $s_v: \Z[{\Bbb P}^1_K] \lra \Z[{\Bbb P}^1_{k_v}]$ by
setting  $s_v\{x\}:= \{\overline x\}$  if  $x$ is a unit,  and  $0$ otherwise. It induces a map  
$s_v: {\cal B}_n(K) \lra  {\cal B}_n(k_v) $, $n \geq 2$. We set 
\be \la{RESM}
{\rm res}_v:= s_v\otimes \theta_v: {\cal B}_n(K)\otimes \Lambda^mK^\times \lra {\cal B}_n(k_v)\otimes \Lambda^{m-1}k_v^\times.
\ee 
The maps $\theta_v$ and  ${\rm res}_v$ define  a morphism of complexes 
\be \la{RESM1}
{\rm res}_v : {\cal B}^\bullet(K, m) \lra{\cal B}^\bullet(k_v, m - 1)[-1].
\ee

Now let $X$ be any variety over a field $k$,  possibly singular.\footnote{Although we define complexes $\Gamma(X;4)$  
only regular varieties $X$, singular varieties will appear in the definition of the residue to the middle raw, that is when $m=3$.}
 Let $\F=k(X)$. Given a codimension one point $x \in X^1$, let us define the residue map 
\be \la{RESM12}
{\rm res}_x : {\cal B}^\bullet(\F, m) \lra{\cal B}^\bullet(\overline \F_x, m - 1)[-1]^{{\rm Gal}_{\F_x}}.
\ee
If $x$ is a regular point of $X$, it is given by the map (\ref{RESM1}). If the point $x$ is singular, we consider the points $y_1, ..., y_m$ on the normalization $\widetilde X$ of $X$ over the point $x$,  
Each of them gives a map 
\be \la{RESM12a}
{\rm res}_{y_i}: {\cal B}^\bullet(\F, m) \lra{\cal B}^\bullet(\overline \F_{y_i}, m - 1)[-1].
\ee
Consider all possible  different  embeddings to $j_{i, \alpha}: \F_{y_i}\hra  \overline {\F}_x$, and  the   induced  maps 
$$
j_{i, \alpha \ast}: {\cal B}^\bullet(\F_{y_i}, m-1) \lra{\cal B}^\bullet(\overline \F_x, m-1 ). 
$$
Then
 take the normalized sum of the residue maps (\ref{RESM12a}):
\be \la{RESM12b}
{\rm res}_x : = {\cal B}^\bullet(\F, m) \lra \sum_{i, \alpha}  [\F_{y_i}: \F_x]\cdot j_{i, \alpha *}\circ {\rm res}_{y_i}\subset  {\cal B}^\bullet(\overline \F_x, m - 1)[-1]^{{\rm Gal}_{\F_x}}.
\ee
The Galois invariance property is forced by the averaging over all embeddings $j_{i, \alpha}$.\\

The vertical arrows in (\ref{MCDIm}) are given by the residue maps  (\ref{RESM}) or $\theta_v$,  
 related to a valuation $v$ given by an appropriate  divisor.  The map   ${\rm H}$ is defined in Theorem \ref{PPP0} in Section \ref{HMTPM}.  
 To motivate its definition, we begin to  construct the regulator map $r_{\cal D}$ in (\ref{regd}).

 \subsection{The canonical real period morphism} \la{SECT8.1}
 
 In Section \ref{SECT8.1} $X$ is a regular complex variety. 

\paragraph{1. The real Beilinson-Deligne complexes.} Let $X$ be a regular complex variety, and $\underline{\R}_X(n)$ the constant variation of real Hodge-Tate structures $\R(n)$ on $X$. Let $({\cal D}^\bullet(X),  {\rm D})$  be the de Rham complex of $\C-$valued  currents on $X$. Let  $({\cal D}_\R^\bullet(X)(n-1),  {\rm D})$ be the subcomplex of $\R(n-1)-$valued  currents on $X$, and  $(\Omega_{\rm log}^\bullet, d)$  the de Rham complex of holomorphic forms with logarithmic singuilarities on $X$. 
The weight $n$ real Beilinson-Deligne cohomology of    $X$ is   the cohomology of the   complex of sheaves   associated with the  following  bicomplex,  where ${\cal D}_\R^0(X)$ is in the degree $1$:
 \be \la{RDC}
\begin{gathered}
    \xymatrix{
       \Bigl( {\cal D}_\R^0(X)  \ar[r]^{{\rm D}}&{\cal D}_\R^1(X)   \ar[r]^{{\rm D}}  & \ldots  \ar[r]^{{\rm D} \ \ \ } &  {\cal D}_\R^{n}(X) \ar[r]^{{\rm D} \ \ } & {\cal D}_\R^{n+1}(X) \ar[r]^{ {\rm D}} & \ldots  \Bigr)(n-1)\\
              &   &   
                & \ar[u]^{\pi_n}{\Omega}_{\rm log}^{n}(X)  \ar[r]^{d \ \ }&  \ar[u]^{-\pi_n}{\Omega}_{\rm log}^{n+1}(X) \ar[r]^{\ \ \ d}&\ldots \\}
                                             \end{gathered}
\ee
The map  
 $\pi_n$ is induced by the projection $\C \to \C/\R(n) = \R(n-1)$. Equivalently, 
\be \la{165}
(\ref{RDC}) = {\rm Cone}\Bigl({\Omega}_{\rm log}^{\geq n}(X)\stackrel{\pi_n}{ \lra} {\cal D}_\R^\ast(X)\Bigr)[-1].
\ee
Let us replace the complex of sheaves $\Omega_{\rm log}^{\geq n}(X)$ in (\ref{165}) by its  Dolbeault resolution: 
$$
{\cal D}_{\rm log}^{\geq n, \ast}(X):= \Omega_{\rm log}^{\geq n}(X)\otimes_{\Omega^{\geq n}(X)} {\cal D}^{\geq n, \ast}(X).
$$
Then there is a natural map of complexes
$$
\pi_n:{\cal D}_{\rm log}^{\geq n, \ast}(X) \stackrel{ }{\lra} {\cal D}_\R^\ast(X), ~~~~
 \pi_n: {\cal D}_{\rm log}^{p, q}(X) \stackrel{ }{\lms} {\cal D}_\R^{p+q}(X).
 $$ 
We get   a complex of abelian groups calculating the cohomology of the complex of sheaves (\ref{RDC}):
\be \la{RDC1}
 \R_{\cal D}(X; n):= {\rm Cone}\Bigl({\cal D}_{\rm log}^{\geq n, \ast}(X) \stackrel{\pi_n}{\lra} {\cal D}_\R^\ast(X)\Bigr)[-1].
 \ee
It calculates ${\rm RHom}$'s in the category of real 
mixed Hodge structures on $X$:
\be \la{RHMNS}
{\rm RHom}^*_{\rm MHS_\R}(\underline{\R}_X(0), \underline{\R}_X(n)) \stackrel{}{= } \R_{\cal D}(X; n).
\ee 

Given a closed embedding of a regular  complex variety   $i: Y \hra X$   of codimension $d$,  there is a canonical map of complexes, provided by the push-forward  of currents:
\be \la{PUSHD}
i_*: \R_{\cal D}(Y; n)[-2d] \lra \R_{\cal D}(X; n+d).
\ee

 \paragraph{2. The canonical real period map ${\rm P}$.}  Recall  defined in  \cite[Section 4]{Gon08} complexes ${\cal C}_{\cal H}^\ast(\underline{\R}_X(n))$,    quasiisomorphic to     (\ref{RHMNS}):
$$
 {\cal C}_{\cal H}^\ast(\underline{\R}_X(n)) \stackrel{\sim}{= } \R_{\cal D}(X; n).
$$
Their key feature  is  a commutative associative product $\ast$, 
providing  a  commutative dg-algebra
\be \la{CH}
{\cal C}_{\cal H}^\ast(X):= \bigoplus_{n=0}^\infty{\cal C}_{\cal H}^\ast(\underline{\R}_X(n)).
\ee
This dg-algebra  has two gradings: the cohomological grading $\ast$, and the weight grading $\bullet$. \\

The category ${\cal H}{\cal T}_X$ of good variations of rational Hodge-Tate structures on a regular complex variety $X$ gives rise to a Hopf 
algebra of framed variations, and hence to the related $\Z_{>0}$-graded Lie coalgebra  ${\cal L}^{\rm HT}_{\bullet}(X)$. One has: 
\be \la{uiuiu}
{\cal L}^{\rm HT}_1(X) ={\cal O}^\times(X)_\Q .
\ee
Applying  the Chevalley-Eilenberg  functor ${\rm CE}^*( -)$, see (\ref{CEil}),   to the Lie coalgebra   
   ${\cal L}^{\rm HT}_\bullet(X)$ we get  a  commutative dg-algebra with an extra weight grading:
 $$
{\rm CE}^\ast\left({\cal L}^{\rm HT}_\bullet(X)\right).
$$
Let $({\cal A}^\bullet(X), d)$ be the real  de Rham complex on $X$. Set  ${\cal A}^\bullet(X)(m)= {\cal A}^\bullet(X)\otimes\R(m)$.

\bt \la{PM1} Let $X$ be a regular complex variety. Then:

1. There is a canonical map of commutative dg-algebras -  the real period map:
\be \la{MP}
{\rm P}: {\rm CE}^\ast\left({\cal L}^{\rm HT}_\bullet(X)\right) \lra {\cal C}_{\cal H}^\ast(X),
\ee
such that one has 
 \be \nonumber  \begin{split}
  &{\cal L}^{\rm HT}_1(X)  \stackrel{(\ref{uiuiu})}{=} {\cal O}^\times(X)_\Q \lra {\cal A}^0(X),\\
&f  \lms \log|f|.\\
 \end{split}
 \ee
 2. It provides a  regulator map  
to the weight $n$ real Deligne complex (\ref{RDC}) of $X$:
\be \la{CEM}
{\rm P}: {\rm CE}_{(n)}^\ast\left({\cal L}^{\rm HT}_\bullet(X)\right) \lra  \R_{\cal D}(X; n).
\ee
\et

\begin{proof} 1. It is based on the construction of {\it canonical generators} of the Tannakian Lie coalgebra of the category of real Hodge  structures. Their key property is that 
the corresponding periods  satisfy a Maurer-Cartan system of differential equations. 
The canonical generators were introduced for the subcategory of Hodge-Tate structures in \cite{Lev01}, and in full generality in \cite[Section 5]{Gon08}. 
The Maurer-Cartan differential equations for the canonical period functions  imply that the canonical period functions extend to a homomorphism of complexes. 

Taking the weight $n$ part of the map (\ref{MP}), where $n>0$,  we get   a    map of complexes: 
\be \la{MOC}
\begin{gathered}
    \xymatrix{
        {\cal L}^{\rm HT}_n(X) \ar[r]^{}  \ar[d]^{ {\rm P}}    &\Lambda_{(n)}^2 {\cal L}^{\rm HT}_\bullet(X) \ar[d]^{ {\rm P} } \ar[r]^{}  & \ldots  \ar[r]^{} & 
       \Lambda^n  {\cal L}^{\rm HT}_1(X) \ar[d]^{{\rm P}}  \\
             {\cal A}^0(X)(n-1)  \ar[r]^{ d}  &   {\cal A}^1(X)(n-1)  \ar[r]^{~~~~~~d}&   \ldots  
               \ar[r]^{ d~~~~~~}& {\cal A}^{n-1}(X)(n-1)   \\}
                                             \end{gathered}
\ee
The explicit formula for the product $\ast$ on dg-algebra (\ref{CH})  \cite[Definition 4.10]{Gon08} implies that the   the right column map is given by 
\be \la{PP}
 \begin{split}
 &f_1 \wedge \ldots \wedge f_n \lms \omega_{n-1}(\log |f_1|, \ldots , \log |f_n|),\\  
 &d \omega_{n-1}(\log |f_1|, \ldots , \log |f_n|) = \pi_n(d\log f_1 \wedge \ldots \wedge d\log f_n).\end{split}
\ee
Here $\pi_n$ is induced by the projection $\C \to \R(n-1)$. 
See a  formula for  $\omega_n$   in \cite[Section 2.1]{Gon08}.

2. Thanks to   (\ref{uiuiu}) and   (\ref{PP}),  the map (\ref{MOC}) provides a map (\ref{CEM}). \end{proof}

 The real Hodge realization map on the conjectural motivic Lie coalgebra, whenever the latter is defined, provides a map of Lie coalgebras,  
where    $\F:=\C(X)$ and $\eta_X:= {\rm Spec}\ \C(X) $: 
$$
 {\cal L}_\bullet(\F)  \lra  {\cal L}^{\rm HT}_\bullet(\eta_X).
$$
Composing the induced map of the Chevalley-Eilenberg complexes with the canonical real period map, we get a motivic version of the real period map:
\[
{\rm P}: {\rm CE}^\ast\left({\cal L}_\bullet(\F)\right) \lra {\cal C}_{\cal H}^\ast(\eta_X).
\]

\paragraph{3. Examples.}  1. The classical polylogarithms give rise to good variations of framed Hodge-Tate structures. Therefore Theorem \ref{PM1} provides a map of complexes:
 \be \la{MP3} 
\begin{gathered}
    \xymatrix{
        {\cal B}_n(\F) \ar[r]^{}  \ar[d]^{ {\rm P} }    & {\cal B}_{n-1}(\F) \otimes \F^\times\ar[d]^{ {\rm P} } \ar[r]^{}  & \ldots  \ar[r]^{} & 
       \Lambda^n  \F^\times \ar[d]^{{\rm P}^{ }}  \\
             {\cal A}^0(\eta_X)(n-1)  \ar[r]^{ d}  &   {\cal A}^1(\eta_X)(n-1)  \ar[r]^{~~~~~~d}&   \ldots  
               \ar[r]^{ d~~~~~~}& {\cal A}^{n-1}(\eta_X)(n-1).   \\}
                                             \end{gathered}
 \ee
 Explicit formulas   can be written using the formula for the $\ast-$product   \cite[Definition 4.10]{Gon08}. 
 \vskip 2mm

 2.  The  crucial for us example is given by the weight four period map: 
\be \la{MP4} 
\begin{gathered}
    \xymatrix{
        {\cal B}_4(\F \ar[r]^{}  \ar[d]^{ {\rm P} })    & {\cal B}_{3}(\F) \otimes \F^\times \ar[d]^{ {\rm P} } \ar[r]^{}  & {\cal B}_{2}(\F) \otimes \Lambda^2\F^\times  \ar[d]^{ {\rm P} } \ar[r]^{} & 
       \Lambda^4  \F^\times \ar[d]^{{\rm P}^{ }}  \\
             {\cal A}^0(\eta_X)(3)  \ar[r]^{ d}  &   {\cal A}^1(\eta_X)(3)  \ar[r]^{~d}&   {\cal A}^2(\eta_X)(3)
               \ar[r]^{ d}& {\cal A}^{3}(\eta_X)(3).   \\}
                                             \end{gathered}
 \ee

 Here are   explicit formulas.     Let us set, following \cite[Section 11.1.3]{Gon08}
\be \la{xryu}
\bs
&{\Bbb L}^*_{4}(z):= 2^{-4}\Bigl(5 {\cal L}_4(z) + \frac{1}{3}{\cal L}_2(z){\rm log}^2 |z| \Bigr),\\
&{\Bbb L}^*_{3}(z) := 2^{-3}3 {\cal L}_3(z),\\
&{\Bbb L}^*_{2}(z):= 2^{-1} {\cal L}_2(z).\\
\end{split}
\ee
Then, using \cite[Definition 4.10]{Gon08},  the map (\ref{MP4}) is given explicitly by
 \be
\bs
 \{x\}_4 \lms &{\Bbb L}^*_{4}(x).\\
 \{x\}_3 \otimes y \lms &\frac{5}{6} {\Bbb L}^*_{3}(x)  d^\C \log |y| - \frac{1}{6}  \log |x| d^\C {\Bbb L}^*_{3}(y).\\
 \{x\}_2 \otimes y \wedge z \lms &\frac{3}{7}{\Bbb L}^*_{2}(x) d^\C  \log |y| \wedge d^\C\log |z| - \frac{1}{7}\log |y|  d^\C {\Bbb L}^*_{2}(x) \wedge d^\C\log |z| \\
 &+ \frac{1}{7}\log |z|    d^\C {\Bbb L}^*_{2}(x) \wedge d^\C\log |y|, \\
 x_1\wedge x_2\wedge x_3 \wedge x_4 \lms &\omega_4(x_1, x_2, x_3, x_4) = \\
 &\frac{1}{6}{\rm Alt}_4\Bigl(\log|x_1| d^\C \log |x_2| \wedge d^\C \log |x_3|\wedge d^\C \log |x_4|\Bigr).\\
\end{split}
\ee

Recall  the de Rham complex $({\cal D}^\bullet(X),  {\rm D})$  of $\C$-valued currents on $X.$

\bp \la{GCEz}  Let $X$ be a regular complex variety and $\F=\C(X)$.  Then:    
 
 i) The  differential forms in the image of the  maps
\be \la{111}
\begin{split}
&{\rm P}:  \Lambda^q {\F}^\times  \lra {\cal A}^{q-1}(\eta_X)(q-1), \ \ \ \ \ \ \ \ \ \  \ \forall q\geq 1;\\
&{\rm P}: {\cal B}_{p}(\F)\otimes \Lambda^q {\F}^\times  \lra {\cal A}^q(\eta_X)(p+q-1),\ \ \ \ \forall p\geq 2, q\geq 0.\\
\end{split}
\ee
  are integrable on $X$, providing  currents    on $X$.

ii) Let  $i: Y \subset X$ be a closed irreducible subvariety of   codimension $d$, and  $\F_Y:=\C(Y)$. 
Then the forms on $Y$ given by the maps    (\ref{111})    provide  $\delta-$currents on $X$, 
resulting in   maps
\be \la{187}
i_*\circ {\rm P}:  \Lambda^q {\F}_Y^\times  \lra {\cal D}^{q-1+2d}(X), \ \ \ \ \ \ i_*\circ {\rm P}: {\cal B}_{p}(\F_Y)\otimes \Lambda^q \F_Y^\times  \lra {\cal D}^{q+2d}(X).
\ee 
\ep
  
 \begin{proof} i) Given an element $\A=\{g\}_p\otimes f_1 \wedge \ldots \wedge f_q \in {\cal B}_{p}(\F)\otimes \Lambda^q \F^\times$, 
take a blow up   $\sigma: \widehat X \lra X$ such that the divisors of the functions $\sigma^*(f_i)$,  $\sigma^*(g)$,  $\sigma^*(1-g)$ form a normal crossing divisor. 
Then near each point $p\in \widehat X$ there are 
 local coordinates $\{z_i\}$  such that each of those functions   
 is either regular at $p$, or given by the equation $z_j=0$ for some $j$. An example: $\{x\}_2 \otimes x\wedge y$.

Note that for any smooth functions $\varphi_j$ on near $p$, the following form is integrable near $p$: 
 \be \la{000}
 {\rm P}(\{z_0\}_p \otimes z_1\wedge \ldots \wedge z_k \wedge \varphi_1 \wedge \ldots \wedge \varphi_l).
 \ee 
  Indeed,  functions  ${\cal L}_p(t)\log^a|t|$ and   1-forms  ${\cal L}_p(t)d^\C \log^a|t|$ and $\log^a|t| d^\C{\cal L}_p(t)$ are integrable on $\C$.   
The $\ast-$product   in   the definition of the period map ${\rm P}$ is given by the product of such functions/their $d^\C-$differentials.  Therefore the direct product of these  currents   is a current on $\C^{q+1}$. Multiplying it by a smooth form we get the current assigned to (\ref{000}).

Since the push forward $\sigma_*$ of currents is  well defined,  we define the current ${\rm P}_{\rm c}(A)$ on $X$ by  
$$
{\rm P}_{\rm c} (\A):= \sigma_*{\rm P}(\sigma^*(\A)).
$$
Below we   skip the subscript in ${\rm P}_{\rm c}$ when it is clear that we mean the current. 

ii)  Take an element $\B \in {\cal B}_{p}(\F_Y)\otimes \Lambda^q \F_Y^\times$. Take a blow up  $\widehat X \to X$ providing   a regular subvariety $i: \widehat Y\subset \widehat X$.  
Since $\Q(Y) = \Q(\widehat Y)$,  by the part i) we get a current ${\rm P}_{\rm c} (\B)$ on $\widehat Y$.  
   Then, 
  given a smooth  test form $\varphi$ on $X$, the current $i_*{\rm P}_{\rm c} (\B)$   is given by
\be \la{IC}
 \langle i_*{\rm P}_{\rm c} (\B), \varphi\rangle := \int_{\widehat Y} {\rm P}_{\rm c} (\B) \wedge i^*\varphi.
\ee
\end{proof}

\paragraph{4. The map $i_{x*}\pi_{ }\circ {\rm P}$.}  Let $x \in X^2$ be a point given by a closed irreducible codimension 2   
 subvariety $i_x: Y_x \subset X$.  Let us   define the following map of complexes generalizing (\ref{187}): 
\be \la{kl}
 i_{x*}\pi_{ }\circ {\rm P}: \Bigl(  {\cal B}_2(\overline \F_x)   \stackrel{{\delta}}{\lra}  \Lambda^2\overline \F_{x}^\times\Bigl)^{{\rm Gal}_{\F_x} } ~~~~~\lra ~~~~~
 \Bigl({\cal D}^4(X) \stackrel{\rm D}{\lra} {\cal D}^5(X)\Bigr).
\ee
 
   Take a finite field extension  $\F_y/\F_x$, which we can assume to be   Galois. It  gives rise to a finite cover $\pi_{y/x}: Y^\circ_y\to Y^\circ_x$ of the generic point $Y_x^\circ \subset  Y_x$. 
   Given a smooth form $\varphi$  on $X$, and an element $\A \in {\cal B}_2(\F_y)$, we have to define a number    $ {\rm deg}(\F_y/\F_x)^{-1}\langle i_{x*}\pi_{ }\circ {\rm P}(\A), \varphi\rangle$. We set 
    $$
     \langle i_{x*}\pi_{ }\circ {\rm P}(\A), \varphi\rangle:=      \langle  {\rm P}(\A), \pi_{y/x}^*{\rm res}_{Y_x^\circ}\varphi\rangle.      
   $$
Let us show that the integral over $Y_y^\circ$ given by the right hand side converges, providing a current on $X$.  Take any smooth irreducible projective variety $Y$, containing  $Y_y^\circ$ as a Zariski  open subset, and 
    equipped with a map $p:Y \to Y_x$ extending the map $\pi_{y/x}$. Since ${\cal L}_2(z)$ is  continuous   on $\C\P^1$,  for any rational function $f$ on $Y$, 
    the function ${\cal L}_2(f)$ is continuous on $Y$. Since $p^*\varphi$ is a continous function on $Y$,   the integral converges, and coincides with   its restriction to the open part $Y_y^\circ$.   
 
  The definition for an element $\B \in \Lambda^2\F_y^\times$ is similar, using the fact that the 1-form  $\log|f|di \arg g - \log|g| di \arg f$ is   integrable  on $Y$ for any two rational functions $f,g$ on $Y$. \\

Proposition \ref{GCEz} allows to upgrade a map of complexes (\ref{MP3}) to the  smooth de Rham complex at the generic point of $X$ to maps of each individual group into the space of currents. 
However these maps may not commute with the   differentials. This leads us   to   residual differentials.

  
\paragraph{5. The residual differential.} Let $\omega$ be a smooth  form   on   a Zariski open part $U$ of  a regular complex variety $X$. Let us assume that it   provides a current  $[\omega]$ on $X$, given by 
 $\langle [\omega], \varphi\rangle:= \int_U\omega \wedge \varphi$, where the integral over  $U$  converges for any smooth on $X$ test form $\varphi$. 
 
 Denote by 
  $d_U$ the de Rham differential  
 on forms   on $U$. Recall   the de Rham differential ${\rm D}$ on currents on $X$:  $\langle {\rm D}[\omega], \varphi \rangle:= (-1)^{|\omega|} \langle  [\omega], d_X\varphi \rangle$. 
 
 Let us assume that the form $d_U\omega$ on $U$ defines a current $[d_U\omega]$ on $X$.  Then in general $[d_U\omega] \not = {\cal D}[\omega]$. 
 We call the difference the {\it residual differential } ${\rm res}_{\cal D}(\omega)$ of the current $\omega$ on $X$:
 \be
 {\rm res}_{\cal D}(\omega):=  {\rm D}[\omega] - [d_U\omega].
 \ee
 For example, the 1-form $\omega = di \arg (z)$ on $\C^\times$ defines a current on $\C$. Furthermore,  $d(di \arg(z))=0$ on $\C^\times$, while ${\rm D}[di\arg(z)]= 2\pi i\delta(z)$. 
  So  
 the residual differential is defined, and  
 $$
 {\rm res}_{\cal D}(di\arg (z))  = 2\pi i \delta(z).
 $$
 
 Here is an example of the residue differential. 
  \bl \la{LMP5} If the divisor of the functions $f_1, ..., f_n$ is a normal crossing divisor, then:
 \be \la{MP5}
\bs
&{\cal D} \circ {\rm P} (f_1 \wedge ... \wedge f_n) - \pi_n(d\log f_1 \wedge
\ldots \wedge d\log f_n) = 
2 \pi i\cdot \sum_{Y \subset X^{1}}{\rm P}  \circ {\rm res}_{Y}(f_1 \wedge ... \wedge f_n) .\\
\end{split}
\ee
\el

\begin{proof}  Formula (\ref{MP5}) is  proved directly, see \cite{Gon00a}.
\end{proof}

Our  next goal is to investigate the composition ${\rm res}_{\cal D}\circ {\rm P}$ of the canonical real period morphism ${\rm P}$ 
with the {residual differential }   for the weight four period morphism (\ref{MP4}).

  \subsection{The homotopy map $\rm H$.} \la{HMTPM} Let $X$ be  a  regular variety over a  field $k$, and $\F = k(X)$. Denote by $\F_x$ the field of functions 
  for a point $x \in X^1$ provided by an irreducible divisor $Y_x \subset X$.   There is the discrete valuation 
${\rm val}_{x}$ of $\F_x$ given by the order of zero of a rational function  on   $Y_x$.   Recall the residue homomorphism
\be
{\rm res}: \Lambda^4\F^\times \lra \bigoplus_{x \in X^1}\Lambda^3\F_x^\times. 
\ee
Its component ${\rm res}_x$ for a point $x \in X^1$  is given by the map (\ref{theta}) for the discrete valuation 
${\rm val}_{x}$:  
\be \la{resncd}
{\rm res}_x(f_0\wedge f_1\wedge f_2 \wedge f_3) = \sum_{i=0}^3 (-1)^{i}{\rm val}_{x}(f_i) f_0 \wedge \ldots \widehat f_i   \ldots \wedge f_3 \in \Lambda^3\F_x^\times.
\ee
Although the field $\F$ depends only on the birational type of $X$, the residue     depends on its model. 
\vskip 2mm

In the case when  $k=\C$, and $X$ is a regular complex variety, recall the map $i_{\ast}\pi\circ {\rm P}$ in (\ref{kl}). We denote by $\pi\circ {\rm P}$ the sum of these maps over all $x \in X^2$. 

Recall the map 
\be
\begin{split}
&d\log: \Lambda^k\F^\times \lra \Omega_{\rm log}^k(X), \\
& d\log (f_1\wedge\ldots  \wedge f_k) = d\log (f_1) \wedge \ldots \wedge d\log (f_k).\\
\end{split}
\ee

\bt \la{PPP0}  i) Let $X$ be  a  regular variety over a  field $k$, and $\F = k(X)$.  Then there is  a \underline{canonical} map
\be \la{themapH}
{\rm H}:  \Lambda^{4} \F^\times  \lra \bigoplus_{x\in X^2}    {\cal B}_2(\overline \F_x) ^{{\rm Gal}_{\F_x}}
 \ee
 such that   ${\rm H}(k^* \wedge \Lambda^{3} \F^\times)= 0$  and, see the diagram (\ref{MCDImwww}) below, 
 \be
\begin{split}
& \delta \circ {\rm H} + {\rm res}\circ {\rm res} =0. \\
\end{split}
 \ee

   ii)   If   $k=\C$,   for any $\A \in \Lambda^{4} \F^\times $,  the differential of the 3-current ${\rm P}(\A)$ on $X$ is   
 \be
 {\rm D}\circ {\rm P}(\A) =     \pi_4(d\log (\A)) +   2\pi i \cdot {\rm P}\circ  {\rm res}(\A) +  (2\pi i)^2 \cdot  \pi \circ {\rm P}\circ   {\rm H}(\A). 
  \ee
 \et
 
So the part i) provides    the following diagram: 
\be \la{MCDImwww} 
\begin{gathered}
    \xymatrix{
        {\cal B}_{2}(\F)\otimes \Lambda^2 \F^\times \ar[r]^{\ \ \ \ \ \delta}  \ar[d]^{{\rm res} }& 
        \Lambda^{4} \F^\times \ar[ddl]^{{\rm H}} \ar[d]^{{\rm res} }\\
               \bigoplus_{x\in X^1}    \Bigl(   {\cal B}_2(\F_x) \otimes  \F_{x}^\times  \ar[d]^{{\rm res} }
               \ar[r]^{\ \ \ \ \ \ \ \  \delta }& \Lambda^3\F_{x}^\times \Bigr) \ar[d]^{{\rm res} }\\
                  \bigoplus_{x\in X^2}    \Bigl(  {\cal B}_2(\overline \F_x)   \ar[r]^{ \ \ \ \ \ \delta}& \Lambda^2\overline \F_{x}^\times\Bigl)^{{\rm Gal}_{\F_x} }}
                                \end{gathered}
 \ee
 
  \begin{proof}  i) 
  We use the following   result,  conjectured in   \cite[Conjecture 6.2]{Gon02a}, and proved in full generality by V. Bolbochan   in \cite{Bol21}, see also \cite{Rud15s} for partial results.  
  \bt  Given a  field $K$, and a curve $C$ over $K$, there exists a canonical functorial   map 
\be
\begin{split}
&h: \Lambda^3K(C)^\times \lra {\cal B}_2(\overline  K)^{{\rm Gal}_K}.\\
\end{split}
\ee
The   map $h$   enjoys the following three  properties:\footnote{We use  the  $1-$form  $d i \arg(z)$, and    polylogarithms $i^{n-1}{\cal L}_n(z)\in \R(n-1)$, forcing the factor $2\pi i \cdot i $ in  (\ref{hhh10}).} 
\be \la{hhh10}
 \begin{split}
 &\delta \circ {h} (f_1  \wedge f_2  \wedge f_3 ) = \sum_{p \in C(\overline K)} {\rm res}_p(f_1  \wedge f_2  \wedge f_3).\\
&h: K^\times \wedge \Lambda^2 K(C)^\times \lra 0.\\
& \mbox{If $K=\C$, then}   \ \ \ \ \ \  \int_{C(\C)}\omega_2(f_1, f_2, f_3) = 2\pi i \cdot i{\cal L}_2(h(f_1  \wedge f_2  \wedge f_3)).\\
\end{split}
\ee
 \et

If $C$ is  ${\Bbb P}^1$   or   an elliptic curve, there are explicit formulas for the map $h$ \cite[Theorems 6.5, 6.14]{Gon02a}. For example, if $C= {\Bbb P}^1$ we get, using the cross-ratio 
      ${\it r}'(x_0, x_1, x_2, x_3):= \frac{(x_0-x_2)(x_1-x_3)}{(x_1-x_2)(x_0-x_3)}$, 
$$
f_1  \wedge f_2  \wedge f_3  \lms \sum_{x_i \in \P^1(\overline K)} {\rm val}_{x_1}(f_1){\rm val}_{x_2}(f_2){\rm val}_{x_3}(f_3)\{{\it r}'(x_1, x_2, x_3,\infty)\}_2.\
$$
   Take an element $\A \in  \Lambda^{4} \F^\times$. Take a sequence of blowups  
\be \la{BBB0}
\sigma: \widehat X \lra X
\ee 
 such that $\widehat X$ is non-singular, and the divisor of the element $\sigma^*\A$ is a normal crossing divisor. 
Let $\{E_i\}$ be   irreducible components  of the exceptional divisor $E$.   Consider the residue map at the divisor $E_i$: 
\be
{\rm res}_{E_i}: \Lambda^{4} \F^\times  \lra \Lambda^{3} k(E_i)^\times.
\ee

Next, 
let $Y_i:= \sigma(E_i)$,  $K_i:=k(Y_i)$. Assuming that ${\rm codim}(Y_i)=2$, the   fiber of the map $\sigma_{|E_i}: E_i \to Y_i$ over the generic point of $Y_i$     provides a 
curve $ C$  over a finite extension   of the field $K_i$. 
Indeed, to get the divisor $E_i$ we   blow up $\widehat Y_i \to Y_i$,   then   blow up a divisor in $\widehat Y_i$ which is finite over $Y_i$, etc. The curve $C$ can be any curve, e.g. when $X$ is a singular surface given by the cone over a curve $C$. Therefore by Theorem \ref{PPP0} there is a     map 
 $$
h_{E_i}:  \Lambda^{3} k(E_i)^\times \lra {\cal B}_2(\overline K_i)^{{\rm Gal}_{K_i}}. 
  $$
  Composing the  two maps, we get    the  homomorphism 
\be \la{he}
 {\rm H}_{E_i}:= h_{E_i} \circ  {\rm res}_{E_i}:  \Lambda^{4} \F^\times  \lra {\cal B}_2(\overline K_{i})^{{\rm Gal}_{K_i}}.
\ee 
\bd 
The $x-$component ${\rm H}_x$ of the map ${\rm H}$ in (\ref{themapH}) is the sum of the maps (\ref{he}) over all  components $E_i$ of the exceptional divisor $E$ over $x$ such that $\sigma(E_i)$ has  codimension $2$:
\be
{\rm H}:= \sum_{x \in X^2}{\rm H}_x, \ \ \ \  {\rm H}_x:= \sum_{{\rm codim}\ \sigma(E_i)=2} {\rm H}_{E_i}: \Lambda^{4} \F^\times  \lra  {\cal B}_2(\overline \F_x)^{{\rm Gal}_{\F_x}}.
\ee
\ed

\bl \la{LLL0} The map ${\rm H}$ does not depend on the choice of blow ups in its definition. 
\el

 \begin{proof} For any two blow ups there is a third one dominating them. So it is sufficient to check that the map ${\rm H}$ does not change when we blow up $\widehat X$. 
 This follows  from the second property in (\ref{hhh10}).   Indeed,   blowing up a codimension $2$ point on a variety with an element $\A$ with a normal crossing divisor, and  restricting $\sigma^*\A $ to  the exceptional divisor $E$, we get an element of  $k^\times \wedge \Lambda^2\F_E^\times$, annihilated by $h_{E}$. \end{proof}

 \bl \la{LLL1} One has 
 \be \la{iikk}
(\delta  \circ {\rm H}  + {\rm res}\circ {\rm res} )(\A) =0.
\ee
 \el
 
 \begin{proof} On the blow up $\widehat  X$  we have  ${\rm res} \circ {\rm res}  \ \sigma^*\A  =0$ since the divisor  of  $\sigma^*\A$ 
has normal crossings.  
 Denote by    $\{Z_j\}$ the  non-exceptional irreducible divisors on $\widehat X$ with  
 ${\rm res}_{Z_j}\sigma^*\A \not = 0$. Then 
 $$
 {\rm res} \circ {\rm res} (\A)=\sigma_*\Bigl({\rm res} \circ \sum_j {\rm res}_{Z_j}  \sigma^*\A  \Bigr).
  $$
Indeed, clearly $  {\rm res} (\A)=\sigma_*( \sum_j {\rm res}_{Z_j}  \sigma^*\A )$. To define the residue on a   divisor $\sigma(Z_j)$ we  normalize it, calculate the residue, and push it down to $\sigma(Z_j)$. 
Since $Z_j$ is non-singular, it coincides with the normalization of $\sigma(Z_j)$ up to the   codimension $\geq  2$. 
 So it remains to prove that 
 $$
\delta \circ {\rm H} (\A)=\sigma_{2, *}\Bigl({\rm res} \circ \sum_i {\rm res}_{E_i}  \sigma^*\A  \Bigr).
$$
Here $\sigma_{2, *}$ denotes the contribution to $\sigma_*$ of the components whose projection has codimension $2$. Note that  ${\rm codim}\ \sigma(E_i)>1$.  
 If  ${\rm codim}\ \sigma(E_i)>2$, then $E_i$ does not contribute neither to ${\rm H} (\A)$, nor to the right hand side. 
If the codimension of $Y_i:=\sigma(E_i)$   is $2$, the claim 
 follows from the definition of ${\rm H}$ and the top formula in (\ref{hhh10}).    
 Indeed, the generic point $E^\circ_i$ of $E_i$ is obtained by  blowing up  $Y_i$, then blowing up  a 
finite over $Y_i$ divisor there, etc. So $E^\circ_i$ is a curve $C$  over a finite extension of $k(Y_i)$.   \end{proof} 
The part  i)   of Theorem \ref{PPP0} follows from Lemmas \ref{LLL0} and \ref{LLL1}. \vskip 2mm

ii)     From now on till the end of the proof, $X$ is a regular complex variety. By the very definition,  
$$
{\rm D}\circ {\rm P}(\A):= \sigma_* \circ  {\rm D} \circ {\rm P}(\sigma^* \A).
$$

Lemma \ref{LMP5}, applied to the normal crossing divisor of $\sigma^* \A$, calculates the current  ${\rm D} \circ {\rm P}(\sigma^* \A)$ on $\widehat X$. 
Its push forward   to $X$ is a sum of the two components: 

1. The sum of  push forwards of  contributions of   non-exceptional divisors $\{Z_j\}$.

2. The sum of  push forwards of  contributions of   the exceptional divisor $E = \{E_i\}$.

 The first one is equal to ${\rm P}\circ {\rm res}(\A)$.

The contribution of a component $E_i$ of the exceptional divisor  with ${\rm codim} ~\sigma_*(E_i)  >2$ is zero for the following   reason.  By the very definition, given  a smooth form $\varphi$  on $X$,  
\be \la{pushf}
\langle \sigma_* \circ  {\rm D} \circ {\rm P}(\sigma^* \A), \varphi \rangle:=   \langle  {\rm D} \circ {\rm P}(\sigma^* \A), \sigma^*\varphi \rangle.
\ee 
 The restriction of the form $\sigma^*\varphi$   to the exceptional divisor is zero. The component of  the 
 $4-$current ${\rm D} \circ {\rm P}(\sigma^* \A)$ concentrated on $E_i$ 
 is given by a 2-form at the generic point of $E_i$.  
 The evaluation of  the right hand side in (\ref{pushf}) is given by the   integration along the fibers of the fibration $E_i \to \sigma(E_i)$. 
 The  fibers have dimension  at least $k= {\rm codim}~\sigma(E_i)-1$.   Therefore  the push-forward is zero if $k>1$, since the real dimension  of the fiber, which is  at least $2k$,  
 is bigger than the degree of the form, which is $2$.

A similar reasoning  for a subvariety $Y_x:=  \sigma_*(E_i) $, ${\rm codim} ~ Y_x=2$, delivers  $g(y) \cdot \delta_{Y_x}$, where   $g(y)$ is a function on $Y_x$ given by the integral 
along the fibers of the projection $E_i \to Y_x$. These fibers are curves.   The generic fiber is a curve over a finite extension $\F_y$ of the field $\F_x$. Geometrically, it is a fibration on  curves  over a finite cover $\pi_{y/x}: Y_y\to Y_x$.  So  the map $\sigma: E_i \to Y_x$ is a composition $E_i \to Y_y \to Y_x$ where  the fibers of the first map are connected curves $C_y$. 
Denote by $\pi_{y/x*}$ the pushforward of currents. Then  \be \la{mappiyx}
g(y):=  \pi_{y/x*}\int_{C_y(\C)} \omega_2({\rm res}_{E_i}  \sigma^*\A ) =2 \pi i \cdot i {\cal L}_2({\rm H}_{E_i}(\sigma^*\A)).
\ee    
To prove the  second equality we calculate the middle integral  using    the last formula in (\ref{hhh10}):
$$
\int_{C_y(\C)} \omega_2({\rm res}_{E_i} \sigma^*\A) = 2 \pi i \cdot i{\cal L}_2({h}_{E_i}(\sigma^*\A)).
$$
  \end{proof}

The complex $\Gamma(X,4)$ has been defined, and the part i) of Theorem \ref{MTHRCC} is proved.  

\subsection{The real regulator map} 

Recall the canonical real period map $\rm P$ from  Section \ref{SECT8.1}.

   \bt \la{MTHRCCC} Let $X$ be a regular  complex variety and $\F=\C(X)$. Then there is a natural   map of complexes, called the real regulator map 
   \be \la{iii}
r_{\cal D}:   \Gamma(X;4) \lra  \R_{\cal D}(X; 4),  
 \ee
whose restriction to the horizontal complexes is given by the  canonical real period map ${\rm P}$. 
 \et
 
 \begin{proof}

 The regulator map (\ref{iii}) 
is given by its   two components:
\be \la{167a}
\begin{split}
&\alpha:   \Gamma(X;4) \lra  {\cal D}_\R^*(X)(n-1)[-1].\\
&\beta:   \Gamma(X;4) \lra  {\cal D}^{\geq n, \ast}(X).\\
\end{split}
\ee
The map $\alpha$ is given by applying the canonical period map ${\rm P}$ to the polylogarithmic motivic complexes sitting at generic points of subvarieties $Y_x \subset X$, see (\ref{MP4}), and considering the obtained forms as currents on $X$.  Here we use Proposition \ref{GCEz} which guarantee that we indeed get currents as well as Section \ref{SECT8.1}.3  where 
the map $i_{x*}\pi\circ {\rm P}$ was defined. The result is depicted  below: 
 \be \la{MCDImq} 
\begin{gathered}
    \xymatrix{
     {\rm P}\Bigl(  {\cal B}_4(\F) \ar[r]^{}   &{\cal B}_{3}(\F)\otimes \F^\times \ar[d]^{{\rm res}_{ \cal D}} \ar[r]^{}  & {\cal B}_{2}(\F)\otimes \Lambda^2 \F^\times \ar[r]^{}  \ar[d]^{{\rm res}_{ \cal D}}& 
        \Lambda^{4} \F^\times \Bigr)\ar[ddl]^{{\rm res}_{ \cal D}}\ar[d]^{{\rm res}_{ \cal D}}\\
               &\bigoplus_{x\in X^1} i_{x\ast}   {\rm P}\Bigl( {\cal B}_3(\F_x)    \ar[r]^{}&   {\cal B}_2(\F_x) \otimes  \F_{x}^\times  \ar[d]^{{\rm res}_{ \cal D}}
               \ar[r]^{ }& \Lambda^3\F_{x}^\times \Bigr) \ar[d]^{{\rm res}_{ \cal D}}\\
                  &  \alpha (\Gamma(X; 4)) :  &\bigoplus_{x\in X^2}   i_{x\ast}   \pi  \circ {\rm P}\Bigl(  {\cal B}_2(\overline \F_x)   \ar[r]^{ }& \Lambda^2\overline \F_{x}^\times\Bigl)^{{\rm Gal}_{\F_x}} \ar[d]^{{\rm res}_{ \cal D}}\\
                & & &\bigoplus_{x\in X^3}  i_{x\ast}   {\rm P}( \F_{x}^\times ) \ar[d]^{{\rm res}_{ \cal D}}\\   & & &\coprod_{x\in X^4} i_{x\ast}   \R }\\               \end{gathered}
 \ee

  The map $\beta$ is  zero everywhere except   the right column of the diagram    (\ref{MCDIm}).  It is defined on the right column as follows:
  $$
 \beta: (f_1 \wedge \ldots \wedge f_k)_{|Y_x} \lms i_{x*}(d\log f_1 \wedge ... \wedge d\log f_k)  \in {\cal D}_{\rm log}^{n, \ast}(X)[1], \qquad x   \in X^{4-k} ,~~ k>0.
  $$

 We have to  study the differentials of   {currents}  in (\ref{MCDImq}). Such a differential   
  is   a sum of two terms:  the current provided, via Proposition \ref{GCEz},  by the   differential of the underlying  form,  
and  the  {residuel differential}. 

The claim that the map $r_{\cal D}$ commutes with the differential on $\Lambda^4\F^\times$ follows from the part ii) of Theorem \ref{PPP0}. The latter calculates  the residual differential on ${\rm P}(  \Lambda^4  \F^\times )$,  and involves, besides the residue map,  a new feature -  the homotopy ${\rm H}$. 

The claim that the map $r_{\cal D}$ commutes with the differential on the rest of the right column   of $\Gamma(X;4)$ just means   that the residual differential on ${\rm P}(  \Lambda^k  \F_x^\times )$ for $k\leq 3$ 
is controlled  by the residue map. This follows using  the arguments in the proof of   part ii) of Theorem \ref{PPP0} and Lemma \ref{LMP5}.

 \bp \la{PPP0a}  Given a   regular complex variety $X$,  for any 
 $$
 \B \in ({\cal B}_2(\F) \otimes   \F^\times) \oplus ({\cal B}_2(\F) \otimes \Lambda^{2} \F^\times) \oplus ({\cal B}_3(\F) \otimes   \F^\times)
  $$
     the residual differentials of   
  ${\rm P}( \B)$ 
 is provided by the residue maps. 
Precisely, setting $i_* = \sum_x i_{x*}$, 
 \be
 \begin{split}
  &{\rm res}_{\cal D}\circ {\rm P}(\B) =  2\pi i \cdot  i_*{\rm P}\circ  {\rm res}(\B),\\
  \end{split}
   \ee
 \ep
 
 \begin{proof}  Let us handle an element $\B \in  {\cal B}_2(\F) \otimes \Lambda^{2} \F^\times$. For any blow up $\sigma: \widehat X\to X$ such that the divisor of $\sigma^*(\B)$ is a normal crossing divisor, 
 \be \la{pushf1}
 \begin{split}
&\langle  {\rm D} \circ {\rm P}(  \B), \varphi \rangle:= \langle \sigma_* \circ  {\rm D} \circ {\rm P}(\sigma^* \B), \varphi \rangle:=   \langle  {\rm D} \circ {\rm P}(\sigma^* \B), \sigma^*\varphi \rangle \\
&=\langle    {\rm P}\circ ({\rm res} + \delta)(\sigma^* \B), \sigma^*\varphi \rangle.\\
 \end{split}
 \ee  
Here the first two equalities are the definitions, and the last follows from  
\bl \la{LMP51} If divisors of the functions $g, 1-g, f_1, f_2$ form a normal crossing divisor, then:
\[
{\rm D} \circ {\rm P} (\{g\}_2 \otimes f_1 \wedge  f_2) =  {\rm P} ( (1-g)\wedge g \wedge f_1\wedge f_2) + 
2 \pi i\cdot \sum_{Y \subset X^{1}}{\rm P}  \circ {\rm res}_{Y} (\{g\}_2 \otimes f_1 \wedge  f_2).
\]
\el
The rest is very similar to the proof of the part ii) of  Theorem \ref{PPP0}, with the only proviso that the contribution of the exceptional divisor is zero. Indeed, 
${\rm P}  \circ {\rm res}_{Y} (\B)$ is a 1-form, and so it contributes zero for any push-forward $\sigma_*$, since its    fibers are of the dimension $>0$. 
The other two cases of are very similar. 
 \end{proof}

 Note that the assumption that $X$ is smooth is sufficient to handle singular divisors $Y_x$ for elements $\B \in  {\cal B}_2(\F_x) \otimes   \F_x^\times$ as well.
 
  It is easy to see that  for the groups on the diagonal in (\ref{MCDImq}) the residuel differential is zero. 
 
 Theorem \ref{MTHRCCC} is proved. \end{proof}.


  \section{The weight $4$ motivic Chern class $\&$ Beilinson's regulator} \la{SEC8.4n}
  
  \subsection{Construction of the  weight 4 Beilinson's regulator: the strategy} \la{sect9}
  
   \paragraph{1. Milnor's simplicial model of the classifying space ${\rm B}\G_\bullet$.}  Given an algebraic group $\G$, recall the simplicial realization ${\rm E}\G_\bullet$ of the space ${\rm E}\G$: \\
 
 \begin{tikzpicture}
\path  
 node (a) at (3.8,0) {$\cdots$}
  node (b) at (5,0) {$G^3$}
  node (c) at (8,0) {$G^2$}
  node (d) at (11,0) {$G$};

\begin{scope}[->,>=latex]

    \foreach \i in {0,...,2}{%
      \draw[->] ([yshift=(\i - 1) * 0.6 cm]b.east) -- ([yshift= (\i - 1) * 0.6 cm]c.west) ;}
    \foreach \i in {0,...,1}{%
      \draw[<-] ([yshift=(2 * \i -1) * 0.3 cm]b.east) -- ([yshift=(2 * \i -1) * 0.3 cm]c.west) ;}

    \foreach \i in {0,...,1}{%
      \draw[->] ([yshift=(2 *\i - 1) * 0.3 cm]c.east) -- ([yshift=(2 *\i - 1) * 0.3 cm]d.west) ;}
    \foreach \i in {0,...,0}{%
      \draw[<-] ([yshift=\i * 0.3 cm]c.east) -- ([yshift=\i * 0.3 cm]d.west) ;}
\end{scope}

\end{tikzpicture} \\
 In particular, there are the $n+1$ standard maps  
\be
s_{n, i}: \G^{n+1} \lra \G^n, ~(g_0, ..., g_n) \lms (g_0, ..., \widehat g_i, ..., g_n), ~~i=0, ..., n.
\ee
Then we set 
 ${\rm B}\G_\bullet:= \G\backslash {\rm E}\G_\bullet$, where the group $\G$ acts on $\G^n$ diagonally. \vskip 2mm

Let   $X \lms {\cal F}^\bullet(X)$ be an assignment to an algebraic variety $X$ a  complex of abelian groups ${\cal F}^\bullet(X)$, contravariant under surjective maps $X \to Y$. 
We define the complex   ${\cal F}^\bullet({\rm E}\G_\bullet)$  as the total complex associated with the  bicomplex
\be
  \begin{split}
  &  \cdots \stackrel{s^*}{\longleftarrow}  \mathcal{F}^\bullet({\G^4}) \stackrel{s^*}{\longleftarrow}    \mathcal{F}^\bullet({\G^3})  \stackrel{s^*}{\longleftarrow}  
  \mathcal{F}^\bullet({\G^2})\stackrel{s^*}{\longleftarrow}  \mathcal{F}^\bullet(\G)\stackrel{s^*}{\longleftarrow}  \mathcal{F}^\bullet(\ast).\\
&s^*: = \sum_{i=0}^n (-1)^is_{n, i}^*: \mathcal{F}^\bullet({\G^{n}}) \lra \mathcal{F}^\bullet({\G^{n+1}}).\\
\end{split}
 \ee
 Applying this construction  to    complexes $\Gamma (\ast; 4)$,  and taking the $\G-$invariants, we get the   complex  
$$
\Gamma ( \B\G_\bullet;4) :=  \Gamma ({\rm E}\G_\bullet  ; 4)^\G.
$$
Its cohomology $H^*\Gamma ( \B\G_\bullet  ;4)$ are the weight four motivic cohomology of the classifying space $\B\G$.

Let $\G = {\rm GL}_n$, and let ${\rm U} \subset {\rm GL}_n$ be the subgroup preserving an $(n-3)-$flag. Then we have the   decorated $(n-3)-$flag variety, which we denote by 
 \be \la{Aa}
  \mathscr {A}:=\G /{\rm U} = {\cal A}^{(n-3)}_{n}.
\ee
The canonical   projection $({\rm GL}_n)^k\lra {\mathscr A}^k$ induces a map of complexes, denoted $\varphi_{{\cal A}  \to {\rm G }}$: 
  \begin{displaymath}
    \xymatrix{
 \Bigl( \ldots&    \ar[l]_{s^*~~~~~}   \ar[d]  \Gamma^\bullet({\mathscr A}^4 ; 4)    &    \ar[d]  \ar[l]_{~~s^*}     \Gamma^\bullet({\mathscr A}^3; 4)   &   \ar[d]  \ar[l]_{~~s^*}  
  \Gamma^\bullet({\mathscr A}^2; 4)  &   \ar[d]  \ar[l]_{~~s^*} \Gamma^\bullet({\mathscr A} )\Bigr)^{{\rm GL}_n} \\
  \Bigl(\ldots& \ar[l]_{s^*~~~~~}     \Gamma^\bullet(({\rm GL}_n)^4 ; 4)    & \ar[l]_{~~s^*}       \Gamma^\bullet({({\rm GL}_n)^3 }; 4)   &     \ar[l]_{~~s^*}   
 \Gamma^\bullet({({\rm GL}_n)^2 }; 4)  &   \ar[l]_{~~s^*}   
   \Gamma^\bullet({ \rm GL}_n; 4) \Bigr)^{{\rm GL}_n} \\}
         \end{displaymath}

\paragraph{2. Universal motivic Chern classes. } Recall that one should have, for all $n \geq 1$,  
 the universal   motivic Chern class     of the classifying space ${\rm BGL}_{{n}}$ 
 of the algebraic group ${\rm GL_n}$    with values in the weight $p$ motivic cohomology:
$$
{c}_{p, {n}}^{\cal M}\in H^{2p}_{\rm Zar}({\rm BGL}_{n}, \underline{\Z}_{\cal M}(p)).
$$
Here 
$H_{\rm Zar}$ is Zariski cohomology, and $\underline{\Z}_{\cal M}(p)$  the weight $p$  motivic complex  of sheaves \cite{Voe00}.
These classes would provide   geometric constructions of  Chern classes in various realizations. \vskip 2mm

The construction of the complex   $\Gamma(X;4)$ is  immediately  generalized to simplicial varieties. We use   Milnor's simplicial model ${\rm BGL}_{{n}\bullet}$  of the classifying space ${\rm BGL}_{{n}}$, and  
define a cocycle  
\be \la{MCC}
{\rm C}_{4, {n}}^{\cal M}\in H^{8}\Gamma({\rm BGL}_{{n}\bullet} ;4) 
\ee
  representing a universal motivic Chern class.  This    generalizes the construction   of the universal motivic Chern classes in \cite{Gon93}, \cite[Section 5]{Gon94} for the weights 2 and 3.\footnote{See more details for the weight 3 in   \cite[Appendix]{Gon95a}. }

To define the class ${\rm C}_{4, n}^{\cal M}$ we define a class in $H^8\Gamma ( {\mathscr A}_{\bullet};4)$, and apply  to it  the map $\varphi_{{\cal A} \to \G}$. 
  \vskip 2mm

Applying   the regulator map $r_{\cal D}$ defined in Section \ref{Sec7} to the cocycle ${\rm C}_{4, {n}}^{\cal M}$, we  get a cocycle ${\rm C}_{4, {n} }^{\cal H}$ for the  weight $4$ Beilinson regulator class in the  
  the weight $4$ real Deligne cohomology:
\[
{C}_{4, {n}}^{\cal H}:= r_{\cal D}({\rm C}_{4, {n}}^{\cal M}) \in H^8\R_{\cal D}({\rm BGL}_{{n}\bullet}(\C);4) = H^8({\rm BGL}_{{n}\bullet}(\C), \underline{\R}_{\cal D}(4)).
\]
This is all we need to
finish the proof of Zagier's conjecture on $\zeta_\F(4)$.

 \paragraph{3. The strategy to define   the  class (\ref{MCC}).}   Recall the weight $4$   motivic complex  
 \be \la{G4}
{\rm G}_4(\F) \lra ({\B}_3(\F)\otimes \F^\times) \oplus  \Lambda^2 {\B}_2(\F) \lra {\B}_2(\F)\otimes \Lambda^2\F^\times \lra \Lambda^4\F^\times.
\ee

Recall   the decorated  $(n-3)-$flag complex $\Z[{\mathscr A}_{ \bullet}(\F)]$ for ${\rm GL}_{n}(\F)$:
  \begin{displaymath}
    \xymatrix{
 \Z[{\mathscr A}_{ \bullet}(\F)]:= \Bigl(  \ldots  ~~  \ar[r]^{~~~~~s_*}      &      \ar[r]^{ s_*}       \Z[{\mathscr A}^3(\F)]     &      \ar[r]^{ s_*~~~}    
  \Z[{\mathscr A}^2(\F) ]&     \Z[{\mathscr A}(\F)] \Bigr)^{{\rm GL}_n}.  }
         \end{displaymath} 
 Here $s_* = \sum_{i=0}^n(-1)^is_{n,i}$. 
 The   main result of Section \ref{SEC2} is  a   construction  of the map  
\be \la{COMDA}
\mbox{ decorated $(n-3)-$flag complex $\Z[{\mathscr A}_\bullet(\F)]$ } \lra \mbox{complex (\ref{G4})}. 
\ee
Combined with the map $\varphi_{{\cal A} \to \G}$, this is just    equivalent to a construction of a cocycle   $ \widetilde {C}_{4, {n} ; \circ}^{\cal M}$ for 
  the motivic Chern class   at the generic point  of    ${\rm BGL}_{{n} \bullet}$ with values in the complex (\ref{G4}).  
 Using Theorem \ref{JMCi} and   the basic relation between  motivic correlators and the motivic fundamental group of the punctured ${\Bbb P}^1$,   we deduce from the existence of  the  map (\ref{COMDA}) the existence of a similar map to the weight $4$ part of the Chevalley-Eilenberg complex the Lie coalgebra ${\Bbb L}_{\leq 4}(\F)$.    
To proceed further, we recall the weight $4$ polylogarithmic motivic complex:
  \be \la{GB4}
{\rm  B}_4(\F) \lra {\B}_3(\F)\otimes \F^\times  \lra {\B}_2(\F)\otimes \Lambda^2\F^\times \lra \Lambda^4\F^\times.
\ee 

Next,    
  Theorem \ref{THRN} implies the existence of a  
map 
\be \la{COMDA1}
\mbox{decorated $(n-3)-$flag complex   $\Z[{\mathscr A}_\bullet(\F)]$} \lra \mbox{weight $4$ polylogarithmic motivic complex}. 
\ee   
Combining with the map $\varphi_{{\cal A} \to \G}$, we interpret it as a   cocycle ${C}_{4, {n}; \circ}^{\cal M}$  
 for 
  the motivic Chern class   at the generic point  of   the simplicial scheme ${\rm BGL}_{{n}\bullet}$ with values in    the weight $4$   polylogarithmic  motivic complex.    
  In Section \ref{SEC8.4n} we   calculate  the  residues of  the cocycle  $ \widetilde {C}_{4, {n}; \circ}^{\cal M}$ on ${\rm BGL}_{{n}\bullet}$, 
  and using this extend  it   to a cocycle on ${\rm BGL}_{{n}\bullet}$.  This implies easily   an extension of the cocycle 
  ${C}_{4, {n}; \circ}^{\cal M}$ to ${\rm BGL}_{{n}\bullet}$. Finally, we deduce from its  construction that the real Hodge cocycle  ${C}_{4, {n}; \circ}^{\cal H}$   
  represents Beilinson's class.

\subsection{Calculating the residues} \la{SEC9.2}

  \bt \la{TH1.7}  Let ${n} \geq 4$. There is a cocycle  
  $$
 {\rm C}^{\cal M}_{4, {n} } \in  \Gamma({\rm BGL}_{{n}  \bullet}; 4).
  $$
  It  represents the universal motivic    Chern   class $c^{\cal M}_{4, {n} }$. 
  Its real Deligne realization  is a cocycle 
  $$
 {\rm C}^{\cal H}_{4, {n} } \in \Gamma_{\cal D}({\rm BGL}_{{n}   \bullet}(\C); 4).
  $$ 
  It provides the weight $4$ Beilinson regulator. 
\et

Denote by $\widehat \Gamma(X;4)$ the analog of the complex $\Gamma(X;4)$  in (\ref{MCDIm}), where we put instead of 
${\cal B}_4(\F) \lra{\cal B}_3(\F) \otimes \F^\times$ the map $\G_4(\F) \lra \Lambda^2\B_2(\F)\oplus \B_3(\F) \otimes \F^\times$, and set the residue map 
on $\B_2(\F) \wedge \B_2(\F)$ to be zero.   Our main goal is to produce a cocycle  
\be \la{1588}
 \widehat {\rm C}^{\cal M}_{4, {n}  } \in  \widehat \Gamma({\rm BGL}_{{n}   \bullet}; 4).
\ee
Then in the second major step  we will show that it can be corrected by a coboundary to land in $\Gamma({\rm BGL}_{{n}   \bullet}; 4)$. 
This will affect only the top row of the bicomplex due to   Lemma \ref{L9.5} below.

\vskip 2mm
Recall  the    cocycle $\widetilde {\rm C}^{\cal M}_{4, {n} , \circ }$  defined at the generic point of 
$\B{\rm GL}_{{n}  \bullet}$.  Let us calculate   its residues  on $\B{\rm GL}_{{n}  \bullet}$. 
Few comments are in order. \vskip 2mm

1. Cocycle $\widetilde {\rm C}^{\cal M}_{4, {n} , \circ }$ is obtained by a three-step construction:

\begin{itemize} 

\item  Map  the decorated $(n-3)-$flag complex $\Z[{\mathscr A}_\bullet(\F)]$  to the weight $4$ Bigrassmannian complex.

\item  Apply the Grassmannian duality.

\item   Apply  map (\ref{56}) from the  weight $4$ dual Bigrassmannian complex to  the complex (\ref{G4}).

 \end{itemize} 

The Grassmannian duality 
is a birational map. For the residue calculation we  work with an equivalent variant of the cocycle obtained in a two-step construction, followed by the map $\varphi_{{\cal A}\to \G}$:

\begin{itemize} 

\item
(a) Map  the  complex $\Z[{\mathscr A}_\bullet(\F)]$ to the weight $4$ Bigrassmannian complex.

\item (b) Apply the map from the Bigrassmannian complex  to  the motivic complex (\ref{G4}).

 \end{itemize}
 
 To get the map (b), we simply apply the Grassmannian duality to the map  (\ref{56}).  \vskip 2mm
 
2. The $\Lambda^*\F^\times$-components of the cocycle in (b) are given by  regular functions on $\G^m$, where $ \G:= {\rm GL_n}$. 
Indeed, each $\F^\times$-component is given  by a minor on the space of $m+1$ decorated flags in an $n-$dimensional vector space. Calculating the residues  
it is sufficient to do this just for the Birassmannian, and pull back the result first to the simplicial variety of the decorated flags, and then to $\B{\rm GL_N }$. 
This is precisely what we do below.\footnote{We could  avoid the   Grassmannian duality from the beginning, but this would result in longer formulas.}   \vskip 2mm

 3. The calculation of the first residue can be performed   using   simpler map (\ref{56}). This is the first thing we will do.   Construction (a)-(b)    
leads to the same result, as one  easily checks.  \vskip 2mm

4. To check that the cocycle $\widetilde {\rm C}^{\cal M}_{4, {n} , \circ }$  extends to codimension one strata it is sufficient to check the following. The cocycle has components 
on $\G^8, \G^7, \G^6, \G^5$. One needs just to check that their residues are lifted from $\G^7, \G^6, \G^5, \G^4$ respectively via the standard simplicial map 
\be \la{57}
\G^m \lra \G^{m-1}, ~~~~(g_1, \ldots , g_m) \lms \sum(-1)^i(g_1, \ldots , \widehat g_i, \ldots , g_m).
\ee
We use the fact that the residue map is a morphism of complexes. The residue of the component on $\G^8$ is zero by definition. 
 We will  prove that the residue 
of the component on $\G^7$ is zero. We  will calculate the residue on $\G^6$, and present it as the pull back of an element with values in $\B_2(\F_x) \otimes \F_x^\times$ from $\G^5$ using the maps (\ref{57}).  
We show that the residue of the latter is zero. 

The residue on the component $\G^4$ is not zero, and its successive residues   play the central role in proving that our  cocycle indeed represents the Chern class. Let us start from there.

\paragraph{1. The residue   ${\rm res}\circ {\bf r^*_5(1)}$.} All  work for the ``Milnor'' column 
  in any weight $n$, which is in our case is the right column of the complex $\Gamma(X;4)$ in   (\ref{MCDIm}),
 has been done in \cite[Section 4.4]{Gon93}, in the setting of a cocycle for the  Chern class  $c_n \in H^{2n}({\rm BGL}_{{n} \bullet}, \underline{K}^M_n)$.  
In fact the cocycles   there   provides on the nose the  cocycles with  values in $\Lambda^*\F_x^\times$.  See also \cite[Section 5.2]{Gon94}. 
 
 Let us recall schematically how it was done. 
The  divisor  of the $\G-$invariant element  in $\Lambda^4\Q(\G^5)^\times$  given by the map ${\bf r^*_5(1)}$  is a normal crossing divisor. 
 Thus    ${\rm H}  \circ {\bf r^*_5(1)}=0$, and   ${\rm res}\circ {\bf r^*_5(1)}$ is  calculated via formula (\ref{resncd}). 
The residue  ${\rm res}\circ {\bf r^*_5(1)}$   is on the nose   the alternating sum of the pull backs,  by the five maps $\G^5 \to \G^4$, see (\ref{57}), of the $\G-$invariant 
element   $X \in \Lambda^3\Q(Y_x)^\times$ on a divisor  $Y_x\subset \G^4$. 
The divisor of $X$ is a normal crossing divisor, and ${\rm res}(X)$  is on the nose given by 
 the alternating sum of the pull backs,  by the maps $\G^4 \to \G^3$, of the  $\G-$invariant element   $Y \in \Lambda^2\Q(Y_y)^\times$ on a codimension $2$ subvariety $Y_y \subset \G^3$. 
Finally, ${\rm res}(Y)$  is provided by  the $\G-$invariant element $Z \in \Q(Y_z)^\times$    at a codimension $3$ subvariety   $Y_z \subset \G^2$, given by an element of $Z'\in \Q(Y_z/\G)^\times$ on the codimension $3$ subvariety  $Y_z/\G\subset \G$. Then ${\rm res}(Z')=0$. So it defines a cocycle presenting a class in    $H^7(\G(\C), \Z(4))=\Z$.   The key point is that  its cohomology class  is the 
generator. This  proves that   
 the whole cocycle  
    gives the Beilinson regulator. 

\paragraph{2. The residue  ${\rm res}\circ  {\bf r^*_6(2)}$.}  
Recall the formula (\ref{56}) for the map ${\bf r_6^*(2)} $:
\be \la{56a}
\begin{split}
&{\bf r_6^*(2)} \colon (1,2,3,4,5,6) \lms \\
&\alt_6\Biggl(\Bigl(  \frac{1}{2}\cdot\{{\it r}(2,4,5, 6)\}_2
-  \{{\it r}(1,2,4,5)\}_2 \Bigr)\otimes |12| \wedge |23| +
 \{{\it r}(1,3,5,6)\}_2 \otimes |12|\wedge |34|\Biggr),\\
\end{split}
\ee



Using the $5-$term relation for $(1,2,4,5,6)$,   write the middle term in (\ref{56a}) as a multiple of 
$$
\alt_6 \Bigl(  
  \{{\it r}(1,4,5, 6)\}_2 - {\it r}(2,4,5, 6)\}_2 \Bigr)\otimes |12| \wedge |23|.
  $$
 Let us calculate   the residue of the element  ${\bf r^*_6(2)}$.  
The divisor $|12|=0$ carries  a   rational function ${\rm L}_{2/1}$ given by the ratio of collinear vectors $l_2$ and $l_1$, that is 
$l_2 = {\rm L}_{2/1}l_1$.
\bl
The  residue of the element ${\bf r^*_6(2)}$ in (\ref{56}) at the divisor $|12|=0$ is equal to 
\be \la{HALLOW1vvx}
\begin{split}
&{\rm Res}_{(12)}{\bf r_6^*(2)}(1,2,3,4,5,6)= 
  - \alt_{\{3,4,5,6\}}\Bigl(\{{\it r}(2,3,4,5)\}_2 \otimes   L_{2/1}\Bigr).\\
  \end{split}
\ee 
\el

\begin{proof}     Note that the permutation $(13)$ takes $|12|\wedge |23|$ to $|23|\wedge |12|$. Therefore, since  $\{{\it r}(1,2,4,5)\}_2=0$ at $|12|=0$,  we get:
\[
\begin{split}
&{\rm Res}_{(12)}{\bf r^*_6(2)}(1,2,3,4,5,6)=  \\
 &\alt_{\{1,2\}, \{3,4,5,6\}}\Bigl(  \frac{1}{2}\cdot(\{{\it r}(2,4,5, 6)\}_2  +  \{{\it r}(2, 3,4,5)\}_2)\otimes |23| + 
  \{{\it r}(1,3,5,6)\}_2 \otimes   |34|\Bigr).
\end{split}
\]
Since $\{r(1,3,5,6)\}_2 = \{r(2,3,5,6)\}_2$ on the divisor $|12|=0$, the third summand vanishes after the alternation 
by $\{1,2\}$.  Since $L_{2/1} = |23|/|13|$,  after the alternation $\{1,2\}$, and using the odd permutation $(6,5,4,3)$, we get
(\ref{HALLOW1vvx}).\end{proof}

\paragraph{3. The residue  ${\rm res}\circ {\bf r^*_7(3)}$.}    Recall that the residue map  on $ \Lambda^2 {\rm B}_2$ is zero anyway. 

Denote by ${\bf r^*_7(3)}_{3,1}$ the  ${\rm B}_3 \otimes \F^\times$-component  
 of the map ${\bf r^*_7(3)}$. It is given by  formula (\ref{56}):
   \be \la{56aa}
\begin{split}
&{\bf r_7^*(3)}_{3,1} \colon (1,2,3,4,5,6,7) \lms  
  \alt_{7}\Bigl(\Bigl(\frac{1}{108}\cdot(1,2,3,4,5,6)_3  - 
  \{r(6|2, 4,3,5)\}_3\Bigr) \otimes   |123| \Bigr).\\ 
\end{split}
\ee 
So the residue    
  on the divisor $|123|=0$ is  
\be \la{Map44vv}
\begin{split}
&{\rm Res}_{(123)}{\bf r^*_7(3)}_{3,1}(1,2,3,4,5,6,7) = \\
& \alt_{\{1,2,3\}, \{4,5,6,7\}} \Bigl(\frac{1}{108}\cdot(1,2,3,4,5,6)_3  -
  \{{\it r}(6|2, 4,3,5)\}_3\Bigr)  .
\end{split}
\ee

\bl \la{902} Given three distinct points $l_1, l_2, l_3$ on a line and three other generic points $m_1, m_2, m_3$ in ${\Bbb P}^2$, one has
\be \la{900}
\begin{split}
&\frac{1}{3}\cdot (l_1, l_2, l_3, m_1, m_2, m_3)_3=   {\rm Alt}_{\{l_1, l_2, l_3\}, \{m_1, m_2, m_3\}} \{r(l_1, m_1, l_2, m_2)\}_3.\\
\end{split}
\ee
\el

\begin{proof}
We compare  our element $(l_1, l_2, l_3, m_1, m_2, m_3)_3$ normalized by (\ref{TRIPR}), 
with  the element $r_3(l_1, l_2, l_3, m_1, m_2, m_3)$ from  \cite{Gon00}, normalized there  right before Section 3.3:
\be \la{901}
\frac{1}{3} \cdot(l_1, l_2, l_3, m_1, m_2, m_3)_3 =  r_3(l_1, l_2, l_3, m_1, m_2, m_3).
\ee
The factor $1/3$ comes   from the  following identity, which follows from (and in fact is equivalent to) the dual five-term relation, which is deduced from    the standard five-term relation: 
 $$
 \frac{1}{3} \cdot   \alt_6 \left  (\{{\it r}(1|2,3,4,5)\}_2\otimes |345|\right ) = -\frac{1}{2}\cdot {\rm Alt}_6 \left (\{{\it r}(1|2,3,4,5)\}_2 \otimes |123| \right ). 
  $$
  Indeed,  the expression on the right was used in the formula normalizing $r_3$ just before Section 3.3 in \cite{Gon00}, while 
  the expression on the left is $1/3$ times the one which appears in (\ref{TRIPR}). 
  
Now identity (\ref{900}) follows from \cite[Lemma 3.7]{Gon00} and (\ref{901}).
 \end{proof}

\bc The residue (\ref{Map44vv}) is equal to zero.  \ec

\begin{proof} Follows immediately from Lemma \ref{902}. 
  \end{proof}
  
  \paragraph{4. Extending the cocycle to $\B {\rm GL_n}$.} Recall   $\G = {\rm GL_n}$.  All we need to check 
is that each component of the residue on $\G^m$ is the pull back of an element on a divisor in  $\G^{m-1}$ via  map (\ref{57}). 
The  right-hand side of (\ref{HALLOW1vvx}) is lifted from the configuration space of $5$   vectors.\footnote{Note that the right-hand side of (\ref{HALLOW1vvx}) is antisymmetric in $(3,4,5,6)$ but not in $(1,2)$. This is fine, since taking the residue we broke the $(1,2)$ symmetry.} 
Therefore it provides an extension of cocycle (\ref{56}), originally defined at the generic part of $\B\G$, to divisors in $\G^5$,  
with values in $\B_2 \otimes \F_x^\times$. 
 As   discussed above, 
 a similar extension to divisors in $\G^4$, with values in $\Lambda^3(\F_x^\times)_{\Q}$ is provided by \cite{Gon93}.

This way, we  extend  cocycle (\ref{56}) to the second from the top line in the  complex (\ref{MCDIm}). Let us write it without using the Grassmannian duality. 
Then its component on the divisor $|123|=0$ in the configuration space $(l_1, \ldots, l_5)$ of five vectors in $V_3$ is given by 
\[
\begin{split}
&{\bf r_6^{**}(3)}(1,2,3,4,5)= 
  c\cdot\alt_{\{123\}}\Bigl(\{r(5|1,2,3,4)\}_2 \otimes   \frac{|234|}{|235|}\Bigr).\\
  \end{split}
\]
 
The only non-trivial residues are the ones on the divisors $|ijk|=0$, permuted by the action of the symmetric group $S_5$.  The residue on the divisor $|234|=0$ is given by 
\[
\begin{split}
&{\rm Res}_{|234|=0}{\bf r_6^{**}(3)}(1,2,3,4,5)= 
  c'\cdot\{r(1,2,3,4)\}_2.\\
  \end{split}
\]
It is evidently lifted from the configuration space of $4$ vectors in $V_3$.

\subsection{End of the proof of Theorem \ref{TH1.7} and other main theorems} \la{Sec8.3}

To prove our main results  in their strongest form, we  have to define a map from algebraic $K$-groups of $\F$ to the cohomology of the 
weight 4 polylogarithmic motivic complexes ${\cal B}^\bullet(\F;4)$. To prove that we can land indeed in the complex ${\cal B}^\bullet(\ast;4)$ of the field $\F$ itself rather than its finite extension, 
we need to use a different way to write  the map ${\bf r}^*_7(3)$, reflecting the construction of   the Grassmannian tetralogarithm using the scissor congruence groups ${\rm A}_3(\F)$. 

\paragraph{1. Relation with the Grassmannian tetralogarithm.} 
 Let us show   that the map (\ref{COMPO}) coincides with a similar map defined in \cite[Section 5.2]{Gon00}.  
The     latter  map is more transparent. However  it is unclear how to show  that it leads to a non-trivial cohomology class 
of ${\rm GL}_{n}(\C)$.

\vskip 3mm
For an infinite field $\F$ there are scissor congruence groups $\A_n(\F)$ reflecting the properties of Aomoto polylogarithms, defined in 
\cite{BMS87}, \cite{BGSV90}. We use slightly modified groups, 
adding  the dual additivity relation. For the convenience of the reader, we recall the definition  from \cite{Gon09}. 

 \vskip 2mm
The abelian group $\A_n(\F)$ is generated by the elements 
$$
\langle L;M\rangle _{\A_n}:= \langle l_0,\ldots,l_n;m_0,\ldots,m_n\rangle _{\A_n}
$$ corresponding to generic  
configurations of $2(n+1)$ points
$(l_0,\ldots,l_n;m_0,\ldots,m_n)$ in ${\Bbb P}^n(\F)$. Here 
  $L= (l_0,\ldots,l_n)$ and $M = (m_0,\ldots,m_n)$. 
The relations are 
the following:

{\it Degeneration}.  $\langle L;M\rangle _{\A_n} = 0$ if $(l_0,\ldots,l_n)$ or $(m_0,\ldots,m_n)$ belong to a
hyperplane.

{\it Projective invariance}. $\langle gL;gM\rangle_{\A_n}  = \langle L;M\rangle_{\A_n}$ for any $g \in {\rm PGL}_{n+1}(\F)$.

{\it Antisymmetry}. $\langle \sigma L;M\rangle _{\A_n} = \langle L;\sigma M\rangle _{\A_n} = (-1)^{|\sigma|} \langle L;M\rangle _{\A_n}$
for any $\sigma \in S_{n+1}$.

{\it Additivity and dual additivity}.  For any generic configuration $(l_0,\ldots,l_{n+1}; m_0, \ldots, m_n)$ in ${\Bbb P}^n(\F)$, and respectively in ${\Bbb P}^{n+1}(\F)$, one has 
\be \nonumber  
\bs
&\sum_{i=0}^{n+1}(-1)^i \langle l_0,\ldots,\widehat l_i,\ldots,l_{n+1};m_0,\ldots,m_n\rangle _{\A_n} =0,\\
&\sum_{i=0}^{n+1}(-1)^i \langle l_i\vert l_0,\ldots,\widehat l_i,\ldots,l_{n+1};m_0,\ldots,m_n\rangle _{\A_n} =0.\\
\end{split}
\ee
There is a similar  
condition  for $M$. 

The direct sum $\bigoplus_{n=0}^\infty\A_n(\F)$ has a graded coalgebra structure with the cobracket $\delta$.

 \vskip 3mm

Below we use notation $(l_1, \ldots , l_8)$ for a configuration of vectors in generic position in $V_4$. Using the group $\A_3(\F)$, we define a map \cite[Section 5.2]{Gon00}
\[
\bs
&\widetilde \Delta: \G_4(\F) \lra (\A_3(\F)\otimes \F^\times) \oplus (\F^\times \otimes \A_3(\F)) \oplus \Lambda^2\B_2(\F)\\
&( 1, \ldots ,  8) \lms {\rm Alt_8}\Bigl(\langle  1, \ldots,  4 ; 5, \ldots,  8 \rangle_{{\rm A}_3} \otimes | 5678| - | 1   2   3  4| \otimes \langle  1, \ldots,  4 ;  5, \ldots,  8 \rangle_{{\rm A}_3}   \Bigr)\\
&  
 -\frac{3}{28}\cdot \{r( 3,4| 2, 5, 6, 7)\}_2\wedge  \{r( 6,7| 1, 3, 4, 5)\}_2. \\
\end{split}
\]
Next, there is a homomorphism \cite[Section 3.3]{Gon00}:\footnote{Note that our $a_3'$ is equal to $1/3 a_3'$ from  loc. cit.. We found the former a better normalization.}
\[
\bs
& a_3:  \A_3(\F) \lra \B_3(\F), \\
&\langle l_0, \ldots, l_3 ; l_4, \ldots, l_7 \rangle_{{\rm A}_3} \lms \frac{1}{2}a_3'-\frac{1}{3}a''_3,\\
\end{split}
\]
where the map $a_3'$ and $a_3''$ are defined as follows:\footnote{Recall that our normalization of the map $(*, \ldots , *)_3$ differs from the one \cite[Section 5.2]{Gon00}, see (\ref{901}).} 
\be \nonumber  
\bs
&a_3'(l_0, \ldots, l_3 ; m_0, \ldots, m_3):= \sum^3_{i,j=0} (-1)^{i+j}  (l_i ~|~ l_0, \ldots , \widehat l_i, \ldots , l_3; m_0, \ldots , \widehat m_j, \ldots , m_3)_3,\\
&\mu_3(x_1, x_2, x_3; y_1, y_2, y_3):= \frac{1}{2} \cdot {\alt}_{\{x_1, x_2, x_3\}, \{y_1, y_2, y_3\}}\{r(x_1, y_2, x_2, y_1)\}_3,\\
&a_3''(l_0, \ldots, l_3 ; m_0, \ldots, m_3):= \sum^3_{i,j=0} (-1)^{i+j} \mu_3(l_i, m_j~|~ l_0, \ldots , \widehat l_i, \ldots , l_3; m_0, \ldots , \widehat m_j, \ldots , m_3).\\
\end{split}
\ee

There is a similar map $a_2: \A_2(\F) \lra \B_2(\F)$ \cite{BGSV90}. 
The key property of the map $a_3$ is the following commutative diagram  \cite[Theorem 3.2]{Gon00}:

\[
\begin{gathered}
    \xymatrix{
        \A_3(\F)  \ar[r]^{\delta~~~~~~~~~~~~~~} \ar[d]_{a_3} &  (\A_2(\F) \otimes \F^\times) \oplus  (\F^\times \otimes \A_2(\F))\ar[d]^{a_2\wedge a_1} \\
          {\B}_3(\F)   \ar[r]^{\delta~~}       &{\rm B}_2(\F)  \wedge  {\rm F}^\times}
 \end{gathered}
\]
 
 The composition $\Delta:= (a_3 \wedge {\rm Id} + {\rm Id}  \wedge a_3 + {\rm Id}  \wedge {\rm Id} )\circ \widetilde \Delta$ is a map 
\be \la{Map3}
\bs
& \Delta: \G_4(\F) \lra \B_3(\F)\otimes \F^\times  \oplus \Lambda^2 \B_2(\F).\\
\end{split}
\ee

\bp The map $\Delta$ in (\ref{Map3}), followed by the Grassmannian duality, coincides with the map ${\bf r^*_7}(3)$ in (\ref{56}).
\ep

\begin{proof}  Follows by a careful comparison of the definition and  normalizations involved. \end{proof}

\paragraph{Remark.} The map $\widetilde \Delta$  satisfies the key property that its composition ${\rm res}\circ \widetilde \Delta$ with the residue map  is zero. 
Indeed, the residue can be non-zero only at the divisors $|ijkl|=0$. Due to the antisymmetry it is sufficient to calculate the residue at the divisor $|1234|=0$, which is given by 
$$
  -   a_3\Bigl(   \langle  1, \ldots,  4 ;  5, \ldots,  8 \rangle_{{\rm A}_3} + \langle  5, \ldots,  8  ; 1, \ldots,  4 \rangle_{{\rm A}_3}\Bigr)_{|1234|=0}. 
$$
Each of the two summands here is zero due to the degeneration relation in the group ${\rm A}_3$.

In particular, the claim that the ratio of the first two coefficients in ${\bf r}^*_7(3)$ in (\ref{56}) is$-1/108$ is nailed by the fact that the residue 
${\rm res}\circ {\bf r}^*_7(3)$ is zero, as we check   in Section \ref{SEC9.2} above.  

The only place where the value of the coefficient $3/28$ is important is the commutativity of the middle square at the bottom of diagram (\ref{MCDI1zz}). 

\paragraph{2. End of the proof of Theorem \ref{TH1.7}.} Let us identify the space ${\rm Conf}_8(V_4)$ with the Grassmannian ${\rm Gr}(4;8)$ of $4$-dimensional subspaces  in a coordinate $8$-dimensional vector space, which are in generic position to the coordinate hyperplanes. 
We identify the latter with the quotient  by the action of ${\rm GL}_4$ of the space ${\rm M}^*_{4,8}$ of non-degenerate $4 \times 8$ matrices $(a_{ij})$, where $1 \leq i \leq 4$ and $1 \leq j \leq 8$. 
Note that ${\rm M}^*_{4,8}$ is a subspace of the space ${\rm M}_{4,8}$ of all $4 \times 8$ matrices $(a_{ij})$. 
Consider a path $p$ in  ${\rm M}_{4,8}$ starting at a  point $(a_{ij}) \in {\rm M}^*_{4,8}$ and given by the composition 
$ p = {\rm I}_8 \circ \ldots \circ  {\rm I}_1$ of the following eight segments. The segment 
${\rm I}_k$ is given parametrically by $a_{1, k-1}=t$,  where $ 0 \leq t \leq 1$, 
 while the rest of the coordinates stay the same, and starts at a point with $a_{11}=0, \ldots,  a_{1, k-1}=0$.

Recall  \cite{Gon09} that 
 the symbol of the Grassmannian tetralogarithm is a rational multiple of 
 $$
\alt_8\Bigl( |1234| \otimes |2345| \otimes |3456| \otimes |4567|\Bigr).
 $$
Restricting it to the path ${\rm I}_k$ we get a sum of terms where  each factor in $\otimes^4{\Bbb F}^\times$ is a {\it linear function in the path variable $t$}. 
This implies that we can express  the Grassmannian tetralogarithm  as a sum of weight four framed sub-quotients of $\pi_1^{\cal M}({\Bbb P}^1 - \{b_0, \ldots, b_N\}, v_a)$ where 
 the points $b_i$ and $a$ belong to the number subfield $\F \subset \C$. 
 
 Therefore thanks to Corollary \ref{WTRD43} the Grassmannian tetralogarithm can be expressed 
 as a linear combination of the motivic correlators $\{z\}_4$ and $\{x,y\}_{3,1}$ with the arguments in $\F$. 
 
So we get   for each ${n} $ a map from the complex of decorated flags for ${\rm GL}_{n} $ to the weight four part of the standard cochain complex of the Lie coalgebra ${\Bbb L}_{\leq 4}(\F)$. 

Corollary \ref{COR112} of Theorem \ref{THRN} implies that  for each $N$ there exists a map 
from the complex of decorated flags for ${\rm GL_N}$ to the weight four polylogarithmic complex   ${\cal B}^\bullet(\F;4)$. 
One can define it   by   subtracting from the original map  the cobracket of the following element:   
$$
  {\rm Alt}_8\{r( 3,4| 2, 5, 6, 7), r( 6,7| 1, 3, 4, 5)\}_{3,1}. 
$$
Then the  obtained map lands  in ${\Bbb L}_4(\F) \to \B_3(\F) \otimes \F^\times\to  \B_2(\F) \otimes \Lambda^2 \F^\times \to  \Lambda^4 \F^\times$. 
So by Theorem \ref{THRN}c) it   lands in the subcomplex ${\rm B}_4(\F) \to \B_3(\F) \otimes \F^\times\to  \B_2(\F) \otimes \Lambda^2 \F^\times \to  \Lambda^4 \F^\times$. 

Finally, this will not affect residue calculations from Section \ref{SEC9.2} due to the following. 
 
 \bl \la{L9.5} One has 
 \be \nonumber  {\rm res} (\delta \{x, y\}_{3,1})=0.
  \ee
  \el
  
  \begin{proof} Due to the antisymmetry of $\{x, y\}_{3,1}$ it is sufficient to check the residues at $x=0,1, \infty$.   Then we have using (\ref{26}) and (\ref{144a}):
  \be
 \begin{split}
 &{\rm res}_{x=0}\circ \delta \left \{x,y \right\}_{3,1}= 
~ \left \{0,y\right\}_{2,1} +
\left \{y\right\}_3 \stackrel{(\ref{144a})}{=}0.\\
& {\rm res}_{x=1}\circ \delta \left \{x,y \right\}_{3,1}= 
~0.\\  
&{\rm res}_{x=\infty}\circ \delta \left \{x,y \right\}_{3,1}=0.\\
\end{split}
 \ee  
Indeed, we have
 \be \la{144ab}
\begin{split}
&\left \{0,y\right\}_{2,1}:= {\rm Sp}_{x= 0}\left \{x,y \right\}_{2,1}=
 \left\{1-y^{-1}\right \}_3  +\left\{ {1-y} \right \}_3   - \{1\}_3 = -\{y\}_3.\\
&\delta \left \{1,y \right\}_{3,1}:= {\rm Sp}_{x= 1}\delta \left \{x,y \right\}_{3,1}= 
\left \{1/y\right\}_3 
-\left \{y\right\}_3=0.\\
&\left \{\infty,y\right\}_{2,1}:= {\rm Sp}_{x= \infty}\left \{x,y \right\}_{2,1}=  0.\\
\end{split}
\ee
    \end{proof}

 Theorem \ref{TH1.7}    follows   from this.

 \paragraph{3. Proof of Theorem \ref{ZCZ4t2}.} Follows essentially immediately from what we have done. Indeed, we constructed a map from 
 the decorated $(n-3)-$flag complex to the weight four polylogarithmic motivic complex, and proved using  
  Theorem \ref{TH1.7} that it gives rise to  
 Beilinson's regulator map.  
 
 The only part of Theorem \ref{ZCZ4t2} which still needs a  proof is the claim that the restrictions   to ${\cal F}_3^{\rm rk}K_{8-i}(\F)_\Q$ of the maps defined by the part i) are zero. 
 The argument is similar to the proof of  \cite[Lemma 3.18]{Gon94a}. Choose  linearly independent vectors $v_1, ..., v_{n-3}$ in an $n-$dimensional vector space $V$ of the field $\F$, 
 and a  complimentary subspace $V'$. Recall  from (\ref{GDF}) the complex $C_\bullet({\cal A}^{(p)}_n) $ of generic configurations of decorated $p-$flags in $V$.  
 Then the resolution of the trivial ${\rm GL}_n(\F)-$module $\Z$ given by the complex of generic decorated 
 $(n-3)-$flags $C_\bullet({\cal A}^{(n-3)}_n) $ has a ${\rm GL}_3(\F)-$invariant section $s$, where  ${\rm GL}_3(\F)$ is identified with ${\rm Aut}(V')$ by choosing a basis in $V'$:
  \be \la{MOCsec}
\begin{gathered}
    \xymatrix{
    &   &    & & 
       \Z\ar[d]^{s}  \\
       &    \ldots \ar[r]^{\partial \ \ \ \ \ \ }  & C_3({\cal A}^{(n-3)}_n)  \ar[r]^{\partial}&  C_2({\cal A}^{(n-3)}_n)  \ar[r]^{\partial}& C_1({\cal A}^{(n-3)}_n) \\ }
                                             \end{gathered}
\ee 
It is given by $s: m \lms m\cdot [ v_1, ...., v_{n-3}]$, where 
 $[ v_1, ...., v_{n-3}]$  is the decorated $(n-3)-$flag given  by $\F_1 \subset \F_2 \subset ... \subset \F_{n-3}$ where  
 $\F_k := \langle v_1, ..., v_k\rangle$,   decorated by the vector $v_k$.   Given a projective resolution ${\cal P}_\bullet$ of the trivial ${\rm GL}_n(\F)-$module $\Z$, 
 there is a unique up to homotopy map of complexes of ${\rm GL}_n(\F)-$modules $\gamma: {\cal P}_\bullet \lra C_\bullet({\cal A}^{(n-3)}_n)$.  Restricting to ${\rm GL}_3(\F)$,  
 we get a projective resolution ${{\cal P}_\bullet}_{|{\rm GL}_3(\F)}$ of the trivial  ${\rm GL}_3(\F)-$module $\Z$. There is a map $\alpha: {{\cal P}_\bullet}_{|{\rm GL}_3(\F)} \lra \Z$ lifting the identity map $\Z \to \Z$.  Then   
  $s\circ \alpha $ is homotopic to the map $\gamma_{|{\rm GL}_3(\F)}$.  The claim follows.

  \paragraph{4. Proof of Theorem \ref{ZCZ4t}.} Follows   from Theorem \ref{ZCZ4t2} and Borel's theorem \cite{Bor77}.

  \paragraph{5. Remark.} Proposition 5.6 in \cite{Gon00}   provides  a measurable function on the configuration space of $8$ points in ${\Bbb C}\P^3$ satisfying the nine-term relation and the dual nine-term relation, and hence by the construction in \cite{Gon93} 
leading to a measurable cohomology class of ${\rm GL_n}(\C)$ for any ${n}  \geq 4$. 
Theorem \ref{TH1.7} implies that it is a $\Q^\times-$multiple of the   Borel class  \cite{Bor77}. Indeed, Theorem \ref{TH1.7}  provides a construction of the weight four  Beilinson regulator class. It restricts to a cohomology class in the measurable cohomology  ${\rm H}_{\rm meas}^7({\rm GL}_n(\C), \R)$, $n \geq 4$, of the Lie group ${\rm GL}_n(\C)$, called the Beilinson class. By  \cite[Corollary A5.2]{Bei84} it  is a $\Q^\times-$multiple of the Borel class. 
  
\section{Proof of  Theorem \ref{MapBtoP}} \la{Sec4}

In Section \ref{Sec4} we prove Theorem $\ref{MapBtoP}.$  One can understand the proof better by 
using   the representation theory of the symmetric group.  
We  provide   representation-theoretic explanations along with   computations.  

We introduce symbolic versions of  higher Bloch groups ${\rm B}_n$,   $n=1,2,3$. We denote them by ${\bf b}_n(q,m)$, where $q,m \in   \mathbb{N}$.     They are finite dimensional $\mathbb{Q}$-vector spaces  described   by  generators and relations. This groups describe some relations between polylogarithms {\it modulo constants}.

For example, ${\bf b}_2(2,6)$ is a $10-$dimensional  space generated by symbols $\{i_1,i_2,i_3,i_4\}_2$ satisfying two types of relations: the antisymmetry and the five-term relation. 
The definition reflects  properties of the  dilogarithm   evaluated on  cross-ratios  $r(v_{i_1}, v_{i_2}, v_{i_3}, v_{i_4})$ obtained by picking four     
 out of    six generic vectors   in  a $2$-dimensional 
  space.

Next, for every injection  of finite sets  ${\bf m_1} \hookrightarrow {\bf m_2}$ one has  a morphism 
\be \nonumber  {\bf b}_n(q, |{\bf m_1}|)  \lra {\bf b}_n(q, |{\bf m_2}|). 
\ee
Under the evaluation map it corresponds to   picking $|{\bf m_1}|$ vectors out of $|{\bf m_2}|.$ It means that ${\bf b}_n(q,\bullet)$ is a covariant 
functor from the category ${\bf FI}$ of finite sets with the morphisms given by 
injective maps to the category of finite dimensional $\mathbb{Q}$-vector spaces. In particular, ${\bf b}_n(q, |{\bf m}|)$ is a representation of the symmetric group ${\Bbb S}_{|{\bf m}|}$. Such objects are called FI-modules.

We do not use any results of the theory of FI-modules in the proof of Theorem \ref{MapBtoP}. Nevertheless, the main property of finitely generated FI-modules -  the representation stability -  partially explains why the alternating components of  representations  appearing in the proof  are very small. 

\subsection{Symbolic versions of  Bloch groups} 

\paragraph{FI-modules and representation stability \cite{CEF15}.} 

The category  ${\bf FI}$ of finite sets with injections   is equivalent to the category with the objects ${\bf m}:=\{1,2,\ldots,m\}$, and    with   morphisms from ${\bf m_1}$ to ${\bf m_2}$ given by   injections ${\bf m_1} \hookrightarrow {\bf m_2}.$ 
 An {\it ${\rm FI}$-module  $\B$} is a covariant functor from category ${\bf FI}$ to the category of finite-dimensional $\Q$-vector spaces. 
FI-modules form an abelian category.  Tensor products and Schur functors for FI-modules are defined termwise.

Given an  ${\rm FI}$-module ${\rm B}$, for every $m\in \mathbb{N}$ we denote by $\B_m$   the vector space $\B({\bf m}).$ It is a representation of the symmetric group ${\rm S}_m$. 
The latter   are in bijection with partitions $\lambda_1\geq \lambda_2\geq \ldots \geq \lambda_l$ of   $m.$ We denote these
 representations by 
\be \nonumber  V(\lambda)=[\lambda_1,\lambda_2,\ldots,\lambda_l].
\ee
So $[m]$ is the trivial representation, and $[1,1,\ldots,1]$ the alternating one. 
 
An FI-module $\B$ is called {\it finitely generated} if there exists a finite set of vectors $S$ in $\bigsqcup_{m \geq 1} \B_m$ such 
that $\B$ has no proper FI-submodules containing all vectors in $S.$
 
Finitely generated FI-modules are  preserved by    tensor products and Schur functors. One of key results  is that every  submodule of a finitely generated FI-module is finitely generated. 

Many applications of the theory of FI-modules come from the phenomena of representation stability of a sequence of representations. To state it, we  introduce an  axillary  notation.
Given a partition $\lambda=(\lambda_1,\lambda_2,\ldots,\lambda_l)$, denote by $V(\lambda)_m$ the irreducible ${\rm S}_m$-module   given by the partition
\be \nonumber 
\lambda[m]=(m-|\lambda|,\lambda_1,\lambda_2,\ldots,\lambda_l).
\ee

\begin{theorem} 
\cite{CEF15} Let $\B$ be a finitely generated FI-module. Then for sufficiently large $m$ we have 
\be \nonumber  \B_m \cong \bigoplus c_{\lambda} V(\lambda)_m,
\ee
where the multiplicities $c_{\lambda}$ do  not dependf on $m.$
\end{theorem}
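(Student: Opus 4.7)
The plan is to reduce the theorem to two special cases. First I would verify the statement for the ``free'' (principal projective) FI-module $M(k)$, defined by $M(k)_n := \Q[\mathrm{Inj}(\{1,\ldots,k\},\{1,\ldots,n\})]$ with the tautological FI-action. Any finitely generated FI-module $\B$ admits a surjection $\bigoplus_{i=1}^r M(k_i) \twoheadrightarrow \B$: if $\B$ is generated by $v_1 \in \B_{k_1},\ldots,v_r \in \B_{k_r}$, then by the universal property (morphisms $M(k) \to \B$ are in bijection with vectors in $\B_k$) each $v_i$ extends to a map $M(k_i) \to \B$, and the resulting morphism is surjective by construction.

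Second, I would verify representation stability for each $M(k)$ by a direct calculation. The $\mathbb{S}_n$-set $\mathrm{Inj}(\{1,\ldots,k\},\{1,\ldots,n\})$ is transitive with stabilizer $\mathbb{S}_{n-k}$, so $M(k)_n \cong \mathrm{Ind}_{\mathbb{S}_{n-k}}^{\mathbb{S}_n}\mathbf{1}$ as an $\mathbb{S}_n$-representation. By Frobenius reciprocity, the multiplicity of $V(\lambda)_n$ in $M(k)_n$ equals the multiplicity of the trivial representation of $\mathbb{S}_{n-k}$ inside $\mathrm{Res}^{\mathbb{S}_n}_{\mathbb{S}_{n-k}} V(\lambda)_n$, and iterated application of the branching rule shows that for $n \geq 2k$ this multiplicity depends only on $\lambda$ and not on $n$ (the box-removal process stabilizes once the first row of $\lambda[n]$ is long enough). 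Hence representation stability holds for $M(k)$, and by additivity for any finite direct sum $\bigoplus_i M(k_i)$.

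The third and most delicate step is to pass from stability of $\bigoplus_i M(k_i)$ to stability of the quotient $\B$. Here I would invoke the Noetherian property of FI-modules stated in the excerpt above: the kernel $K$ of the surjection $\bigoplus_i M(k_i) \twoheadrightarrow \B$ is itself finitely generated. Using induction on a suitable complexity measure (for instance the maximal generation degree together with the total number of generators), representation stability propagates to $K$. The short exact sequence
\[
0 \lra K_m \lra \bigoplus_i M(k_i)_m \lra \B_m \lra 0
\]
of $\mathbb{S}_m$-representations then forces the multiplicity of each $V(\lambda)_m$ in $\B_m$ to be eventually constant, since it is the difference of two eventually constant sequences. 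The true obstacle hidden in this plan is the Noetherian property itself, which is the combinatorial heart of the theory and was established by the Gr\"obner-basis and well-partial-order arguments of Sam--Snowden and Church--Ellenberg--Farb--Nagpal; in the present context it is taken as given, and the plan above completes the proof.
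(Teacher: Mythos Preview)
The paper does not supply its own proof of this theorem: it is quoted as background from \cite{ChEF}, and the surrounding text explicitly says the FI-module material is not used in the paper's arguments. So there is no in-paper proof to compare against; the relevant comparison is with the Church--Ellenberg--Farb argument.

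Your overall plan---compute $M(k)_n \cong \mathrm{Ind}_{\mathbb{S}_{n-k}}^{\mathbb{S}_n}\mathbf{1}$ directly, then use Noetherianity and a short exact sequence to pass to arbitrary finitely generated $\B$---captures the right ingredients. The calculation for $M(k)$ is correct, and the observation that a surjection from a finite sum of $M(k_i)$'s bounds the multiplicities in $\B_m$ is exactly one half of what is needed.

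The gap is in your third step. The induction you propose is not well-founded: the kernel $K$ of $\bigoplus_i M(k_i)\twoheadrightarrow \B$ is finitely generated by Noetherianity, but there is no reason its generation degree or number of generators should be smaller than those of $\B$. In general $K$ requires generators in strictly higher degrees than $\B$, so the ``complexity measure'' you name can increase when you pass from $\B$ to $K$, and the induction does not terminate. The actual argument in \cite{ChEF} avoids this circularity: the surjection from a free module gives an eventual \emph{upper bound} on each multiplicity $c_\lambda(m)$, and a separate argument (using the co-FI structure, or equivalently the shift and derivative functors) shows the multiplicities are eventually \emph{non-decreasing}. Bounded plus monotone gives eventually constant. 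Alternatively one may invoke the finite projective dimension of FI-modules in characteristic~$0$ (a Hilbert-syzygy-type statement), which makes your resolution strategy terminate---but that is itself a nontrivial input you would need to cite or prove.
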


\paragraph{Symbolic higher Bloch groups.} Let us introduce FI-modules ${\bf b}_n(q,\bullet)$ of {\it weights}  $n=1,2,3$.

\paragraph{The weight one.}

\begin{definition}
The $\Q$-vctor space ${\bf b}_1(q,m)$ is generated by the symbols $|i_1 i_2\ldots i_q|,$ where the indices $1 \leq i_k \leq m$ are distinct, and satisfy the symmetry  relation:  
\be \nonumber 
|i_1 i_2 \ldots i_q|=|i_{\sigma(1)} i_{\sigma(2)}\ldots i_{\sigma(q)}| ~~~~\forall \sigma \in {\rm S}_q.
\ee
\end{definition}

For a given  configuration of $m$ vectors $v_1,\ldots,v_m$ in general position in a $q-$dimensional vector space  over $\F$ with a chosen volume element $\omega_q$ there exists a  map of $\mathbb{Q}-$vector spaces
\be \nonumber 
ev \colon {\bf b}_1(q,m) \lra \F^{\times},
\ee
sending symbol $|i_1 i_2 \ldots i_q|$ to the determinant ${\rm det}(v_{i_1},\ldots,v_{i_q}):= \langle \omega_m, v_{i_1} \wedge\ldots \wedge v_{i_q}\rangle$ of the corresponding vectors. This map is well defined  since $-1$ is vanishing in    $\F^{\times}$. 

Vector spaces ${\bf b}_1(q,\bullet)$   form a finitely generated FI-module. The following lemma gives an example of representation stability phenomena. We omit the proof  since it is not used later. 
\bl
As representation of  ${\rm S}_m,$ 
\be \nonumber 
{\bf b}_1(q,m)={\rm Ind}_{{\rm S}_q \times {\rm S}_{m-q}}^{{\rm S}_m} \left ( [q]\times [m-q] \right ).
\ee
In particular,  
\be \nonumber 
\begin{split}
&{\bf b}_1(2,m)=[m-2,2]\oplus[m-1,1] \oplus[m], ~~~~\forall m \geq 5,\\
&
{\bf b}_1(3,m)=[m-3,3]\oplus[m-2,2]\oplus[m-1,1] \oplus[m], ~~~~\forall m \geq 6.\\
\end{split}
\ee
\el

\paragraph{The weight two.}

\begin{definition}
The vector space ${\bf b}_2(q,m)$ is generated by the symbols 
\be \nonumber 
\{i_1, \ldots, i_{q-2}|i_{q-1},i_{q},i_{q+1},i_{q+2}\}_2, ~~~~ 1 \leq i_k \leq m, ~~i_a\not = i_b.
\ee
 These symbols  are symmetric in the first $q-2$ variables, and antisymmetric in the last four. They satisfy the following relations:

{\bf the 5-term  relation:} 
\be \nonumber 
\{\bullet| i_1, i_2, i_3, i_4\}_2- \{\bullet| i_1, i_2, i_3, i_5\}_2+ \{\bullet| i_1, i_2, i_4, i_5\}_2- \{\bullet| i_1, i_3, i_4, i_5\}_2+ \{\bullet|  i_2, i_3, i_4, i_5\}_2=0,
\ee

{\bf the dual 5-term  relation:} 
\be \nonumber 
\{\bullet i_1| i_2, i_3, i_4, i_5\}_2- \{\bullet i_2 |i_1, i_3, i_4, i_5 \}_2+ \{\bullet i_3 |i_1, i_2, i_4, i_5 \}_2- \{\bullet i_4|  i_1, i_2, i_3, i_5\}_2+ \{\bullet i_5| i_1, i_2, i_3, i_4\}_2=0,
\ee
where $\bullet$ stands for a collection of $q-2$ indices distinct from $i_1,\dots, i_5.$
\end{definition}

For a given configuration of $m$ vectors $v_1,\ldots,v_m$ in generic position in a  $q$-dimensional vector space $V$ over $\F$ there is a  map of $\mathbb{Q}-$vector spaces
\be \nonumber 
{ ev}: {\bf b}_2(q,m) \lra {\rm B}_2(\F).
\ee
It is defined as follows. Let $W$ be the subspace of $V$ generated by the vectors $v_{i_{1}},\ldots,v_{i_{q-2}}.$
Then 
$$
{ ev}: \{i_1, \ldots, i_{q-2}|i_{q-1},\ldots, i_{q+2}\}_2 \lms \{r(v_{i_1},  \ldots, v_{i_{q-2}} | v_{i_{q-1}},\ldots, v_{i_{q+2}})\}_2 \in \B_2(\F),
$$ 
 where on the right stands the cross-ratio of the   four points  in the projective line $\mathbb{P}[V/W]$ provided by the projections of the vectors 
$ {v_{i_{q-1}}},  {v_{i_{q}}},  {v_{i_{q+1}}},  {v_{i_{q+2}}}$ to $V/W$. 

There is a unique symbolic analog of the cobracket compatible with the evaluation maps:
\be
\delta \colon {\bf b}_2(q,m) \lra \Lambda^2 [{\bf b}_1(q,m)].
\ee

\begin{example}
Consider the  symbolic version of the weight two polylogarithmic complex:
\be \nonumber {\bf b}_2(2,4) \lra \Lambda^2 {\bf b}_1(2,4).
\ee
Then ${\bf b}_2(4)$ is a sign representation of ${\rm S}_4$, and $\Lambda^2 {\bf b}_1(2,4)$ is computed as follows:
\be \nonumber
 [1,1,1,1]\oplus 2[2,1,1]\oplus [2,2] \oplus 2[3,1].
\ee
Therefore the cobracket map is   unique up to a constant.
\end{example}

\begin{example}
1. The vector space ${\bf b}_2(2,6)$ is a 10-dimensional representation of ${\rm S}_6$:
\be \nonumber 
{\bf b}_2(2,6)=[3,1,1,1].
\ee

2. ${\bf b}_2(3,7)=[3, 2, 1, 1] + [4, 1, 1, 1]$ and has dimension $55.$  
\end{example}

\paragraph{The weight three.}

\begin{definition}
The vector space ${\bf b}_3(3,m)$ is generated by the symbols  of two types:
\be \nonumber  
\begin{split}
&\{i_1|i_{2},i_{3},i_{4},i_{5}\}_3, 
~~~~\{i_1,i_2,i_3,i_4,i_5,i_6\}_3.\\
\end{split}
\ee
 The symbols  of the first type satisfy the following relations:
 \be \nonumber  
 \begin{split}
&\{i_1|i_{2},i_{3},i_{4},i_{5}\}_3=\{i_1|i_{3},i_{4},i_{5},i_{2}\}_3=\{i_1|i_{4},i_{3},i_{2},i_{5}\}_3,\\
&\{i_1|i_{2},i_{3},i_{4},i_{5}\}_3+\{i_1|i_{2},i_{4},i_{5},i_{3}\}_3+\{i_1|i_{2},i_{5},i_{3},i_{4}\}_3=0.\\
\end{split}\ee

The symbols of the second type are anti-symmetric and satisfy the  
{\rm 7-term  relation:}  
\be \nonumber  
\sum_{k=1}^7(-1)^k \{j_{1},  \ldots,  \widehat j_{k}, \ldots,  j_{7}\}_3=0.
\ee
\end{definition}
Notice that since we omit constants, the second equation does not include the term $\zeta(3).$

\begin{example}
The vector space ${\bf b}_3(3,5)$ is a 10-dimensional representation of ${\rm S}_5$:
\be \nonumber  
{\bf b}_3(3,5)=[3,2]+[2,2,1].
\ee
\end{example}
For a given configuration of $m$ vectors $v_1,\ldots,v_m$ in a $3-$dimensional vector space  over $\F$ there exist a natural evaluation map of $\mathbb{Q}$-vector spaces
\be \nonumber  
\begin{split}
&{\bf b}_3(3,m) \lra \B_3(\F),\\
&\{i_1|i_{2},i_{3},i_{4},i_{5}\}_3 \lms \{r(v_{i_1}| v_{i_{2}}, v_{i_{3}}, v_{i_{4}}, v_{i_{5}})\}_3.\\
&\{1,2,3,4,5,6\}_3 \lra \frac{1}{5} {\rm Alt}_6\left\{\frac{|v_{i_1},v_{i_2},v_{i_4}||v_{i_2},v_{i_3},v_{i_5}||v_{i_1},v_{i_3},v_{i_6}|}{|v_{i_1},v_{i_2},v_{i_5}||v_{i_2},v_{i_3},v_{i_6}||v_{i_1},v_{i_3},v_{i_4}|}\right\}_3.\\
\end{split}
\ee

There is a unique symbolic analog of the cobracket compatible with the evaluation maps:
\be \nonumber  
\delta \colon {\bf b}_3(3,m) \lra {\bf b}_2(3,m)\otimes{\bf b}_1(3,m).
\ee

\subsection{Commutativity of the first square}
\begin{theorem} \label{Sq1}
a) The map 
\be \nonumber  
\delta \colon {\bf b}_2(2,6)\otimes \Lambda^2 [{\bf b}_1(2,6)] \lra \Lambda^4 [{\bf b}_1(2,6)]
\ee
induces an isomorphism of alternating components.
The image of the element  
\be \nonumber 
\alt_6\Biggl(\Bigl(  \frac{1}{2}\{2, 4, 5 , 6\}_2
-  \{1,2,4,5\}_2 \Bigr)\otimes |12|\wedge |23|+
 \{1,3,5,6\}_2 \otimes |12|\wedge |34|\Biggr)
\ee
is equal to 
$
-\frac{5}{2}\alt_6(|12|\wedge |13| \wedge |14|\wedge |15|).
$

b)The following diagram is commutative:
\be   \nonumber 
\begin{gathered}
    \xymatrix{
 C_6(2) \ar[r]^{p}   \ar[d]^{{\bf r^*_6(2)}}     &C_5(1)   \ar[d]^{{\bf r^*_5(1)}}   \\ 
   {\rm B}_2 \otimes \Lambda^2{\rm F}^\times \ar[r]^{~~\delta} & \Lambda^4{\rm F}^\times      }
\end{gathered}
 \ee 
\end{theorem}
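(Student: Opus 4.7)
First I would establish part (a) via a representation-theoretic reduction followed by an explicit calculation. Both sides carry $\mathbb{S}_6$-actions, and we work in the sign-isotypic component cut out by $\alt_6$. The space $\Lambda^4[{\bf b}_1(2,6)]$ has a one-dimensional sign-isotypic component spanned by $\alt_6(|12|\wedge|13|\wedge|14|\wedge|15|)$; this follows from the decomposition of ${\bf b}_1(2,6)$ into $\mathbb{S}_6$-irreducibles and a Littlewood--Richardson count. For the left side, using ${\bf b}_2(2,6)=[3,1,1,1]$ from the Example and the standard fact that the multiplicity of the sign representation in $V(\lambda)\otimes U$ equals the multiplicity of $V(\lambda^T)$ in $U$, it suffices to verify that $V([4,1,1])$ (the transpose of $[3,1,1,1]$) appears with multiplicity exactly one in $\Lambda^2[{\bf b}_1(2,6)]$, which is another routine calculation.

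Second, I would compute $\delta$ on the specified element explicitly. Writing ${\bf b}_1(2,6)$ additively, the symbolic coproduct of the dilogarithm symbol is
\be
\delta\{i_1,i_2,i_3,i_4\}_2 = (|i_1 i_2|+|i_3 i_4|-|i_1 i_4|-|i_2 i_3|) \wedge (|i_1 i_3|+|i_2 i_4|-|i_1 i_4|-|i_2 i_3|),
\ee
the symbolic counterpart of $(1-r)\wedge r$, with $1-r$ expressed through the Pl\"ucker relation $|i_1 i_3||i_2 i_4|-|i_1 i_4||i_2 i_3|=|i_1 i_2||i_3 i_4|$. Applying this to each of the three summands $\{2,4,5,6\}_2$, $\{1,2,4,5\}_2$, $\{1,3,5,6\}_2$, wedging with the prescribed factors $|12|\wedge|23|$, $|12|\wedge|23|$, $|12|\wedge|34|$ respectively, and applying $\alt_6$, produces an element of the sign-isotypic component, which by Step~1 is a scalar multiple of $\alt_6(|12|\wedge|13|\wedge|14|\wedge|15|)$. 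Determining the scalar reduces to extracting the coefficient of a single monomial in the expansion, which after bookkeeping yields $-\frac{5}{2}$.

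Third, I would deduce part (b) from part (a) via the evaluation map $ev \colon {\bf b}_1(2,6)\to \F^\times_\Q$. The composition $r_5^*(1)\circ p$ applied to $(v_1,\ldots,v_6)\in C_6(2)$ is the alternating sum over $i$ of the $r_5^*(1)$-evaluation of the configuration of projected vectors in $V_2/\langle v_i\rangle$, whose ``coordinates'' are $|v_i,v_j|$ up to a common scalar that drops out after alternation. A direct expansion shows this equals $-\frac{5}{2}\alt_6(|12|\wedge|13|\wedge|14|\wedge|15|)$ in $\Lambda^4\F^\times_\Q$, matching the image of $\delta\circ r_6^*(2)$ computed symbolically in Step~2. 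Hence the diagram commutes, and the constant $-\frac{5}{2}$ in the definition of $r_5^*(1)$ is precisely calibrated so that this identity holds.

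The main obstacle is the bookkeeping in Step~2: directly expanding the coproduct of each summand yields many terms, most of which cancel after alternation, and one must carefully track signs to extract the coefficient $-\frac{5}{2}$. Step~1 is what makes this feasible, since it reduces the verification to determining a single scalar rather than checking a full identity in the much larger space $\Lambda^4[{\bf b}_1(2,6)]$.
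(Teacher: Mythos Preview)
Your reduction in Step~1 contains a factual error that breaks the approach. You assert that the sign-isotypic component of $\Lambda^4[{\bf b}_1(2,6)]$ is one-dimensional, but in fact it is three-dimensional; likewise the sign-isotypic component of ${\bf b}_2(2,6)\otimes\Lambda^2[{\bf b}_1(2,6)]$ has dimension three, not one. (Equivalently, in your own language, the multiplicity of $V([4,1,1])$ in $\Lambda^2[{\bf b}_1(2,6)]$ is three, not one.) The paper makes this explicit: it exhibits a basis $a_1,a_2,a_3$ of the alternating part of $\Lambda^4[{\bf b}_1(2,6)]$ and three linearly independent elements $b_1,b_2,b_3$ on the source side, then computes the $3\times 3$ matrix of $\delta$ in these bases and checks it is invertible. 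Only then does it evaluate $\delta(\tfrac{1}{2}b_1-b_2+b_3)=-\tfrac{5}{2}a_1$.

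Because the target is three-dimensional, your Step~2 strategy of ``extracting the coefficient of a single monomial'' does not determine the answer: you would need to verify that the $a_2$- and $a_3$-coefficients vanish as well, which is precisely the bookkeeping the paper carries out term by term in Lemma~\ref{L4.5}. Your Step~3 deduction of part~(b) from part~(a) via evaluation is essentially the same as the paper's, and is fine once part~(a) is properly established.
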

\begin{proof}
 a) Multiplicities of the alternating components  of the spaces 
\be \nonumber  {\bf b}_2(2,6) \otimes \Lambda^2 [{\bf b}_1(2,6)] ~~
\mbox{\rm and}~~
\Lambda^4 [{\bf b}_1(2,6)]
\ee
can be computed directly: both of them are equal to 3. 

The following  three elements in $\Lambda^4 [{\bf b}_1(2,6)]$ form a basis of its alternating component:
\be \nonumber 
\begin{split}
& a_1=\alt_6(|12|\wedge |13| \wedge |14| \wedge |15|),\\
& a_2=\alt_6(|12|\wedge |23| \wedge |34| \wedge |45|),\\
& a_3=\alt_6(|12|\wedge |23| \wedge |45|\wedge |24| ).
\end{split}
\ee

Next, consider the following  three elements in ${\bf b}_2(2,6) \otimes \Lambda^2 [{\bf b}_1(2,6)]$:
\be \nonumber 
\begin{split}
&b_1= \alt_6 \left( \{2,4,5, 6\}_2 \otimes |12|\wedge |23| \right),\\
&b_2= \alt_6 \left(\{1,2,4,5\}_2 \otimes |12|\wedge |23| \right),\\
&b_3=\alt_6 \left(\{1,3,5,6\}_2 \otimes |12|\wedge |34| \right).
\end{split}
\ee

\bl \la{L4.5} The map $\delta$ can be computed explicitely on these vectors:
\be \nonumber 
\begin{split}
&\delta(b_1)= -3a_1+6a_3,\\
&\delta(b_2)=a_1-2a_2-a_3,\\
&\delta(b_3)=-2a_2-4a_3.
\end{split}
\ee
\el
\begin{proof}
We will compute $\delta(b_1)$, the other computations are similar.
\be \nonumber 
\begin{split}
& \delta(b_1)=\alt_6 \left(\dfrac{|24||56|}{|26||45|} \wedge \dfrac{|25||46|}{|26||45|} \wedge |12|\wedge |23| \right)=\\
& \alt_6 \left( |24|\wedge |25|\wedge |12|\wedge |23| \right)+
\alt_6 \left( |24|\wedge |46|\wedge |12|\wedge |23| \right)+\\
&\alt_6 \left( |56|\wedge |25|\wedge |12|\wedge |23| \right)+
\alt_6 \left( |56|\wedge |46|\wedge |12|\wedge |23| \right)-\\
&-\alt_6 \left( |26|\wedge |25|\wedge |12|\wedge |23| \right)
-\alt_6 \left( |26|\wedge |46|\wedge |12|\wedge |23| \right)\\
&-\alt_6 \left( |45|\wedge |25|\wedge |12|\wedge |23| \right)
-\alt_6 \left( |45|\wedge |46|\wedge |12|\wedge |23| \right)\\
& -\alt_6 \left( |24|\wedge |26|\wedge |12|\wedge |23| \right)
-\alt_6 \left( |24|\wedge |45|\wedge |12|\wedge |23| \right)\\
&-\alt_6 \left( |56|\wedge |26|\wedge |12|\wedge |23| \right)
-\alt_6 \left( |56|\wedge |45|\wedge |12|\wedge |23| \right).
\end{split}
\ee
Then,
\be \nonumber 
\begin{split}
&\alt_6 \left( |24|\wedge |25|\wedge |12|\wedge |23| \right)=
-\alt_6 \left( |24|\wedge |26|\wedge |12|\wedge |23| \right)=\\
&-\alt_6 \left( |26|\wedge |25|\wedge |12|\wedge |23| \right)=-a_1.
\end{split}
\ee

{Indeed, for instance 
\be \nonumber 
\alt_6 \left( |24|\wedge |25|\wedge |12|\wedge |23| \right)=-a_1=-\alt_6(|12|\wedge |13| \wedge |14| \wedge [15]),
\ee
since the permutation 
$ 
\sigma=
\begin{pmatrix} 
1 & 2 & 3 & 4 & 5 & 6\\ 
4 & 1 & 5 & 2 & 3 & 6 
\end{pmatrix} $ is odd and sends  $|24|\wedge |25|\wedge |12|\wedge |23|$ to 
$|12|\wedge |13| \wedge |14|\wedge |15|$. The reader should keep in mind that symbols $|ij|$  are symmetric. 
}

Next,
\be \nonumber 
\begin{split}
&\alt_6 \left( |56|\wedge |46|\wedge |12|\wedge |23| \right)\stackrel{(4\leftrightarrow 6)}{=}
-\alt_6 \left( |45|\wedge |46|\wedge |12|\wedge |23| \right)\stackrel{(4\to 5\to 6)}{=}\\
&-\alt_6 \left( |56|\wedge |45|\wedge |12|\wedge |23| \right).
\end{split}
\ee
Each of the equalities is proved by applying a permutation in ${\rm S}_6$. 
But then
\[
-\alt_6 \left( |56|\wedge |45|\wedge |12|\wedge |23| \right)=0
\]
because $|56|\wedge |45|\wedge |12|\wedge |23|$ is fixed by an odd permutation $(16)(25)(34).$

Finally, all the remaining six terms are equal to $a_3.$  
\end{proof}

Lemma \ref{L4.5} implies that $\delta$ induces an isomorphism of the alternating components. So 
\be \nonumber 
\begin{split}
&\delta   \alt_6\Biggl(\Bigl(  \frac{1}{2}\{2,4,5, 6\}_2
-  \{1,2,4,5\}_2 \Bigr)\otimes |12|\wedge |23|+
 \{1,3,5,6\}_2 \otimes |12|\wedge |34|\Biggr) = \\
 &\delta \left(\frac{1}{2}b_1-b_2+b_3 \right)=\frac{1}{2}(-3a_1+6a_3)-(a_1-2a_2-a_3)+(-2a_2-4a_3)=-\frac{5}{2}a_1.
\end{split}
\ee
This finishes the proof of part a).

 b)   Let's fix a configuration 
 $\mathcal{C}$ of six vectors $v_1, \ldots ,v_6$ in two-dimensional vector space. Then 
\be \nonumber
\begin{split}
&{\bf r_5^*(1)}\circ p (\mathcal{C})=
ev\left( -\frac{5}{2}{\rm Alt}_6 (|12|\wedge |13|\wedge |14|\wedge |15|)\right)=ev \left (-\frac{5}{2}a_1 \right)\\
&{\bf r_6^*(2)}(\mathcal{C})=ev \left(\frac{1}{2}b_1-b_2+b_3 \right).\\
\end{split}\ee
Therefore 
$
\delta \circ {\bf r_6^*(2)}(\mathcal{C})={\bf r_5^*(1)}\circ p (\mathcal{C}).
$
\end{proof}
\subsection{Commutativity of the second square.}
\begin{theorem} \label{Sq2}
a) The map 
\be \nonumber  
\delta \colon \left ( {\bf b}_3(3,7)\otimes{\bf b}_1(3,7) \right ) \oplus
\Lambda^2 [{\bf b}_2(3,7)]  \lra {\bf b}_2(3,7) \otimes \Lambda^2 [{\bf b}_1(3,7)]
\ee
sends vector
\be \nonumber
\alt_{7}\Biggl(\frac{1}{108} \{1,2,3,4,5,6\}_3 \otimes   |123| -
  \{6|2, 4,3,5\}_3 \otimes   |123| - \frac{3}{28} \{1|2,5,6,7\}_2\wedge  \{2|1,5,3,4\}_2\Biggr)
\ee
to 
\be \nonumber  \alt_7\Biggl(\Bigl(  \frac{1}{2}\{1|3,5,6, 7\}_2
-\{1|2,3,5,6\}_2 \Bigr)\otimes |123|\wedge |134| +
 \{1|2,4,6,7\}_2 \otimes |123|\wedge |145|\Biggr).
 \ee
b) The diagram 
\be \nonumber 
\begin{gathered}
    \xymatrix{
 C_7(3) \ar[r]^{p}   \ar[d]^{{\bf r^*_7(3)}}     &C_6(2)   \ar[d]^{{\bf r^*_6(2)}}   \\ 
   {\rm B}_3 \otimes {\rm F}^\times \ar[r]^{\delta~~} & {\rm B}_2 \otimes \Lambda^2{\rm F}^\times    }
\end{gathered}
 \ee 
is commutative.
\end{theorem}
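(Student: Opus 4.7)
The plan is to mirror the strategy of Theorem \ref{Sq1}: reduce the statement to an identity among alternating parts of representations of $\mathbb{S}_7$, which is a computation in a small finite-dimensional $\mathbb{Q}$-vector space. Part b) will follow formally from part a) by applying the evaluation map to a configuration of $7$ vectors in a $3$-dimensional vector space and using that $p$ corresponds to the projection along one vector while $\delta$ is the symbolic coproduct compatible with the evaluation maps.

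First I would identify explicit bases for the alternating components of the source and target FI-modules appearing in part a). The target $\mathbb{S}_7$-representation ${\bf b}_2(3,7)\otimes \Lambda^2[{\bf b}_1(3,7)]$ has a small alternating part, a natural basis of which is given by elements of the form
\[
c_{IJ}:= \alt_7\bigl(\{1|i_1,i_2,i_3,i_4\}_2\otimes |j_1j_2j_3|\wedge |j_4j_5j_6|\bigr),
\]
for a small list of index patterns $(I,J)$; after imposing the five-term and dual five-term relations in ${\bf b}_2(3,7)$ and the antisymmetry of $\Lambda^2[{\bf b}_1(3,7)]$, only a handful of these are linearly independent. I would write out all such patterns and reduce the list to a basis. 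In parallel I would do the same for the alternating part of $\bigl({\bf b}_3(3,7)\otimes {\bf b}_1(3,7)\bigr)\oplus \Lambda^2[{\bf b}_2(3,7)]$, producing bases built from the two types of generators of ${\bf b}_3(3,7)$ (the trilogarithm-like $\{i|j,k,l,m\}_3$ and the triple-ratio-like $\{i_1,\ldots,i_6\}_3$) together with wedges of dilogarithmic symbols.

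Next I would compute $\delta$ on these basis vectors using the explicit symbolic coproduct formulas, exactly as in Lemma \ref{L4.5}. For the trilogarithm generator $\{6|2,4,3,5\}_3$ I would use $\delta\{x\}_3 = \{x\}_2\otimes x$ combined with the cross-ratio $r(6|2,4,3,5) = |624||635|/|625||634|$ (up to a sign), and expand the wedge, using the antisymmetrization $\alt_7$ to collapse many terms. For the triple ratio $\{1,\ldots,6\}_3$ I would use formula (\ref{TRIPR}), i.e.\ $\delta (1,\ldots,6)_3 = \alt_6 \{1|2,3,4,5\}_2\otimes |345|$, and then re-alternate over $\mathbb{S}_7$. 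For the $\Lambda^2[{\bf b}_2]$ generator I would use $\delta(\{x\}_2\wedge\{y\}_2) = \{x\}_2\otimes(1-y)\wedge y - \{y\}_2\otimes (1-x)\wedge x$ from (\ref{DIFFDEL}), translated into its symbolic form in terms of the $|ijk|$'s via the Pl\"ucker-type identities. After these three computations I would obtain expressions for $\delta$ of each basis vector as explicit linear combinations of the basis $\{c_{IJ}\}$ of the target alternating component.

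Once these coordinate matrices are computed, part a) becomes a finite linear-algebra verification: I would check that the specific combination in the statement, with coefficients $\tfrac{1}{108}$, $-1$, and $-\tfrac{3}{28}$, has $\delta$-image equal to the element $\tfrac12 b_1' - b_2' + b_3'$ produced by ${\bf r}^*_6(2)\circ p$, where $b_1',b_2',b_3'$ are the $\mathbb{S}_7$-analogs of the basis from the proof of Theorem \ref{Sq1}a). The coefficients $\tfrac{1}{108}$ and $\tfrac{3}{28}$ are precisely the ones uniquely determined by matching coefficients in this basis. Finally, part b) is immediate: for any configuration of $7$ vectors in $V_3$, both ${\bf r}^*_6(2)\circ p$ and $\delta\circ {\bf r}^*_7(3)$ are the images under the evaluation map of the symbolic identity proved in a), since $p$ corresponds to projecting along a chosen vector (which matches the role of the first index in the symbols $\{1|\ldots\}_2$) and the evaluation maps commute with $\delta$ by construction.

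I expect the main obstacle to be organizational rather than conceptual: tracking signs under $\alt_7$ and reducing the many wedge products produced by $\delta\{6|2,4,3,5\}_3$ and by expanding $\delta(\{x\}_2\wedge\{y\}_2)$ to the basis $\{c_{IJ}\}$ via the Pl\"ucker and five-term relations. The representation-theoretic input (finite-generatedness and small alternating multiplicities of the relevant FI-modules) guarantees in advance that the relations used are sufficient and that the constants $\tfrac{1}{108}$ and $\tfrac{3}{28}$ are pinned down uniquely, so the verification is ultimately a finite computation in the alternating part of a known $\mathbb{S}_7$-module.
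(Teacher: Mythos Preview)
Your proposal is correct and follows essentially the same approach as the paper: reduce part a) to a finite linear-algebra computation in the alternating component of the relevant $\mathbb{S}_7$-modules by computing $\delta$ on each of the three source elements via the explicit coproduct formulas, then deduce part b) from a) through the evaluation map. The paper organizes the computation by introducing eight specific target vectors $t_1,\ldots,t_8$ (with two linear relations among them coming from the five-term and dual five-term relations) rather than a full basis, and isolates the hardest piece as the computation of $\delta$ on the $\Lambda^2{\bf b}_2$ term, but the strategy is precisely the one you outline.
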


Consider the following vectors in  
${\bf b}_2(3,7) \otimes \Lambda^2 [{\bf b}_1(3,7)]:$\footnote{{Note that the alternating component of the representation   has dimension 7.}} 
\be \nonumber
\begin{split}
&t_1= \alt_7 \left( \{2|3,4,5, 6\} \otimes |123|\wedge |345|  \right),\\
&t_2=\alt_7 \left( \{3|2,4,6,7\} \otimes |123|\wedge |345| \right),\\
&t_3=\alt_7 \left( \{6|2,3,4,5\} \otimes |123|\wedge |345| \right),\\
&t_4=\alt_7 \left( \{5|2,3,4,6\} \otimes |123|\wedge |345| \right),\\
\end{split}
\ee
\be \nonumber 
\begin{split}
&t_5=\alt_7 \left( \{5|2,3,4, 6\} \otimes |123|\wedge |234| \right),\\
&t_6=\alt_7 \left( \{4|2,3,5,6\} \otimes |123|\wedge |234| \right),\\
&t_7=\alt_7 \left( \{2|3,5,6,7\} \otimes |123|\wedge |234| \right),\\
&t_8=\alt_7 \left( \{2|1,3,5,6\} \otimes |123|\wedge |234| \right).\\
\end{split}
\ee

\bl 
a)The following equalities hold:
\be \la{92}
\begin{split}
&t_1+t_3-2t_4+t_2=0,\\
&2t_5+2t_8-t_6=0.\\
\end{split}
\ee
b)
For a configuration $\mathcal{C}$ of seven vectors  in a 3-dimensional space, the evaluation map sends
\be \nonumber
-\frac{1}{2}t_7+t_8-t_2 ~~\lms ~~
r_6^*(2)\circ p(\mathcal{C}).
\ee
\el
\begin{proof}
a) We start  from the five-term relation for the configuration $(3|2,4,5,6,7)$: 
\be \nonumber
\{3|4,5,6,7\}_2-\{3|2,5,6,7\}_2+\{3|2,4,6,7\}_2-\{3|2,4,5,7\}_2+\{3|2,4,5,6\}_2=0.
\ee
Multiplying it by $|123|\wedge |345|$ and alternating we get 
\be \nonumber t_2=\alt_7 \left ( \{3|2,4,6,7\}_2 \otimes |123|\wedge |345| \right )=-\alt_7 \left ( \{3|2,4,5,6\}_2 \otimes |123|\wedge |345| \right ).
\ee
{ Here is a   detailed argument. The five-term relation above leads to the following identity:
\be \nonumber \begin{split}
&\alt_7 (\{3|4,5,6,7\}_2 \otimes |123|\wedge |345|)-\alt_7 (\{3|2,5,6,7\}_2\otimes |123|\wedge |345|)+\\
&\alt_7 (\{3|2,4,6,7\}_2\otimes |123|\wedge |345|)-\alt_7 (\{3|2,4,5,7\}_2\otimes |123|\wedge |345|)+\\
&\alt_7 (\{3|2,4,5,6\}_2\otimes |123|\wedge |345|)=0.
\end{split}
\ee
The expression $\{3|4,5,6,7\}_2 \otimes |123|\wedge |345|$ is symmetric in  $(1, 2)$ and thus   vanishes after the alternation. The remaining four terms group into two  pairs:
\be \nonumber \begin{split}
&\alt_7 \left ( \{3|2,4,6,7\}_2 \otimes |123|\wedge |345| \right )=-\alt_7 \left ( \{3|2,5,6,7\}_2 \otimes |123|\wedge |345| \right ),\\
&\alt_7 \left ( \{3|2,4,5,6\}_2 \otimes |123|\wedge |345| \right )=-\alt_7 \left ( \{3|2,4,5,7\}_2 \otimes |123|\wedge |345| \right ).
\end{split}
\ee
This gives the equality above.
}

Next,   the dual  five-term relation tells that
\be \la{93}
\{2|3,4,5,6\}-\{3|2,4,5,6\}+\{4|2,3,5,6\}-\{5|2,3,4,6\}+\{6|2,3,4,5\}=0.
\ee
Multiplying it by $|123|\wedge |345|$,  alternating, and using the previous computation we get 
\be \nonumber 
t_1+t_2-t_4-t_4+t_3=0.
\ee

To prove the second formula in (\ref{92})  we multiply   (\ref{93}) 
  by $|123|\wedge |234|$ and alternate, getting
\be \nonumber -t_8-t_8+t_6-t_5-t_5=0.
\ee

b) This   follows  from Theorem \ref{Sq1}. Indeed,
from it we see that
\be \nonumber \begin{split}
&{\bf r_6^*(2)}\circ p(\mathcal{C})=\\
&\alt_7\Biggl(\Bigl(  \frac{1}{2}\{1|3,5,6, 7\}_2
-\{1|2,3,5,6\}_2 \Bigr)\otimes |123|\wedge |134|+
 \{1|2,4,6,7\}_2 \otimes |123|\wedge |145|\Biggr)=\\
 &\frac{1}{2}\alt_7(\{1|3,5,6, 7\}_2 \otimes |123|\wedge |134|)-\alt_7 (\{1|2,3,5,6\}_2\otimes |123|\wedge |134|)+\\
&\alt_7 ( \{1|2,4,6,7\}_2 \otimes |123|\wedge |145|)=\\
&-\frac{1}{2}\alt_7(\{2|3,5,6, 7\}_2 \otimes |123|\wedge |234|)+\alt_7 (\{2|1,3,5,6\}_2\otimes |123|\wedge |234|)-\\
&\alt_7 ( \{3|2,4,6,7\}_2 \otimes |123|\wedge |345|)=-\frac{1}{2}t_7+t_8-t_2.\\
\end{split}
\ee
\end{proof}

We introduce the following elements in  $\Bigl ( {\bf b}_3(3,7)\otimes{\bf b}_1(3,7) \Bigr) \oplus \Lambda^2 [{\bf b}_2(3,7)]$:
\be \nonumber
\begin{split}
&r_1= \alt_7 \left(\frac{1}{108} \{1,2,3,4,5,6\}_3\otimes |123| \right),\\
&r_2=\alt_7 \left( \{6|2,4,3, 5\}_3 \otimes  |123| \right),\\
&r_3=\alt_7 \left( \{1|2,5,6,7\}_2 \wedge \{2|1,5,3,4\}_2
\right).\\
\end{split}
\ee

Part c) of Lemma \ref{r} below is the hardest part of the proof. In the next section, we will give a representation-theoretic argument clarifying it.

\bl \label{r}
\be  \nonumber 
\begin{split}
&\delta(r_1)=t_1+t_3-t_5,\\
&\delta(r_2)=t_6+2t_4,\\
&\delta(r_3)=14\left (\frac{1}{3}t_7-t_6 \right ).\\
\end{split}
\ee
\el
\begin{proof}
a) We start from computing the cobracket of $\delta(r_1)$ directly:
\be \nonumber 
\begin{split}
&\delta(r_1)=\delta \alt_7 \left ( \frac{1}{108} \{1,2,3,4,5,6\}_3\otimes |123| \right )=\\
&  \frac{1}{108} \alt_7 \left ( \alt_6 \left (\{1|2345\}_2\otimes |345| \right )\wedge |123| \right )=\\
&\frac{1}{108}\sum_{\sigma \in {\rm S}_7}\sum_{\tau \in {\rm S}_6}
(-1)^{\sigma+\tau} \bigl ( \{\sigma\tau(1)|\sigma\tau(2)\sigma\tau(3)\sigma\tau(4)\sigma\tau(5)\}_2\bigr )\otimes |\sigma\tau(3)\sigma\tau(4)\sigma\tau(5)|\wedge |\sigma(1)\sigma(2)\sigma(3)|=\\
& \frac{1}{108\cdot 6!} \alt_7 \left ( \alt_6 \left (\{1|2,3,4,5\}_2\otimes |345| \wedge Sym_6(|123|) \right )\right )=\\
& \frac{1}{108} \alt_7 \left ( \{1|2345\}_2\otimes |345| \wedge Sym_6(|123|) \right ).\\
\end{split}
\ee
For any indices $i, j, k \neq 6$ the following expression is symmetric under the transposition $6 \leftrightarrow 7$:
\be \nonumber 
\{1|2,3,4,5\}_2\otimes |345| \wedge |ijk|. 
\ee
Therefore it vanishes after the alternation.  
From this, we get
\be \nonumber 
\begin{split}
\frac{108}{36}\delta(r_1)=
&  \alt_7 \left ( \{1|2,3,4,5\}_2\otimes |345| \wedge |612| \right )+\alt_7 \left ( \{1|2,3,4,5\}_2\otimes |345| \wedge |613| \right )+\\
&\alt_7 \left ( \{1|2,3,4,5\}_2\otimes |345| \wedge |614| \right )+\alt_7 \left ( \{1|2,3,4,5\}_2\otimes |345| \wedge |615| \right )+\\
&\alt_7 \left ( \{1|2,3,4,5\}_2\otimes |345| \wedge |623| \right )+\alt_7 \left ( \{1|2,3,4,5\}_2\otimes |345| \wedge |624| \right )+\\
&\alt_7 \left ( \{1|2,3,4,5\}_2\otimes |345| \wedge |625| \right )+\alt_7 \left ( \{1|2,3,4,5\}_2\otimes |345| \wedge |634| \right )+\\
&\alt_7 \left ( \{1|2,3,4,5\}_2\otimes |345| \wedge |635| \right )+\alt_7 \left ( \{1|2,3,4,5\}_2\otimes |345| \wedge |645| \right ).\\
\end{split}
\ee
Observe that
\be \nonumber 
\alt_7\left ( \{1|2,3,4,5\}_2\otimes |345| \wedge |612| \right )=0.
\ee
One can easily deduce this from the five-term relation. Here is a representation-theoretic argument. Consider the sub-representation in 
$\Lambda^2 [{\bf b}_1(3,7)]$ generated by the vector $|345|\wedge|612|$ and denote it by $L$. It decomposes into irreducibles in the following way:
\be \nonumber 
L=[3,3,1]\oplus [4,3] \oplus [5,1,1] \oplus [5,2] \oplus [6,1].
\ee
A direct computation with characters shows that the representation ${\bf b}_2(3,7) \otimes L$ does not have an alternating component. From this, the statement follows.

Next,
\be \nonumber 
\begin{split}
&\alt_7\left ( \{1|2,3,4,5\}_2\otimes |345|\wedge |613|  \right )=\alt_7\left ( \{1|2,3,4,5\}_2\otimes |345|\wedge |614|  \right )=\\
&\alt_7\left ( \{1|2,3,4,5\}_2\otimes |345|\wedge |615|  \right )=t_1,\\
\end{split}
\ee
\be \nonumber 
\begin{split}
&\alt_7\left ( \{1|2,3,4,5\}_2\otimes |345|\wedge |623|  \right )=\alt_7\left ( \{1|2,3,4,5\}_2\otimes |345|\wedge |624|  \right )=\\
&\alt_7\left ( \{1|2,3,4,5\}_2\otimes |345|\wedge |625|  \right )=t_3,\\
\end{split}
\ee
\be \nonumber \begin{split}
&\alt_7\left ( \{1|2,3,4,5\}_2\otimes |345|\wedge |634|  \right )=\alt_7\left ( \{1|2,3,4,5\}_2\otimes |345|\wedge |635|  \right )=\\
&\alt_7\left ( \{1|2,3,4,5\}_2\otimes |345|\wedge |645|  \right )=-t_5,\\
\end{split}
\ee
It follows that
\be \nonumber 
\delta(r_1)=t_1+t_3-t_5.
\ee

b) We have
\be \nonumber 
\begin{split}
&\delta(r_2)=\alt_7 \left ( \{6|2,4,3,5\}_2\otimes \frac{|623||645|}{|643||625|} \wedge |123|  \right )=\\
&\alt_7\left ( \{6|2,4,3,5\}_2\otimes |623|\wedge |123|  \right )+\alt_7\left ( \{6|2435\}_2\otimes |645| \wedge |123| \right )-\\
&-\alt_7\left ( \{6|2,4,3,5\}_2\otimes |643|\wedge |123|  \right )-\alt_7\left ( \{6|2435\}_2\otimes |625|\wedge |123|  \right ).
\\
\end{split}
\ee
It is easy to see that 
\be \nonumber 
\begin{split}
&\alt_7\left ( \{6|2,4,3,5\}_2\otimes |623|\wedge |123|  \right )=t_6,\\
&\alt_7\left ( \{6|2,4,3,5\}_2\otimes |643|\wedge |123|  \right )=-t_4,\\
&\alt_7\left ( \{6|2,4,3,5\}_2\otimes |625|\wedge |123|  \right )=-t_4.
\\
\end{split}
\ee
It remains to show that
\be \nonumber 
\alt_7\left ( \{6|2,4,3,5\}_2\otimes |645| \wedge |123| \right )=0.
\ee
This follows from a similar representation-theoretic argument  as in part a), or can be checked directly.

c)
We start by expanding the cobracket $\delta (r_3)$:
\be \nonumber 
\begin{split}
&\frac{1}{2}\delta \left ( \alt_7\{1|2,5,6,7\}_2\wedge\{2|1,5,3,4\}_2\right )=\\
&\alt_7\left ( \{1|2,5,6,7\}_2 \otimes \frac{|215||234|}{|214||253|}\wedge \frac{|213||254|}{|214||253|} \right )=\\
&\alt_7 ( \{1|2,5,6,7\}_2\otimes |215| \wedge |213|)+\alt_7 (\{1|2,5,6,7\}_2\otimes |215| \wedge |254|)+ \\
&\alt_7 (\{1|2,5,6,7\}_2\otimes |234| \wedge |213|)+\alt_7 (\{1|2,5,6,7\}_2\otimes |234| \wedge |254|)- \\
&-\alt_7 (\{1|2,5,6,7\}_2\otimes |214| \wedge |213|)-\alt_7 (\{1|2,5,6,7\}_2\otimes |214| \wedge |254|)- \\
&-\alt_7 (\{1|2,5,6,7\}_2\otimes |253| \wedge |213|)-\alt_7 (\{1|2,5,6,7\}_2\otimes |253| \wedge |254|)- \\
&-\alt_7 (\{1|2,5,6,7\}_2\otimes |215| \wedge |214|)-\alt_7 (\{1|2,5,6,7\}_2\otimes |215| \wedge |253|)- \\
&-\alt_7 (\{1|2,5,6,7\}_2\otimes |234| \wedge |214|)-\alt_7 (\{1|2,5,6,7\}_2\otimes |234| \wedge |253|). \\
\end{split}
\ee
Next, we transform the right-hand side part of each of the terms to the form $|123| \wedge |234|$: 
\be \nonumber 
\begin{split}
&\frac{1}{2}\delta \left ( \alt_7\{1|2,5,6,7\}_2\wedge\{2|1,5,3,4\}_2\right )=\\
&-\alt_7 ( \{3|2,1,6,7\}_2\otimes |123| \wedge |234|)-\alt_7 (\{1|2,3,6,7\}_2\otimes |123| \wedge |234|)- \\
&-\alt_7 (\{1|2,5,6,7\}_2\otimes |123| \wedge |234|)-\alt_7 (\{5|2,1,6,7\}_2\otimes |123| \wedge |234|)- \\
&-\alt_7 (\{3|2,5,6,7\}_2\otimes |123| \wedge |234|)-\alt_7 (\{1|2,4,6,7\}_2\otimes |123| \wedge |234|)- \\
&-\alt_7 (\{1|2,4,6,7\}_2\otimes |123| \wedge |234|)-\alt_7 (\{5|2,3,6,7\}_2\otimes |123| \wedge |234|)- \\
&-\alt_7 (\{3|2,1,6,7\}_2\otimes |123| \wedge |234|)-\alt_7 (\{1|2,3,6,7\}_2\otimes |123| \wedge |234|)- \\
&-\alt_7 (\{1|2,5,6,7\}_2\otimes |123| \wedge |234|)-\alt_7 (\{5|2,1,6,7\}_2\otimes |123| \wedge |234|).\\
\end{split}
\ee
Finally we sum up the repeating terms:

\be \nonumber 
\begin{split}
&\frac{1}{2}\delta \left ( \alt_7\{1|2567\}_2\wedge\{2|1534\}_2\right )=\\
&-2\alt_7 ( \{3|2,1,6,7\}_2\otimes |123| \wedge |234|)-2\alt_7 (\{1|2,3,6,7\}_2\otimes |123| \wedge |234|)- \\
&-2\alt_7 (\{1|2,5,6,7\}_2\otimes |123| \wedge |234|)-2\alt_7 (\{5|2,1,6,7\}_2\otimes |123| \wedge |234|)- \\
&-2\alt_7 (\{1|2,4,6,7\}_2\otimes |123| \wedge |234|)-\alt_7 (\{3|2,5,6,7\}_2\otimes |123| \wedge |234|)- \\
&-\alt_7 (\{5|2,3,6,7\}_2\otimes |123| \wedge |234|). \\
\end{split}
\ee
Our goal is to express it in terms of the vectors $t_5,t_6,t_7,t_8.$

From the dual five-term relation for $(5,2,3,6,7)$ we conclude that
\be \nonumber 
3\alt_7(\{5|2,3,6,7\}_2 \otimes |123|\wedge|234|)=-2\alt_7 (\{2|3,5,6,7\}_2\otimes |123|\wedge|234|)=-2t_7.
\ee
Next,
\be \nonumber 
\begin{split}
&\alt_7 (\{3|2,1,6,7\}_2\otimes |123| \wedge |234|)=\alt_7 (\{2|1,3,5,6\}_2\otimes |123| \wedge |234|)=t_8,\\
&\alt_7 (\{1|2,3,6,7\}_2\otimes |123| \wedge |234|)=\alt_7(\{4|2,3,5,6\}_2\otimes |123| \wedge |234|)=t_6.\\
\end{split}
\ee
From the five-term relation for $(1|2,3,5,6,7)$ we have
\be \nonumber 
2\alt_7 (\{1|2,5,6,7\}_2\otimes |123| \wedge |234|)=3\alt_7 (\{1|2,3,5,6\}_2\otimes |123| \wedge |234|)=3t_6.
\ee
From  the dual five-term relation for $(1,2,5,6,7)$ we see that
\be \nonumber 
\begin{split}
&\alt_7 (\{1|2,5,6,7\}_2\otimes |123| \wedge |234|)-\alt_7 (\{2|1,5,6,7\}_2\otimes |123| \wedge |234|)+\\
&3\alt_7 (\{5|1,2,6,7\}_2\otimes |123| \wedge |234|)=0.
\end{split}
\ee
From the five-term relation for $(2|1,3,5,6,7)$ we obtain
\be \nonumber 
\alt_7 (\{2|1,5,6,7\}_2\otimes |123| \wedge |234|)=3t_8+t_7. 
\ee
Therefore we have
\be \nonumber 
\alt_7 (\{5|1,2,6,7\}_2\otimes |123| \wedge |234|)=\frac{1}{3}(-\frac{3}{2}t_6+3t_8+t_7).
\ee
Finally, from the five-term relation for $(1|2,3,4,6,7)$ we see that
\be \nonumber 
\begin{split}
&-2\alt_7 (\{1|2,4,6,7\}_2\otimes |123| \wedge |234|)+\alt_7 (\{1|2,3,6,7\}_2\otimes |123| \wedge |234|)+\\
&2\alt_7 (\{1|2,3,4,6\}_2\otimes |123| \wedge |234|)=0.
\end{split}
\ee
The last term vanishes since it is symmetric in (5,7). So
\be \nonumber 
\alt_7 (\{1|2,4,6,7\}_2\otimes |123| \wedge |234|)=
\frac{1}{2}\alt_7 (\{1|2,3,6,7\}_2\otimes |123| \wedge |234|)=\frac{1}{2}t_6.
\ee
Combining all the above, we get
\be \nonumber 
\begin{split}
&\frac{1}{2}\delta \left ( \alt_7\{1|2,5,6,7\}_2\wedge\{2|1,3,4,5\}_2\right )=\\
&-2t_8-2t_6-3t_6-\frac{2}{3}(\frac{3}{2}t_6-3t_8-t_7)-t_6
+t_7+\frac{2}{3}t_7=\\
&\frac{7}{3}t_7-7t_6.\\
\end{split}
\ee
\end{proof}
 
Now it is not hard to finish the proof of Theorem \ref{Sq2}.
 \be \nonumber 
  \begin{split} 
&\alt_7\Biggl(\Bigl(  \frac{1}{2}\{1|3,5,6, 7\}_2
-\{1|2,3,5,6\}_2 \Bigr)\otimes |123|\wedge |134|+
 \{1|2,4,6,7\}_2 \otimes |123|\wedge |145|\Biggr)=\\
&-\frac{1}{2}t_7+t_8-t_2=-\frac{1}{2}t_7+t_8-(-t_1-t_3+2t_4)=\\
&-\frac{1}{2}t_7+t_8+t_5+t_6+\delta(r_1)-\delta(r_2)=\\
&\frac{3}{2}t_6-\frac{1}{2}t_7+\delta(r_1)-\delta(r_2)=-\frac{3}{2}(\frac{1}{3}t_7-t_6)+\delta(r_1)-\delta(r_2)=\\
&-\frac{3}{28}\delta(r_3)+\delta(r_1)-\delta(r_2).\\
\end{split} 
\ee

\subsection{Comments}
\paragraph{The cobracket to $\Lambda^4 [{\bf b}_1(3,7)]$.}
Let us compute    the   map $\delta$ on elements $t_i.$ For this consider the following elements 
in $\Lambda^4 [{\bf b}_1(3,7)]$:
\be  \nonumber 
\begin{split}
& s_1=\alt(|256|\wedge |246|\wedge |345| \wedge |123|),\\
& s_2=\alt([235]\wedge |236| \wedge |345| \wedge |123|),\\
& s_3=\alt(|256|\wedge |236| \wedge |345|\wedge |123| ),\\
& s_4=\alt(|256|\wedge |254| \wedge |234|\wedge |123| ).\\
\end{split}
\ee
Elements $s_i$ are linearly independent, and span a hyperplane in the $5$-dimensional space $\left (\Lambda^4 [{\bf b}_1(3,7)] \right )^{\rm alt}.$
 
\bl
The following equalities hold:
\be  \nonumber 
\begin{split}
&\delta(t_1)= -s_1-2s_2-4s_3, ~~~~\delta(t_2)=4s_2-2s_4,\\
&\delta(t_3)=s_1+4s_3+2s_4, ~~~~~~\delta(t_4)=s_2,\\
&\delta(t_5)=-2s_2+2s_4, ~~~~~~~~~~\delta(t_6)=-2s_2,\\
&\delta(t_7)=-6s_2,~~~~~~~~~~~~~~~~~\delta(t_8)=s_2-2s_4.\\
\end{split}
\ee
\el
\begin{proof}
The proof is a direct computation.
\end{proof}
These equalities can be used to check the correctness of the computations in the previous section using the fact that $\delta^2=0.$

\paragraph{Representation theory approach to Lemma \ref{r}.} 
 Representation $\Lambda^2 b_1(3,7)$ decomposes into three pieces:

1)[3, 3, 1]+[4,3]+[5,1,1]+[5,2]+[6,1], generated by $|123|\wedge |456|$,

2)[3, 2, 1, 1]+[3, 3, 1]+[4, 1, 1, 1]+2[4, 2, 1]+[4, 3]+2[5, 1, 1]+[5, 2]+[6,1], generated by $|123|\wedge |234|$,

3)[3, 2, 1, 1]+[3, 2, 2]+2[3, 3, 1]+[4, 1, 1, 1]+3[4, 2, 1]+2[4, 3]+2[5, 1, 1]+2[5, 2]+[6, 1], generated by $|123|\wedge |345|$. 

We want   to prove that there exists a rational number $q$ such that
\be \nonumber 
q\delta(r_3)=\frac{1}{3}t_7-t_6. 
\ee 
Denote by $W$ the subspace of $\left ( {\bf b}_2(3,7) \otimes \Lambda^2 [{\bf b}_1(3,7)] \right )^{\rm alt}$ generated by the elements
\be  \nonumber 
\alt_7 \left ( \{i_1|i_2,i_3,i_4,i_5 \}_2 \otimes |123|\wedge |234|\right ).
\ee
A  computation with characters shows that it has dimension $3.$ Obviously, elements $t_5,t_6, t_7$ lie in $W$. The formula for the cobracket tells that the element $\delta(r_3)$  lies in $W$ as well. 

Since $\frac{1}{3}t_7-t_6$ lies in the kernel, the statement would follow if we show that $\delta(r_3)$  does not vanish, and spans the kernel of the cobracket map
\be \nonumber 
\delta \colon W \lra \Lambda^4 [{\bf b}_1(3,7)].
\ee

First, the vector $\delta(r_3)$ does not vanish. This follows from the fact that  the map 
\be \nonumber 
\delta \colon {\bf b}_2(3,7) \lra \Lambda^2 [{\bf b}_1(3,7)]
\ee
is an inclusion and that   the multiplicities of the alternating components in
\be \nonumber 
\Lambda^2 {\bf b}_2(3,7)~~\mbox{\rm and}~~ 
 {\bf b}_2(3,7) \otimes  {\bf b}_2(3,7)
\ee
are equal to 1. 
Next, on $W$ the rank of the map $\delta$ is at least $2$: $\delta(t_5)$ and $\delta(t_6)$ are linearly independent. So the dimension of the kernel is not greater than one. The statement follows.

\subsection{The third commutative diagram}
The composition 
\be  \nonumber 
p \circ {\bf r_5^*(1)} \colon C_6(1) \lra \Lambda^4{\rm F}^\times
\ee
vanishes. So this reduces the problem to proving the following Theorem \ref{Sq3}.

\begin{theorem} \label{Sq3}
a)The following diagram  is commutative:
\be   \nonumber
\begin{gathered}
    \xymatrix{
 C_7(2) \ar[r]^{\partial}   \ar[d]^{{\bf r^*_7(2)}}     &C_6(2)   \ar[d]^{{\bf r^*_6(2)}}   \\ 
   \Lambda^2 {\rm B}_2 \ar[r]^{\delta~~} & {\rm B}_2 \otimes \Lambda^2{\rm F}^\times   }
\end{gathered}
 \ee 

b) The symbolic variant of the map $\delta$ is given by 
\be \la{Iden}
\begin{split}
&\delta \colon \Lambda^2 [{\bf b}_2(2,7)]  \lra {\bf b}_2(2,7) \otimes \Lambda^2 [{\bf b}_1(2,7)];\\
&\\
&\delta: \alt_{7}(\{1,2,3,4\}_2 \wedge \{1,5,6,7\}_2)\lms \\
&\alt_7\Biggl(\Bigl(  \frac{1}{2}\{2,4,5, 6\}_2
-  \{1,2,4,5\}_2 \Bigr)\otimes |12|\wedge |23|+
 \{1,3,5,6\}_2 \otimes |12|\wedge |34|\Biggr).\\
 \end{split}
\ee\end{theorem}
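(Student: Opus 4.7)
\medskip

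\noindent\textbf{Proposal for the proof of Theorem \ref{Sq3}.} First I would observe that part (a) is an immediate consequence of part (b). Indeed, ${\bf r^*_7(2)}(1,\dotsc,7)$ is (up to the factor $-1/84$) the evaluation of $\alt_7(\{1,2,3,4\}_2\wedge\{1,5,6,7\}_2)$, while the evaluation of the right hand side of (\ref{Iden}) coincides with $({\bf r^*_6(2)}\circ\partial)(1,\dotsc,7)$ up to the same factor. This rests on the standard decomposition
\[
\alt_7 h(x_{i_1},\dotsc,x_{i_6}) \;=\; \sum_{j=1}^{7}(-1)^{7-j}\,\alt_{\{1,\dotsc,\widehat j,\dotsc,7\}}\,h(x_{i_1},\dotsc,x_{i_6}),
\]
valid for any function $h$ of six arguments. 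So the whole task is to establish the symbolic identity (\ref{Iden}) in ${\bf b}_2(2,7)\otimes\Lambda^2[{\bf b}_1(2,7)]$.

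The plan for part (b) mirrors the arguments used in Theorems \ref{Sq1} and \ref{Sq2}. First I would expand the coproduct on the left:
\[
\delta\bigl(\{1,2,3,4\}_2\wedge\{1,5,6,7\}_2\bigr) \;=\; \{1,2,3,4\}_2\otimes\delta\{1,5,6,7\}_2 \;-\; \{1,5,6,7\}_2\otimes\delta\{1,2,3,4\}_2.
\]
Using the Plücker relation $|14||23|-|13||24|=|12||34|$, one writes
\[
\delta\{1,2,3,4\}_2 \;=\; \frac{|12||34|}{|14||23|}\,\wedge\,\frac{|13||24|}{|14||23|} \;\in\; \Lambda^2[{\bf b}_1(2,7)],
\]
and expanding the wedge reduces each of the two terms to a $\Q$-linear combination of elements of the form $\{a,b,c,d\}_2 \otimes |ij|\wedge|kl|$. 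After applying $\alt_7$, only the skew-symmetric parts survive.

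Next, as in the proof of Theorems \ref{Sq1}(a) and \ref{Sq2}(a), I would fix a short list of alternating basis vectors $u_1,\dotsc,u_N$ in
\[
\bigl({\bf b}_2(2,7)\otimes\Lambda^2[{\bf b}_1(2,7)]\bigr)^{\mathrm{alt}},
\]
chosen among the elements $\alt_7(\{a,b,c,d\}_2\otimes|ij|\wedge|kl|)$ with $|ij|,|kl|$ drawn from the two patterns $|12|\wedge|23|$ and $|12|\wedge|34|$. A character computation shows that this alternating component is of small dimension, and the five-term relation (applied to quadruples sharing three indices with $\{1,2,3,4\}$ or $\{1,5,6,7\}$) together with the dual five-term relation will allow me to express every term produced by the coproduct in terms of this fixed basis. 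At that point, matching coefficients with the right hand side of (\ref{Iden}), rewritten in the same basis via the calculations already done for ${\bf r^*_6(2)}$ in Theorem \ref{Sq1}(a), yields the identity.

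The main obstacle will be the combinatorial bookkeeping: after expanding $\delta$ and alternating over $S_7$, one obtains a sum of many terms of the shape $\{a,b,c,d\}_2\otimes|ij|\wedge|kl|$ with indices drawn from two disjoint triples plus a shared index $1$, and the reduction to a fixed basis requires repeated use of the five-term and dual five-term relations, much as in Lemma \ref{r}(c). To keep the computation under control, I would first use the representation-theoretic observation that the relevant alternating component is spanned by a small explicit set, and compute $\delta$ on each basis element independently, as in Lemma \ref{L4.5}; this confines each five-term reduction to a handful of summands and replaces a brute force expansion by a short linear algebra check. Once the matrix of $\delta$ on the chosen basis is written out, the identity (\ref{Iden}) reduces to a numerical verification whose coefficient $-1/84$ is forced by matching against ${\bf r^*_6(2)}$.
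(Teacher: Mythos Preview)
Your outline is correct in spirit and would succeed, but you miss the one observation that makes the paper's argument short: the alternating component of ${\bf b}_2(2,7)\otimes\Lambda^2[{\bf b}_1(2,7)]$ has multiplicity \emph{one}, not merely ``small dimension''. Once you know this, there is no need for a list of basis vectors $u_1,\dots,u_N$ or a matrix of $\delta$; the two sides of (\ref{Iden}) are automatically proportional, and the entire proof collapses to computing a single scalar. The paper expands the left side directly to $-42\,\alt_7(\{2,4,5,6\}_2\otimes|12|\wedge|23|)$ using two five-term relations, and then observes that on the right side the term $\alt_7(\{1,2,4,5\}_2\otimes|12|\wedge|23|)$ vanishes because it is independent of the indices $6,7$, while $\alt_7(\{1,3,5,6\}_2\otimes|12|\wedge|34|)$ vanishes by a representation-theoretic argument (the tensor product of ${\bf b}_2(2,7)$ with the subrepresentation generated by $|12|\wedge|34|$ has no alternating component). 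What remains is $\tfrac12\,\alt_7(\{2,4,5,6\}_2\otimes|12|\wedge|23|)$, and comparing gives the factor $-1/84$.

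So your plan of expanding the coproduct, alternating, and reducing via five-term relations is exactly what the paper does; the difference is that the paper front-loads the character computation to learn that the target space is one-dimensional, which eliminates the ``combinatorial bookkeeping'' you flag as the main obstacle. Your more elaborate scheme with several basis vectors would of course also yield the answer, but it is unnecessary work here.
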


\begin{proof}  Part a) is a direct corollary of part b).

The multiplicity of the alternating component in the representation 
\be \nonumber
{\bf b}_2(2,7) \otimes \Lambda^2 [{\bf b}_1(2,7)]
\ee
is equal to 1. So  identity (\ref{Iden}) holds up to a rational factor, which we compute below. 
\be \nonumber
\begin{split}
&\delta \left ( \alt_{7}(\{1,2,3,4\}_2 \wedge \{1,5,6,7\}_2)\right )= 2\left (\alt_{7}(\{1,2,3,4\}_2 \otimes \frac{|15||67|}{|17||56|} \wedge  \frac{|16||57|}{|17||56|} \right )=\\
&6\alt_{7}\left ((-\{2,4,5,6\}_2 +2\{1,5,7,4\}_2-\{6,7,5,4\}_2)\otimes |12|\wedge |23|\right ).\\
\end{split}
\ee
From the 5-term relation for $(1,5,7,4,6)$ we get that 
\be \nonumber 
\alt_{7}(\{6,7,5,4\}_2\otimes |12|\wedge |23| )+4\alt_{7}(\{1,5,7,4\}_2\otimes |12|\wedge |23|)=0.
\ee
From the 5-term relation for $(5,2,6,4,7)$ we get that 
\be \nonumber
\alt_{7}(\{6,7,5,4\}_2\otimes |12|\wedge |23| )-4\alt_{7}(\{2,4,5,6\}_2\otimes |12|\wedge |23|)=0.
\ee
So
\be \nonumber
\begin{split}
&\delta \left ( \alt_{7}(\{1,2,3,4\}_2 \wedge \{1,5,6,7\}_2)\right )= -42\alt_{7}(\{2,4,5,6\}_2\otimes |12|\wedge |23| ).
\end{split}
\ee
Now let us simplify the expression
\be \nonumber
\alt_7\Biggl(\Bigl(  \frac{1}{2}\{2,4,5, 6\}_2
-  \{1,2,4,5\}_2 \Bigr)\otimes |12|\wedge |23|+
 \{1,3,5,6\}_2 \otimes |12|\wedge |34|\Biggr).
\ee
First, the expression 
$
\alt_7\left (\{1,2,4,5\}_2 \otimes |12|\wedge |23|\right)
$ 
does  not depend on $(6,7)$, and thus  vanishes. 
Next, the expression 
\be \nonumber
\alt_7(\{1,3,5,6\}_2 \otimes |12|\wedge |34|)
\ee
also vanishes. This follows from the representation theory: the tensor product of 
${\bf b}_2(2,7)$ 
and the representation generated by  $|12|\wedge|34|$ does not have an alternating component.
So
\be \nonumber
\begin{split}
&\alt_7\Biggl(\Bigl(  \frac{1}{2}\{2,4,5, 6\}_2
-  \{1,2,4,5\}_2 \Bigr)\otimes |12|\wedge |23|+
 \{1,3,5,6\}_2 \otimes |12|\wedge |34|\Biggr)=\\
 &\frac{1}{2}\alt_7\{2,4,5, 6\}_2 \otimes |12|\wedge |23|=-\frac{1}{84} \delta \alt_{7}(\{1,2,3,4\}_2 \wedge \{1,5,6,7\}_2).
 \end{split}
\ee
\end{proof}



\end{document}